\numberwithin{equation}{section}
\numberwithin{theorem}{section}
\numberwithin{corollary}{section}
\numberwithin{asmp}{section}
\numberwithin{definition}{section}
\newtheorem{prop}{Proposition}
\algnewcommand\algorithmicinput{\textbf{Input:}}
\algnewcommand\INPUT{\item[\algorithmicinput]}
\algnewcommand\algorithmicoutput{\textbf{Output:}}
\algnewcommand\OUTPUT{\item[\algorithmicoutput]}
\def\reals{{\mathbb R}}
\def\P{{\mathbb P}}
\def\E{{\mathbb E}}
\def\supp{\mathop{\text{supp}\kern.2ex}}
\def\supp{\mathop{\text{supp}}}
\begin{document}
\begin{bibunit}
\begin{frontmatter}

\title{A Unified Framework for Testing High Dimensional Parameters: A Data-Adaptive Approach}

\begin{aug}
\author{\fnms{Cheng} \snm{Zhou}\thanksref{t1}\ead[label=e1]{chengzhmike@gmail.com}},
\author{\fnms{Xinsheng} \snm{Zhang}\thanksref{t1}\ead[label=e2]{xszhang@fudan.edu.cn}},
\author{\fnms{Wenxin} \snm{Zhou}\thanksref{t2}\ead[label=e2]{wenxinz@princeton.edu}}
\and
\author{\fnms{Han} \snm{Liu}\thanksref{t2}\ead[label=e3]{hanliu@princeton.edu}}

\affiliation{Department of Statistics, Fudan University\thanksmark{t1} and Department of Operation Research and
	Financial Engineering, Princeton University\thanksmark{t2}}
\end{aug}




\begin{abstract}
	High dimensional hypothesis test  deals with models in which the  number of parameters is significantly larger than the sample size. Existing literature develops a variety of individual tests. Some of them are sensitive to the dense and small disturbance, and others are sensitive to the sparse and large disturbance. Hence, the powers of these tests depend on the assumption of the alternative scenario. This paper provides a unified framework for developing new tests which are adaptive to a large variety of  alternative scenarios in high dimensions. In particular, our framework includes arbitrary  hypotheses which can be  tested using  high dimensional $U$-statistic based vectors. Under this framework,  we  first develop  a broad family of tests based on a novel variant of the $L_p$-norm with $p\in \{1,\dots,\infty\}$. We then combine these tests to  construct a data-adaptive test that is simultaneously powerful under various  alternative scenarios. To obtain the asymptotic distributions of these tests, we utilize the  multiplier bootstrap for $U$-statistics. 
	In addition, we consider the computational aspect of the bootstrap method and propose a novel low cost scheme. We prove the optimality of the proposed tests. Thorough numerical results on simulated and real datasets are provided to support our theory. 
\end{abstract}
\begin{keyword}
	\kwd{High dimensional hypothesis tests}
	\kwd{$U$-statistics}
	\kwd{multiplier bootstrap methods}
	\kwd{data-adaptive tests}
\end{keyword}

\end{frontmatter}

\section{Introduction}
Modern data acquisition routinely produces massive datasets in many scientific areas, e.g. genomics, astronomy, functional Magnetic Resonance Imaging (fMRI), and image processing. Effective analysis of such data requires us to test high dimensional parameters (\cite{
	lee2012optimal,pan2014powerful, yu2007altered,zou2008improved, fama2012size,cakici2015five}). Though specific methods have been developed to infer high dimensional mean and covariance parameters.  It is unclear how to choose the best test when the parameter of interest has a complex structure and the  pattern of possible alternative hypothesis is unknown. In particular,
we need a unified framework for constructing tests of high dimensional parameters which are simultaneously powerful under a large variety of alternative assumptions. This paper provides such a framework.
\subsection{General setup}

Our framework considers a generic setup for high dimensional inference.  More specifically, let $\bX=(X_1,\ldots,X_d)^\top$ and $\bY=(Y_1,\ldots,Y_d)^\top$ be two $d$-dimensional  random vectors
independent of each other. $\bX_1,\ldots,\bX_{n_1}$ are independent and identically distributed (i.i.d.)
random samples from $\bX$ with $\bX_k=(X_{k1},X_{k2},\ldots,X_{kd})^\top$. Similarly,  $\bY_1,\ldots, \bY_{n_2}$ are i.i.d. random
samples from $\bY$ with $\bY_k=(Y_{k1},Y_{k2},\ldots,Y_{kd})^\top$. We set $\mathcal{X}=\{\bX_1,\ldots,\bX_{n_1}\}$, $\mathcal{Y}=\{\bY_1,\ldots,\bY_{n_2}\}$, and  
\begin{equation}\label{def:uhat}
\begin{array}{rl}
\hat{u}_{1,s}&=\binom{n_1}{m}^{-1}\sum\limits_{1\le k_1<\cdots<k_m\le n_1}\Phi_{s}(\bX_{k_1},\ldots,\bX_{k_m}),\\
\hat{u}_{2,s}&=\binom{n_2}{m}^{-1}\sum\limits_{1\le k_1<\cdots<k_m\le n_2}\Phi_{s}(\bY_{k_1},\ldots,\bY_{k_m}),
\end{array}
\end{equation}
where $s=1,\ldots,q$, and $\Phi_s$ is a $m$-order symmetric kernel function. We assume that $\Phi_s$ is symmetric and that each kernel function is of the same order $m$ only for notational simplicity.\footnote{If $\Phi_s$ is an asymmetric kernel function, it gives a  $U$-statistic $\hat{u}_{1,s}=\frac{1}{m!}\binom{n_1}{m}^{-1}\sum\Phi_{s}(\bX_{\ell_1},\ldots,\bX_{\ell_m})$, where the summation is over all permutations of distinct elements 
	$\{\ell_1,\ldots,\ell_m\}$ from $\{1,\ldots,n_1\}$. By setting  $\Phi_s^{0}(\xb_1,\ldots,\xb_m)=(m!)^{-1}\sum\Phi_s(\bx_{k_1},\ldots,\bX_{k_m})$, where the summation is over all permutations of
	$\{1,\ldots,m\}$, we rewrite $\hat{u}_{1,s}$ as a $U$-statistic with a symmetric kernel $\Phi_s^0$. For $\bY$, we can rewrite $\hat{u}_{2,s}$ as a $U$-statistic with a symmetric kernel similarly.}\textsuperscript{,}\footnote{ If  $\{\Phi_s\}_{s=1,\ldots,q}$ have different kernel orders, we require that the kernel orders are uniformly bounded.  } 

We then define two $U$-statistic  based vectors as  
\begin{equation}\label{def:uvector}
\hat{\bu}_1:=(\hat{u}_{1,1},\hat{u}_{1,2},\ldots,\hat{u}_{1,q})^\top\hspace{1em}{\rm and}\hspace{1em} \hat{\bu}_2:=(\hat{u}_{2,1},\hat{u}_{2,2},\ldots,\hat{u}_{2,q})^\top.
\end{equation}
We use $\bu_{\gamma}$ to denote the expectation of $\hat{\bu}_{\gamma}$, i.e., $\bu_{\gamma}=(u_{\gamma,1},u_{\gamma,2},\ldots,u_{\gamma,q})^\top$ with $u_{\gamma,s}=\E[\hat{u}_{\gamma,s}]$ for $\gamma=1,2$ and $s=1,\ldots,q$. We are interested in testing  the hypotheses:
\begin{itemize}
	\item[(i)](One-sample problem) For a given $\bu_0\in \reals^q$,
	\begin{equation}\label{def:hone-sampleu}
	\Hb_0: \bu_1=\bu_0\hspace{2em}{\rm v.s.}\hspace{2em}\Hb_1: \bu_1\neq\bu_0;
	\end{equation}
	\item[(ii)](Two-sample problem)
	\begin{equation}\label{def:htwo-sampleu}
	\Hb_0: \bu_1=\bu_2\hspace{2em}{\rm v.s.}\hspace{2em}\Hb_1: \bu_1\neq\bu_2.
	\end{equation}
\end{itemize}

We consider the high dimensional setting  that $d/n$ (or $q/n$) does not necessarily go to zero.   These two kinds of  hypotheses are quite general and include most existing studies as special cases.

\subsection{Special cases and applications}

In this section, we provide several special cases of the above general testing problem.

\begin{itemize}
	\item  Matrix-based one-sample test:
	\begin{equation}\label{def:hone-sampleumat}
		\Hb_0: \Ub_1=\Ib_d \hspace{2em}{\rm v.s.}\hspace{2em}\Hb_1: \Ub_1\neq\Ib_d,
	\end{equation}
	where   $\Ub_1$'s entries are estimated  by $U$-statistics and $\Ib_d$ is an identity matrix of size $d$. The hypothesis (\ref{def:hone-sampleumat}) is often used to infer the independence of random variables. This problem plays a fundamental role in many fields including multiple testing (\cite{benjamini1995controlling}), naive Bayes classification (\cite{tibshirani2002diagnosis,fan2008high}), and independent component analysis(\cite{comon1994independent}). Under the Gaussian setting, testing  (\ref{def:hone-sampleumat}) with $\Ub_1$ as  covariance  matrix is well studied both  in low (\cite{roy1957some, nagao1973some,anderson2003introduction}) and high (\cite{ledoit2002some, jiang2004asymptotic,birke2005note,schott2005testing,bai2009corrections,chen2010two,cai2012phase,jiang2013central,cai2013optimal}) dimensions. Moreover,  \cite{jiang2004asymptotic,li2006some,zhou2007asymptotic,liu2008asymptotic,cai2011limiting,cai2012phase,shao2014necessary} consider  the high dimensional independence test under more general distribution. Considering robustness,  rank-based $U$-statistics such as Kendall' s tau and Spearman's rho are introduced to describe the dependence of random variables. As for their definitions and basic theoretical properties, we refer to the book \cite{1990u}. Recently, \cite{han2014distribution,bao2015spectral} study  how to utilyze general $U$-statistics for high dimensional independence test.
	
	\item Matrix-based two-sample  test:
	\begin{equation}\label{def:htwo-sampleumat}
		\Hb_0: \Ub_1=\Ub_2\hspace{2em}{\rm v.s.}\hspace{2em}\Hb_1: \Ub_1\neq\Ub_2,
	\end{equation}
	where $\Ub_1$ and $\Ub_2$ are matrices such that their entries are estimated by $U$-statistics. The hypothesis (\ref{def:htwo-sampleumat}) is often used before the discriminant analysis (\cite{anderson2003introduction,shao2011sparse, cai2011direct, mai2012direct, fan2012road,  mai2013semiparametric, han2013coda}) to simplify the test statistics. For low dimensional  two-sample covariance matrix test, we refer its theoretical properties to \cite{anderson2003introduction}. In recent years, \cite{schott2007test, srivastava2010testing, li2012two, cai2013, chang2015} study how to perform the two-sample covariance matrix test in  high dimensions.  Moreover, \cite{1990u, han2014distribution, bao2015spectral, zhou2015extreme} consider how to use general U-statistics  to replace covariance coefficients. 
	
	\item Means  test:
	\begin{itemize}
		\item[(i)](One-sample problem) 
		\begin{equation}\label{def:hone-sampleu}
		\Hb_0: \bmu_1=\zero \hspace{2em}{\rm v.s.}\hspace{2em}\Hb_1: \bmu_1\neq \zero;
		\end{equation}
		\item[(ii)](Two-sample problem)
		\begin{equation}\label{def:htwo-sampleu}
		\Hb_0: \bmu_1=\bmu_2\hspace{2em}{\rm v.s.}\hspace{2em}\Hb_1: \bmu_1\neq\bmu_2,
		\end{equation}
	\end{itemize}
	where $\bmu_1$ and $\bmu_2$ are  mean vectors of $\bX$ and $\bY$.
	Testing the mean vector is a   special case of (\ref{def:hone-sampleu}) and (\ref{def:htwo-sampleu}). The testing of  mean values is very fundamental. We refer  their low dimensional properties to  \cite{anderson2003introduction}. Recently, a large amount of literature work on high dimensional means test (\cite{bai1996effect,
		srivastava2008test, srivastava2009test, chen2010two, tony2014two, chang2014simulation}). 
\end{itemize}

For (\ref{def:hone-sampleumat}) and  (\ref{def:htwo-sampleumat}),  we
can convert the matrix into a column vector by vectorization to obtain  equivalent tests with the same form as (\ref{def:hone-sampleu}) or (\ref{def:htwo-sampleu}). Therefore,  (\ref{def:hone-sampleumat}) and (\ref{def:htwo-sampleumat})  fall in our  framework.

Testing high dimensional  $U$-statistic parameters also has many important practical applications. For example, in gene selection,  we use it to detect gene differences \citep{ho2008differential,hu2009detecting,hu2010new, cai2013, tony2014two} or rare variants \citep{basu2011comparison,lin2011general,wu2011rare,
	lee2012optimal,pan2014powerful}
between  the diseased and non-diseased population. In finance, we  use  it 
to detect anomalies (\citep{castagna2003instantaneous}) and  test the market efficiency (\cite{fama1993common, fama2015, fama2016international}).

\subsection{Background and existing work}

In the low dimensional setting with $d<n$ fixed,  the Hotelling's $T^2$ test enjoys certain  kind of optimality and has been widely used. To test two-sample mean vectors, the Hotelling's $T^2$ is defined as
\[
\frac{n_1n_2}{n_1+n_2}(\overline{\bX}-\overline{\bY})^\top S_{1,2}^{-1}(\overline{\bX}-\overline{\bY}),
\]
where $\overline{\bX}=n_1^{-1}\sum_{k=1}^{n_1}\bX_{k}$, $\overline{\bY}=n_2^{-1}\sum_{k=1}^{n_2}\bY_{k}$, and 
\[
S_{1,2}=\frac{1}{n_1+n_2-2}\Big(\sum_{k=1}^{n_1}(\bX_k-\overline{\bX})(\bX_k-\overline{\bX})^\top+\sum_{k=1}^{n_2}(\bY_k-\overline{\bY})(\bY_k-\overline{\bY})^\top\Big).
\]
As for the limiting distribution, large and moderate deviations of Hotelling's  $T^2$, we refer to \cite{anderson2003introduction,dembo2006large,liu2013cramer}.

In the high dimensional setting,  many tests have been proposed to test high dimensional vectors and matrices.  These tests fall in two categories:  the $L_2$-type versus $L_\infty$-type tests. Specifically, for (\ref{def:hone-sampleu}) and (\ref{def:htwo-sampleu}), the $L_2$-type tests  are based on $\|\Ab(\bu_1-\bu_0)\|_{2}$ or $\|\Ab(\bu_1-\bu_2)\|_{2}$, and the $L_\infty$-type tests are based on $\|\Ab(\bu_1-\bu_0)\|_\infty$ or $\|\Ab(\bu_1-\bu_2)\|_\infty$ for some operator $\Ab$. 
On one hand, the $L_2$-type tests \citep{bai1996effect,schott2007test,srivastava2009test,srivastava2010testing,
	chen2010two,li2012two}  aim to detect relatively dense signals, as the $L_2$-norm accumulates  small deviations of all entries. On the other hand, the $L_{\infty}$-type tests \citep{cai2013,tony2014two} are more  sensitive to  sparse signals, where some strong perturbations exist on a small number of entries.
\cite{liu2013cramer,cai2013,tony2014two} illustrate that the $L_{\infty}$-type tests  are reasonably more powerful than the $L_2$-type tests and enjoy  certain kind of optimality when the alternative is sparse.
 \subsection{Our contributions}

Theoretically, there is no uniformly most powerful test under different scenarios of the alternatives (\cite{cox1979theoretical}). Therefore, depending on the unknown truth of alternatives, a given and fixed test may or may not be powerful. In this paper, we aim to   develop a broad family of  tests such that
at least one of them is powerful enough in a given situation. We then 
combine these tests to obtain a data-adaptive test
that will maintain high power across a wide range of alternative scenarios.  We develop our family of tests based on a new family of adjusted $L_p$-norms with $p=1,2,\ldots,\infty$, so that there is at least one test in our family is powerful no matter the signal is dense or sparse. The
limiting distribution of the data-adaptive test  is very  complex that we cannot obtain its explicit  form. Therefore, we use the bootstrap method to approximate the limiting distribution, so that we can obtain the critical value and valid $P$-value of the test.  More specifically, to  obtain a better approximation in the high dimensional setting, we adjust $L_p$-norm while building the test statistics. In detail, we introduce it as follows.
\begin{definition}\label{def:s0pnorm}
	 For  $\vb =(v_1,\ldots, v_d)^\top\in\reals^d$, we define
	$\|{\vb}\|_{(s_0,p)}:=\big(\sum_{j=d-s_0+1}^d(v^{(j)})^p\big )^{1/p}$, where $v^{(1)}, v^{(2)},\ldots, v^{(d)}$ are the order statistics of $|v_1|,\ldots, |v_d|$ with $0\le v^{(1)}\le v^{(2)}\le \ldots\le v^{(d)}$.
\end{definition}
By this definition, for any positive integer $s_0$, we have $\|\vb\|_{(s_0,\infty)}=\|\vb\|_{\infty}$, where $\|\vb\|_{\infty}=\max_{j=1,\dots, d} |v_d|$. Moreover, the following proposition shows that $\|\cdot \|_{(s_0,p)}$ is a norm  for any $1\le p\le \infty$.
\begin{prop}\label{proposition:s0pnorm}
For any $1\le p\le \infty$,  $\|\cdot \|_{(s_0,p)}$ is a norm on $\reals^d$.
\end{prop}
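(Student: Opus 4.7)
My plan is to prove Proposition \ref{proposition:s0pnorm} via the variational identity
$$\|\vb\|_{(s_0,p)} \;=\; \max_{S \subset \{1,\dots,d\},\; |S|=s_0}\Bigl(\sum_{i \in S} |v_i|^p\Bigr)^{1/p},$$
with the natural $p=\infty$ interpretation $N_S(\vb):=\max_{i\in S}|v_i|$. Once this representation is established, each coordinate-restricted functional $N_S(\vb):=(\sum_{i\in S}|v_i|^p)^{1/p}$ is a seminorm on $\reals^d$ by Minkowski's inequality applied on the $s_0$-dimensional coordinate subspace indexed by $S$, and a pointwise maximum of finitely many seminorms is again a seminorm. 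The work therefore reduces to two clean steps: proving the identity, and verifying that the norm axioms transfer through the finite max.

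For the identity, I would observe that $\sum_{i\in S}|v_i|^p$ is the sum of $s_0$ of the nonnegative numbers $|v_1|^p,\ldots,|v_d|^p$, so over all subsets $S$ of size $s_0$ it is maximized by picking the $s_0$ largest of these numbers. That maximizing selection yields precisely $\sum_{j=d-s_0+1}^{d}(v^{(j)})^p$, so the identity follows after applying the monotone map $x\mapsto x^{1/p}$ to both sides. The case $p=\infty$ is immediate because $\max_{S}\max_{i\in S}|v_i|=\max_i|v_i|$ whenever $s_0\ge 1$.

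With the representation in hand, the three norm axioms follow routinely. Absolute homogeneity is a consequence of $N_S(\lambda\vb)=|\lambda|N_S(\vb)$ and the commutation of a nonnegative scalar with $\max$. The triangle inequality follows from the chain
$$\|\vb+\wb\|_{(s_0,p)}=\max_{S}N_S(\vb+\wb)\le\max_{S}\bigl(N_S(\vb)+N_S(\wb)\bigr)\le\max_{S}N_S(\vb)+\max_{S}N_S(\wb),$$
where the first bound is Minkowski restricted to the $s_0$ coordinates in $S$ and the second is the elementary inequality $\max_S(a_S+b_S)\le\max_S a_S+\max_S b_S$. Non-degeneracy: if $\|\vb\|_{(s_0,p)}=0$, then every order statistic $v^{(d-s_0+1)},\dots,v^{(d)}$ vanishes; since these are the $s_0$ largest of $|v_1|,\dots,|v_d|$ and $s_0\ge 1$, in particular $v^{(d)}=\max_i|v_i|=0$, so $\vb=0$.

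There is essentially no obstacle here; the only substantive idea is recognizing the variational representation as a maximum over coordinate subsets, after which the proposition reduces to classical properties of $\ell_p$ seminorms and elementary facts about finite maxima. The argument is uniform in $1\le p\le\infty$, handling the $p=\infty$ case through the same max-over-subsets formalism.
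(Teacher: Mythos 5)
Your proof is correct and is essentially the paper's own argument in a slightly more structured form: the variational identity $\|\vb\|_{(s_0,p)}=\max_{|S|=s_0}\bigl(\sum_{i\in S}|v_i|^p\bigr)^{1/p}$ is exactly the paper's observation that the top-$s_0$ index set both realizes the norm and dominates any other $s_0$-subset, and both arguments then finish with Minkowski's inequality on $s_0$ coordinates plus the elementary properties of finite maxima. No gaps; the $p=\infty$ case and the definiteness axiom are handled the same way in both proofs.
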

The detailed proof of Proposition \ref{proposition:s0pnorm} is in Appendix \ref{proof:s0pnorm} of supplementary materials. 
In this paper, we assume $1\le p\le \infty$ to make $\|\cdot\|_{(s_0,p)}$ a norm. Therefore, similarly to $L_p$-norm, we can call $\|\vb\|_{(s_0,p)}$ the $(s_0,p)$-norm of $\vb$ in this paper. To construct the above family of tests, we use the $(s_0,p)$-norm as the adjusted $L_p$-norm.
More details on this testing procedure is in Section \ref{sec:testProc}.
This paper has four major contributions:
\begin{itemize}
	\item First, we introduce a new family of tests based on the $(s_0,p)$-norm. As is shown in the simulation experiment of Section \ref{section:Exper}, the power of traditional $L_p$-norm based test decreases tremendously (especially for small $p$) as $q\rightarrow\infty$.  The reason is that the $L_p$-norm with small $p$ is easy to accumulate the noise of all  entries. Therefore, we introduce $s_0$ to increase the signal-noise ratio of  test statistics. The introduction of $s_0$ is also crucial in establishing our theoretical results for high dimensional multiplier bootstrap.  Moreover, we  obtain the required scaling between $s_0$, $p$, $q$, and $n$ for the proposed bootstrap methods. 
	
	\item Secondly, as it is hard to obtain the joint distribution of test statistics with various $(s_0,p)$-norm, we use the multiplier bootstrap method to obtain its  asymptotic distribution. In low dimensions, this bootstrap method is well studied for both the sum of random variables (\cite{rubin1981bayesian,lo1987large,parzen1994resampling,barbe2012weighted}) and $U$-statistics (\cite{korolyuk2013theory,arcones1993limit,mason1992rank,huskova1993consistency,huvskova1993generalized,gombay2002rates}).  In high dimensions, the multiplier bootstrap  is also useful for approximating the sum of random vectors (\cite{chernozhukov2013gaussian}).  Motivated by these results,
	we generalize multiplier bootstrap method for $U$-statistics to the high dimensional setting with  theoretical guarantees.
	\item Thirdly, for adapting to the
	possible alternatives, we propose a new approach to combine these
	$(s_0,p)$-norm based tests. Our combined test  automatically chooses the most powerful test within the chosen combination according to the data. Therefore, we call this test the data-adaptive combined test. However, to 
	obtain the $P$-value for the combined test, we originally need a double-loop bootstrap procedure, which suffers from high computational cost. To avoid this, we propose a novel computationally efficient scheme which generates  nonindependent bootstrap samples. We also provide theoretical guarantees for this new bootstrap scheme in the high dimensional setting.
	
	\item Finally, 
	combining the developed theory for the proposed methods and exiting lower bounds in the literature, we  present that our methods are rate-optimal in many settings. 
\end{itemize}

\subsection{Notation}
We set $\|{\vb}\|_p$ as the  $L_p$-norm of a vector $\vb = (v_1,\dots,v_d)^\top\in \reals^d$. We denote the spherical surface in $\reals^d$ by
$\mathbb{S}^{d-1}:=\{\vb\in\reals^d:\|\vb\|_2=1\}$. For two sequences of real numbers $\{a_n\}$ and $\{b_n\}$, we write $a_n = O(b_n)$ if there exists a constant $C$ such that $|a_n| \le C|b_n|$ holds for all sufficiently large $n$, write $a_n = o(b_n)$ if $a_n/b_n \rightarrow 0$, and write $a_n\asymp b_n$ if there exist constants $C \ge c >0$ such that $c|b_n| \le |a_n| \le C|b_n|$ for all sufficiently large $n$. For a sequence of random variables $\{\xi_1,\xi_2,\ldots\}$, we use $\xi_n\rightarrow \xi$ to denote that the sequence $\{\xi_n\}$ converges in probability towards $\xi$ as $n\rightarrow\infty$. For  simplicity,  we also use $\xi_n=o_p(1)$ to denote $\xi_n\rightarrow 0$. 
\subsection{Paper organization}
The rest of this paper is organized as follows. In Section \ref{section:GenProc} we propose the new testing procedures: the individual $(s_0,p)$-norm based test and the data-adaptive combined test. In Section \ref{section:Theo},  we develop a theory to analyze the size and power of the proposed tests. Section \ref{section:Exper} provides some numerical results on simulated data to justify our proposed methods' size and power. In Section \ref{section:Diss}, we discuss some potential future work. Supplementary materials provide both proofs and additional numerical results on both simulated and real data.

\section{Methodology}\label{section:GenProc}
\label{sec:testProc}
This section introduces the $(s_0,p)$-norm based individual tests and the data-adaptive combined test for testing high dimensional $U$-statistic based parameters. We also introduce how to exploit the multiplier bootstrap method to obtain the critical values and 
$P$-values for both individual and combined tests. In the following,  we introduce individual tests based on the $(s_0, p)$-norm in Section \ref{sec:s0ptestProc} and  the data-adaptive  combined test in Section \ref{sec:adTestProc}.


\subsection{ Individual tests based on the $(s_0,p)$-norm}\label{sec:s0ptestProc}
We  introduce the $(s_0,p)$-norm based tests which are basic components of the data-adaptive combined test.
First, we explain the construction motivation in Section \ref{sec:indMot} and describe the test statistics in Section \ref{sec:indBuild}. We then introduce  bootstrapping scheme for $U$-statistics in high dimensions in   Section \ref{sec:bootsProc} and use it to obtain critical values and $P$-values for the proposed tests.

\subsubsection{Motivation of the construction of the $(s_0,p)$-norm}\label{sec:indMot}
We first introduce the motivation of  the proposed individual tests. In the existing literature,  there are two types of tests ($L_2$-type and $L_\infty$-type tests) to test high dimensional vectors or matrices.
The $L_2$-type tests are sensitive to  dense signals and the $L_\infty$-type tests are sensitive to  sparse signals. Therefore, the performance  of these tests
depends on  the pattern of possible alternatives. 
If such pattern is unknown,  it is more desirable to construct a data-adaptive test which is simultaneously powerful under various alternative scenarios. For this, we need to construct a family of versatile tests so that for a given alternative at least one test wiithin the family is powerful. Inspired by the existing  $L_2$-type and $L_\infty$-type tests, we build the test family based on the $L_p$-norm. Importantly, as $p$ increases, the $L_p$-norm puts more weight
on the larger entries  while gradually ignoring the remaining smaller entries. As $p\rightarrow\infty$, we have
$
\|\vb\|_p\rightarrow\|\vb\|_{\infty}$ for any $\vb\in\reals^d$,
where  $\|\vb\|_{\infty}$'s value only depends on the largest entry of $\vb$. More generally, as $p$ increases,
we put more weight on the larger  entries, eventually realizing the $L_\infty$-type test. Hence, by properly choosing  $p$ from the proposed test family, there exists at least one test within the family that is powerful in each alternative situation. 

However, it is problematic to directly use the $L_p$-norm ($p<\infty$)   to construct the test statistics in  high dimensions. For example, when $d/n\not\rightarrow 0$,  Hotelling's $T^2$ test ($L_2$-type) performs
poorly, as the Pearson's sample covariance matrices no longer converge to their population counterparts under the spectral norm (\cite{bai1993limit}). For high dimensional testing problems, we need to adjust the test statistics or make structured assumptions on the population covariance matrix to obtain better  asymptotic distributions of the test statistics. We face the same problem while using $L_p$-norm ($p<\infty$)  to construct the test statistics. Hence, to avoid making unnecessary assumptions on the covariance structure of the random vector, we introduce the $(s_0,p)$-norm to adjust the original $L_p$-norm. As is shown by numerical simulations in Section \ref{section:Exper}, the $L_p$-norm based test with small $p$ has significant power loss when the dimension of the parameter of interest $q\rightarrow\infty$. 
The introduction of $s_0$ can boost the power of $L_p$-norm based test especially for  small $p$. More specifically, when $p$ is small, the 
$L_p$-norm  accumulates noise from all the entries, which leads to significant power loss.  By exploiting the $(s_0,p)$-norm,  we can enhance the signal-noise ratio  for the obtained test statistics.  When $p$ is large, the choice of $s_0$ becomes less critical. In theory, for the bootstrap scheme to work properly under any $1\le p\le \infty$, we require that $s_0^2\log(qn) = O(n^\delta)$ holds for some $0<\delta<1/7$. Therefore, 
 $s_0$ can also go to the infinity as $n\rightarrow\infty$. By simulation, $s_0$ close to $s$, which is the  true unknown number of entries violating $\Hb_0$, is preferable. More details on the choice of $s_0$ are provided in Section \ref{section:Theo} and \ref{section:Exper}.

\subsubsection{The $(s_0,p)$-norm based test statistics}\label{sec:indBuild}
Before presenting the test statistics,  we first introduce the following jackknife variance estimator for the $U$-statistic $\hat{u}_{\gamma, s}$  defined in \eqref{def:uhat} with $\gamma=1,2$ and $s=1,2\ldots,q$. As $m\ge 2$, we define 
\begin{equation}\label{def:vhat}
\hat{v}_{1,s}=m^2n_1^{-1}\sum_{k=1}^{n_1}(Q_{1k,s}-\hat{u}_{1,s})^2,~
\hat{v}_{2,s}=m^2n_2^{-1}\sum_{k=1}^{n_2}(Q_{2k,s}-\hat{u}_{2,s})^2,
\end{equation}
where we set
\begin{equation}\label{def:qalphas}
\begin{array}{rl}
Q_{1k,s}&:=\binom{n_1-1}{m-1}^{-1}\sum\limits_{1\le \ell_1<\cdots< \ell_{m-1}\le n_1\atop \ell_j\neq k, j=1,\ldots,m-1}\Phi_{s}(\bX_{k}, \bX_{\ell_1},\ldots,\bX_{\ell_{m-1}}),\\
Q_{2k,s}&:=\binom{n_2-1}{m-1}^{-1}\sum\limits_{1\le \ell_1<\cdots< \ell_{m-1}\le n_2\atop \ell_j\neq k, j=1,\ldots,m-1}\Phi_{s}(\bY_{k}, \bY_{\ell_1},\ldots,\bY_{\ell_{m-1}}).
\end{array}
\end{equation}
We use $\hat{v}_{\gamma,s}$  to estimate the variance of $\sqrt{n_\gamma}\hat{u}_{\gamma,s}$. Therefore, $\hat{v}_{\gamma,s}/n_\gamma$ is the variance estimator for $\hat{u}_{\gamma,s}$. 
As $m=1$, $\hat{u}_{\gamma,s}$ and $\hat{v}_{\gamma,s}$ are reduced to 
\begin{equation}\label{def:mequalone}
\left\{
\begin{array}{c}
\hat{u}_{1,s}=n_1^{-1}\sum\limits_{k=1}^{n_1}\Phi_s(\bX_k),\\ 
\hat{u}_{2,s}=n_2^{-1}\sum\limits_{k=1}^{n_2}\Phi_s(\bY_k),
\end{array}
\right. 
~
\left\{
\begin{array}{c}
\hat{v}_{1,s}=n_1^{-1}\sum\limits_{k=1}^{n_1}(\Phi_s(\bX_{k})-\hat{u}_{1,s})^2,\\
\hat{v}_{2,s}=n_2^{-1}\sum\limits_{k=1}^{n_2}(\Phi_s(\bX_{k})-\hat{u}_{2,s})^2.
\end{array}
\right.
\end{equation}

After introducing these notations, we  present our $(s_0,p)$-norm  based test statistics. For this, we define $\bW=(W_1,\ldots,W_q)^\top$ and $\bN=(N_1,\ldots,N_q)^\top$, where we set $W_s$ and $N_s$ as
\begin{equation}\label{def:WN}
\begin{aligned}
W_{s}&:={(\hat{u}_{1,s}-u_{0,s})}/{\sqrt{\hat{v}_{1,s}/n_1}},\\
N_{s}&:={(\hat{u}_{1,s}-\hat{u}_{2,s})}/{\sqrt{\hat{v}_{1,s}/n_1+\hat{v}_{2,s}/n_2}}.
\end{aligned}
\end{equation}
For the one-sample problem in (\ref{def:hone-sampleu}), we propose the test statistic
$W_{(s_0,p)}:=\|\bW\|_{(s_0,p)}$. Similarly, for the  two-sample problem in (\ref{def:htwo-sampleu}), we propose the test statistic
$N_{(s_0,p)}:=\|\bN\|_{(s_0,p)}$. Throughout this paper, if not specially specified, we require $1\le p\le \infty$ to make  $\|\cdot\|_{(s_0,p)}$ a norm, which is also required by the theory.

\subsubsection{Bootstrap  procedure for the asymptotic distribution}\label{sec:bootsProc}
In the high dimensional setting, \cite{chernozhukov2013gaussian} introduce the  multiplier bootstrap method for the sum of independent random vectors. In detail, let $\bZ_1, \ldots,\bZ_{n}$ be independent random vectors in $\reals^d$ with 
$\bZ_k=(Z_{k1},\ldots,Z_{kd})^\top$  and $\E[\bZ_k]=\zero$ for $k=1,\ldots,n$. Let
$\varepsilon_1, \varepsilon_2,\ldots,\varepsilon_n$ be independent standard normal random variables, the multiplier bootstrap sample for $\bZ_1, \ldots,\bZ_{n}$ is $\varepsilon_1\bZ_1,\ldots,\varepsilon_n\bZ_n$. The bootstrap sample for the sample mean 
$n^{-1}\sum_{k=1}^n\bZ_k$ then becomes $n^{-1}\sum_{k=1}^n\varepsilon_k\bZ_k$.
To fully utilyze  this result, we use multiplier bootstrap scheme for for high dimensional $U$-statistics.
In detail, we generate independent samples $\varepsilon_{1,1}^b,\ldots,\varepsilon_{1,n_1}^b$ and
$\varepsilon_{2,1}^b,\ldots,\varepsilon_{2,n_2}^b$ from $\varepsilon\sim N(0,1)$ for $b=1,\ldots,B$ and set
\[\label{def:uhatbs}
\begin{array}{rl}
\hat{u}_{1,s}^b&\hspace{-1em}=\binom{n_1}{m}^{-1}\sum\limits_{1\le k_1<\cdots<k_m\le n_1}(\varepsilon^b_{1,k_1}+\cdots
+\varepsilon^b_{1,k_m})\big(\Phi_{s}(\bX_{k_1},\ldots,\bX_{k_m})-\hat{u}_{1,s}\big),\\
\hat{u}_{2,s}^b&\hspace{-1em}=\binom{n_2}{m}^{-1}\sum\limits_{1\le k_1<\cdots<k_m\le n_2}(\varepsilon^b_{2,k_1}+\cdots+\varepsilon^b_{2,k_m})\big(\Phi_{s}(\bY_{k_1},\ldots,\bY_{k_m})-\hat{u}_{2,s}\big).
\end{array}
\]
Correspondingly, we set $\hat{\bu}^b_\gamma:=(\hat{u}^b_{\gamma,1},\ldots,\hat{u}^b_{\gamma,q})^\top$ for $\gamma=1,2$. After introducing $\hat{\bu}^b_\gamma$, we define $
\bW^b=(W^b_1,\ldots,W^b_q)^\top$ and $\bN^b=(N^b_1,\ldots,N^b_q)^\top$, 
where
\begin{equation}\label{def:WNB}
W_{s}^b={\hat{u}^b_{1,s}}/{\sqrt{\hat{v}_{1,s}/n_1}},~
N_{s}^b={(\hat{u}^b_{1,s}-\hat{u}^b_{2,s})}/{\sqrt{\hat{v}_{1,s}/n_1+\hat{v}_{2,s}/n_2}}.
\end{equation}
Bootstrap samples become $\{N^b_{(s_0,p)}\}_{b=1,\ldots,B}$ and $\{W^b_{(s_0,p)}\}_{b=1,\ldots,B}$ with 
\begin{equation}\label{def:WNbs0p}
W_{(s_0,p)}^b=\|\bW^b\|_{(s_0,p)}\hspace{1em}{\rm  and }\hspace{1em}
N_{(s_0,p)}^b=\|\bN^b\|_{(s_0,p)}.
\end{equation}
Given the significance level $\alpha$ and the bootstrap samples, we set the critical values of   $W_{(s_0,p)}$  and $N_{(s_0,p)}$ as 
\begin{align*}
\hat{t}^{W}_{\alpha,(s_0,p)}&=\inf\Big\{t\in \reals: \frac{1}{B}\sum_{b=1}^B\ind \{W^b_{(s_0,p)}\le t\} > 1-\alpha\Big\},~\\
\hat{t}^{N}_{\alpha,(s_0,p)}&=\inf\Big\{t\in \reals: \frac{1}{B}\sum_{b=1}^B\ind \{N^b_{(s_0,p)}\le t\}> 1-\alpha\Big\}.
\end{align*}
Therefore, we obtain the $(s_0,p)$-norm based tests for (\ref{def:hone-sampleu}) and (\ref{def:htwo-sampleu}) as
\begin{align}\label{def:TNW}
T^{W}_{\alpha,(s_0,p)}:=\ind\big\{W_{(s_0,p)}\ge \hat{t}^W_{\alpha,(s_0,p)}\big\},~T^{N}_{\alpha,(s_0,p)}:=\ind\big\{N_{(s_0,p)}\ge \hat{t}^N_{\alpha,(s_0,p)}\big\}
.
\end{align}
We reject $\Hb_0$ of (\ref{def:hone-sampleu}) if and only if 
$T^W_{\alpha,(s_0,p)}=1$ and reject $\Hb_0$ of (\ref{def:htwo-sampleu}) if and only if 
$T^N_{\alpha,(s_0,p)}=1$.
Accordingly, we estimate  $W_{(s_0,p)}$ and $N_{(s_0,p)}$'s oracle  $P$-values $P^W_{(s_0,p)}$ and $P^N_{(s_0,p)}$ by
\begin{equation}\label{def:PhatWN}
\begin{aligned}
\hat{P}^W_{(s_0,p)}&=(B+1)^{-1}{\sum_{b=1}^B\ind\{W^b_{(s_0,p)}> W_{(s_0,p)}\}}\\
\hat{P}^N_{(s_0,p)}&=(B+1)^{-1}{\sum_{b=1}^B\ind\{N^b_{(s_0,p)}> N_{(s_0,p)}\}}.
\end{aligned}
\end{equation}
Therefore, given a significance level $\alpha$,  we reject $\Hb_{0}$ of (\ref{def:hone-sampleu}) if and only if $\hat{P}^W_{(s_0,p)}\le
\alpha$ and reject $\Hb_{0}$ of (\ref{def:htwo-sampleu}) if and only if $\hat{P}^N_{(s_0,p)}\le
\alpha$.

\subsection{Data-adaptive combined  test }\label{sec:adTestProc}
We now introduce the data-adaptive combined test. In Section \ref{sec:Adptest}, we present the test procedure. In Section \ref{sec:doubleboots}, we introduce a  double-loop bootstrap procedure to obtain the $P$-value of the data-adaptive test. To reduce the expensive computation cost of the double-loop bootstrap procedure, in Section \ref{sec:lowcost} we introduce a low cost bootstrap procedure which obtains  nonindependent bootstrap samples.  The theory of this new low cost bootstrap procedure is provided in Section \ref{sec:theroAd}.

\subsubsection{Test statistics}\label{sec:Adptest}
$W_{(s_0,p)}$ and $N_{(s_0,p)}$ have different  powers for different $p$ and alternative scenarios. For example, $W_{(s_0,\infty)}$ and $N_{(s_0,\infty)}$ are  sensitive to large  perturbations on a small number of entries of $\bu_1-\bu_0$ and $\bu_1-\bu_2$. Moreover,  $W_{(s_0,2)}$ and $N_{(s_0,2)}$ are sensitive to small perturbations on  a large number of entries of $\bu_1-\bu_0$ and $\bu_1-\bu_2$. We aim to  combine these tests to construct a data-adaptive test which is simultaneously powerful under different alternatives.

For the one-sample problem, as small  $P$-values of $W_{(s_0,p)}$  lead to the rejection of $\Hb_0$ in (\ref{def:hone-sampleu}), we  construct the data-adaptive test statistic $W_{\rm ad}$ by taking the minimum of $P$-values of all individual tests, i.e.,
\begin{equation}\label{def:Wad}
W_{\rm ad}=\min_{p\in\mathcal{P}}\hat{P}^W_{(s_0,p)},
\end{equation}
where $\mathcal{P}\subset\{1,2,\ldots,\infty\}$ is a candidate  set of $p$.  A bootstrap procedure to  obtain $W_{\rm ad}$ is described in Algorithm \ref{alg:first}.
For the two-sample problem in (\ref{def:htwo-sampleu}), we construct the data-adaptive test statistic $N_{\rm ad}$ as
\begin{equation}\label{def:Nad}
N_{\rm ad}=\min_{p\in\mathcal{P}}\hat{P}^N_{(s_0,p)}.
\end{equation}

Throughout this paper, we require that $\#(\mathcal{P})<\infty$ is a fixed constant, which is also required by the theory and discussed in Section \ref{sec:theroAd}. If the alternative pattern is unknown, we recommend using the balanced $\mathcal{P}$ including both small and large values of $p\in [1,\infty]$.  
For example, $\mathcal{P}=\{1,2,\ldots,5,\infty\}$ is used in the  later simulation experiments. If the alternative pattern is known, we can boost the power of the data-adaptive combined test by choosing $\mathcal{P}$ accordingly. For example, for  possible sparse alternatives, $\mathcal{P}$ should consist of large values of $p$.  

\begin{algorithm}
	\caption{A bootstrap procedure to obtain $W_{\rm ad}$}\label{alg:first}
	\begin{algorithmic}[1]
		\INPUT $\mathcal{X}$.
		\OUTPUT $W^1_{(s_0,p)},\ldots,W^B_{(s_0,p)}$ with $p\in \mathcal{P}$, and $W_{\rm ad}$.
		\Procedure{}{}
		\State 
		$W_{(s_0,p)}=\|{\bm W}\|_{(s_0,p)} \text{ with } {\bm W}=(W_1,\ldots,W_q)^\top \text{ and } W_{s}={(\hat{u}_{1,s}-u_{0,s})}/{\sqrt{\hat{v}_{1,s}/n_1}}.$
		\For{ $b\leftarrow 1$ {\bf to} $B$}
		\State  Sample independent standard normal random variables $\{\varepsilon^b_{1,1},\ldots,\varepsilon^b_{1,n_1}\}$.
		\State $\hat{u}_{1,s}^b\!=\!\binom{n_1}{m}^{-1}\hspace*{-1em}\sum\limits_{1\le k_1<\cdots<k_m\le n_1}\hspace*{-1em}(\varepsilon^b_{1,k_1}+\cdots
		+\varepsilon^b_{1,k_m})\big(\Phi_{s}({\bm X}_{k_1},\ldots,{\bm X}_{k_m})-\hat{u}_{1,s}\big).$
		\State $W_{s}^b={\hat{u}^b_{1,s}}/{\sqrt{\hat{v}_{1,s}/n_1}}$ for $s=1,\ldots,q$.
		\For{$p$ {\bf in} $\mathcal{P}$}
		\State $W^b_{(s_0,p)}=\|{\bm W}^b\|_{(s_0,p)}\text{ with } 
		{\bm W}^b=(W^b_1,\ldots, W^b_q)^\top$.
		\EndFor
		\EndFor
		\State $\hat{P}^W_{(s_0,p)}={\sum_{b=1}^B\ind\{W^b_{(s_0,p)}> W_{(s_0,p)}\}}/{(B+1)}$ for $p\in \mathcal{P}$.
		\State $W_{\rm ad}=\min_{p\in\mathcal{P}}\hat{P}^W_{(s_0,p)}$.
		\EndProcedure
	\end{algorithmic}
\end{algorithm}

\subsubsection{Double-loop bootstrap procedure}\label{sec:doubleboots}
We  present how to  obtain $P$-value of $W_{\rm ad}$. By setting   $F_{W, {\rm ad}}(x)$   as the  distribution function
of $W_{\rm ad}$,  $W_{\rm ad}$'s oracle $P$-value becomes  $F_{W,{\rm ad}}(W_{\rm ad})$. As $F_{W,{\rm ad}}(x)$ is unknown, we need to use the bootstrap method to estimate it, which leads to a double-loop bootstrap procedure. In the outer loop, by  Algorithm \ref{alg:first} we  obtain the bootstrap samples for  $W_{(s_0,p)}$, i.e, $\big\{W^1_{(s_0,p)},\ldots,W^B_{(s_0,p)}\big\}$.  In the inner loop,  for each $b\in\{1,\ldots, B\}$, we use Algorithm 
\ref{alg:second} to obtain bootstrap samples for $W^b_{(s_0,p)}$, i.e., $\big\{W^{b,1}_{(s_0,p)},\ldots,W^{b, L}_{(s_0,p)}\big\}$, and   construct the bootstrap samples  for $W_{\rm ad}$ as
\[
W_{\rm ad}^b=\min_{p\in\mathcal{P}}\frac{\sum_{\ell=1}^L\ind\{W^{b,\ell}_{(s_0,p)}> W^b_{(s_0,p)}\}}{L+1} \hspace{2em}\text{ for }\hspace{2em} b=1,\ldots,B.
\] 
With the bootstrap samples, we can estimate the oracle $P$-value of $W_{\rm ad}$ by 
\[
\frac{1}{B+1}\Bigg(\Big(\sum_{b=1}^{B}\ind\{W^b_{\rm ad}\le W_{\rm ad}\}\Big)+1\Bigg).
\]
Figure \ref{fig:flowchart} illustrates this  double-loop bootstrap method. By this double-loop bootstrap procedure, to guarantee the independence of  $W^1_{\rm ad},\ldots,W^B_{\rm ad}$,  we totally need  $LB+B$ samples from (\ref{def:uhatbs}), which  is computationally expensive when $L$ and $B$ are large.

\begin{algorithm}
	\caption{A double-loop bootstrap procedure to obtain bootstrap samples of $W_{\rm ad}$}\label{alg:second}
	\begin{algorithmic}[1]
		\INPUT $\mathcal{X}$ and $W^1_{(s_0,p)},\ldots,W^B_{(s_0,p)}$ for $p\in \mathcal{P}$.
		\OUTPUT $W_{\rm ad}^1,\ldots,W_{\rm ad}^B$.
		\Procedure{}{}
		\For{$b \leftarrow 1$ {\bf to} $B$ }
		\For{$\ell \leftarrow 1$ {\bf to} $L$ }
		\State  Sample independent standard normal random variables $\{\varepsilon^{b,\ell}_{1,1},\ldots,\varepsilon^{b,\ell}_{1,n_1}\}$.
		\State
		$\hat{u}_{1,s}^{b,\ell}\!=\!\binom{n_1}{m}^{-1}\hspace*{-1em}\sum\limits_{1\le k_1<\cdots<k_m\le n_1}\hspace*{-1em}(\varepsilon^{b,\ell}_{1,k_1}+\cdots
		+\varepsilon^{b,\ell}_{1,k_m})\big(\Phi_{s}({\bm X}_{k_1},\ldots,{\bm X}_{k_m})-\hat{u}_{1,s}\big).$
		\State $W_{s}^{b,\ell}={\hat{u}^{b,\ell}_{1,s}}/{\sqrt{\hat{v}_{1,s}/n_1}}$ for $s=1,\ldots, q$.
		\For{p {\bf in } $\mathcal{P}$}
		\State $W^{b,\ell}_{(s_0,p)}=\|{\bm W}^{b,\ell}\|_{(s_0,p)}\text{ with } 
		{\bm W}^{b,\ell}=(W^{b,\ell}_1,\ldots, W^{b,\ell}_q)^\top$.
		
		\EndFor
		\EndFor
		\State $\hat{P}_{W^b,(s_0,p)}={\sum_{\ell=1}^L\ind\{W^{b,\ell}_{(s_0,p)}> W^b_{(s_0,p)}\}}/{(L+1)}$ for $p\in\mathcal{P}$.
		\State $W^b_{\rm ad}=\min_{p\in\mathcal{P}}\hat{P}_{W^b,(s_0,p)}$
		\EndFor
		\EndProcedure
	\end{algorithmic}
\end{algorithm}
\begin{figure}[htp!]
	\begin{center}
		\includegraphics[width=0.9\textwidth,angle=0]{./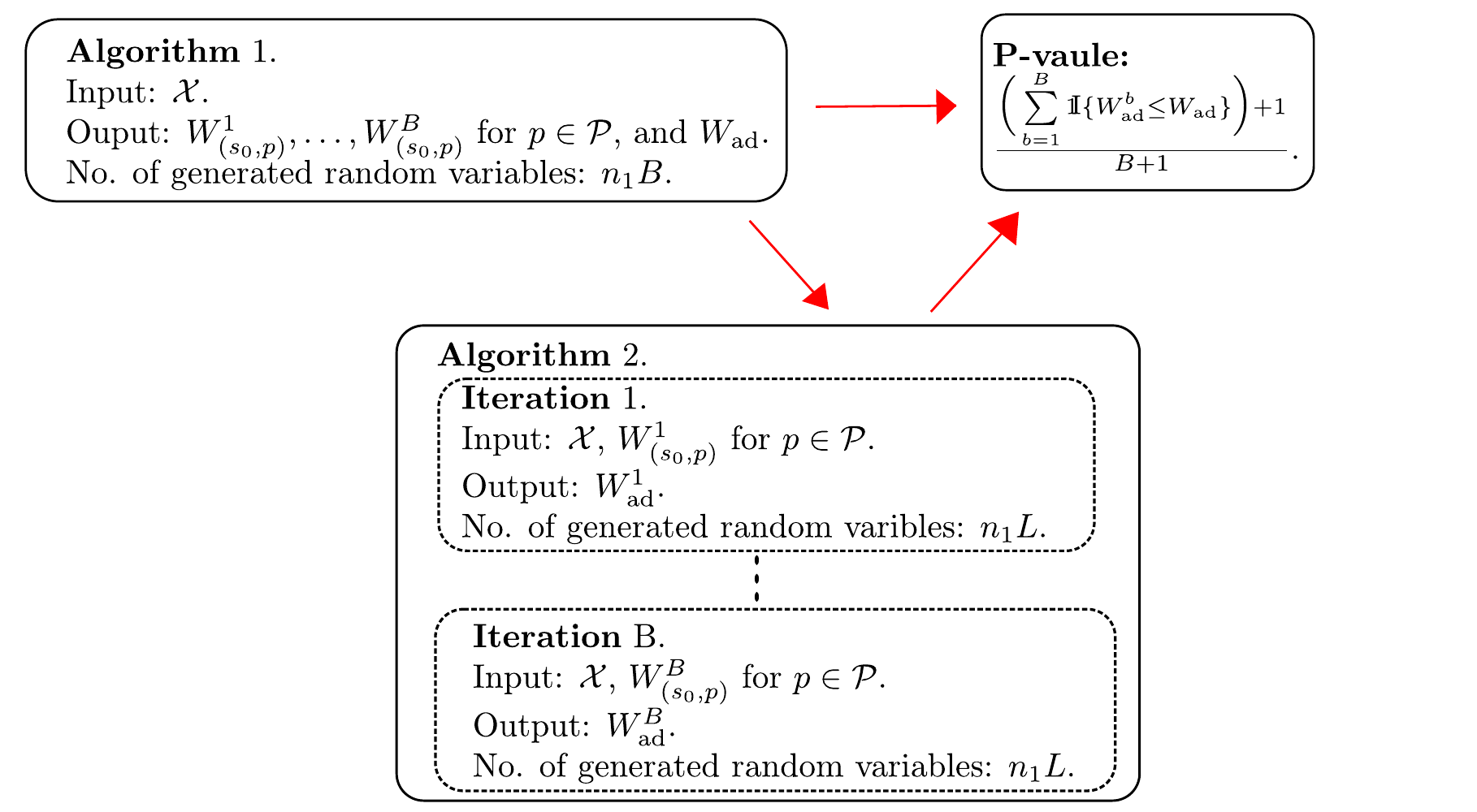}
	\end{center}
	\caption{\small Flowchart for the double-loop bootstrap procedure and total number of generated standard normal random  variables. } \label{fig:flowchart}
\end{figure}

\subsubsection{A low cost bootstrap procedure}\label{sec:lowcost}

To handle the computational bottleneck of the double-loop bootstrap, we propose to  replace Algorithm \ref{alg:second} with  Algorithm \ref{alg:adjust}, which is computationally more efficient but obtains  nonindependent bootstrap samples for $W_{\rm ad}$, denoted as $\{W^1_{\rm ad'},\ldots,W^B_{\rm ad'}\}$.

In detail, in Algorithm \ref{alg:first} by (\ref{def:uhatbs}), (\ref{def:WNB}), and (\ref{def:WNbs0p})  we generate  bootstrap samples for $W_{(s_0,p)}$, i.e., $W_{(s_0,p)}^1,\ldots,W_{(s_0,p)}^B$. To avoid the double-loop bootstrap procedure, we  need to more effectively  utilize the generated bootstrap samples $W^1_{(s_0,p)},\ldots,W^B_{(s_0,p)}$. For this, we  set 
\[
\hat{P}^{b,W}_{(s_0,p)}=\frac{\sum_{b_1\ne b} \ind\{W^{b_1}_{(s_0,p)}> W^{b}_{(s_0,p)}\}}{B}~{\rm for}~b=1,\ldots,B\hspace{1em}{\rm and}\hspace{1em}p\in\mathcal{P}.
\]
We use $W^b_{\rm ad'}=\min_{p\in\mathcal{P}
}\hat{P}^{b,W}_{(s_0,p)}$  as the bootstrap sample for $W_{\rm ad}$, and
estimate the oracle $P$-value  by
\begin{equation}\label{def:hatPWad}
\hat{P}_{\rm ad}^W = \frac{\big(\sum_{b=1}^{B}\ind\{W^b_{\rm ad'}\le W_{\rm ad}\}\big)+1}{B+1}.
\end{equation}
The samples  $W^1_{\rm ad'},\ldots,W^B_{\rm ad'}$ are nonindependent. However, we can prove that they are  asymptotically independent  as $n_1,B\rightarrow\infty$, which plays a pivotal role in proving the consistency of
$\hat{P}^W_{\rm ad}$.

Figure \ref{fig:flowchartAdj} illustrates 
the process of the low cost bootstrap procedure. To obtain the $P$-value of $W_{\rm ad}$, we don't need to generate new bootstrap samples. In total, to perform the data-adaptive test we only need to generate $B$ bootstrap samples from (\ref{def:uhatbs}).

We similarly deal with the two-sample problem. By generating  bootstrap samples for $N_{(s_0,p)}$, i.e., $N^1_{(s_0,p)},\ldots,N^B_{(s_0,p)}$ and setting
\begin{equation}\label{def:hatbns0p}
\hat{P}^{b,N}_{(s_0,p)}=\frac{\sum_{b_1\ne b} \ind\{N^{b_1}_{(s_0,p)}> N^{b}_{(s_0,p)}\}}{B}~{\rm for}~b=1,\ldots,B~\text{and}~p\in\mathcal{P},
\end{equation}
we use $N^b_{\rm ad'}=\min_{p\in\mathcal{P}
}\hat{P}^{b,N}_{(s_0,p)}$ as the bootstrap sample of $N_{\rm ad}$. Therefore, we can similarly  
estimate the oracle $P$-value of $N_{\rm ad}$  by
\begin{equation}\label{def:hatPNad}
\hat{P}_{\rm ad}^N= \frac{\big(\sum_{b=1}^{B}\ind\{N^b_{\rm ad'}\le N_{\rm ad}\}\big)+1}{B+1}.
\end{equation}

With the estimated $P$-values of the data-adaptive tests $W_{\rm ad}$ and $N_{\rm ad}$, given significance level $\alpha$, we reject $\Hb_0$ of (\ref{def:hone-sampleu}) if and only if $\hat{P}^W_{\rm ad}\le\alpha$ and reject $\Hb_0$ of (\ref{def:htwo-sampleu}) if and only if $\hat{P}^N_{\rm ad}\le\alpha$. Therefore, we set 
\begin{equation}\label{def:TWNAd}
T^W_{\rm ad}=\ind\{\hat{P}^W_{\rm ad}\le\alpha\}\hspace{2em}{\rm and}\hspace{2em}T^N_{\rm ad}=\ind\{\hat{P}^N_{\rm ad}\le\alpha\}.
\end{equation}

\begin{algorithm}[ht]
	\caption{A low cost  bootstrap procedure}\label{alg:adjust}
	\begin{algorithmic}[1]
		\INPUT $\mathcal{X}$ and $W^1_{(s_0,p)},\ldots,W^B_{(s_0,p)}$ for $p\in \mathcal{P}$.
		\OUTPUT $W_{\rm ad'}^1,\ldots, W_{\rm ad'}^B$.
		\Procedure{}{}
		\For{$b \leftarrow 1$ {\bf to} $B$}
		\For{ $p$ {\bf in} $\mathcal{P}$}
		\State $\hat{P}^{b,W}_{(s_0,p)}={\sum_{b_1\ne b} \ind\{W^{b_1}_{(s_0,p)}> W^{b}_{(s_0,p)}\}}/{B}$
		\EndFor
		\State $W^b_{\rm ad'}=\min_{p\in\mathcal{P}
		}\hat{P}^{b,W}_{(s_0,p)}$.
		\EndFor
		\EndProcedure
	\end{algorithmic}
\end{algorithm}

\begin{figure}[htp!]
	\begin{center}
		\includegraphics[width=0.9\textwidth,angle=0]{./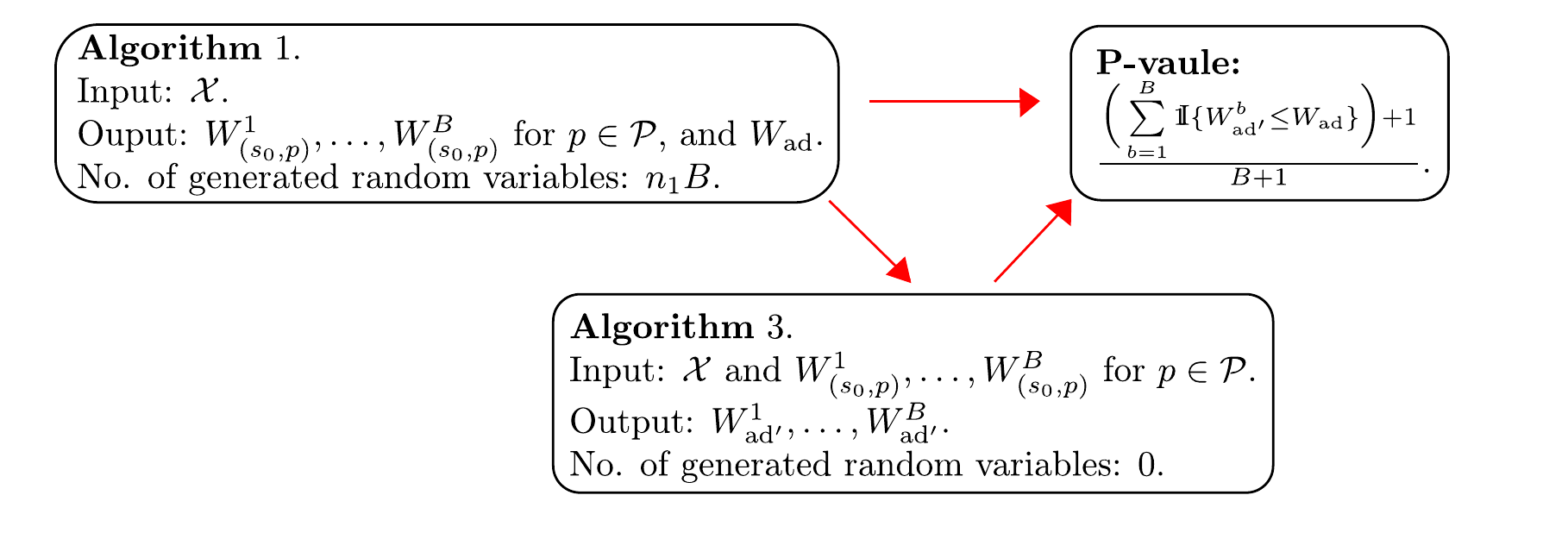}
		
	\end{center}
	\caption{\small Flowchart for the low cost  bootstrap procedure with low computation cost and total number of generated standard normal random  variables. } \label{fig:flowchartAdj}
\end{figure}

\begin{remark}
	To construct test statistics $W_{(s_0,p)}$  and $N_{(s_0,p)}$,  we normalize $\hat{u}_{1,s}-u_{1,s}$  and $\hat{u}_{1,s}-\hat{u}_{2,s}$  by dividing their standard deviation estimators. 
	If we assume that $U$-statistics have the same variance under the null hypothesis (homogeneity  assumption),  we can build $W_{(s_0,p)}$ and $N_{(s_0,p)}$ without the normalization to avoid introducing unnecessary estimation error. Therefore, 
	$W_s$ and $N_s$ become
	\[
	W_s:=\hat{u}_{1,s}-u_{0,s}\hspace{2em}{\rm and}\hspace{2em}
	N_s:=\hat{u}_{1,s}-\hat{u}_{2,s} .
	\] 
	For the same reason, we set $W_s^b=\hat{u}^b_{1,s}$ and 
	$N_s^b=\hat{u}^b_{1,s}-\hat{u}^b_{2,s}$ when performing bootstrap procedure of Sections \ref{sec:bootsProc} and \ref{sec:adTestProc}.  As the proof is similar for the test
	statistics without normalization, in Section \ref{section:Theo}  we only analyze
	the theoretical properties of the test statistics with  normalization.
\end{remark}

\section{Theoretical properties}\label{section:Theo}
In this section, we discuss the theoretical properties of the proposed testing methods including the $(s_0,p)$-norm based  test and  data-adaptive combined test. We first introduce several  assumptions  in Section \ref{sec:assumptions}. We then analyze the asymptotic size and power of the $(s_0,p)$-norm based test in Section \ref{sec:theorys0lp}. At last, we analyze the data-adaptive combined test in Section \ref{sec:theroAd}. 

\subsection{Assumptions}\label{sec:assumptions}
Before presenting the theoretical properties, we introduce the assumptions that are needed in this paper.   We  also explain the intuitions of these assumptions. Throughout this paper, for the two-sample problem, we assume  $n_1\asymp n_2\asymp n:=\max(n_1,n_2)$, which  means that $n_1, n_2,$ and $n$ are of the same order. We then introduce some other assumptions.
Assumption {\bf (A)}  characterizes the scaling of $s_0$, $q$, and $n$. 
Assumptions {\bf (E)},  {\bf (M1)} and {\bf (M2)} specify the requirements of the kernel functions. In detail, we introduce Assumption 
{\bf (A)} as follows.
\begin{itemize}
	\item{\bf (A)} For the one-sample problem in (\ref{def:hone-sampleu}), we assume that there is some   $0<\delta<1/7$ such that  $s_0^2\log (q)= O(n_1^\delta)$ holds.  For the two-sample problem in (\ref{def:htwo-sampleu}), we similarly assume that there is some $0<\delta<1/7$ such that $s_0^{2}\log q= O(n^\delta)$ holds.
\end{itemize}
 Assumptions {\bf (A)} also allows $q$ and $s_0$ to go to the infinity, as long as $s_0^2\log(qn)= o(n^{\delta})$ holds with some $0<\delta<1/7$.

We then introduce the assumptions on the kernel functions of  the $U$-statistics.  For  $\xb,\xb_1,\ldots,\xb_m\in\reals^d$, define
\[
\begin{aligned}
\bPsi(\xb_1,\ldots,\xb_m)&:=\big(\Psi_1(\xb_1,\ldots,\xb_m),\ldots,\Psi_q(\xb_1,\ldots,\xb_m)\big)^\top\\
\bh(\xb)&:=\big(h_1(\xb),\ldots,h_q(\xb)\big)^\top,
\end{aligned}
\]
where $\Psi_s$ and $h_s$ are
 \begin{equation}\label{def:PsiAnd}
 \begin{aligned}
\Psi_s(\bX_{k_1},\ldots,\bX_{k_m}) &= \Phi_s(\bX_{k_1},\ldots,\bX_{k_m})-u_{1,s}\\
h_s(\bX_{k})&=\E[\Psi_s(\bX_{k_1},\ldots,\bX_{k_m})|\bX_k].
\end{aligned}
\end{equation} Also, set $\mathcal{V}_{s_0}:=\{\vb\in \mathbb{S}^{q-1}: \|\vb\|_0\le s_0\}$.
With these introduced notations, by setting $0<K,b<\infty$ as some positive constants,
we are now ready to state Assumptions {\bf (E)}, {\bf (M1)}, and {\bf (M2)}.
\begin{itemize}
	\item{\bf (E)} For different indexes $0<i_1,\ldots, i_m<n_1$ and $0<j_1,\ldots, j_m<n_2$, we require
	\begin{align*}
	\max_{1\le s\le q}&\E\big[\exp\big(|\Psi_s(\bX_{i_1}, \ldots, \bX_{i_m})|/K\big)\big]\le 2,\\
	\max_{1\le s\le q}&\E\big[\exp\big(|\Psi_s(\bY_{j_1},\ldots,\bY_{j_m})|/K\big)\big]\le 2.
	\end{align*}
	\item{\bf (M1)}  $\E[|\vb^\top \bh(\bX)|^2]\ge b$ and $\E[|\vb^\top\bh(\bY)|^2]\ge b$ hold for any $\vb\in\mathcal{V}_{s_0}$. 
	
	\item{\bf (M2)} For   $\ell=1,2$, we require 
	\[
	\max_{1\le s\le q}\E[|h_s(\bX)|^{2+\ell}]\le K^\ell,~
	\max_{1\le s\le q}\E[|h_s(\bY)|^{2+\ell}]\le K^\ell.
	\]

\end{itemize} 
Assumption {\bf (E)} requires that  $\Psi_s(\bX_{i_1},\ldots,\bX_{i_m})$ and $\Psi_s(\bY_{j_1},\ldots,\bY_{j_s})$ follow the sub-exponential distribution.  Especially,  bounded $\Psi_s$  including useful rank-based $U$-statistics such as Kendall's tau and Spearman's rho satisfy this condition. Assumption {\bf (M1)} excludes degenerate $U$-statistics. Moreover, it also requires that the inner product of  $\bh(\bX)$ (or $\bh(\bY)$) and any $\vb \in \mathcal{V}_{s_0}$  is not degenerated. The distribution assumptions {\bf (E)}, {\bf (M1)}, and {\bf(M2)} are useful for applying high-dimensional
central limiting theorem (CLT) in Lemma \ref{lemma:CCK2}. These assumptions are also justified by \cite{chernozhukov2014central}.

\subsection{Theoretical properties of  $(s_0,p)$-norm based test statistics}\label{sec:theorys0lp}
After introducing the assumptions in Section \ref{sec:assumptions}, we now state the theoretical properties of the $(s_0,p)$-norm based test. Firstly, we consider the asymptotic size. The following theorem justifies the multiplier bootstrap for $W_{(s_0,p)}$ and $N_{(s_0,p)}$, which is crucial for the size control.
\begin{theorem}\label{therom:CoreWN1+} Suppose all assumptions in Section \ref{sec:assumptions} hold. Under  $\Hb_0$ of (\ref{def:hone-sampleu}),  we have 
	\begin{equation}\label{ineq:CoreW1+}
	\sup_{z\in(0,\infty)}\Big|\P(W_{(s_0,p)}\le z)-\P(W^b_{(s_0,p)}\le z|\mathcal{X})\Big|=o_p(1),~\text{as}~n_1\rightarrow\infty.
	\end{equation}
	Similarly, under $\Hb_0$  of (\ref{def:htwo-sampleu}) we have
	\begin{equation}\label{ineq:CoreN1+}
	\sup_{z\in(0,\infty)}\Big|\P(N_{(s_0,p)}\le z)-\P(N^b_{(s_0,p)}\le z|\mathcal{X}, \mathcal{Y})\Big|=o_p(1),~\text{as}~n\rightarrow\infty.
	\end{equation}
\end{theorem}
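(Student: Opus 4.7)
\medskip\noindent
\textbf{Proof proposal.} The plan is to reduce the statistic $W_{(s_0,p)}$ to a supremum of sparse linear functionals of a sample mean, then to apply a high-dimensional Gaussian approximation (cited as Lemma \ref{lemma:CCK2}) together with multiplier bootstrap consistency on the same class of test functions. First I would use the dual characterization
\[
\|\vb\|_{(s_0,p)} \;=\; \sup_{\wb\in\mathcal{W}_{s_0,p'}} \wb^\top \vb, \qquad \mathcal{W}_{s_0,p'} := \{\wb\in\reals^q : \|\wb\|_0\le s_0,\ \|\wb\|_{p'}\le 1\},
\]
with $1/p+1/p'=1$, so that the event $\{W_{(s_0,p)}\le z\}$ is an intersection of half-spaces whose normal directions are $s_0$-sparse. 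This places the Kolmogorov comparison in (\ref{ineq:CoreW1+}) in the ``sparsely convex set'' regime for which Chernozhukov--Chetverikov--Kato--type CLTs (Lemma \ref{lemma:CCK2}) apply, and explains the appearance of the scaling $s_0^{2}\log(qn)=O(n^\delta)$ in Assumption \textbf{(A)}.

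Second, I would linearize the $U$-statistics by the Hoeffding decomposition. Writing $\hat{u}_{1,s}-u_{1,s}=\frac{m}{n_1}\sum_{k=1}^{n_1} h_s(\bX_k)+R_{1,s}$ with $R_{1,s}$ a sum of completely degenerate $U$-statistics of order $\ge 2$, Assumptions \textbf{(E)} and \textbf{(M2)} give Bernstein-type tail control for $h_s(\bX_k)$ and maximal inequalities for $R_{1,s}$. I would need these in the $(s_0,p)$-norm sense, i.e., $\sqrt{n_1}\,\|\bR_1/\hat{\bsigma}\|_{(s_0,p)} = o_p(a_n^{-1})$ where $a_n$ is an anti-concentration rate for the Gaussian surrogate. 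Assumption \textbf{(M1)} prevents degeneracy of the leading Hájek projection uniformly over sparse directions $\vb\in\mathcal V_{s_0}$, giving a positive lower bound on $\vb^\top \mathrm{Cov}(\bh(\bX))\vb$. The jackknife variance estimators $\hat{v}_{1,s}$ converge uniformly in $s$ to $\sigma_s^2=m^2\E[h_s(\bX)^2]$ by standard Nagaev-type bounds; this self-normalization error is absorbed into the $o_p$ term via an anti-concentration argument for the Gaussian.

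Third, after linearization the original statistic is close, in $(s_0,p)$-norm, to $\widetilde W_{(s_0,p)}:=\|n_1^{-1/2}\sum_{k=1}^{n_1}\bxi_k\|_{(s_0,p)}$ where $\bxi_k$ has $s$-th coordinate $m\,h_s(\bX_k)/\sigma_s$. Apply Lemma \ref{lemma:CCK2} to $\widetilde W_{(s_0,p)}$ on the sparsely convex class generated by $\mathcal{W}_{s_0,p'}$ to obtain a Gaussian analogue $\widetilde W^{\mathrm G}_{(s_0,p)}$ with Kolmogorov distance $o(1)$ under \textbf{(A)}. For the bootstrap side, the multiplier form
\[
\hat u_{1,s}^b=\binom{n_1}{m}^{-1}\!\!\!\sum_{k_1<\cdots<k_m}\!(\varepsilon^b_{1,k_1}+\cdots+\varepsilon^b_{1,k_m})\big(\Phi_s(\bX_{k_1},\dots,\bX_{k_m})-\hat u_{1,s}\big)
\]
admits, conditionally on $\mathcal X$, a Hoeffding-type decomposition whose leading term equals $\frac{m}{n_1}\sum_{k=1}^{n_1}\varepsilon^b_{1,k}\,\widehat h_s(\bX_k)$ with $\widehat h_s(\bX_k)=Q_{1k,s}-\hat u_{1,s}$; the remaining cross terms are conditionally centered degenerate $U$-statistics and are controlled by Gaussian chaos bounds paired with the same moment conditions. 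Conditionally on $\mathcal{X}$ the leading term is Gaussian with covariance $\frac{m^2}{n_1^2}\sum_k\widehat h_s(\bX_k)\widehat h_{s'}(\bX_k)$, which converges (by \textbf{(M2)} and uniform laws of large numbers over sparse directions) to the same covariance that governs $\widetilde W^{\mathrm G}_{(s_0,p)}$. A further application of the Gaussian comparison bound (CCK) on sparsely convex sets yields
\[
\sup_{z>0}\big|\P(\widetilde W^{\mathrm G}_{(s_0,p)}\le z)-\P(\widetilde W^{\mathrm G,b}_{(s_0,p)}\le z\,|\,\mathcal X)\big|=o_p(1).
\]
Chaining the three comparisons (statistic~$\approx$~Gaussian; bootstrap~$\approx$~conditional Gaussian; two Gaussians close) gives (\ref{ineq:CoreW1+}). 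The two-sample case (\ref{ineq:CoreN1+}) follows identically after replacing $\bxi_k$ by the concatenation of normalized Hájek projections from both samples and noting $n_1\asymp n_2\asymp n$.

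The hardest part will be Step~3, namely establishing the bootstrap Gaussian approximation uniformly over the sparse class $\mathcal W_{s_0,p'}$ while simultaneously controlling the conditionally degenerate $U$-statistic remainders in the $(s_0,p)$-norm. The standard CCK multiplier bootstrap covers sums of independent vectors, so the novelty is in handling the kernel sums $\varepsilon^b_{1,k_1}+\cdots+\varepsilon^b_{1,k_m}$: one must show that the bilinear and higher-order chaos pieces contribute negligibly \emph{uniformly over} $\mathcal W_{s_0,p'}$, which is where the restriction $\delta<1/7$ in Assumption \textbf{(A)} enters through a decoupling argument and a covering-number bound for the sparse $\ell_{p'}$-ball (of log-cardinality $\lesssim s_0\log(q/s_0)$). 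Everything else is bookkeeping: uniform consistency of $\hat v_{1,s}$, Gaussian anti-concentration to upgrade weak convergence to Kolmogorov distance, and symmetrization to pass from symmetric to asymmetric kernels if needed.
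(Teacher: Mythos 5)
Your overall architecture is the same as the paper's: linearize the $U$-statistics by the Hoeffding decomposition and control the degenerate remainder and the self-normalization error, apply the high-dimensional Gaussian approximation of Lemma \ref{lemma:CCK2} to the normalized H\'ajek projection over the class generated by the $(s_0,p)$-ball (your dual representation over $s_0$-sparse $\ell_{p'}$-vectors is equivalent to the paper's polytope approximation of $\{\vb:\|\vb\|_{(s_0,p)}\le z\}$ via Lemma \ref{lemma:approximstCovexSet}, and both put the problem in the sparsely-convex regime responsible for the $s_0^2\log(qn)=O(n^{\delta})$ scaling), then compare the limiting Gaussian with the conditional law of the bootstrap statistic and finish with anti-concentration (Lemma \ref{lemma:anticoncentrate}).

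The one genuine misconception is in what you call the hardest part. With the additive multiplier weights $(\varepsilon^b_{1,k_1}+\cdots+\varepsilon^b_{1,k_m})$ the bootstrap statistic is \emph{exactly} linear in the Gaussian multipliers: summing over the $m$-subsets containing index $k$ gives the identity
\begin{equation*}
\hat{u}^b_{1,s}=\frac{m}{n_1}\sum_{k=1}^{n_1}\varepsilon^b_{1,k}\,(Q_{1k,s}-\hat{u}_{1,s}),
\end{equation*}
(this is (\ref{def:uhatb2}) in the paper), so conditionally on $\mathcal{X}$ (and $\mathcal{X},\mathcal{Y}$ in the two-sample case) the vector $\bW^b$ (resp.\ $\bN^b$) is exactly Gaussian with covariance $\hat{\Rb}_1$ (resp.\ $\hat{\Rb}_{12}$). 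There are no conditionally centered degenerate $U$-statistic cross terms and no Gaussian chaos to control uniformly over your sparse class; that entire step of your plan addresses terms that are identically zero. (Chaos terms would arise only if the weights were products of multipliers, which is not the scheme here.) Consequently the real content of the bootstrap side is different and lighter: uniform consistency of the jackknife covariance/correlation estimates, $\max_{s,t}|\hat{r}_{\gamma,st}-r_{\gamma,st}|\lesssim \log^{3/2}(qn)/\sqrt{n}$ (Lemma \ref{lemma:sigmaRErrors}), fed into a Gaussian-to-Gaussian comparison over the $m$-generated approximating polytope, where the facet count $q^{s_0}(\gamma\epsilon^{-1/2}\ln(1/\epsilon))^{s_0^2}$ and the rate $(\log^7(mn)/n)^{1/6}$ are what force $\delta<1/7$ — not a decoupling argument. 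Your treatment of the data side is fine in spirit, though the paper controls the H\'ajek residual via kernel truncation at $C\log(qn)$ plus the Arcones--Gin\'e exponential bound rather than generic maximal inequalities; with that correction your proposal reproduces the paper's proof.
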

\begin{proof}
	The proof of  (\ref{ineq:CoreW1+}) is similar to that of (\ref{ineq:CoreN1+}). For simplicity,
	we only present the proof of (\ref{ineq:CoreN1+}), which consists of three steps. We first analyze the approximate distribution of $\bN$. We then obtain the distribution of the bootstrap sample $\bN^b$ given $\mathcal{X}$ and $\mathcal{Y}$. At last, we analyze the approximation error between $\bN$ and $\bN^b|\mathcal{X},\mathcal{Y}$ to yield (\ref{ineq:CoreN1+}). We only sketch the proof here. More detailed proof  is presented in Appendix \ref{proof:theoremCoreNW1+} of supplementary materials. 
	
	{\bf Step (i) (Sketch).} In this step, we aim to obtain the approximate distribution of $\bN$ under the null hypothesis.
	Under the null hypothesis we have $u_{1,s}=u_{2,s}$. Therefore, we rewrite  $N_{s}$ as
	\[
	N_s = (\tilde{u}_{1,s}-\tilde{u}_{2,s})/{\sqrt{\hat{v}_{1,s}/n_1
			+\hat{v}_{2,s}/n_2}},
	\]
	where $\tilde{u}_{\gamma,s}:=\hat{u}_{\gamma,s}-u_{\gamma,s}$ is the centered version of
	$\hat{u}_{\gamma,s}$. As $\tilde{u}_{\gamma,s}$ is also a  $U$-statistic, by the Hoeffding's decomposition we  can approximate  $\tilde{u}_{\gamma,s}$ by a sum of independent random variables. In detail, we  use $(m/n_1)\sum_{k=1}^{n_1}h_s(\bX_k)$ and  $(m/n_2)\sum_{k=1}^{n_2}h_s(\bY_k)$ to approximate $\tilde{u}_{1,s}$ and $\tilde{u}_{2,s}$. By setting 
	\begin{equation}\label{def:sigmast}
	\sigma_{1,st}=\E\big(h_s(\bX)h_t(\bX)\big)\hspace{2em}{\rm and}\hspace{2em}\sigma_{2,st}=\E\big(h_s(\bY)h_t(\bY)\big)
	\end{equation}
	for $1\le s,t\le q$, as $n\rightarrow\infty$ we have 
	$\hat{v}_{\gamma,s}\rightarrow m^2\sigma_{\gamma,ss}$, which motivates us to define
	\begin{equation}\label{def:Hs}
	H_s^N={\Big(\dfrac{1}{n_1}\sum\limits_{k=1}^{n_1}h_s(\bX_k)-\dfrac{1}{n_2}\sum\limits_{k=1}^{n_2} h_s(\bY_k)\Big)} \Big/ {\sqrt{\sigma_{1,ss}/n_1+\sigma_{2,ss}/n_2}}.
	\end{equation}
	By setting  $\bH^N=(H_1^N,\ldots,H_q^N)^\top$, we use $\bH^N$ as an approximation of $\bN$.
	However, we  don't know the exact distribution of $\bH^N$. As $\bH^N$ is a sum of independent random vectors with zero mean, by the central limit theorem we can use a normal random vector
	to further approximate $\bH^N$.

	Let $\bG^N$ be a Gaussian random vector with the same mean vector and covariance  matrix as $\bH^N$.  By setting $\bSigma_{1}:=(\sigma_{1,st}), \bSigma_{2}:=(\sigma_{2,st})\in \reals^{q\times q}$, we have 
	\begin{equation}\label{def:GNR12}
	\bG^N\sim N(\zero, \Rb_{12})\hspace{2em}{\rm with}\hspace{2em}
	\Rb_{12}:=\Db_{12}^{-1/2}\bSigma_{12}\Db_{12}^{-1/2},
	\end{equation}
	where we set
	\begin{equation}\label{def:SigmaD12}
	\bSigma_{12}=\bSigma_1/n_1+\bSigma_2/n_2\hspace{2em}{\rm and}\hspace{2em}\Db_{12}={\rm Diag}(\bSigma_{12}).
	\end{equation}
	We then use the distribution of  $\bG^N$ to approximate that of $\bN$.
	
	{\bf Step (ii) (Sketch).} In this step, we aim to obtain the  distribution of  $\bN^b|\mathcal{X},\mathcal{Y}$. We rewrite $\hat{u}^b_{1,s}$ and $\hat{u}^b_{2,s}$  in (\ref{def:uhatbs}) as
	\begin{equation}\label{def:uhatb2}
	\hat{u}^b_{1,s}=\frac{m}{n_1}\sum_{k=1}^{n_1}(Q_{1k,s}-\hat{u}_{1,s})\varepsilon^b_{1,k},~\hat{u}^b_{2,s}=\frac{m}{n_2}\sum_{k=1}^{n_2}(Q_{2k,s}-\hat{u}_{2,s})\varepsilon^b_{2,k},
	\end{equation}
	where $Q_{1k,s}$ and $Q_{2k,s}$ are defined in (\ref{def:qalphas}). Considering that $\varepsilon^b_{\gamma,1},\ldots,\varepsilon^b_{\gamma,n_1}$ are independent standard normal random variables,  by (\ref{def:uhatb2}) we have $\hat{\bu}^b_{\gamma}:=(\hat{u}^b_{\gamma,1},\ldots,\hat{u}^b_{\gamma, q})|\mathcal{X},\mathcal{Y}\sim N(\zero, m^2\hat{\bSigma}_{\gamma}/n_\gamma)$ with 
	$\hat{\bSigma}_{\gamma}:=(\hat{\sigma}_{\gamma,st})\in\reals^{q\times q}$ and
	\begin{equation}\label{def:sigmahat}
	\hat{\sigma}_{\gamma,st}=\frac{1}{n_\gamma}\sum_{k=1}^{n_1}(Q_{\gamma k,s}-\hat{u}_{\gamma,s})(Q_{\gamma k,t}-\hat{u}_{\gamma,t}),
	\end{equation}
	for $\gamma=1,2$. Apparently, by the definition of $\hat{v}_{\gamma,s}$ in (\ref{def:vhat}) we have $\hat{v}_{\gamma,s}=
	m^2\hat{\sigma}_{\gamma,ss}$. By setting
	\[
	\hat{\bSigma}_{12}=\hat{\bSigma}_{1}/n_1+\hat{\bSigma}_{2}/n_2\hspace{1em}{\rm and}\hspace{1em}\hat{\Db}_{12}={\rm Diag}(\hat{\bSigma}_{12}),
	\]
	given $\mathcal{X}$ and $\mathcal{Y}$  we have
	\begin{equation}\label{def:NbhatR12}
	\bN^b=m^{-1}\hat{\Db}_{12}^{-1/2} (\hat{\bu}^b_1-\hat{\bu}^b_2)\sim N(\zero,\hat{\Rb}_{12}),
	\end{equation}
	where we set $\hat{\Rb}_{12}=\hat{\Db}_{12}^{-1/2}\hat{\bSigma}_{12}\hat{\Db}_{12}^{-1/2}$.
	
	{\bf Step (iii) (Sketch).} In this  step, we aim to obtain the approximation error between $\bN$ and $\bN^b|\mathcal{X},\mathcal{Y}$.  For this, we  analyze the estimation error between $\hat{\Rb}_{12}$ and  $\Rb_{12}$.  We then combine results from {\bf Steps (i)} and {\bf  (ii)} to finish the proof of  (\ref{ineq:CoreN1+}). The detailed proof is in Appendix \ref{proof:theoremCoreNW1+} of supplementary materials.
\end{proof} 

\begin{remark}
	Assumption {\bf (A)}  requires that $s_0^\zeta \log(q) = O(n^\delta)$ holds with $\zeta=2$ and $0<\delta<1/7$. However, $\zeta=2$ is not optimal for each individual  $p$. By the proof of Theorem \ref{therom:CoreWN1+}, $\zeta$ depends on the facet number of a polytope to approximate 
	$B_{(s_0,p)}(x) =\{\vb\in \reals^q : \|\vb\|_{(s_0,p)}\le x  \}$.  If $p=1$, $B_{(s_0,p)}(x)$ itself is a polytope, which makes $\zeta=1$ is enough for obtaining (\ref{ineq:CoreW1+}) and (\ref{ineq:CoreN1+}). Similarly, If $p=\infty$, $\zeta=0$ is sufficient. To make (\ref{ineq:CoreW1+}) and (\ref{ineq:CoreN1+}) hold for any $p\in[1,\infty]$, by Lemma \ref{lemma:approximstCovexSet} in Appendix A, we set $\zeta=2$ in Theorem \ref{therom:CoreWN1+}.
\end{remark}

As an implication of Theorems \ref{therom:CoreWN1+}, the following corollary shows that under mild moment conditions on the kernel functions of $U$-statistics, by using the multiplier bootstrap introduced in  Section \ref{sec:bootsProc}, the size of $(s_0,p)$-norm based test  is asymptotically $\alpha$, as desired.
\begin{corollary} \label{corollary:size}
	Suppose all assumptions in Section \ref{sec:assumptions} hold. For the one-sample problem in (\ref{def:hone-sampleu}), 
	under $\Hb_0$ of (\ref{def:hone-sampleu}) we have 
	\begin{equation}\label{size_one_sample}
	\P_{\Hb_0}\big(T^W_{\alpha,(s_0,p)}=1\big)\rightarrow \alpha~~\text{and}~~
	\hat{P}^W_{(s_0,p)}-P^W_{(s_0,p)}\rightarrow0, ~\text{as $n_1$, $B$}\rightarrow\infty.
	\end{equation}
	as $n_1,B\rightarrow\infty$.
	Similarly, for the two-sample problem in (\ref{def:htwo-sampleu}), under $\Hb_0$ of (\ref{def:htwo-sampleu}) we have 
	\begin{equation}\label{size_two_sample}
	\P_{\Hb_0}\big(T^N_{\alpha,(s_0,p)}=1\big)\rightarrow\alpha~~{\rm and}~
	\hat{P}^N_{(s_0,p)}-P^N_{(s_0,p)}\rightarrow0,~\text{as $n$, $B$}\rightarrow\infty.
	\end{equation}
\end{corollary}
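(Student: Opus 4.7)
The plan is to combine Theorem \ref{therom:CoreWN1+} with a conditional Glivenko--Cantelli argument for the Monte Carlo bootstrap replicates, and then invoke an anti-concentration estimate for the limit law to upgrade CDF closeness to both quantile closeness (for size) and $P$-value closeness. I will sketch the one-sample assertion for $T^W_{\alpha,(s_0,p)}$ and $\hat P^W_{(s_0,p)}$; the two-sample case is identical after replacing $W$ by $N$ and conditioning on $(\mathcal{X},\mathcal{Y})$.

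First I would introduce three CDFs: the target $F_W(z) := \P(W_{(s_0,p)} \le z)$, the conditional bootstrap CDF $\hat F^*_W(z) := \P(W^b_{(s_0,p)} \le z \mid \mathcal{X})$, and the Monte Carlo empirical CDF $\hat F^B_W(z) := B^{-1}\sum_{b=1}^B \ind\{W^b_{(s_0,p)}\le z\}$. Theorem \ref{therom:CoreWN1+} delivers $\sup_{z>0} |F_W(z) - \hat F^*_W(z)| = o_p(1)$. Conditional on $\mathcal{X}$, the replicates $W^1_{(s_0,p)},\dots,W^B_{(s_0,p)}$ are i.i.d.\ draws from a continuous law (the bootstrap vector $\bN^b|\mathcal{X},\mathcal{Y}$ in Step (ii) of the proof of Theorem \ref{therom:CoreWN1+} is Gaussian with covariance non-degenerate on $\mathcal{V}_{s_0}$ by Assumption (M1), and analogously for the one-sample case), so a standard Glivenko--Cantelli bound gives $\sup_z |\hat F^B_W(z) - \hat F^*_W(z)| = o_p(1)$ as $B\to\infty$. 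The triangle inequality yields $\sup_z |\hat F^B_W(z) - F_W(z)| = o_p(1)$.

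For the size claim, the critical value $\hat t^W_{\alpha,(s_0,p)}$ is the empirical $(1-\alpha)$-quantile of $\hat F^B_W$. The uniform CDF convergence above, combined with the anti-concentration property that $F_W$ is continuous and strictly increasing in a neighborhood of its $(1-\alpha)$-quantile $t^W_\alpha$, gives $\hat t^W_{\alpha,(s_0,p)} \to t^W_\alpha$ in probability. Therefore
\[
\P_{\Hb_0}\bigl(T^W_{\alpha,(s_0,p)}=1\bigr) = \P\bigl(W_{(s_0,p)} \ge \hat t^W_{\alpha,(s_0,p)}\bigr) \longrightarrow 1-F_W(t^W_\alpha) = \alpha,
\]
which is \eqref{size_one_sample}. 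For the $P$-value statement, writing $\hat P^W_{(s_0,p)} = \tfrac{B}{B+1}\bigl(1-\hat F^B_W(W_{(s_0,p)})\bigr) + O(1/B)$ and applying the uniform bound gives $\hat P^W_{(s_0,p)} - (1-F_W(W_{(s_0,p)})) = o_p(1)$; the right-hand side is precisely the oracle $P^W_{(s_0,p)}$, so $\hat P^W_{(s_0,p)} - P^W_{(s_0,p)}=o_p(1)$.

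The main obstacle is the anti-concentration step: CDF-uniform convergence alone does not upgrade to quantile convergence without some continuity of $F_W$ near $t^W_\alpha$. Because $W_{(s_0,p)}$ is a maximum-type functional of $q$ dependent random variables, one must explicitly rule out atoms and flat portions of $F_W$. The Gaussian anti-concentration inequalities of Chernozhukov--Chetverikov--Kato used in the proof of Theorem \ref{therom:CoreWN1+} provide exactly this: they show that $F_W$ is well approximated by the law of $\|\bG^W\|_{(s_0,p)}$ for a mean-zero Gaussian $\bG^W$ whose covariance is non-degenerate on $\mathcal{V}_{s_0}$ (Assumption (M1)), so this law admits a bounded density on $(0,\infty)$ and supplies the required Lipschitz control near $t^W_\alpha$. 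Once this ingredient is in hand, the remaining manipulations are routine bookkeeping.
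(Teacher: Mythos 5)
Your proposal is correct and follows essentially the same route as the paper: the $P$-value claim is obtained by the triangle inequality combining Theorem \ref{therom:CoreWN1+} (closeness of the conditional bootstrap law to the law of $W_{(s_0,p)}$, resp.\ $N_{(s_0,p)}$) with a Glivenko--Cantelli/DKW bound (the paper invokes Massart's inequality) for the Monte Carlo empirical CDF. The only difference is cosmetic: the paper dispatches the size statement in one line as an implication of the $P$-value convergence, whereas you derive it explicitly via quantile convergence plus Gaussian anti-concentration, which is a more careful rendering of the same argument.
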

The detailed proof of Corollary \ref{corollary:size} is in Appendix \ref{proof:corollary:size}  of supplementary materials. After analyzing the asymptotic size of the $(s_0,p)$-norm based test,  we now turn to the analysis of its power.
For this, we need the following notations: 
$\bD_1=(D_{1,1},\ldots,D_{1,q})^\top$ and $\bD_2=(D_{2,1},\ldots,D_{2,q})^\top$ with
\begin{equation}\label{def:powerAssumption}
\begin{aligned}
D_{1,s}&=|u_{1,s}-u_{0,s}|/\sqrt{m^2\sigma_{1,ss}/n_1},\\
D_{2,s}&=|u_{1,s}-u_{2,s}|/\sqrt{m^2\sigma_{1,ss}/n_1+
	m^2\sigma_{2,ss}/n_2},
\end{aligned}
\end{equation}
where $\sigma_{\gamma,ss}$ is defined in (\ref{def:sigmast}). We  need new Assumption {\bf (A)$'$} to describe the scaling between $s_0$, $q$, and $n$ for test statistics $W_{(s_0,p)}$ and $N_{(s_0,p)}$ to reject with overwhelming probability under the alternative.

\begin{itemize}
	\item{\bf (A)$'$} 
	For the one-sample problem in (\ref{def:hone-sampleu}), we  assume 
	$\log q = o(n_1^{1/3})$ and $ n_1 = O( q^{\delta_1})$ with  some $\delta_1>0$, as $n_1, q\rightarrow\infty$. For  the two-sample problem in (\ref{def:htwo-sampleu}),  we assume  $\log q = o(n^{1/3})$ and $n = O(q^{\delta_1})$ with some $\delta_1 > 0$, as
	$n, q\rightarrow\infty$. Moreover, we also assume that there is a constant $\delta_2 >0 $ such that $s_0 = O(\log^{\delta_2}(q))$ holds for both problems.
\end{itemize}
After the introduction of Assumption {\bf (A)$'$},   we then state the  theorem that characterizes the power of $W_{(s_0,p)}$ and $N_{(s_0,p)}$. 

\begin{theorem}\label{theorem:power}
	Suppose Assumptions {\bf (A)$'$}, {\bf (E)}, {\bf (M1)}, and {\bf (M2)} hold.  For the one-sample problem in (\ref{def:hone-sampleu}),  we assume $\varepsilon_{n_1}=o(1)$ with  $\varepsilon_{n_1}\sqrt{\log q}\rightarrow\infty$ as $n_1,q\rightarrow\infty$. If $\Hb_1$ of (\ref{def:hone-sampleu}) holds with 
	\begin{equation}\label{assump:powerOnesample}
	\|\bD_1\|_{(s_0,p)}\ge s_0 (1+\varepsilon_{n_1})
	\big(\sqrt{2\log q}+\sqrt{2\log (1/\alpha)}\big),
	\end{equation}
	we have $
	\P_{\Hb_1}\big(T^W_{(s_0,p)}=1\big)\rightarrow1$ as $n_1,q,B\rightarrow\infty.$
	Similarly, for the two-sample problem in (\ref{def:htwo-sampleu})  we assume $\varepsilon_n=o(1)$, and $\varepsilon_n\sqrt{\log q}\rightarrow\infty$ as $n,q\rightarrow\infty$. If $\Hb_1$ of (\ref{def:htwo-sampleu}) holds with 
	\begin{equation}\label{assump:powerTwosample}
	\|\bD_2\|_{(s_0,p)}\ge s_0 (1+\varepsilon_n)
	\big(\sqrt{2\log q}+\sqrt{2\log (1/\alpha)}\big),
	\end{equation}
	we have $
	\P_{\Hb_1}\big(T^N_{(s_0,p)}=1\big)\rightarrow1$ as $n,q,B\rightarrow\infty.
	$
	
\end{theorem}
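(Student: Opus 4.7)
The plan is to combine a high-probability lower bound on the test statistic $W_{(s_0,p)}$ with a matching upper bound on the bootstrap critical value $\hat{t}^W_{\alpha,(s_0,p)}$, and to show that the signal condition forces the former to exceed the latter by a diverging margin. First, decompose coordinatewise $W_s = \Delta_s + W^{(0)}_s$, where $\Delta_s := (u_{1,s}-u_{0,s})/\sqrt{\hat{v}_{1,s}/n_1}$ is the (up to random scaling) deterministic signal and $W^{(0)}_s := (\hat{u}_{1,s}-u_{1,s})/\sqrt{\hat{v}_{1,s}/n_1}$ is the null-type noise. Because the bootstrap variables $\hat{u}^b_{1,s}$ are built from \emph{centered} kernel increments $\Phi_s(\bX_{k_1},\ldots,\bX_{k_m}) - \hat{u}_{1,s}$, the conditional law of $\bW^b$ given $\mathcal{X}$ does not see the signal, so the Gaussian approximation of Theorem \ref{therom:CoreWN1+} holds verbatim under $\Hb_1$: $\bW^b\given\mathcal{X}$ is approximately $N(\zero, \hat\Rb)$ with $\hat\Rb$ having unit diagonal. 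The Gaussian maximal inequality together with the elementary bound $\|\vb\|_{(s_0,p)}\le s_0^{1/p}\|\vb\|_\infty\le s_0\|\vb\|_\infty$ then delivers
\[
\hat{t}^W_{\alpha,(s_0,p)}\le s_0^{1/p}\bigl(\sqrt{2\log q}+\sqrt{2\log(1/\alpha)}\bigr)(1+o_p(1)).
\]

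For the lower bound, let $S\subset\{1,\ldots,q\}$ collect the top $s_0$ indices of $|\bD_1|$, so that $|S|=s_0$ and $\|\bD_1\|_{(s_0,p)} = \bigl(\sum_{s\in S} D_{1,s}^p\bigr)^{1/p}$. Since any size-$s_0$ subset is a feasible choice in the definition of $\|\cdot\|_{(s_0,p)}$, Minkowski's inequality gives
\[
W_{(s_0,p)}\ge \Bigl(\sum_{s\in S}|W_s|^p\Bigr)^{1/p}\ge \Bigl(\sum_{s\in S}|\Delta_s|^p\Bigr)^{1/p} - \Bigl(\sum_{s\in S}|W^{(0)}_s|^p\Bigr)^{1/p}.
\]
A routine uniform concentration argument based on the jackknife form of $\hat{v}_{1,s}$ together with Assumption {\bf (E)} and $\log q = o(n_1^{1/3})$ from Assumption {\bf (A)$'$} yields $\max_{1\le s\le q}|\hat{v}_{1,s}-m^2\sigma_{1,ss}| = o_p(1)$, which upgrades to $\bigl(\sum_{s\in S}|\Delta_s|^p\bigr)^{1/p}\ge \|\bD_1\|_{(s_0,p)}(1-o_p(1))$.

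The crucial and most delicate step is the sharp bound on the noise restricted to the small fixed support $S$,
\[
\Bigl(\sum_{s\in S}|W^{(0)}_s|^p\Bigr)^{1/p} = o_p\bigl(s_0^{1/p}\sqrt{\log q}\bigr),
\]
for which Assumption {\bf (A)$'$} is essential: $s_0 = O(\log^{\delta_2}q)$ forces $\sqrt{2\log s_0} = o(\sqrt{\log q})$. Using the Hoeffding decomposition from Step (i) of the proof of Theorem \ref{therom:CoreWN1+} to reduce $W^{(0)}_s$ to a standardized empirical mean of $h_s(\bX_k)$, one obtains coordinatewise Gaussian approximation of the $s_0$-dimensional subvector $(W^{(0)}_s)_{s\in S}$. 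Applying the Gaussian maximal inequality over just $|S|=s_0$ coordinates yields $\max_{s\in S}|W^{(0)}_s|\le \sqrt{2\log s_0}(1+o_p(1))$, and multiplication by $s_0^{1/p}$ finishes the estimate. The naive fallback $\bigl(\sum_{s\in S}|W^{(0)}_s|^p\bigr)^{1/p}\le \|\bW^{(0)}\|_{(s_0,p)}$ is too loose: it would replace $\sqrt{2\log s_0}$ by $\sqrt{2\log q}$, costing an unwanted multiplicative factor of roughly $2$ in the signal condition. Sharpening this estimate via the polylogarithmic control on $s_0$ is the main technical obstacle.

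Combining the three displays with $s_0^{1/p}\le s_0$ for $p\ge 1$ and the signal assumption (\ref{assump:powerOnesample}) yields
\[
W_{(s_0,p)} - \hat{t}^W_{\alpha,(s_0,p)}\ge s_0\bigl(\sqrt{2\log q}+\sqrt{2\log(1/\alpha)}\bigr)\bigl(\varepsilon_{n_1} - o_p(1)\bigr),
\]
which diverges to $+\infty$ in probability because $\varepsilon_{n_1}\sqrt{\log q}\to\infty$; hence $\P_{\Hb_1}(T^W_{\alpha,(s_0,p)} = 1)\to 1$. The two-sample conclusion follows by the parallel argument with $N_s$ in place of $W_s$ and $\bD_2$ in place of $\bD_1$, employing the joint Hoeffding decomposition for $(\mathcal{X},\mathcal{Y})$ from Step (ii) of the proof of Theorem \ref{therom:CoreWN1+} and the uniform consistency of $\hat{\bSigma}_1/n_1 + \hat{\bSigma}_2/n_2$ in lieu of that of $\hat{v}_{1,s}/m^2$.
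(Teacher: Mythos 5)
Your overall route is the same as the paper's: an upper bound on the bootstrap critical value of the form $s_0^{1/p}\bigl(\sqrt{2\log q}\,(1+\{2\log q\}^{-1})+\sqrt{2\log(1/\alpha)}\bigr)$ obtained from the exact conditional Gaussianity of $\bW^b$ (resp.\ $\bN^b$), the bound $\|\cdot\|_{(s_0,p)}\le s_0^{1/p}\|\cdot\|_\infty$ and Gaussian concentration; then a lower bound on the statistic obtained by restricting to the top-$s_0$ support of $\bD_1$ (resp.\ $\bD_2$), separating signal and noise by the triangle inequality, replacing $\hat{\bD}$ by $\bD$ through the variance-estimation error (the paper's Lemma \ref{lemma:sigmaRErrors} and the event $\mathcal{E}_0(x)$), and controlling the noise on that fixed support using that it has only $s_0$ coordinates. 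Your $\max_{s\in S}|W^{(0)}_s|\le\sqrt{2\log s_0}\,(1+o_p(1))$ step plays exactly the role of the paper's union bound over the $s_0$ indices of $\bk$ at threshold $u\sqrt{2\log q}$ with $u\asymp\varepsilon_{n_1}$ (Lemma \ref{lemma:NS1BMEM}); in both arguments it is the restriction to the size-$s_0$ support, not the full $(s_0,p)$-norm over $q$ coordinates, that makes the noise negligible, so your "main technical obstacle" is the same one the paper resolves.

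The place where your write-up does not close as stated is the error bookkeeping. Because $\varepsilon_{n_1}\to 0$, the criterion you announce for the noise, $o_p\bigl(s_0^{1/p}\sqrt{\log q}\bigr)$, is not sufficient, and the closing step "$W_{(s_0,p)}-\hat{t}^W_{\alpha,(s_0,p)}\ge s_0(\sqrt{2\log q}+\sqrt{2\log(1/\alpha)})(\varepsilon_{n_1}-o_p(1))$ diverges since $\varepsilon_{n_1}\sqrt{\log q}\to\infty$" is not valid for a generic $o_p(1)$: every error term must be $o_p(\varepsilon_{n_1})$ in relative terms, i.e.\ $o_p(\varepsilon_{n_1}\sqrt{\log q})$ after multiplication by $\sqrt{2\log q}$. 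In particular, "$\max_s|\hat v_{1,s}-m^2\sigma_{1,ss}|=o_p(1)$" must be upgraded to the explicit rate $O_p(\log^{3/2}(qn)/\sqrt{n})$ of Lemma \ref{lemma:sigmaRErrors} (so that $x\sqrt{\log q}\to 0$, hence $x=o(\varepsilon_{n_1})$), and the $(1+o_p(1))$ in your critical-value bound should be the explicit $(1+\{2\log q\}^{-1})$ factor. This is precisely why the paper defines $u$ through $(1+x)(1+u+\{2\log q\}^{-1})=1+\varepsilon_n$ and compares the noise to $s_0u\sqrt{2\log q}$ rather than to a bare $o_p(s_0\sqrt{\log q})$. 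Finally, reducing $W^{(0)}_s$ to the linear Hoeffding term is not free when $m\ge 2$: the degenerate remainder has to be bounded (the paper does this with the Arcones--Gin\'e exponential inequality inside Lemma \ref{lemma:NS1BMEM}). With these rates restored, your argument coincides with the paper's proof.
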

The detailed proof of Theorem \ref{theorem:power} is presented in Appendix \ref{proof:theorem:power}  of supplementary materials.
The scaling of $q$ and $n$ in Theorem \ref{theorem:power} is weaker than  Assumption {\bf (A)}, allowing larger $q$ for proposed tests to correctly reject the null hypothesis. Moreover, by the proof of Theorem \ref{theorem:power}, for $m=1$, we can  further relax the conditions $\log q = o(n_1^{1/3})$ and $\log q = o(n^{1/3})$ by $\log q = o(n_1^{1/2})$ and $\log q = o(n^{1/2})$ in Assumption {\bf (A)$'$}. 

\subsection{Theoretical properties of $W_{\rm ad}$ and $N_{\rm ad}$}\label{sec:theroAd}
In Section \ref{sec:adTestProc}, we introduce the data-adaptive test by combining  the $(s_0,p)$-norm  based tests with $p\in\mathcal{P}$, where $\mathcal{P}\subset\{1,2\ldots,\infty\}$ is a finite fixed set specified by users. Intuitively, by combining tests with various norms, the data-adaptive test enjoys  high power across various alternative hypothesis scenarios. In  (\ref{def:Wad}) and (\ref{def:Nad}), we introduce the data-adaptive tests as
\begin{equation}\label{def:WadNad}
W_{\rm ad}=\min_{p\in\mathcal{P}}\hat{P}^W_{(s_0,p)}
\hspace{2em}{\rm and}\hspace{2em}
N_{\rm ad}=\min_{p\in\mathcal{P}}\hat{P}^N_{(s_0,p)},
\end{equation}
where $\hat{P}^W_{(s_0,p)}$ and $\hat{P}^N_{(s_0,p)}$
are defined in (\ref{def:PhatWN}).  By setting 
$F_{W,(s_0,p)}(z):=\P(W_{(s_0,p)}\le z)$ and $F_{N,(s_0,p)}(z):=\P(N_{(s_0,p)}\le z)$, we have that the oracle $P$-values of
$W_{(s_0,p)}$ and $N_{(s_0,p)}$ are 
\[
P^W_{(s_0,p)}:= 1-F_{W,(s_0,p)}(W_{(s_0,p)})\hspace{1em}{\rm and }
\hspace{1em}
P^N_{(s_0,p)}:= 1-F_{N,(s_0,p)}(N_{(s_0,p)}).
\] 
By the definitions of 
$\hat{P}^W_{(s_0,p)}$ and $\hat{P}^N_{(s_0,p)}$ in (\ref{def:PhatWN}), $\hat{P}^W_{(s_0,p)}$ and $\hat{P}^N_{(s_0,p)}$ estimate 
$P^W_{(s_0,p)}$ and $P^N_{(s_0,p)}$. Therefore,  by (\ref{def:WadNad})  $W_{\rm ad}$ and $N_{\rm ad}$ estimate
\begin{equation}\label{def:tildaWNad}
\tilde{W}_{\rm ad}=\min_{p\in \mathcal{P}}P^W_{(s_0,p)}\hspace{2em} {\rm and}
\hspace{2em}
\tilde{N}_{\rm ad}=\min_{p\in\mathcal{P}}P^N_{(s_0,p)}.
\end{equation}
By setting  $\tilde{F}_{W,{\rm ad}}(z):=\P(\tilde{W}_{\rm ad}\le z)$ and  $\tilde{F}_{N,{\rm ad}}(z):=\P(\tilde{N}_{\rm ad}\le z)$, considering that the small values of $W_{\rm ad}$ and $N_{\rm ad}$ yield the rejection of the  null hypotheses,  we have that the oracle $P$-values of $W_{\rm ad}$ and $N_{\rm ad}$ are $\tilde{F}_{W\rm ad}(\tilde{W}_{\rm ad})$ and $\tilde{F}_{N,\rm ad}(\tilde{N}_{\rm ad})$.

After introducing these notations, we aim to justify the bootstrap procedure in Section
\ref{sec:adTestProc} by showing that  $\hat{P}^W_{\rm ad}$ and   $\hat{P}^N_{\rm ad}$ (defined in (\ref{def:hatPWad}) and (\ref{def:hatPNad})) are consistent estimators of the oracle $P$-values  $\tilde{F}_{W\rm ad}(\tilde{W}_{\rm ad})$ and $\tilde{F}_{N,\rm ad}(\tilde{N}_{\rm ad})$. For this,
we introduce Assumption {\bf (A)$''$} to  specify the scaling between $s_0$, $q$ and $n$ for the data-adaptive combined test. 

To state Assumption {\bf (A)$''$}, we need some additional notations. For the two-sample problem,  we introduce $\bG^N\sim N(\zero,\Rb_{12} )\in \reals^q$ in (\ref{def:GNR12}) to approximate  $\bN$. 
We set $f_{\bG^N, (s_0,p)}(x)$ and $c_{\bG^N, (s_0,p)}(\alpha)$ as the probability density function  and the  $\alpha$-quantile of $\|\bG^N\|_{(s_0,p)}$. We then define $h_{q,N}(\epsilon)$ as
\[
h_{q,N}(\epsilon)=\max_{p\in \mathcal{P}}\max_{x\in I^N_{(s_0,p)}(\epsilon) } f^{-1}_{\bG^N, (s_0,p)}(x),
\]
where $ I^N_{(s_0,p)}(\epsilon) = [c_{\bG^N,(s_0,p)}(\epsilon), c_{\bG^N,(s_0,p)}(1-\epsilon)]$. For the one-sample problem, we define $h_{q,W}(\epsilon)$ similarly for $\bG^W\sim N(\zero, \Rb_1)\in \reals^q$, where 
\[
\Rb_1=(r_{1,st})\in\reals^{q\times q} \hspace{1em} \text{with}\hspace{1em} r_{1,st}={\rm Corr}(h_s(\bX),h_t(\bX)).
\]
By definition,  $\Rb_1$ and $\Rb_{12}$ are the asymptotic correlation
matrices of $\bW$ and $\bN$, where $\bW$ and $\bN$ are defined in (\ref{def:WN}). 
With these additional notations, we then state Assumption {\bf (A)$''$} as follows.
\begin{itemize}
	\item{\bf (A)$''$} Under (\ref{def:hone-sampleu}), as $n_1\rightarrow\infty$, we assume  that $h^{0.6}_{q,W}(\epsilon)s_0^2\log q= o(n_1^{1/10})$ holds for any $0<\epsilon<1$.  Under  (\ref{def:htwo-sampleu}), as $n\rightarrow\infty$, we assume that $h^{0.6}_{q,N}(\epsilon)s_0^2\log q= o(n^{1/10})$ holds for any $0<\epsilon<1$.  
\end{itemize} 
Compared to Assumption {\bf {(A)}},  the required  scaling in Assumption {\bf {(A)}$''$} is more stringent. This is because when analyzing the combined test, we need not only the convergence of  distribution functions of the test statistics but also   their  uniform convergence of the quantile functions on $[\epsilon, 1-\epsilon]$. 
\begin{remark}\label{remark:Rs0p}
	Let $1\le s_0,\#(\mathcal{P})<\infty$. 
	If there are $0<C_0<\infty$ and $0<\eta<1$ such that $C_0^{-1}<\lambda_{\min}(\Rb_{12})\le \lambda_{\max}(\Rb_{12})<C_0$  and $\max_{i\neq j}|r_{ij}|<\eta $, we have $h_{q,N}(\epsilon)=O(1)$ for any $\epsilon\in(0,1)$, as $q\rightarrow\infty$. Similarly, if  $C_0^{-1}<\lambda_{\min}(\Rb_{1})\le \lambda_{\max}(\Rb_{1})<C_0$  and $\max_{i\neq j}|r_{ij}|<\eta $, we also have $h_{q,W}(\epsilon)=O(1)$ for any $\epsilon\in(0,1)$, as $ q\rightarrow\infty$.  The detailed proof is in Appendix \ref{proof:rmark:Rs0p} of supplementary materials.
\end{remark}
The detailed proof of Remark \ref{remark:Rs0p} is in Appendix \ref{proof:rmark:Rs0p} of supplementary materials, in which
we obtain a joint asymptotic distribution for the order statistics of nonindependent Gaussian random variables.  This  result  is nontrivial
and of independent technical interest.
After introducing additional assumptions, we then justify the data-adaptive combined test by the following theorem.

\begin{theorem}\label{thm:AdSize}
	Suppose Assumptions  {\bf (A)$''$},  {\bf (E)}, {\bf (M1)} and {\bf (M2)} hold.  For the one-sample problem, 
	under $\Hb_0$ of (\ref{def:hone-sampleu})  we have 
	\begin{equation}\label{limit:Wad}
	\P_{\Hb_0}\big(T^W_{\rm ad}=1\big)\rightarrow \alpha~\text{and}~	\tilde{F}_{W,{\rm ad}}(\tilde{W}_{\rm ad})-\hat{P}^W_{\rm ad}\rightarrow0~\text{as}~n_1,B\rightarrow\infty.
	\end{equation}
	Similarly, for the two-sample problem, under $\Hb_0$
	of (\ref{def:htwo-sampleu}) we have 
	\begin{equation}\label{limit:Nad}
	\P_{\Hb_0}\big(T^N_{\rm ad}=1\big)\rightarrow \alpha~\text{and}~
	\tilde{F}_{N,{\rm ad}}(\tilde{N}_{\rm ad})-\hat{P}^N_{\rm ad}\rightarrow0 ~\text{as}~n,B\rightarrow\infty.
	\end{equation}
\end{theorem}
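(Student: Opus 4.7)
The plan is to establish the consistency of the estimated $P$-value, namely $\hat{P}^N_{\rm ad} - \tilde{F}_{N,{\rm ad}}(\tilde{N}_{\rm ad}) = o_p(1)$ (and analogously for $\hat{P}^W_{\rm ad}$). The size statement then follows automatically, because by the proof of Theorem \ref{therom:CoreWN1+} each $F_{N,(s_0,p)}$ is asymptotically the continuous CDF of $\|\bG^N\|_{(s_0,p)}$, so $P^N_{(s_0,p)}$ is asymptotically $\mathrm{Unif}[0,1]$, $\tilde{F}_{N,{\rm ad}}$ inherits continuity, and $\tilde{F}_{N,{\rm ad}}(\tilde{N}_{\rm ad})$ is asymptotically $\mathrm{Unif}[0,1]$ under $\Hb_0$. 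I focus on the two-sample case; the one-sample version is parallel with $(\bG^W, h_{q,W})$ replacing $(\bG^N, h_{q,N})$.

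\textbf{Step 1 (individual $P$-value consistency).} Define the conditional bootstrap CDF $\hat{F}^B_{N,(s_0,p)}(z) := B^{-1}\sum_{b=1}^B \ind\{N^b_{(s_0,p)} \le z\}$. Conditionally on $(\mathcal{X},\mathcal{Y})$, the samples $\{N^b_{(s_0,p)}\}_{b=1}^B$ are i.i.d.~with some CDF $\bar{F}_{N,(s_0,p)}$, so the Dvoretzky--Kiefer--Wolfowitz inequality yields $\sup_z |\hat{F}^B_{N,(s_0,p)}(z) - \bar{F}_{N,(s_0,p)}(z)| = o_p(1)$ as $B\to\infty$. Combined with Theorem \ref{therom:CoreWN1+}, this gives $\sup_z |\hat{F}^B_{N,(s_0,p)}(z) - F_{N,(s_0,p)}(z)| = o_p(1)$, hence $\hat{P}^N_{(s_0,p)} = P^N_{(s_0,p)} + o_p(1)$ for each $p$. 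Since $\#(\mathcal{P})<\infty$, a union bound and continuity of $\min$ on $\reals^{\#(\mathcal{P})}$ lift this to $N_{\rm ad} - \tilde{N}_{\rm ad} = o_p(1)$.

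\textbf{Step 2 (i.i.d.-like behavior of the bootstrap samples).} From (\ref{def:hatbns0p}) and Step 1,
\[
\max_{1\le b\le B}\Big|\hat{P}^{b,N}_{(s_0,p)} - \big(1 - F_{N,(s_0,p)}(N^b_{(s_0,p)})\big)\Big| = o_p(1).
\]
Set $\xi_b := \min_{p\in\mathcal{P}}\big(1 - F_{N,(s_0,p)}(N^b_{(s_0,p)})\big)$. The $\{\xi_b\}$ are i.i.d.~conditional on the data (the bootstrap multipliers are independent across $b$), and Theorem \ref{therom:CoreWN1+}, applied jointly over $p\in\mathcal{P}$ via union bound, implies that the conditional CDF of $\xi_b$ converges uniformly to $\tilde{F}_{N,{\rm ad}}$. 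A second DKW application then gives $B^{-1}\sum_{b=1}^B \ind\{\xi_b \le N_{\rm ad}\} = \tilde{F}_{N,{\rm ad}}(N_{\rm ad}) + o_p(1) = \tilde{F}_{N,{\rm ad}}(\tilde{N}_{\rm ad}) + o_p(1)$.

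\textbf{Main obstacle.} The subtle point, and the reason Assumption \textbf{(A)}$''$ is strengthened to involve $h_{q,N}(\epsilon)$, is that the $\min$ inside $N^b_{\rm ad'}$ is non-smooth: an $o_p(1)$ additive error in $\hat{P}^{b,N}_{(s_0,p)}$ can flip the sign of $\ind\{N^b_{\rm ad'} \le N_{\rm ad}\}$ whenever $N^b_{\rm ad'}$ lies near $N_{\rm ad}$. Handling this requires, beyond the pointwise CDF control of Step 1, an anti-concentration/Lipschitz bound on the quantile functions of $\|\bG^N\|_{(s_0,p)}$ on the interior intervals $[c_{\bG^N,(s_0,p)}(\epsilon), c_{\bG^N,(s_0,p)}(1-\epsilon)]$, which is exactly what a finite $h_{q,N}(\epsilon)$ encodes. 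The scaling $h_{q,N}^{0.6}(\epsilon)\, s_0^2 \log q = o(n^{1/10})$ is calibrated so that the Gaussian-approximation error from Theorem \ref{therom:CoreWN1+} translates, via CDF-to-quantile Lipschitz inversion, into uniform $o_p(1)$ control on the $\epsilon$-bulk of the law of $\xi_b$; the two $\epsilon$-tails contribute at most $O(\epsilon)$ in probability and can be driven to zero by letting $\epsilon\to 0$ slowly after $n,B\to\infty$. Chaining Steps 1--2 through this bound then yields (\ref{limit:Nad}); (\ref{limit:Wad}) follows by the same argument.
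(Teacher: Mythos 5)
Your proposal follows essentially the same architecture as the paper's proof: compare $N_{\rm ad}$ with the oracle $\tilde{N}_{\rm ad}$ via consistency of the individual bootstrap $P$-values, replace the nonindependent low-cost draws $N^b_{\rm ad'}$ by conditionally i.i.d.\ surrogates (your $\xi_b$ plays the role of the paper's $N^b_{\rm ad}$ and of $S(z)$ in Lemma \ref{lemma:FhatNad'S}), apply DKW/Massart to the surrogate empirical CDF, and transfer distributional closeness through the non-smooth minimum on the bulk $[\epsilon,1-\epsilon]$ using the quantile control encoded by $h_{q,N}(\epsilon)$ in Assumption {\bf (A)$''$}, removing the tails by shrinking $\epsilon$ last. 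This is the two-step proof of the paper (Lemmas \ref{lemma:AdSize}, \ref{lemma:limitFnadFnbad}, \ref{lemma:limitFnbadhatFad'}) in slightly different clothing, and your ``main obstacle'' paragraph correctly identifies where and why Assumption {\bf (A)$''$} is used.

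There is, however, one genuine gap in Step 2: you claim that the conditional CDF of $\xi_b=\min_{p\in\mathcal{P}}\big(1-F_{N,(s_0,p)}(N^b_{(s_0,p)})\big)$ converges uniformly to $\tilde{F}_{N,{\rm ad}}$ because Theorem \ref{therom:CoreWN1+} can be ``applied jointly over $p\in\mathcal{P}$ via union bound.'' Theorem \ref{therom:CoreWN1+} only controls the marginal laws of $N^b_{(s_0,p)}\,|\,\mathcal{X},\mathcal{Y}$ versus $N_{(s_0,p)}$ for each fixed $p$, while the event $\{\xi_b>z\}$ is an intersection over $p\in\mathcal{P}$ of events of the form $\{N^b_{(s_0,p)}<F^{-}_{N,(s_0,p)}(1-z)\}$, so its probability is governed by the joint law of the vector $\big(N^b_{(s_0,p)}\big)_{p\in\mathcal{P}}$; marginal closeness plus a union bound gives no bound on the difference of probabilities of intersections, and the same issue arises in identifying the limit $\tilde{F}_{N,{\rm ad}}$, which depends on the joint law of $\big(N_{(s_0,p)}\big)_{p\in\mathcal{P}}$. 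What is actually needed—and what the paper establishes in Lemma \ref{lemma:D1D3} via (\ref{limit:Ns0pGN})—is a joint Gaussian/bootstrap approximation: one reruns the proof of Theorem \ref{therom:CoreWN1+} with the region $\bigcap_{p\in\mathcal{P}}\{\vb:\|\vb\|_{(s_0,p)}\le z_p\}$, which is again a compact convex set admitting a polytope approximation with at most $\#(\mathcal{P})$ times as many facets (Lemmas \ref{lemma:polyAppr}--\ref{lemma:approximstCovexSet}), so the high-dimensional CLT and multiplier-bootstrap comparison hold simultaneously in all coordinates $p\in\mathcal{P}$; only then does the quantile-inversion step with $h_{q,N}(\epsilon)$ yield uniform closeness of the law of $\xi_b$ to $\tilde{F}_{N,{\rm ad}}$ on $[\epsilon,1-\epsilon]$. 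With that replacement your argument is sound and coincides in substance with the paper's proof.
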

The detailed proof of Theorem \ref{thm:AdSize} is in Appendix \ref{proof:thm:AdSize}  of  supplementary materials. 

\begin{remark}\label{remark:P}
	   To prove Theorem \ref{thm:AdSize}, we first show that for any fixed $0<\epsilon<1$,  not only the  distribution  function of $N_{\rm ad}$  but also its  quantile function on $[\epsilon, 1-\epsilon]$  converge to those of $\tilde{N}_{\rm ad}$. By choosing $\epsilon$ sufficiently small, we then prove that the probability of $\tilde{N}_{\rm ad} \in (0,\epsilon)$ is negligible to finish the proof. If 
	  $\#(\mathcal{P})\rightarrow\infty$, we cannot  guarantee $\tilde{N}_{\rm ad} \in (0,\epsilon)$ is negligible any more. Moreover, it is also very hard to prove
	  the convergence of  quantile functions on  $(\epsilon, 1-\epsilon)$ for $N_{\rm ad}$ with  $\epsilon\rightarrow 0$. Hence, when constructing the combined test, we  require $0<\#(\mathcal{P})<\infty$. By simulation, we recommend using   $\mathcal{P}=\{1,2,3,4,5,\infty\}$. The simulation also shows that there is no significant power advantage to  add more elements to $\mathcal{P}$  (see Appendix \ref{sec:simP}). Therefore, the assumption of  finite $\#(\mathcal{P})$ is enough for the practical usage.

\end{remark}

We now turn to the analysis of  the power of the combined test. For this, we have the following result.
\begin{theorem}\label{theorem:Adpower}
	Suppose Assumptions {\bf (A)$'$}, {\bf (E)}, {\bf (M1)}, and {\bf (M2)}) hold.  For the one-sample problem in (\ref{def:hone-sampleu}),  we  assume $\log q = o(n_1^{1/2})$,  $\varepsilon_{n_1}=o(1)$, and $\varepsilon_{n_1}\sqrt{\log q}\rightarrow\infty$ as $n_1,q\rightarrow\infty$. If $\Hb_1$ of (\ref{def:hone-sampleu}) holds with 
	\begin{equation}
	\|\bD_1\|_{(s_0,p)}\ge s_0 (1+\varepsilon_{n_1})
	\big(\sqrt{2\log q}+\sqrt{2\log (\#\{\mathcal{P}\}/\alpha)}\big),
	\end{equation}
	we have $
	\P_{\Hb_1}\big(T^W_{\rm ad}=1 \big)=1$  as $n_1,q, B\rightarrow\infty.$
	Similarly, for the two-sample problem in (\ref{def:htwo-sampleu})  we assume $\log q=o(n^{1/2})$, $\varepsilon_n=o(1)$, and $\varepsilon_n\sqrt{\log q}\rightarrow\infty$ as $n,q\rightarrow\infty$. If $\Hb_1$ of (\ref{def:htwo-sampleu}) holds with 
	\begin{equation}\label{def:AdpowerD2}
	\|\bD_2\|_{(s_0,p)}\ge s_0 (1+\varepsilon_n)
	\big(\sqrt{2\log q}+\sqrt{2\log (\#\{\mathcal{P}\}/\alpha)}\big),
	\end{equation}
	we have $
	\P_{\Hb_1}\big(T^{N}_{\rm ad}=1 \big)\rightarrow 1$ as $n, q, B\rightarrow\infty.$
\end{theorem}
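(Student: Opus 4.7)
The proof strategy is to leverage the power result for individual tests at a Bonferroni-corrected level. Because $\#\{\mathcal{P}\}<\infty$ is fixed, the signal condition in Theorem \ref{theorem:Adpower} is precisely the signal condition of Theorem \ref{theorem:power} with significance level $\alpha'=\alpha/\#\{\mathcal{P}\}$. Hence, choosing the $p^*\in\mathcal{P}$ that attains the signal lower bound (\ref{def:AdpowerD2}) and applying Theorem \ref{theorem:power} at level $\alpha'$, I would immediately conclude that $\P_{\Hb_1}(T^W_{\alpha',(s_0,p^*)}=1)\to 1$, or equivalently $\hat{P}^W_{(s_0,p^*)}\le\alpha/\#\{\mathcal{P}\}$ with probability tending to one. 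Since $W_{\rm ad}=\min_{p\in\mathcal{P}}\hat{P}^W_{(s_0,p)}\le \hat{P}^W_{(s_0,p^*)}$, this already delivers $W_{\rm ad}\le\alpha/\#\{\mathcal{P}\}$ with probability tending to one.

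The next step is to translate a small value of $W_{\rm ad}$ into a small value of $\hat{P}^W_{\rm ad}$. The bootstrap replicates $W^1_{\rm ad'},\dots,W^B_{\rm ad'}$ constructed in Algorithm \ref{alg:adjust} are (asymptotically) distributed as $\tilde{W}_{\rm ad}=\min_{p\in\mathcal{P}}P^W_{(s_0,p)}$ under the null; this is precisely the content used in the proof of Theorem \ref{thm:AdSize}. Since each $P^W_{(s_0,p)}$ is asymptotically uniform on $[0,1]$ by Corollary \ref{corollary:size}, a simple union bound gives $\P(\tilde{W}_{\rm ad}\le t)\le \#\{\mathcal{P}\}\cdot t$ for every $t\in[0,1]$. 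Setting $t=\alpha/\#\{\mathcal{P}\}$, the fraction of bootstrap replicates below $\alpha/\#\{\mathcal{P}\}$ is, up to $o_p(1)$, at most $\alpha$.

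Combining the two steps: on the event $\{W_{\rm ad}\le\alpha/\#\{\mathcal{P}\}\}$,
\[
\hat{P}^W_{\rm ad}=\frac{1+\sum_{b=1}^B \ind\{W^b_{\rm ad'}\le W_{\rm ad}\}}{B+1}\le \frac{1+\sum_{b=1}^B \ind\{W^b_{\rm ad'}\le \alpha/\#\{\mathcal{P}\}\}}{B+1},
\]
and the right-hand side converges in probability to $\P(\tilde{W}_{\rm ad}\le \alpha/\#\{\mathcal{P}\})\le\alpha$ as $n_1,B\to\infty$. To obtain the strict inequality $\hat{P}^W_{\rm ad}\le\alpha$ with probability tending to one, I would exploit the slack $1+\varepsilon_{n_1}$ in the signal assumption: a refinement of the argument in Theorem \ref{theorem:power} shows that under the strengthened signal condition the individual $P$-value $\hat{P}^W_{(s_0,p^*)}$ is actually $o_p(\alpha/\#\{\mathcal{P}\})$, so that $W_{\rm ad}$ lies strictly below $\alpha/\#\{\mathcal{P}\}$ with room to spare, and the union bound on $\tilde{W}_{\rm ad}$ yields strict inequality in the limit. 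The two-sample conclusion $\P_{\Hb_1}(T^N_{\rm ad}=1)\to 1$ follows by the same argument with $W\mapsto N$, $\bD_1\mapsto\bD_2$, and $n_1\mapsto n$.

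The main obstacle is rigorously justifying the convergence $B^{-1}\sum_b \ind\{W^b_{\rm ad'}\le t\}\xrightarrow{P}\P(\tilde{W}_{\rm ad}\le t)$ at the specific point $t=\alpha/\#\{\mathcal{P}\}$, given that the replicates $W^b_{\rm ad'}$ are \emph{not} independent (they share the underlying pool $\{W^b_{(s_0,p)}\}$ via the pairwise rank computation of Algorithm \ref{alg:adjust}). The asymptotic independence of $W^b_{\rm ad'}$ as $n_1,B\to\infty$, together with the uniform convergence of the bootstrap CDF established in the proof of Theorem \ref{thm:AdSize}, is precisely what supplies this ingredient; the Bonferroni upper bound on $\tilde{F}_{W,{\rm ad}}(\alpha/\#\{\mathcal{P}\})$ is coarse but suffices since we only need an inequality, not an exact asymptotic.
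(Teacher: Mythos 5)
Your proposal is correct and follows essentially the same route as the paper's proof: reduce to the individual $(s_0,p)$-norm test at the Bonferroni level $\alpha/\#\{\mathcal{P}\}$ via Theorem \ref{theorem:power} and the minimum structure of $W_{\rm ad}$ (resp.\ $N_{\rm ad}$), then control $\hat{P}^W_{\rm ad}$ through a union bound on the conditionally uniform bootstrap $p$-values combined with the uniform convergence of the low-cost bootstrap CDF from Lemma \ref{lemma:limitFnbadhatFad'}, which is exactly how the paper handles the nonindependent replicates. The only cosmetic differences are that the paper applies the union bound to the exact conditional quantity $F_{N^b,\rm ad}(\alpha/\#\{\mathcal{P}\})\le\alpha$ rather than to the asymptotic law of $\tilde{W}_{\rm ad}$, and that your explicit use of the $(1+\varepsilon_{n_1})$ slack (equivalently, evaluating the monotone bootstrap CDF at a fixed point strictly below $\alpha/\#\{\mathcal{P}\}$) spells out the boundary step that the paper treats only tersely.
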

The detailed proof of Theorem \ref{theorem:Adpower} is presented in Appendix \ref{proof:theorem:Adpower} of supplementary materials.

\begin{remark} 
	On one hand, by Theorems \ref{theorem:power} and \ref{theorem:Adpower}, we require
	$\|\bu_1-\bu_0\|_{(s_0,p)}\succeq s_0\sqrt{\log(q)/n_1}$ or $\|\bu_1-\bu_2\|_{(s_0,p)} \succeq s_0\sqrt{\log(q)/n}$ for our proposed methods  to reject the null hypothesis with  overwhelming probability. On the other hand, by Theorem 3 in \cite{cai2013, tony2014two}, Theorem 4.3 in \cite{han2014distribution}, and Theorem 3.5 in \cite{zhou2015extreme}, for both vector-based and matrix-based  high dimensional tests, any $\alpha$-level test is unable to reject the null hypothesis correctly uniformly over $\|\bmu_1-\bmu_0\|_\infty \ge c_0\sqrt{\log(d) /n}$ or  $\|\bmu_1-\bmu_0\|_\infty \ge c_0\sqrt{\log(d) /n}$ with $c_0$  sufficiently small. Therefore, we  have that our proposed methods with finite $s_0$ are rate-optimal for these sparse alternatives.
\end{remark} 
\section{Simulation results}\label{section:Exper}
The goal of this section is to investigate the  numerical performance of the proposed tests. For this, we compare our methods with several existing methods from the literature. In this section, we only  consider the high dimensional mean test under different settings. We put additional simulation results for  testing high-dimensional covariance/correlation coefficients in Appendix \ref{appendix:moreSimulation}  to  illustrate the proposed methods'  generality. Apart from simulated datasets,  Appendix \ref{appendix:moreSimulation} also includes the experimental results on real world fMRI datasets.

In the context of high dimensional mean test,  we compare the proposed tests with four existing methods: 
Hotelling's $T^2$ test, the $L_2$-type tests given in \cite{bai1993limit} and \cite{srivastava2008test}, and the $L_\infty$-type test give in \cite{tony2014two}. We refer these four tests as $T^2$, BY, SD, and CLX. For simplicity, we only consider the two-sample problem. We generate synthetic data from a wide range of covariance structure including both sparse and non-sparse settings.  We also consider a wide range of alternative scenarios including both sparse and dense settings to investigate the power of the proposed methods.

Under the null hypothesis, we sample $n_1+n_2$ data points from the following models.
\begin{itemize}
	\item {\bf Model 1.} (Gaussian distribution with block diagonal $\bSigma$) We set 
	$\bSigma^\star=(\sigma^\star_{ij})\in\reals^{d\times d}$ with $\sigma^\star_{ii}\overset{\rm i.i.d.}{\sim} {\rm U}(1,2)$,  $\sigma^\star_{ij}=0.5$ for
	$5(k-1)+1\le i\ne j\le 5k$, where $k=1,\ldots,\lfloor d/5\rfloor$, and $\sigma^\star_{ij} = 0$ otherwise. In this model, under the null hypothesis we generate $n_1+n_2$ random vectors from $N(\zero,\bSigma^\star)$.
	\item {\bf Model 2.} (Gaussian distribution with banded $\bSigma$) We set $\bSigma'=(\sigma'_{ij})\in\reals^{d\times d}$ with 
	$\sigma'_{ij}= 0.4^{|i-j|}$ for $1\le i,j\le d$. In this model, under the null hypothesis we generate $n_1+n_2$ random vectors from $N(\zero,\bSigma')$.
	\item {\bf Model 3.} (Gaussian distribution with non-sparse $\bSigma$) We set $\Fb=(f_{ij})\in \reals^{d\times d}$ with $f_{ii}=1$, $f_{ii+1}= f_{i+1i}=0.5$, and $f_{ij}=0$ otherwise.
	We also set  that $\Ub\sim {\rm U}(\Lambda_{d,k})$ follows the uniform distribution on the  Stiefel manifold $\Lambda_{d,k}$ (i.e., $\Lambda_{d,k}=\{\Hb\in\reals^{d\times k}:\Hb^\top\Hb = \Ib_k\}$). After introducing $\Fb$ and $\Ub$, we then set the correlation matrix as $\Rb=(\Db^f)^{-1/2}(\Fb+\Ub\Ub^\top)(\Db^f)^{-1/2}$ with $\Db^f= {\rm Diag}(\Fb+\Ub\Ub^\top)$.  By setting $\Db=(d_{ij})\in \reals^{d\times d}$ as a diagonal matrix with $d_{ii}\sim U(1,2)$, we generate $n_1+n_2$ random vectors from  $N(0,\bSigma)$ with $\bSigma= \Db^{1/2}\Rb\Db^{1/2}$.
	
	\item {\bf Model 4.} (Multivariate $t$ distribution) 
	We generate $n_1+n_2$ random vectors from the multivariate $t$ distribution $t(\nu,\bmu, \bSigma)$ according to 
	$\bmu +\bZ/\sqrt{W/\nu}$, where we have $W \sim \chi^2(\nu)$ and $\bZ \sim N(\zero, \bSigma)$ with $W$ and $\bZ$ independent of each other.
	In the simulation, we set $\bmu=\zero$, $\nu=5$, and $\bSigma=\bSigma^\star$.
\end{itemize}
We use the above models to show that the proposed methods are valid given a fixed size $\alpha$ under various covariance structures and distributions. To present the empirical power of the proposed methods, we introduce a random vector $\bV\in \reals^d$ with exactly $s$ nonzero entries, which are selected randomly from $d$ coordinates. Each nonzero entry follows an independent uniform distribution ${\rm U}(u_1,u_2)$. Under the alternative hypothesis, we set $\bmu_1=\zero$ and $\bmu_2=\bV$. By choosing different $s$, $u_1$, and $u_2$, we compare the power of the proposed methods with that the existing methods under both the sparse and non-sparse settings.

\begin{table}[!ht]
	\begin{center}
		\addtolength{\tabcolsep}{-2pt}
		\scriptsize
		\caption{Empirical sizes for {\bf Model 1} with $\alpha=0.05$, $B=300$, and $n_1=n_2=100$ based on 2000
			replications.} \vspace{0.2cm}\label{table:1}
		\begin{tabular*}{12.3cm}{cccccccccp{2mm}cccc}
			\toprule[2pt]
			\multicolumn{2}{c}{}&\multicolumn{12}{c}{ Empirical size (\%) }\\
			$d$&$s_0$&$p=1$&$p=2$&$p=3$&$p=4$&$p=5$&$p=\infty$&$T^N_{\rm ad}$&  &$T^2$&BY&SD&CLX\\\hline
			75&5& 5.50 & 5.85 &6.15 &6.35 &6.30 &6.60& 6.50& 
			& 5.05& 5.85& 4.65 & 5.25\\
			&30& 4.20 & 4.45 &4.90 &5.30 &5.70 &6.90 & 5.90& 
			& 5.05& 5.85& 4.65 & 5.25\\
			&75& 3.70& 3.95 &4.75 & 5.10& 5.65 &6.75 & 5.50& 
			& 5.05& 5.85& 4.65 & 5.25
			\\\hline
			200&10&  4.75 &4.50 &4.85 &5.20 &5.25 &6.55&5.75 & 
			& - & 4.85& 3.85& 5.35     
			\\
			&50& 2.80&2.90 &3.55 &3.80 &4.35 &6.45& 4.75& 
			& - & 4.85& 3.85& 5.35     
			\\
			&100& 1.90 &2.25 &2.50 &3.60 &3.85& 6.45 & 4.80 &
			& - & 4.85& 3.85& 5.35    
			\\
			&150& 2.35 &2.45 &2.75 &3.70 & 4.15 & 6.85 & 4.90& 
			& - & 4.85& 3.85& 5.35           
			\\
			&200& 2.30& 2.35 &2.95 &3.65 &4.35 & 7.10 & 5.15 &  
			& - & 4.85& 3.85& 5.35   
			\\\hline
			400&10& 4.20 &4.30& 4.70 &5.30 &5.40 &7.60&5.90 & 
			& - & 5.35& 4.55& 6.65     
			\\
			&50&2.45 &2.50 &2.80 &3.45 &4.25 &8.25& 5.00& 
			& - & 5.35& 4.55& 6.65 
			\\
			&100& 2.05& 2.30 &2.25 &2.65& 3.95& 7.90& 4.75 &
			& - & 5.35& 4.55& 6.65   
			\\
			&200& 1.45 &1.60& 1.90& 2.70 &3.60& 7.75 & 4.55& 
			& - & 5.35& 4.55& 6.65           
			\\
			&400&1.40 &1.40 &1.75 &2.70& 3.80 &7.85 & 4.70 &  
			& - & 5.35& 4.55& 6.65 
			
			\\\hline
			800&10& 4.75& 4.95 &5.20 &5.50& 5.95& 9.10&6.30 & 
			& - & 5.65& 4.65& 7.45    
			\\
			&100&0.75 &1.20& 1.40 &1.80 &2.65& 8.85& 4.45& 
			& - & 5.65& 4.65& 7.45   
			\\
			&200& 0.40& 0.50& 0.75& 1.40& 2.00 &8.85& 4.45 &
			& - & 5.65& 4.65& 7.45   
			\\
			&400& 0.55& 0.45& 0.70& 1.20 &2.10 &8.15& 3.95& 
			& - & 5.65& 4.65& 7.45        
			\\
			&600& 0.40& 0.35& 0.80& 1.20& 2.00& 8.70& 4.00 &  
			& - & 5.65& 4.65& 7.45   
			\\
			&800&0.40& 0.55 &0.75 &1.35 &1.85& 8.65 & 3.65 &  
			& - & 5.65& 4.65& 7.45   
			\\
			\bottomrule[2pt]
			
		\end{tabular*}
	\end{center}
\end{table}

\begin{table}[!ht]
	\begin{center}
		\addtolength{\tabcolsep}{-2pt}
		\scriptsize
		\caption{Empirical power of {\bf Model 1} with $\alpha=0.05$, $B=300$, and $n_1=n_2=100$ based on 2000
			replications.} \vspace{0.2cm}\label{table:2}
		\begin{tabular*}{12.6cm}{cccccccccp{2mm}cccc}
			\toprule[2pt]
			\multicolumn{14}{c}{{ Empirical power (\%) with $\bmu_1=\zero$ and $\bmu_2=\bV$  with $s=5$, $u_1=0$, and $u_2=4\sqrt{\log(d)/n}$  }}\\
			$d$&$s_0$&$p=1$&$p=2$&$p=3$&$p=4$&$p=5$&$p=\infty$&$T^N_{\rm ad}$&  &$T^2$&BY&SD&CLX\\\hline
			75&5& 82.10 &84.35 &85.70& 86.50 &86.80& 85.10& 86.90& 
			& 73.7& 67.85& 66.45 & 83.5\\
			&30& 49.10 &69.20& 78.75 &83.80 &85.40& 85.50 &84.50& 
			& 73.7& 67.85& 66.45 & 83.5\\
			&75& 32.70& 64.25& 78.20 &83.25 &85.15& 85.00& 83.70& 
			& 73.7& 67.85& 66.45 & 83.5\\
			\hline
			200&10& 75.85&81.05 &83.85& 84.95& 86.10 &86.40 &85.65& 
			& - & 55.65& 53.90& 85.25     
			\\
			&50& 36.20& 59.65 &75.35 &81.50 &84.00& 86.05& 84.40& 
			& - & 55.65& 53.90& 85.25  
			\\
			&100&  23.60& 48.90 &72.65& 80.75 &84.20& 86.45& 84.35 &
			& - & 55.65& 53.90& 85.25     
			\\
			&150& 18.70& 45.40& 72.10 &81.15& 84.45 &86.45 & 84.35& 
			& - & 55.65& 53.90& 85.25             
			\\
			&200&  17.55& 45.20& 72.40 &81.00& 84.20 &86.15 & 84.25 &  
			& - & 55.65& 53.90& 85.25   
			\\\hline
			400&10&  77.90 &82.15 &85.20& 87.25& 88.05& 87.80&87.90 & 
			& - & 44.25& 42.75& 87.05     
			\\
			&50&34.25 &56.40 &71.85& 79.65 &84.05& 87.90& 85.55& 
			& - & 44.25& 42.75& 87.05  
			\\
			&100& 18.45 &40.45& 65.15& 77.30 &83.50& 87.60& 85.55 &
			& - & 44.25& 42.75& 87.05  
			\\
			&200&9.85 &29.35& 60.25 &76.25 &83.35& 88.10& 85.45& 
			& - & 44.25& 42.75& 87.05         
			\\
			&400&6.95 &25.60 &60.00& 75.90 &83.60 &87.50 & 85.05 &  
			& - & 44.25& 42.75& 87.05 \\\hline
			\multicolumn{14}{c}{}\\
			\multicolumn{14}{c}{{ Empirical power (\%)  with $\bmu_1=\zero$ and $\bmu_2=\bV$  with $s=100$, $u_1=0$, and $u_2=3\sqrt{1/n}$  }}\\
			$d$&$s_0$&$p=1$&$p=2$&$p=3$&$p=4$&$p=5$&$p=\infty$&$T^N_{\rm ad}$&  &$T^2$&BY&SD&CLX\\\hline
			200&10&87.55& 86.05& 85.00& 83.50& 81.75 &51.25&82.30& 
			& - & 96.20& 96.05& 47.45
			\\
			&50& 93.25& 93.80& 93.40 &92.05 &89.40 &51.05& 90.20& 
			& - & 96.20& 96.05& 47.45 
			\\
			&100&  92.75 &93.85 &93.80& 92.55 &89.80& 51.55& 91.35 &
			& - & 96.20& 96.05& 47.45
			\\
			&150& 90.95& 93.55& 94.35& 92.35& 89.90 &52.75& 90.65& 
			& - & 96.20& 96.05& 47.45        
			\\
			&200&  90.05& 93.75& 93.80& 92.45& 89.55& 51.70 & 91.10&  
			& - & 96.20& 96.05& 47.45
			\\\hline
			400&10& 70.40 &69.95 &68.75& 67.90& 66.00 &42.70&64.00 & 
			& - & 85.10& 84.25& 38.70     
			\\
			&50&72.60& 73.95& 74.95& 74.85 &73.35 &42.30& 70.25& 
			& - & 85.10& 84.25& 38.70
			\\
			&100&69.85& 72.80 &73.95& 74.45& 73.30& 42.35& 69.45 &
			& - & 85.10& 84.25& 38.70 
			\\
			&200& 61.55& 68.25& 73.05&74.45& 73.90& 42.45& 68.30& 
			& - & 85.10& 84.25& 38.70       
			\\
			&400& 54.40 &67.15& 73.00& 74.50 &73.40 &43.15 & 67.15 &  
			& - & 85.10& 84.25& 38.70
			\\\bottomrule[2pt]
			
		\end{tabular*}
	\end{center}
\end{table}

In Table \ref{table:1}, we present the empirical sizes of  introduced methods  for {\bf Model 1}. We set $n_1=n_2=n=100$ and $q=d=75,200,400,800$. The nominal
significance level is $0.05$. We compare our methods with four other tests: $T^2$, BY, SD, and  CLX. Moreover, $T^2$, BY, and SD are $L_2$-type and CLX is $L_\infty$-type. The $T^2$ test requires $d<n$, so that we don't perform  $T^2$ test as $d>n$.  In the current setting, the four existing methods can control the size correctly, except  that CLX test suffers a size distortion as  $d$ is significantly larger ($d=800$) than $n$. For the $(s_0,p)$-norm based tests, when $s_0$ is
significantly smaller ($s_0=5,10$) than $d$, they can control the size correctly, except that the $(s_0,\infty)$-norm based test suffers  a size distortion as $d$ is significantly large ($d=800$). As $s_0$ increases, the  empirical size of $(s_0,p)$-norm based tests decreases dramatically especially for small $p$,  making the $(s_0,p)$-norm based tests with small $p$ overly conservative. Although the $(s_0,p)$-norm based tests perform differently with different $s_0$ and $p$, the data-adaptive combined test $T_{\rm ad}^N$ can control the size correctly under various settings of $d$ and $n$.

In Table \ref{table:2}, we compare these methods under different alternative scenarios. In  the sparse alternative setting, we set  $\bmu_2=\bV$ with $s=5$ nonzero entries. Each entry follows  independent uniform distribution ${\rm U}(0,4\sqrt{\log(d)/n})$. 
In this setting, the $L_\infty$-type test achieves a higher empirical power than  the $L_2$-type tests. In the dense alternative setting, we set $\bmu_2=\bV$ with $s=100$ nonzero entries of the magnitude ${\rm U}(0,3n^{-1/2} )$.  In this setting, the $L_2$-type tests are  more powerful. This similar pattern also appears in the $(s_0,p)$-norm based tests. As $p$ increases, the $(s_0,p)$-norm based test is more sensitive to the sparse alternative. The influence   of  $s_0$ is more complicated. However, by choosing $s_0$ close to $s$,
the tests always enjoy  good performance. For the data-adaptive  combined test $T_{\rm ad}^N$, we choose a balanced $\mathcal{P}$ including both small and large values of $p$. Hence, in various settings of the alternative scenarios, $d$, and $n$,
it always has a high power. Although $T_{\rm ad }^N$ with balanced $\mathcal{P}$ may not be the most powerful option for some  alternatives,  $T_{\rm ad}^N$ is adaptive to the alternative setting and powerful enough in various kinds of alternative scenarios.  Theoretically, there is no uniformly most powerful test in all the alternative scenarios \citep{cox1979theoretical}. If the alternative pattern is unknown, the data-adaptive test with balanced $\mathcal{P}$ (including small and large $p$) is a good choice. If the alternative pattern is known, by choosing $\mathcal{P}$ accordingly we can still construct a powerful test. For the choice of $s_0$, similarly to the $(s_0,p)$-norm based tests,  $T_{\rm ad}^N$ with $s_0$ close to $s$  is always powerful.

We put the numerical results of  {\bf Models 2-4} in  Appendix  \ref{appendix:moreSimulation} of supplementary materials. Their experimental results are similar to {\bf Model 1} and indicate that the proposed methods work well in various settings.

\section{Summary and discussion}\label{section:Diss}
This paper considers the problem of  testing  high dimensional $U$-statistic
based vectors. We construct a family of tests based on the $(s_0,p)$-norm. By the introduction of $s_0$, when $q$ is large, 
we can increase the power compared to the tradition $L_p$-norm based test (especially for small $p$). Moreover, by choosing $p$ properly, we can further enhance the power under different alternatives.
We also introduce a data-adaptive combined test, which is simultaneously powerful under a wide variety of alternatives. Moreover, We also develop a trick for avoiding the high computational cost of the double-loop bootstrap for the data-adaptive  combined test with theoretical guarantee in high dimensions. 

We then discuss the choice of $s_0$ and $\mathcal{P}$. Theoretically, for individual $(s_0,p)$-norm tests we generally require that $s_0^\zeta\log q = o(n^\delta)$ holds with $\zeta=2$ and $0<\delta<1/7$ for all $p\in [1,\infty]$.
We also point out that it is  possible to  reduce $\zeta$ for some specified $p$. For combined tests, we require $0<\#(\mathcal{P})<\infty$  to prevent the  test statistic from going to $0$. By simulation, we also see that  the proposed tests with $s_0$ close to $s$ (true unknown number of entries violating $\Hb_0$)  enjoy high power, which makes $s$ a good candidate for $s_0$. In practice, we recommend choosing $s_0$ and $s$ as close as possible without violating  theoretical conditions.

%

There are several possible future directions of this work. For instance, how to generalize the idea to the $k$-sample testing problems ($k>2$) has been for a future investigation. This may require a nontrivial extension of the theoretical
analysis. Moreover, our theory is based on the Gaussian approximation for the sum of high dimensional independent random vectors
from \cite{chernozhukov2013gaussian}. \cite{zhang2014bootstrapping} and \cite{zhang2015gaussian}  further 
study Gaussian approximations for high dimensional time series, which allow to generalize our methods for dependent data. As a significant
amount of additional work is still needed, we shall report the results elsewhere in the future.

\end{bibunit}

\begin{bibunit}
\newpage
\setcounter{page}{1}

\title{ Supplement Materials to
		``A Unified Framework for Testing High Dimensional Parameters: A Data-Adaptive Approach"
}

\begin{aug}
	\author{\fnms{Cheng} \snm{Zhou}\thanksref{t1}\ead[label=e1]{chengzhmike@gmail.com}},
	\author{\fnms{Xinsheng} \snm{Zhang}\thanksref{t1}\ead[label=e2]{xszhang@fudan.edu.cn}},
	\author{\fnms{Wenxin} \snm{Zhou}\thanksref{t2}\ead[label=e2]{wenxinz@princeton.edu}}
	\and
	\author{\fnms{Han} \snm{Liu}\thanksref{t2}\ead[label=e3]{hanliu@princeton.edu}}
	
	\affiliation{Department of Statistics, Fudan University\thanksmark{t1} and Department of Operation Research and
		Financial Engineering, Princeton University\thanksmark{t2}}
\end{aug}

\begin{center}
	{ ABSTRACT}
\end{center}
\begin{center}
\noindent\begin{minipage}[c]{.8\textwidth}
	\footnotesize\mdseries\upshape
	The supplementary materials contain additional details of  the paper  ``A Unified Framework for Testing High Dimensional Parameters: A Data-Adaptive Approach" authored by Cheng Zhou,  Xinsheng Zhang, Wen-Xin Zhou, and Han Liu. After introducing some useful lemmas in Appendix \ref{sec:usefullemmas},  We  prove main results in Appendix   \ref{SM:mainResults}.
In Appendices \ref{sec:appendix C} and \ref{sec:appendix D}, we prove lemmas required by the proofs in  Appendix \ref{SM:mainResults}. In Appendix \ref{sec:proof:usefullemma}, we  prove lemmas introduced in Appendix \ref{sec:usefullemmas}. In Appendix \ref{appendix:moreSimulation}, we present additional numerical experimental results. Throughout supplementary materials, we use $C$, $C_1$, $C_2,\ldots$ to denote constants which do not depend on $n,d,$ and $q$. These constants can vary from place to place.
\end{minipage}
\end{center}

\appendix
\section{Useful lemmas}\label{sec:usefullemmas}
In Appendix \ref{sec:usefullemmas}, we introduce some useful lemmas that will be used many times for proving main results. We put their proof in Appendix \ref{sec:proof:usefullemma}.
To present these lemmas, we need some additional notations.  
Let $\bZ_1, \ldots,\bZ_{n}$ be independent random vectors in $\reals^d$ with 
$\bZ_k=(Z_{k1},\ldots,Z_{kd})^\top$  and $\E[\bZ_k]=\zero$ for $k=1,\ldots,n$. Let $\bW_1,\ldots,\bW_{n}$ be independent  Gaussian random vectors in $\reals^d$ such that  $\bW_k$ has the same mean vector and covariance matrix as $\bZ_k$.
By setting $\mathcal{V}_{s_0}:=\{\vb\in \mathbb{S}^{d-1}: \|\vb\|_0\le s_0\}$,  we require the following conditions:
\begin{itemize}
	\item{\bf  (M1)$'$} $n^{-1}\sum_{k=1}^n\E\big[(\vb'\bZ_{k})^2\big]\ge b>0$ for any $\vb\in \mathcal{V}_{s_0}$;
	\item{\bf (M2)$'$}$n^{-1}\sum_{k=1}^n\E\big[|Z_{kj}|^{2+\ell}\big]\le K^\ell$  for $\ell=1,2$ and $j=1,\ldots,d$. 
	\item{\bf (E)$'$} $\E\big[\exp(|Z_{kj}|/K)\big]\le 2$ for $j=1,\ldots,d$ and $k=1,\ldots,n$.
\end{itemize}

\begin{lemma}\label{lemma:CCK2}
	Assume $s_0^2\log(dn)=O(n^\zeta)$ with $0<\zeta<1/7$. If $\bZ_1, \ldots,\bZ_{n}$ satisfy {\bf (M1)$'$}, {\bf (M2)$'$}, and {\bf (E)$'$}.
	By setting $S_{n}^{\bZ}=n_1^{-1/2}\sum\nolimits_{k=1}^{n}\bZ_{k}$ and $S_{n}^{\bW}=n_1^{-1/2}\sum\nolimits_{k=1}^{n}\bW_{k}$,  for $1\le p\le \infty$ and  sufficiently large $n$, there is a constant $\zeta_0>0$ such that
	\begin{equation}\label{ineq:HCLT_s0p}
	\sup_{z\in (0,\infty)}\Big|\P\Big(\|S_{n}^{\bZ}\|_{(s_0,p)}\le z\Big)-\P\Big(\|S_{n}^{\bW}\|_{(s_0,p)}\le z\Big)\Big|\le C{n^{-\zeta_0}},
	\end{equation}
	where $C$  depends on $b$ and $K$.
\end{lemma}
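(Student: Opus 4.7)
The plan is to reduce the $(s_0,p)$-norm Kolmogorov-distance bound to a max-of-sparse-linear-functionals Gaussian approximation, so that the high-dimensional CLT machinery of Chernozhukov, Chetverikov, and Kato (CCK) for maxima over hyperrectangles can be invoked after a controlled polytope approximation.

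The first step is the geometric reduction. For any $1\le p\le\infty$ one has the identity
\[
\|\vb\|_{(s_0,p)}^{\,p}=\max_{S\subset\{1,\dots,d\},\,|S|=s_0}\sum_{j\in S}|v_j|^{p},
\]
so the ball $B_{(s_0,p)}(z):=\{\vb:\|\vb\|_{(s_0,p)}\le z\}$ is the intersection of $\binom{d}{s_0}$ convex sets, each of which is the preimage of an $\ell_p$-ball in $\reals^{s_0}$ under the coordinate projection onto an $s_0$-subset. In CCK terminology, $B_{(s_0,p)}(z)$ is \emph{$s_0$-sparsely convex}. For $p=\infty$ each such set is already a half-space, and for $p=1$ each is already a polytope with $2^{s_0}$ facets; for general $p\in(1,\infty)$ we will need to approximate.

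Next I would approximate each $s_0$-dimensional $\ell_p$-ball of radius $z$ by an inscribed (and circumscribed) polytope obtained as the intersection of $N_\epsilon=\exp(Cs_0\log(1/\epsilon))$ half-spaces whose Hausdorff distance from the $\ell_p$-ball is at most $\epsilon z$. Stitching these local approximations together yields an outer polytope $P^{+}_\epsilon(z)$ and an inner polytope $P^{-}_\epsilon(z)$ sandwiching $B_{(s_0,p)}(z)$, each described by at most $M_\epsilon:=\binom{d}{s_0}\cdot 2^{s_0}\cdot N_\epsilon$ linear inequalities, and crucially every facet normal is $s_0$-sparse. Consequently the event $\{S_n^{\bZ}\in P^{\pm}_\epsilon(z)\}$ is of the form $\max_{a\in \mathcal A_\epsilon}a^{\top}S_n^{\bZ}\le z(1\pm\epsilon)$ for a collection $\mathcal A_\epsilon$ of at most $M_\epsilon$ $s_0$-sparse unit vectors, i.e.\ a maximum of at most $M_\epsilon$ linear statistics. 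Note $\log M_\epsilon \lesssim s_0\log d + s_0\log(1/\epsilon)$.

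The third step applies the CCK max-type Gaussian approximation theorem to $\{a^\top\bZ_k\}$ versus $\{a^\top\bW_k\}$ ranging over $\mathcal A_\epsilon$. Under assumptions {\bf (M1)$'$}, {\bf (M2)$'$}, and the sub-exponential bound {\bf (E)$'$}, CCK yields
\[
\sup_{z>0}\bigl|\P\bigl(\max_{a\in\mathcal A_\epsilon}a^{\top}S_n^{\bZ}\le z\bigr)-\P\bigl(\max_{a\in\mathcal A_\epsilon}a^{\top}S_n^{\bW}\le z\bigr)\bigr|\lesssim \bigl(n^{-1}\log^{7}(M_\epsilon n)\bigr)^{1/8}.
\]
The sparsity constraint $\|a\|_0\le s_0$ lets us control $\E[(a^\top\bZ_k)^{2+\ell}]$ via {\bf (M2)$'$} (after applying the elementary inequality $|a^{\top}z|^{2+\ell}\le s_0^{1+\ell}\sum_{j\in\mathrm{supp}(a)}|z_j|^{2+\ell}$), and {\bf (M1)$'$} supplies the nondegeneracy $a^{\top}\Sigma_k a\ge b$ needed for CCK. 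Substituting $\log M_\epsilon\asymp s_0\log d+s_0\log(1/\epsilon)$ gives a bound of order $\bigl(s_0^{7}(\log d+\log(1/\epsilon))^{7}/n\bigr)^{1/8}$ after absorbing lower-order $s_0$ factors from the moment rescaling.

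The last step closes the polytope gap using Gaussian anti-concentration (Nazarov's inequality, as used throughout CCK). Since $S_n^{\bW}$ has covariance bounded below on $\mathcal V_{s_0}$, one obtains
\[
\P\bigl(\|S_n^{\bW}\|_{(s_0,p)}\in[z(1-\epsilon),z(1+\epsilon)]\bigr)\lesssim \epsilon z\,\sqrt{\log M_\epsilon}\lesssim \epsilon\sqrt{s_0\log(d/\epsilon)}\cdot\E\|S_n^{\bW}\|_{(s_0,p)},
\]
up to the usual $\sqrt{\log}$ factor. Choosing $\epsilon=n^{-c}$ for a small constant $c>0$ and combining with the CCK rate gives an overall bound of order $n^{-\zeta_0}$, provided $s_0^{7}(\log(dn))^{7}=o(n)$, which is implied by the hypothesis $s_0^{2}\log(dn)=O(n^{\zeta})$ with $\zeta<1/7$.

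The main obstacle will be the bookkeeping in the third and fourth steps: one has to simultaneously handle the combinatorial blowup $\binom{d}{s_0}$, the $s_0$-dependence coming from the sub-exponential moments of the $s_0$-sparse linear forms $a^{\top}\bZ_k$, and the Nazarov anti-concentration deterioration of order $\sqrt{\log M_\epsilon}$, while still extracting a polynomial decay $n^{-\zeta_0}$. The strict inequality $\zeta<1/7$ in assumption is exactly what one needs for the CCK $\bigl(\log^{7}/n\bigr)^{1/8}$ rate to dominate the polytope-approximation error after optimizing $\epsilon$; any relaxation of $\zeta$ would require tightening either CCK's rate or the polytope-approximation step.
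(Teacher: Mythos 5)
Your route is essentially the paper's: approximate the $(s_0,p)$-ball by a polytope whose facet normals are $s_0$-sparse, apply the Chernozhukov--Chetverikov--Kato high-dimensional CLT to the resulting maximum of sparse linear statistics (with {\bf (M1)$'$} giving nondegeneracy and {\bf (M2)$'$}, {\bf (E)$'$} giving the moment/sub-exponential inputs after the $s_0$-inflation you note), and close the inner/outer gap with Nazarov's anti-concentration inequality; the paper does exactly this, invoking Barvinok's thrifty polytope approximation to get a facet count $m\le d^{s_0}(\gamma\epsilon^{-1/2}\ln(1/\epsilon))^{s_0^2}$ and the hyperrectangle/sparsely-convex CLT of Chernozhukov et al.\ (2014) at rate $(\log^7(mn)/n)^{1/6}$. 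Your facet-count bookkeeping ($\log M_\epsilon\lesssim s_0\log d+s_0\log(1/\epsilon)$, via a per-subset facet-based approximation) is if anything slightly sharper than the paper's $s_0^2\log(1/\epsilon)$ exponent, and either suffices under $s_0^2\log(dn)=O(n^\zeta)$, $\zeta<1/7$.

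The one step that does not go through as written is the final anti-concentration bound uniformly over $z\in(0,\infty)$: with a \emph{relative} $\epsilon z$ Hausdorff approximation, Nazarov gives a slab probability of order $\epsilon z\sqrt{\log M_\epsilon}$, which grows linearly in $z$, and your substitution of $\E\|S_n^{\bW}\|_{(s_0,p)}$ for $z$ is not valid for the supremum over all $z$. You need an explicit truncation: either argue that for $z$ exceeding (expectation plus a $\sqrt{\log}$ term) both probabilities are within $n^{-\zeta_0}$ of $1$ by Gaussian concentration and the sub-exponential tail from {\bf (E)$'$}, or do what the paper does --- first intersect with a Euclidean ball $\mathcal{E}^{R,d}$ with $R=(dn)^{1/2}$, paying only $C_1 d\exp(-C_2 R d^{-1/2})$, so the polytope error becomes an absolute $R\epsilon$ and the Nazarov term $CR\epsilon\sqrt{\log m}$ is made negligible by choosing $\epsilon=(dn)^{-3/2}$. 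With that repair (a standard device) your argument matches the paper's proof.
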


\begin{lemma}\label{lemma:polyAppr}(Corollary 1.2 in \cite{barvinok2014thrifty:app})
	For any compact  and symmetric convex set $\cC\in\mathbb{R}^d$ with non-empty interior  and  $\gamma>e/4\sqrt{2}$, there exist a polytope $\cP\in\mathbb{R}^d$ and  a constant $\epsilon_\gamma >0$ such that
	for any $0<\epsilon<\epsilon_\gamma$, we have 
	\[
	\cP\subset \cC \subset(1+\epsilon)\cP\hspace{2em}\text{and}\hspace{2em} V<\Big(\frac{\gamma}{\sqrt{\epsilon}}\ln \frac{1}{\epsilon}\Big)^d,
	\]
	where $V$ is the vertex number of $\cP$.
\end{lemma}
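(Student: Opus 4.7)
The plan is to prove the result by a combination of polar duality and a random-polytope construction, which is the standard route to a thrifty $\sqrt{\epsilon}$-type vertex bound (rather than the naive $\epsilon$-type bound that one gets from just taking a covering of the boundary). First I would normalize: since $C$ is centrally symmetric with non-empty interior, it is the unit ball of some norm $\|\cdot\|_C$ on $\mathbb{R}^d$, and by John's theorem I may assume after an affine change of coordinates that $B_d \subset C \subset \sqrt{d}\, B_d$, where $B_d$ is the Euclidean unit ball. Passing to the polar body $C^\circ$ then turns the problem into outer approximation: producing $P = \mathrm{conv}(v_1,\dots,v_V) \subset C$ with $C \subset (1+\epsilon) P$ is equivalent, by polarity, to producing $V$ halfspaces whose intersection $Q$ satisfies $C^\circ \subset Q \subset (1+\epsilon)\, C^\circ$. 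So the combinatorial goal is: find $V$ supporting halfspaces of $C^\circ$ whose intersection blows $C^\circ$ up by at most the factor $1+\epsilon$.

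The construction itself would be probabilistic. Sample $N$ points $v_1,\dots,v_N$ independently from an appropriate measure on $\partial C$ (for instance, the cone measure, or the image of the standard Gaussian on $\mathbb{R}^d$ under the radial projection onto $\partial C$), and let $P$ be the symmetric convex hull of these points, together with their negatives (using the symmetry of $C$). The crucial geometric input is the second-order behaviour of the boundary: to ensure that every ray from the origin through $\partial C$ is within a radial factor $1+\epsilon$ of some $v_i$, one only needs a tangential $\sqrt{\epsilon}$-net along $\partial C$, because a convex body deviates from any supporting hyperplane by a quadratic amount in the tangential displacement. This is what produces $(1/\sqrt{\epsilon})^d$ rather than $(1/\epsilon)^d$ in the vertex count, with the extra $\log(1/\epsilon)$ factor coming from a union bound across $\epsilon$-scale directional cells.

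The main obstacle, I expect, is making the tangential $\sqrt{\epsilon}$-net argument uniform across all directions for a general (possibly non-smooth) symmetric convex body, since one cannot appeal to local curvature directly. Instead one must transfer sphere-covering estimates to $\partial C$ using the sandwich $B_d \subset C \subset \sqrt{d}\, B_d$, at the cost of dimension-dependent factors that get absorbed into the constant $\gamma$. Quantifying the failure probability that the random polytope does not $(1+\epsilon)$-approximate $C$ requires a Chernoff-type tail bound on the number of sampled points that fall inside each directional cap; carefully optimizing the cap size against the sample count, and taking a union bound over a minimal net of directions, is what yields the explicit threshold $\gamma > e/(4\sqrt{2})$ stated in the lemma. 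Verifying $P \subset C$ is automatic from sampling on $\partial C$, so the entire difficulty lies in showing $C \subset (1+\epsilon)P$ holds with positive probability for $N$ at most the stated vertex bound --- and that probabilistic existence step is where I would expect the most delicate bookkeeping.
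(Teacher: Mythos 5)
You should first note that the paper itself offers no proof of this lemma at all: it is quoted verbatim as Corollary~1.2 of Barvinok (2014), so the only question is whether your sketch would genuinely establish the stated bound. It would not, because its central geometric step is false for general symmetric convex bodies. You claim that a tangential $\sqrt{\epsilon}$-net of $\partial C$ suffices because ``a convex body deviates from any supporting hyperplane by a quadratic amount in the tangential displacement.'' That quadratic behaviour is a curvature property of the Euclidean ball, not of an arbitrary body. If $x^*$ is a conical (non-smooth) boundary point of $C$ and $f$ is a functional uniquely maximized at $x^*$ with $f(x^*)=1$, then a boundary point $v$ at tangential distance $t$ from $x^*$ satisfies only $f(v)\le 1-ct$, a \emph{linear} drop; a $\sqrt{\epsilon}$-scale covering therefore only guarantees $\max_i f(v_i)\ge 1-c\sqrt{\epsilon}$, i.e.\ a $(1+c\sqrt{\epsilon})$-approximation rather than $(1+\epsilon)$. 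The failure is not just in the justification but in the construction itself: take $C$ to be the cross-polytope in $\reals^d$, $d\ge 3$. The set of boundary points with $f(v)\ge 1-\epsilon$ for the functional attaining its maximum at a vertex has relative boundary (or cone) measure of order $\epsilon^{\,d-1}$, so a sample drawn from any natural boundary measure needs on the order of $\epsilon^{-(d-1)}$ points to hit all $2d$ such caps, which exceeds the target $(\epsilon^{-1/2}\log(1/\epsilon))^{d}$ for small $\epsilon$. Obtaining $\epsilon^{-d/2}$ rather than $\epsilon^{-d}$ for bodies without curvature is exactly the nontrivial content of Barvinok's theorem, and it cannot be reached by covering plus Chernoff and union bounds; consequently your derivation of the threshold $\gamma>e/(4\sqrt{2})$ from ``optimizing cap sizes'' is unsupported.

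For comparison, Barvinok's argument is deterministic, not probabilistic: after placing $C$ in John position ($B_d\subset C\subset \sqrt{d}\,B_d$, the one step you share), the vertex set is produced through a Chebyshev-polynomial (tensor-power) construction, which is what converts a net at scale roughly $\sqrt{\epsilon}$, taken for a lifted body of degree $k\asymp\log(1/\epsilon)$, into a genuine $(1+\epsilon)$ radial approximation of $C$; this is the source of both the $\epsilon^{-1/2}$ base and the $\log(1/\epsilon)$ factor, as well as the explicit constant $e/(4\sqrt{2})$. Your polar-duality reformulation is harmless but does not address the obstruction, since dualizing merely exchanges vertices for facets; and note also that $\gamma$ in the statement is dimension-free, so dimension-dependent losses from the sandwich $B_d\subset C\subset\sqrt{d}\,B_d$ cannot simply be ``absorbed into $\gamma$'' as you suggest (at best into the threshold $\epsilon_\gamma$). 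A correct self-contained proof would have to reproduce a mechanism of Barvinok's type that achieves the quadratic gain for non-smooth bodies, not a boundary-sampling argument.
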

We call a  set $A^m$ is m-generated if it is the intersection of $m$ half-spaces. Therefore, $A^m$ is a polytope with at least $m$ facets. We then set $\mathcal{V}(A^m)$ as the set of  $m$ unit vectors that are outward normal to the facets of $A^m$. For $\epsilon>0$, we then  define
\[
A^{m,\epsilon} :=\cap_{v\in\mathcal{V}(A^m)}\{w\in \reals^d:  w^\top v \le \mathcal{S}_{A^m}(v)+\epsilon\}, 
\]
where $\mathcal{S}_{A^m}(v):=\sup\{w^\top v: w\in A^m \}$.

\begin{lemma}\label{lemma:approximstCovexSet}
	Let $\mathcal{E}^{R,d} = \{\xb\in\reals^d: \|\xb\|\le  R\}$ and
	$V^{z,d}_{(s_0,p)}=\{\xb\in\reals^d:\|\xb\|_{(s_0,p)}\le z\}$.
	For any $\gamma > e/ 4\sqrt{2}$, there is a $m$-generated convex set $A^m\in\reals^d$ and a constant $\epsilon_\gamma$ such that for any $0<\epsilon<\epsilon_\gamma$, we have
	\[
	A^m\subset \mathcal{E}^{R,d} \cap V^{z, d}_{(s_0,p)}\subset A^{m, R\epsilon}\hspace{2em}\text{and}\hspace{2em}
	m\le d^{s_0}\Big(\frac{\gamma}{\sqrt{\epsilon}}\ln \frac{1}{\epsilon}\Big)^{s_0^2}.
	\]
\end{lemma}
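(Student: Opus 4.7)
The plan is to exploit the low-dimensional structure of the $(s_0,p)$-norm through the identity
\[
V^{z,d}_{(s_0,p)}=\bigcap_{|S|=s_0}\bigl\{\xb\in\reals^d:\|\xb_S\|_p\le z\bigr\},
\]
where $\xb_S\in\reals^{s_0}$ denotes the subvector of $\xb$ indexed by $S\subset[d]$. This holds because $\|\xb\|_{(s_0,p)}=\max_{|S|=s_0}\|\xb_S\|_p$, and it reduces a $d$-dimensional approximation problem to an $s_0$-dimensional one. Each constraint $\|\xb_S\|_p\le z$ can be written by duality as the intersection of the half-spaces $\{\xb:\vb^\top\xb_S\le z\}$ over $\vb$ in the $L_{p'}$-unit ball $B_{p'}^{s_0}\subset\reals^{s_0}$ (with $p'=p/(p-1)$), so $V^{z,d}_{(s_0,p)}$ is an intersection of infinitely many half-spaces whose normals live in low-dimensional coordinate slices.

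Next I would apply Lemma \ref{lemma:polyAppr} to $B_{p'}^{s_0}\subset\reals^{s_0}$ with a parameter $\tilde\epsilon$ of the same order as $\epsilon$, obtaining a polytope $\mathcal{P}^{s_0}$ with at most $V_0\le(\gamma/\sqrt{\tilde\epsilon}\ln(1/\tilde\epsilon))^{s_0}$ vertices satisfying $\mathcal{P}^{s_0}\subset B_{p'}^{s_0}\subset(1+\tilde\epsilon)\mathcal{P}^{s_0}$. For each $s_0$-subset $S$ of $[d]$ and each vertex $\vb$ of $\mathcal{P}^{s_0}$, lifting to the half-space $H_{S,\vb}=\{\xb\in\reals^d:\vb^\top\xb_S\le z\}$ and intersecting over all $(S,\vb)$ yields a polytope $A$ with at most $\binom{d}{s_0}V_0\le d^{s_0}(\gamma/\sqrt{\tilde\epsilon}\ln(1/\tilde\epsilon))^{s_0}$ facets, comfortably inside the target bound $d^{s_0}(\gamma/\sqrt{\epsilon}\ln(1/\epsilon))^{s_0^2}$. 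The candidate is then $A^m:=A/(1+\tilde\epsilon)$.

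The two inclusions then follow from a support-function argument. By construction $V^{z,d}_{(s_0,p)}\subset A$; conversely, if $\xb\in A$ then $\vb^\top\xb_S\le z$ for every vertex $\vb$ of $\mathcal{P}^{s_0}$ and every $S$, hence for every $\wb\in B_{p'}^{s_0}\subset(1+\tilde\epsilon)\mathcal{P}^{s_0}$ we obtain $\wb^\top\xb_S\le(1+\tilde\epsilon)z$, i.e.\ $\|\xb_S\|_p\le(1+\tilde\epsilon)z$ for every $S$. This gives $V^{z,d}_{(s_0,p)}\subset A\subset(1+\tilde\epsilon)V^{z,d}_{(s_0,p)}$ and hence $A^m\subset V^{z,d}_{(s_0,p)}\subset(1+\tilde\epsilon)A^m$, so the first required inclusion $A^m\subset\mathcal{E}^{R,d}\cap V^{z,d}_{(s_0,p)}$ holds as soon as $V^{z,d}_{(s_0,p)}\subset\mathcal{E}^{R,d}$. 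For the outer inclusion, every unit facet direction $v_i$ of $A^m$ satisfies
\[
\mathcal{S}_{V^{z,d}_{(s_0,p)}}(v_i)-\mathcal{S}_{A^m}(v_i)\le\tilde\epsilon\,\mathcal{S}_{V^{z,d}_{(s_0,p)}}(v_i)\le\tilde\epsilon\,\mathcal{S}_{\mathcal{E}^{R,d}}(v_i)=\tilde\epsilon\,R,
\]
so choosing $\tilde\epsilon=\epsilon$ yields $\mathcal{E}^{R,d}\cap V^{z,d}_{(s_0,p)}\subset A^{m,R\epsilon}$.

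The main obstacle will be the bound $\mathcal{S}_{V^{z,d}_{(s_0,p)}}(v_i)\le R$ in the last display, which requires that the ball $\mathcal{E}^{R,d}$ is not binding against $V^{z,d}_{(s_0,p)}$. This is guaranteed whenever $V^{z,d}_{(s_0,p)}\subset\mathcal{E}^{R,d}$; since any $\xb\in V^{z,d}_{(s_0,p)}$ satisfies $\|\xb\|_\infty\le z$ and therefore $V^{z,d}_{(s_0,p)}\subset[-z,z]^d\subset\mathcal{E}^{z\sqrt{d},d}$, it suffices that $R\ge z\sqrt{d}$, which is the regime in which Lemma \ref{lemma:approximstCovexSet} is invoked inside Lemma \ref{lemma:CCK2}. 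Outside this regime one must also track the ball on the $s_0$-dimensional slices $\pi_S(\mathcal{E}^{R,d})=B_2^{s_0}(R)$, applying Lemma \ref{lemma:polyAppr} separately to $B_p^{s_0}(z)\cap B_2^{s_0}(R)\subset\reals^{s_0}$ for each $S$; this nested approximation is what accounts for the exponent $s_0^2$, rather than the $s_0$ obtained above, in the stated facet bound.
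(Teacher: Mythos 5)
Your core construction is correct and is a genuinely different, more informative route than the paper's proof, which consists of the single assertion that the lemma follows immediately from Lemma \ref{lemma:polyAppr}. Applying Lemma \ref{lemma:polyAppr} directly to the $d$-dimensional body would give a count exponential in $d$, so a reduction to dimension $s_0$ is really what is needed; your argument — writing $\|\xb\|_{(s_0,p)}=\max_{|S|=s_0}\|\xb_S\|_p$, approximating the dual ball $B^{s_0}_{p'}$ by a polytope with at most $(\gamma\epsilon^{-1/2}\ln(1/\epsilon))^{s_0}$ vertices via Lemma \ref{lemma:polyAppr}, and lifting each vertex over each support set $S$ to a half-space — yields a facet description directly, improves the exponent from $s_0^2$ to $s_0$, and your support-function bookkeeping for the two inclusions is sound. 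However, all of this is established only under the side condition $V^{z,d}_{(s_0,p)}\subset\mathcal{E}^{R,d}$, as you yourself note.

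The gap is the remaining regime, and your final paragraph does not close it. First, it is not accurate that the lemma is invoked only when $R\ge z\sqrt d$: in the proof of Lemma \ref{lemma:CCK2} it is applied inside a supremum over all $z\in(0,\infty)$ with $R=(dn)^{1/2}$ fixed, so arbitrarily large $z$ does occur (the use in Lemma \ref{lemma:boundhatD5} is in your regime). Second, the proposed slice-wise approximation of $B^{s_0}_p(z)\cap B_2^{s_0}(R)$ cannot repair this: the set $\{\xb:\|\xb_S\|_2\le R \text{ for all } |S|=s_0\}$ strictly contains $\mathcal{E}^{R,d}$ (the constant vector with entries $R/\sqrt{s_0}$ satisfies every slice constraint but has Euclidean norm $R\sqrt{d/s_0}$), so the lifted polytope is not contained in $\mathcal{E}^{R,d}\cap V^{z,d}_{(s_0,p)}$ and the inner inclusion fails, while rescaling it into the ball destroys the outer inclusion. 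In fact no construction can meet the stated facet bound once the Euclidean ball binds: if, say, $z\ge s_0^{1/p}R$, then $\mathcal{E}^{R,d}\cap V^{z,d}_{(s_0,p)}=\mathcal{E}^{R,d}$, and the two inclusions force every facet normal $v$ of $A^m$ to satisfy $\mathcal{S}_{A^m}(v)\ge R(1-\epsilon)$ while $A^m\subset\mathcal{E}^{R,d}$; since all facet offsets are then positive, the origin is interior, and for each unit $u$ the exit point of the ray $\{tu:t\ge0\}$ from $A^m$ lies on some facet whose normal $v$ must satisfy $u^\top v\ge(1-\epsilon)$ (the exit point has $u$-coordinate times $\|x\|\le R$ equal to an offset that is at least $R(1-\epsilon)$). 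Hence the facet normals cover $\mathbb{S}^{d-1}$ at inner-product resolution $1-\epsilon$, forcing $m$ to be exponential in $d$, far above $d^{s_0}(\gamma\epsilon^{-1/2}\ln(1/\epsilon))^{s_0^2}$. So the lemma is provable only under a restriction such as $V^{z,d}_{(s_0,p)}\subset\mathcal{E}^{R,d}$ (your main regime), and the ball-binding case must be excluded or handled by a separate device (for instance truncating with a coordinate cube, whose $2d$ sparse-normal half-spaces can simply be added to your $A^m$, instead of with the Euclidean ball); to be fair, the paper's own one-sentence proof passes over exactly this point as well.
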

\begin{lemma}\label{lemma:gaussianDif}(Nazarov’s inequality in \cite{nazarov2003:app})
Let $\bW=(W_1,\ldots,W_d)^\top\in \reals^d$ be centered Gaussian random vector with $\inf_{k=1,\ldots, d}E[W_k^2]\ge b >0$. For any $\xb\in\reals^d$ and $a>0$, we then have 
\[
\P(\bW\le \xb + a) - \P(\bW \le \xb) \le Ca\sqrt{\log d},
\]
where $C$ only depends on $b$.

\end{lemma}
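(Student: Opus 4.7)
The plan is to follow Nazarov's original approach: standardize variances, express the probability difference as an integral of boundary densities, and reduce the problem to a uniform density bound for the coordinatewise maximum of a Gaussian vector.

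First, I would standardize so that every marginal has variance $1$. Set $\sigma_k = \sqrt{\E W_k^2} \ge \sqrt{b}$ and $\widetilde W_k = W_k/\sigma_k$, so $\widetilde W$ is centered Gaussian with unit diagonal covariance. The event $\{W \le \xb + a\} \setminus \{W \le \xb\}$ becomes $\{\widetilde W \le \widetilde y + \widetilde a\} \setminus \{\widetilde W \le \widetilde y\}$, where $\widetilde y_k = x_k/\sigma_k$ and $0 < \widetilde a_k = a/\sigma_k \le a/\sqrt b$. Hence it suffices to prove, for any centered Gaussian $\widetilde W$ on $\reals^d$ with unit marginals, any $\widetilde y \in \reals^d$, and any $\widetilde a \in \reals_+^d$, the inequality $\P(\widetilde W \le \widetilde y + \widetilde a) - \P(\widetilde W \le \widetilde y) \le C\|\widetilde a\|_\infty \sqrt{\log d}$, which directly yields the lemma with constant depending only on $b$.

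Second, I would write $F(t) = \P(\widetilde W \le \widetilde y + t\widetilde a)$ for $t \in [0,1]$, so the difference of interest equals $\int_0^1 F'(t)\,dt$. Differentiating yields
\[
F'(t) \;=\; \sum_{k=1}^d \widetilde a_k \, \phi(\widetilde y_k + t\widetilde a_k)\, \P\bigl(\widetilde W_j \le \widetilde y_j + t\widetilde a_j,\ j\ne k \,\big|\, \widetilde W_k = \widetilde y_k + t\widetilde a_k\bigr) \;\le\; \|\widetilde a\|_\infty \sum_{k=1}^d \partial_k \P(\widetilde W \le z)\big|_{z=\widetilde y+t\widetilde a}.
\]
So the entire lemma reduces to establishing the uniform estimate
\[
(\star)\qquad \sup_{z \in \reals^d}\; \sum_{k=1}^d \partial_k \P(\widetilde W \le z) \;\le\; C\sqrt{\log d}.
\]

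Third, I would recognize the sum in $(\star)$ as the value at $0$ of the density $f_M$ of $M := \max_{1 \le k \le d}(\widetilde W_k - z_k)$, because $\sum_k \partial_k \P(\widetilde W \le z) = \tfrac{d}{ds}\big|_{s=0} \P(M \le s) = f_M(0)$. Thus $(\star)$ is equivalent to showing $\sup_{s\in\reals} f_M(s) \le C\sqrt{\log d}$ uniformly over the shift vector $z$. Here I would combine two ingredients: (i) log-concavity of $F_M(s) = \P(M \le s) = \gamma(R + s\mathbf{1})$ (where $R = \{w \le z\}$), which follows from Borell's Gaussian analogue of the Prekopa-Leindler inequality since translates of the convex set $R$ are Minkowski convex combinations of $R$ with itself, so that $f_M/F_M$ is non-increasing and $f_M$ attains its supremum at some mode $s_\star$; and (ii) a direct estimate at $s_\star$ by splitting the index set $\{1,\dots,d\}$ into indices with $|\widetilde y_k + s_\star| \le 2\sqrt{\log d}$, where $\phi \le (2\pi)^{-1/2}$ and the conditional probabilities sum to $O(1)$ by a Slepian-type comparison, and indices with $|\widetilde y_k + s_\star| > 2\sqrt{\log d}$, where $\phi(\widetilde y_k + s_\star) \le d^{-2}/\sqrt{2\pi}$ makes the contribution negligible.

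The main obstacle is step three, i.e.\ the uniform bound on $f_M$: the correlations within $\widetilde W$ are arbitrary, which precludes any direct product-measure argument, and a naive Borell-TIS concentration bound gives only sub-Gaussian parameter $1$ but no tight control on $f_M$ at its mode. The cleanest resolution is to invoke the Gaussian anti-concentration bound for maxima of \citet{chernozhukov2014central}, which directly yields $\sup_s f_M(s) \le C\sqrt{1 \vee \log d}$; alternatively one can proceed by Nazarov's original interpolation argument combining log-concavity of $F_M$ with the two-scale split above.
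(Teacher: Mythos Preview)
The paper does not prove this lemma: it is stated as Nazarov's inequality with a citation to \cite{nazarov2003:app}, and no argument is supplied in the appendix (only Lemmas~\ref{lemma:CCK2}, \ref{lemma:approximstCovexSet}, \ref{lemma:foldedNormal}, and \ref{lemma:sigmaRErrors} receive proofs there). So there is no paper proof to compare against; you are supplying an argument where the authors simply invoke the literature.

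Your reduction is sound through the identification $\sum_k \partial_k \P(\widetilde W\le z)=f_M(0)$ and the log-concavity of $s\mapsto F_M(s)$, which you justify correctly via Pr\'ekopa's theorem. The gap is in your step~(ii) for bounding $f_M$ at its mode. For indices $k\in A=\{k:|z_k+s_\star|\le 2\sqrt{\log d}\}$ you assert that ``the conditional probabilities sum to $O(1)$ by a Slepian-type comparison''; this is not correct as stated. Each conditional probability $P_k=\P(\widetilde W_j\le z_j+s_\star,\ j\ne k\mid \widetilde W_k=z_k+s_\star)$ can be close to $1$ when the remaining constraints are slack, and there is no reason for $\sum_{k\in A}P_k$ to be bounded independently of $|A|$. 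Slepian's lemma controls $\P(\max_j \widetilde W_j\le\cdot)$ monotonically in the correlations, but it does not turn a sum of conditional half-space probabilities into an $O(1)$ quantity. Nazarov's actual argument at this point is more delicate: it exploits log-concavity of $F_M$ together with the crude one-sided bound $1-F_M(s)\le \sum_k \bar\Phi(z_k+s)$ to extract the $\sqrt{\log d}$ rate, rather than bounding the density sum term-by-term. Your fallback of citing the anti-concentration bound of \cite{chernozhukov2014central} is legitimate and non-circular (their 2015 proof proceeds by Stein interpolation, not via Nazarov), but then your write-up becomes a reduction to an equivalent statement rather than a self-contained proof.
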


\begin{lemma}\label{lemma:foldedNormal}
	$\bW=(W_1,\dots, W_d)^\top$ is a random vector with
	the  marginal distribution $N(0,\sigma^2)$. For any $t>0$, we have
	\begin{equation}\label{ineq:fn0}
	\E\Big[\max_{1\le i\le d}|W_i|\Big]\le \frac{\log(2d)}{t} +\frac{t\sigma^2}{2}.
	\end{equation}
\end{lemma}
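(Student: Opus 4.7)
The plan is to use the standard exponential-moment bound for maxima of sub-Gaussian random variables, which gives the inequality directly without any further ingredients. The argument has no real obstacle; the only thing to be careful about is that the marginals are $|W_i|$ (folded normal), not $W_i$, so an extra factor of $2$ has to be tracked.

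First I would apply Jensen's inequality to the convex function $x\mapsto e^{tx}$ (for $t>0$): this gives
\[
\exp\!\Big(t\,\E\big[\max_{1\le i\le d}|W_i|\big]\Big)\;\le\;\E\Big[\exp\!\big(t\max_{1\le i\le d}|W_i|\big)\Big]\;=\;\E\Big[\max_{1\le i\le d}e^{t|W_i|}\Big].
\]
Next I would replace the maximum by a sum to get $\E[\max_i e^{t|W_i|}]\le \sum_{i=1}^d \E[e^{t|W_i|}]$. To handle the folded exponential moment, I would use the pointwise bound $e^{t|W_i|}\le e^{tW_i}+e^{-tW_i}$, together with the Gaussian moment generating function $\E[e^{tW_i}]=\E[e^{-tW_i}]=e^{t^2\sigma^2/2}$ (this uses only the marginal $N(0,\sigma^2)$ assumption, not joint independence, which is why no dependence structure among the $W_i$'s is needed). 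This yields
\[
\sum_{i=1}^d\E[e^{t|W_i|}]\;\le\;2d\,e^{t^2\sigma^2/2}.
\]

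Finally, combining the two displays gives $\exp(t\,\E[\max_i|W_i|])\le 2d\,e^{t^2\sigma^2/2}$; taking logarithms of both sides and dividing by $t>0$ produces exactly the claimed bound $\E[\max_i|W_i|]\le \log(2d)/t+t\sigma^2/2$. (As a sanity check, optimizing over $t$ recovers the familiar $\sigma\sqrt{2\log(2d)}$ maximal inequality.) There is no genuinely hard step here; the lemma is a textbook Chernoff-style calculation included solely for reference in the main proofs.
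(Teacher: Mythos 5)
Your proof is correct and follows essentially the same route as the paper: Jensen's inequality on $x\mapsto e^{tx}$, bounding the maximum by a sum, and the bound $\E[e^{t|W_i|}]\le 2e^{t^2\sigma^2/2}$. The only cosmetic difference is that the paper obtains this last bound by citing the exact folded-normal moment generating function, whereas you derive it elementarily via $e^{t|W_i|}\le e^{tW_i}+e^{-tW_i}$, which is a perfectly valid (and self-contained) substitute.
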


To estimate the covariance matrix of $U$-statistic based vector, we introduce  $\sigma_{\gamma,st}$ and $\hat{\sigma}_{\gamma,st}$  in (\ref{def:sigmast}) and (\ref{def:sigmahat}).
The following lemma then analyzes  the estimation error of $\hat{\sigma}_{\gamma,st}$. To analyze the correlation matrix, we also  provide the approximation error of $\hat{r}_{\gamma,st}$, where
\begin{equation}\label{def:R1R2hatR1R2}
r_{\gamma,st}=\sigma_{\gamma,st}/{\sqrt{\sigma_{\gamma,ss}\sigma_{\gamma,tt}}}
\hspace{2em}{\rm and}\hspace{2em}
\hat{r}_{\gamma,st}={\hat{\sigma}_{\gamma,st}}/{\sqrt{\hat{\sigma}_{\gamma,ss}\hat{\sigma}_{\gamma,tt}}}.
\end{equation}
\begin{lemma}\label{lemma:sigmaRErrors}
	Assumptions  {\bf (E)}, {\bf (M1)}, and {\bf (M2)} hold. For $\log(qn)=o(n^{1/3})$ and $m>1$, when $n$ is sufficiently large, 
	\begin{equation}\label{sigmaErrorM1+}
	\max_{1\le s,t\le q\atop \gamma=1,2} \max\Big(\big|\hat{\sigma}_{\gamma,st}-\sigma_{\gamma,st}\big|, \big|\hat{r}_{\gamma,st}-r_{\gamma,st}\big|\Big) \le C \frac{\log^{3/2}(qn)}{\sqrt{n}},
	\end{equation}
	holds with probability $1-C_1n^{-1}$. For $\log(qn)=o(n^{1/2})$ and $m=1$,  when $n$ is sufficiently large, 
	\begin{equation}\label{sigmaErrorM1+}
	\max_{1\le s,t\le q\atop \gamma=1,2} \max\Big(\big|\hat{\sigma}_{\gamma,st}-\sigma_{\gamma,st}\big|, \big|\hat{r}_{\gamma,st}-r_{\gamma,st}\big|\Big) \le C\sqrt{\frac{\log(qn)}{n}}+C\frac{\log ^2(qn)}{n},
	\end{equation}
	holds with probability $1-C_1n^{-1}$. 
\end{lemma}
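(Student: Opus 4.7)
The plan is to reduce $\hat{\sigma}_{\gamma,st}$ to the empirical average $n_\gamma^{-1}\sum_k h_s(\bX_k)h_t(\bX_k)$ and then apply Bernstein-type concentration with a union bound over the $q^2$ pairs. Without loss of generality I treat $\gamma=1$. Since $Q_{1k,s}$ is, conditional on $\bX_k$, a $U$-statistic of order $m-1$ in the arguments $\bX_\ell$, $\ell\ne k$, with kernel $\Phi_s(\bX_k,\cdot,\ldots,\cdot)$, and since by definition of $h_s$ one has $\E[\Phi_s(\bX_k,\bX_2,\ldots,\bX_m)\mid \bX_k]=u_{1,s}+h_s(\bX_k)$, the basic identity is
\[
Q_{1k,s}-\hat{u}_{1,s}=h_s(\bX_k)+(u_{1,s}-\hat{u}_{1,s})+R_{k,s},\qquad R_{k,s}:=Q_{1k,s}-u_{1,s}-h_s(\bX_k),
\]
where $R_{k,s}$ is a conditionally centered $U$-statistic of order $m-1$ on the remaining $n_1-1$ observations. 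For $m=1$, $R_{k,s}\equiv 0$.

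I would then establish uniform bounds on the two ``noise'' ingredients. Applying Bernstein's inequality for sub-exponential sums to the Hoeffding projection of $\hat{u}_{1,s}$, combined with {\bf (E)} and a union bound over $s$, yields $\max_s|\hat{u}_{1,s}-u_{1,s}|=O_p(\sqrt{\log(q)/n_1})$. For $m>1$ I bound $R_{k,s}$ by conditioning on $\bX_k$: Bernstein for $U$-statistics (Arcones--Gin\'e) gives a conditional sub-exponential tail of order $\sqrt{\log(qn_1)/n_1}$, with constants controlled through moments of $\Phi_s(\bX_k,\cdot,\ldots,\cdot)$ obtained from {\bf (E)}; a further union bound over $k$ costs an additional $\sqrt{\log n_1}$ and delivers $\max_{k,s}|R_{k,s}|=O_p(\log^{3/2}(qn_1)/\sqrt{n_1})$. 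I also record $\max_s n_1^{-1}\sum_k h_s(\bX_k)^2=O_p(1)$ via {\bf (M2)} and Bernstein.

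Setting $\Delta_{k,s}:=(u_{1,s}-\hat{u}_{1,s})+R_{k,s}$, substitution gives $\hat{\sigma}_{1,st}=n_1^{-1}\sum_k h_s(\bX_k)h_t(\bX_k)+E_{st}$, where $E_{st}$ is a sum of the cross terms $n_1^{-1}\sum_k h_s(\bX_k)\Delta_{k,t}$, $n_1^{-1}\sum_k\Delta_{k,s}h_t(\bX_k)$, and $n_1^{-1}\sum_k \Delta_{k,s}\Delta_{k,t}$. Cauchy--Schwarz together with the bounds of the previous paragraph gives $\max_{s,t}|E_{st}|=O_p(\log^{3/2}(qn_1)/\sqrt{n_1})$ when $m>1$, and $O_p(\log(qn_1)/n_1)$ when $m=1$ (in the latter case $\Delta_{k,s}$ is independent of $k$, so the cross terms combine exactly with the squared term into $-(\hat{u}_{1,s}-u_{1,s})(\hat{u}_{1,t}-u_{1,t})$). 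The main term $n_1^{-1}\sum_k h_s(\bX_k)h_t(\bX_k)-\sigma_{1,st}$ is handled by Bernstein's inequality applied to the products $h_s(\bX_k)h_t(\bX_k)$, which under {\bf (E)} are sub-Weibull of order $1/2$ with mean $\sigma_{1,st}$; a union bound over the $q^2$ pairs yields the rate $O_p(\sqrt{\log(qn_1)/n_1}+\log^2(qn_1)/n_1)$, which combines with $E_{st}$ to give the two stated bounds on $|\hat{\sigma}_{\gamma,st}-\sigma_{\gamma,st}|$. The correlation bound then follows from
\[
\hat{r}_{\gamma,st}-r_{\gamma,st}=\frac{\hat{\sigma}_{\gamma,st}-\sigma_{\gamma,st}}{\sqrt{\hat{\sigma}_{\gamma,ss}\hat{\sigma}_{\gamma,tt}}}+r_{\gamma,st}\,\frac{\sqrt{\sigma_{\gamma,ss}\sigma_{\gamma,tt}}-\sqrt{\hat{\sigma}_{\gamma,ss}\hat{\sigma}_{\gamma,tt}}}{\sqrt{\hat{\sigma}_{\gamma,ss}\hat{\sigma}_{\gamma,tt}}},
\]
using $|r_{\gamma,st}|\le 1$ and the fact that $\sigma_{\gamma,ss}\ge b$ (from {\bf (M1)} applied to $\vb=e_s$), which forces $\hat{\sigma}_{\gamma,ss}\ge b/2$ on the high-probability event.

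The main technical obstacle is the uniform control of $\max_{k,s}|R_{k,s}|$ for $m>1$: the kernel $\Phi_s(\bX_k,\cdot,\ldots,\cdot)$ has a random, $\bX_k$-dependent sub-exponential norm, so a clean union bound over $k$ requires first controlling the tails of these conditional norms through {\bf (E)} before invoking the conditional $U$-statistic Bernstein inequality. This is precisely the source of the extra $\sqrt{\log n_1}$ factor that separates the $m=1$ rate $\sqrt{\log(qn)/n}$ from the $m>1$ rate $\log^{3/2}(qn)/\sqrt{n}$.
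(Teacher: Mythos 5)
Your proposal is correct and lands on the same rates for both $m>1$ and $m=1$, and it uses the same basic toolkit as the paper (Hoeffding/H\'ajek projection, kernel truncation at $B_n\asymp\log(qn)$, exponential inequalities for $U$-statistics, union bounds, and sub-exponential concentration for the products $h_s(\bX)h_t(\bX)$, plus the same algebra for passing from $\hat\sigma$ to $\hat r$ using $\sigma_{\gamma,ss}\ge b$ and $|r_{\gamma,st}|\le 1$). The execution differs in the bookkeeping: the paper writes $\hat\sigma_{1,st}=n_1^{-1}\sum_k\tilde Q_{1k,s}\tilde Q_{1k,t}-\tilde u_{1,s}\tilde u_{1,t}$, expands $\tilde Q_{1k,s}$ explicitly as $\binom{n_1-1}{m-1}^{-1}\big(Ah_s(\bX_k)+BS_{1,s}+\Upsilon^{(k)}_{1,s}\big)$ and bounds five aggregate terms $J_{1,st},\dots,J_{5,st}$ (using Cauchy--Schwarz at the level of the sums $\Lambda_{1,ss}$, $V_{1,tt}$, etc.), whereas you fold everything except $h_s(\bX_k)$ into a single per-observation remainder $\Delta_{k,s}=(u_{1,s}-\hat u_{1,s})+R_{k,s}$, control it in sup-norm over $(k,s)$, and apply Cauchy--Schwarz once; your route is arguably more streamlined, at the cost of needing the uniform bound $\max_{k,s}|R_{k,s}|=O_p(\log^{3/2}(qn)/\sqrt n)$. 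The obstacle you flag there (the $\bX_k$-dependent conditional sub-exponential norm of $\Phi_s(\bX_k,\cdot,\ldots,\cdot)$) is resolved exactly by the device the paper uses: truncate the kernel $\Psi_s$ at $B_n=C\log(qn)$ unconditionally, so the conditional Bernstein/Arcones--Gin\'e bound has deterministic scale $B_n$, and the truncation events and bias are negligible by {\bf (E)} after a union bound over at most $qn^m$ terms; this yields the stated $1-C_1n^{-1}$ probability rather than just an $O_p$ statement. One minor precision: uniform control of $\max_s n_1^{-1}\sum_k h_s(\bX_k)^2$ over $q$ coordinates needs the sub-exponential tails of $h_s$ inherited from {\bf (E)} via conditional Jensen, not only the moment bounds in {\bf (M2)}; with that noted, your argument closes.
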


\section{Proof of main results} \label{SM:mainResults}
In Appendix \ref{SM:mainResults}, we present the detailed proofs of main results including Proposition \ref{proposition:s0pnorm},  Theorems \ref{therom:CoreWN1+},  \ref{theorem:power}, \ref{thm:AdSize} \ref{theorem:Adpower}, Remarks \ref{remark:Rs0p}, and Corollary \ref{corollary:size}.

\subsection{Proof of Proposition \ref{proposition:s0pnorm} }\label{proof:s0pnorm}
\begin{proof}
	We need to prove that for any $1\le p\le \infty$, $a\in\reals$ and $\xb,\yb\in\reals^d$, we have (i) $\|a\xb\|_{(s_0,p)}=|a|\|\xb\|_{(s_0,p)}$; (ii) $\|\xb + \yb\|_{(s_0,p)}\le \|\xb\|_{(s_0,p)}+\| \yb\|_{(s_0,p)}$;
(iii) $\|\xb\|_{(s_0,p)}=0$ implies $\xb=\zero$. 
	By Definition \ref{def:s0pnorm}, for $\xb=(x_1,\ldots,x_d)^\top$ we have \[\|\xb\|_{(s_0,p)}=\Big(\sum\nolimits_{j=d-s_0+1}^d(x^{(j)})^p\Big)^{1/p},\]
	We use $\bk_1$ to denote the index of $x^{(d-s_0+1)}, x^{(d-s_0+2)},\ldots, x^{(d)}$. Therefore, we have $\|\xb\|_{(s_0,p)}=\|\xb_{\bk_1 }\|_p$, where $\xb_{\bk_1}\in \reals^{s_0}$. We then separately prove (i), (ii), and (iii). For (i), we have 
	\[\|a\xb\|_{(s_0,p)}=\|a\xb_{\bk_1}\|_p=|a|\|\xb_{\bk_1}\|_p=|a|\|\xb\|_{(s_0,p)}.\]
	For (iii), from $\|\xb\|_{(s_0,p)}=0$, we have $x^{(d)}=0$, which implies $\xb=\zero$. Therefore, to prove Proposition \ref{proposition:s0pnorm}, we only need to prove 
	\[
	\|\xb + \yb\|_{(s_0,p)}\le \|\xb\|_{(s_0,p)}+\| \yb\|_{(s_0,p)}.
	\]
	Similarly to the definition of $\bk_1$, we define $\bk_2$, $\bk_{12}$ for $\yb$ and $\xb+\yb$. We then have 
	\begin{equation}\label{neq:norm1}
	\|\yb\|_{(s_0,p)}=\|\yb_{\bk_2}\|_p\hspace{1em}\text{and}\hspace{1em}
	\|\xb + \yb\|_{(s_0,p)}=\|(\xb+\yb)_{\bk_{12}}\|_{p}.
	\end{equation}
	For $1\le p\le \infty$, $\|\cdot\|_p$ is a norm. Hence, we have 
	\begin{equation}\label{neq:norm2}
	\|(\xb+\yb)_{\bk_{12}}\|_{p}=\|\xb_{\bk_{12}}+\yb_{\bk_{12}}\|_{p}\le \|\xb_{\bk_{12}}\|_p+\|\yb_{\bk_{12}}\|_{p}.
	\end{equation}
	 By the definition of $\bk_1$ and $\bk_2$, we  have 
	 \begin{equation}\label{neq:norm3}
	 \|\xb_{\bk_{12}}\|_p\le \|\xb_{\bk_1}\|_p=\|\xb\|_{(s_0,p)}\hspace{1em} \text{and}\hspace{1em}  \|\yb_{\bk_{12}}\|_p\le \|\yb_{\bk_2}\|_p=\|\yb\|_{(s_0,p)}.
	 \end{equation}
	 Combining (\ref{neq:norm1}), (\ref{neq:norm2}), and (\ref{neq:norm3}), we have (iii), which finishes the proof.
\end{proof}
\subsection{Proof of Theorem \ref{therom:CoreWN1+}}\label{proof:theoremCoreNW1+}
\begin{proof}
	In Theorem \ref{therom:CoreWN1+}, we aim to prove (\ref{ineq:CoreW1+}) and (\ref{ineq:CoreN1+}).
	For simplicity,  we only present the  detailed proof of (\ref{ineq:CoreN1+}). According to the proof sketch in Section \ref{sec:theorys0lp}, the proof proceeds in three steps. In the first step, we obtain the approximate distribution of $\bN$. In the second step, given $\mathcal{X}$ and $\mathcal{Y}$ we  obtain the bootstrap sample $\bN^b$'s distribution. In the last step, we analyze the approximation error between $\bN$ and $\bN^b|\mathcal{X},\mathcal{Y}$ to yield (\ref{ineq:CoreN1+}).
	
	{\bf Step (i).} In this step, we  aim to obtain  the approximate distribution of $\bN$.  As $\hat{u}_{\gamma,s}$ is a  $U$-statistic, by the Hoeffding decomposition we approximate  $\bN$ by a sum of independent random vectors. Hence, we can further  approximate the sum by its Gaussian counterpart. In detail, under the null hypothesis we have $u_{1,s}=u_{2,s}$. Therefore, we rewrite  $N_{s}$ as
	\begin{equation}\label{def:N_sN1+}
	N_s = (\tilde{u}_{1,s}-\tilde{u}_{2,s})/{\sqrt{\hat{v}_{1,s}/n_1
			+\hat{v}_{2,s}/n_2}},
	\end{equation}
	where $\tilde{u}_{\gamma,s}:=\hat{u}_{\gamma,s}-u_{\gamma,s}$ is the centralized version of
	$\hat{u}_{\gamma,s}$. For introducing Hoeffding  decomposition, we define
	\[
	h_s(\bX_k)=\E[\Psi_s(\bX_{k_1},\ldots,\bX_{k_m})|X_k],
	\]
	where $\Psi_s$ are defined in (\ref{def:PsiAnd}).
	Hence, by the Hoeffding decomposition,  we decompose $\tilde{u}_{\gamma,s}$ as
	\begin{equation}\label{decomposition:tildeusforproof}
	\begin{aligned}
	\tilde{u}_{1,s}&=\frac{m}{n_1}\sum\limits_{k=1}^{n_1}h_{s}(\bX_k)+{\binom{n_1}{m}}^{-1}\Delta_{n_1,s},\\
	\tilde{u}_{2,s}&=\frac{m}{n_2}\sum_{k=1}^{n_2}h_{s}(\bY_k)+{\binom{n_2}{m}}^{-1}\Delta_{n_2,s},
	\end{aligned}
	\end{equation}
	where  we define $\Delta_{n_1,s}$ and $\Delta_{n_1,s}$ as 
	\[
	\begin{aligned}
	\Delta_{n_1,s}&=\sum\limits_{1\le k_1<k_2<\ldots<k_m\le n_1}\Big(\Psi_{s}(\bX_{k_1},\ldots,\bX_{k_m})-\sum_{\ell=1}^{m}h_{s}(\bX_{k_\ell})\Big),\\
	\Delta_{n_2,s}&=\sum\limits_{1\le k_1<k_2<\ldots<k_m \le n_2}\Big(\Psi_{s}(\bY_{k_1},\ldots,\bY_{k_m})-\sum_{\ell =1}^{m}h_{s}(\bY_{k_\ell})\Big).
	\end{aligned}
	\]
	We then  use $m\sum_{k=1}^{n_1}h_s(\bX_k)/n_1$ and  $m\sum_{k=1}^{n_2}h_s(\bY_k)/n_2$ to approximate $\tilde{u}_{1,s}$ and $\tilde{u}_{2,s}$. By setting $\bSigma_{1}:=(\sigma_{1,st}), \bSigma_{2}:=(\sigma_{2,st})\in \reals^{q\times q}$ with
	\begin{equation}\label{def:sigmastN1+}
	\sigma_{1,st}=\E\big(h_s(\bX)h_t(\bX)\big)\hspace{2em}{\rm and}\hspace{2em}\sigma_{2,st}=\E\big(h_s(\bY)h_t(\bY)\big),
	\end{equation}
	considering  $\hat{v}_{\gamma,s}=m^2\hat{\sigma}_{\gamma,ss}$, as $n\rightarrow\infty$
	we  have
	$\hat{v}_{\gamma,s}\rightarrow m^2\sigma_{\gamma,ss}$, which motivates us to define
	\begin{equation}\label{def:HsN1+}
	H_s^N={\Big(\dfrac{1}{n_1}\sum\limits_{k=1}^{n_1}h_s(\bX_k)-\dfrac{1}{n_2}\sum\limits_{k=1}^{n_2} h_s(\bY_k)\Big)}/{\sqrt{\sigma_{1,ss}/n_1+\sigma_{2,ss}/n_2}}.
	\end{equation}
	Moreover, by setting  $\bH^N=(H_1^N,\ldots,H_q^N)^\top$, we have that $\bH^N$  approximates  $\bN$, and the approximation error is characterized by the following lemma.  
	\begin{lemma}\label{lemma:NhatHApproximationN1+}
		Assumptions {\bf (A)}, {\bf (E)}, {\bf (M1)}, and {\bf (M2)} hold.
		Under $\Hb_0$ of (\ref{def:htwo-sampleu}) there is a constant $C>0$ such that as $n\rightarrow\infty$, we have
		\begin{align}
		\P\Big(\|\bN -\bH^N\|_{(s_0,p)}> \varepsilon\Big) = o(1),
		\end{align}
		where $\varepsilon=Cs_0\log^2(qn) n^{-1/2}$.
	\end{lemma}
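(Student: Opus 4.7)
The plan is to decompose the coordinate-wise error $N_s - H_s^N$ into a Hoeffding-remainder contribution and a variance-normalization contribution, control each in sup-norm uniformly over $s$, and then pay a factor of $s_0^{1/p} \leq s_0$ via the norm comparison $\|\vb\|_{(s_0,p)} \leq s_0^{1/p}\|\vb\|_\infty$ (immediate from the definition). Concretely, setting $A_s := \tilde u_{1,s}-\tilde u_{2,s}$, $B_s := \frac{m}{n_1}\sum_k h_s(\bX_k) - \frac{m}{n_2}\sum_k h_s(\bY_k)$, $\hat\tau_s^2 := \hat v_{1,s}/n_1 + \hat v_{2,s}/n_2$ and $\tau_s^2 := m^2\sigma_{1,ss}/n_1 + m^2\sigma_{2,ss}/n_2$, I would use the identity $N_s - H_s^N = (A_s - B_s)/\hat\tau_s + B_s(\tau_s - \hat\tau_s)/(\hat\tau_s\tau_s)$. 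Assumption (M1) forces $\tau_s \asymp n^{-1/2}$, and Lemma \ref{lemma:sigmaRErrors} guarantees that $\hat\tau_s \asymp n^{-1/2}$ uniformly in $s$ with probability $1-o(1)$, which I would fix as the high-probability event for the rest of the argument.

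For the Hoeffding-remainder term, (\ref{decomposition:tildeusforproof}) identifies $A_s - B_s$ as the sum of two completely degenerate $U$-statistics (orders $2$ to $m$) rescaled by $\binom{n_\gamma}{m}^{-1}$. Under the sub-exponential Assumption (E), I would invoke exponential tail bounds for degenerate $U$-statistics (Arcones--Giné type) to get $\P(|\binom{n_\gamma}{m}^{-1}\Delta_{n_\gamma,s}|>t) \leq C\exp(-c(nt)^{1/m})$ for each $s$, then union-bound over $s=1,\ldots,q$ and $\gamma=1,2$ to obtain $\max_s|A_s - B_s| \leq C\log^{2m}(qn)/n$ (and after dividing by $\hat\tau_s \gtrsim n^{-1/2}$, a bound of order $\log^{2m}(qn)/\sqrt n$). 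For the normalization-error term, Lemma \ref{lemma:sigmaRErrors} gives $\max_s|\hat v_{\gamma,s} - m^2\sigma_{\gamma,ss}| \leq C\log^{3/2}(qn)/\sqrt n$, hence $\max_s|\hat\tau_s^2 - \tau_s^2| \leq C\log^{3/2}(qn)/n^{3/2}$, and therefore $\max_s|\hat\tau_s-\tau_s| \leq C\log^{3/2}(qn)/n$ using $\hat\tau_s+\tau_s \gtrsim n^{-1/2}$. Meanwhile $B_s/\tau_s = H_s^N$ is (approximately) standard Gaussian coordinate-wise by CLT arguments; Lemma \ref{lemma:foldedNormal} combined with a Bernstein bound for $B_s - \E B_s$ (Assumption (E)) yields $\max_s|B_s| \leq C\sqrt{\log q}/\sqrt n$ with high probability. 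Multiplying gives a second-term bound of order $\log^2(qn)/\sqrt n$.

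Combining the two uniform bounds yields $\|\bN - \bH^N\|_\infty \leq C\log^2(qn)/\sqrt n$ (absorbing constants and lower-order $\log$ powers) with probability $1-o(1)$, and the norm comparison then gives $\|\bN - \bH^N\|_{(s_0,p)} \leq s_0^{1/p}\|\bN - \bH^N\|_\infty \leq \varepsilon := C s_0 \log^2(qn)/\sqrt n$. Under Assumption (A), $\varepsilon = o(1)$, completing the claim. The main obstacle is the uniform-in-$s$ control of the Hoeffding remainder: completely degenerate $U$-statistics of order $\geq 2$ exhibit heavier tails than sums of independents, so getting the rate $C\log^2(qn)/n$ requires either Arcones--Giné exponential inequalities with careful decoupling or a direct moment computation, and the $\log$-power bookkeeping must be tight enough that the final exponent matches the stated $\log^2(qn)$ (rather than $\log^{2m}(qn)$), which one obtains by exploiting that after dividing by $\hat\tau_s$ one only needs to match the $\log^2$ level produced by the normalization term.
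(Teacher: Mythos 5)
Your decomposition is exactly the paper's: writing $N_s-H_s^N=(A_s-B_s)/\hat\tau_s+B_s(\tau_s-\hat\tau_s)/(\hat\tau_s\tau_s)$ is the same as inserting the intermediate vector $\hat\bH^N$ (Hoeffding projections normalized by the estimated variances) and splitting into $\bN-\hat\bH^N$ and $\hat\bH^N-\bH^N$, and your treatment of the normalization term (Lemma \ref{lemma:sigmaRErrors} for $\max_s|\hat\tau_s-\tau_s|\lesssim \log^{3/2}(qn)/n$, a sub-exponential bound giving $\max_s|B_s|\lesssim\sqrt{\log (qn)}/\sqrt n$, product of order $\log^2(qn)/\sqrt n$, then $\|\cdot\|_{(s_0,p)}\le s_0^{1/p}\|\cdot\|_\infty$) matches the paper's $I_2I_3$ argument step for step.

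The gap is in the Hoeffding-remainder term, and it is exactly the point you flag but do not resolve. Your claimed tail $\P\big(|\binom{n_\gamma}{m}^{-1}\Delta_{n_\gamma,s}|>t\big)\le C\exp(-c(nt)^{1/m})$ is not of the right form, and the resulting uniform bound $\max_s|A_s-B_s|\lesssim\log^{2m}(qn)/n$ would only yield $\|\bN-\bH^N\|_{(s_0,p)}\lesssim s_0\log^{2m}(qn)/\sqrt n$, which is strictly weaker than the stated $\varepsilon=Cs_0\log^2(qn)/\sqrt n$ once $m\ge 2$; the remark that ``one only needs to match the $\log^2$ level produced by the normalization term'' is circular and does not supply the missing mechanism. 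The paper closes this by first truncating the kernel at $B_n=C\log(qn)$ (Assumption {\bf (E)} makes the exceedance probability over all $qn^m$ kernel evaluations and the truncation bias $E_{\gamma,s}$ negligible at the level $\log^2(qn)/n$ for $C$ large---a bias term your sketch omits entirely), and then applying Proposition 2.3(c) of Arcones and Gin\'e (1993) to the bounded degenerate remainder. The crucial quantitative point your bookkeeping misses is that the order-$k$ degenerate component enters scaled by $\binom{n}{m}^{-1}\asymp n^{-m}$, i.e.\ by $n^{-k}$ rather than the CLT scale $n^{-k/2}$, so at threshold $t\asymp\log^2(qn)/n$ the exponential bound reads roughly $\exp\big(-c\,t^{2/k}n/B_n^{2/k}\big)=\exp\big(-c\,n^{1-2/k}\log^{2/k}(qn)\big)$; for $k=2$ this is $\exp(-c\log(qn))$ with $c$ adjustable through the threshold constant, and for $k\ge 3$ the factor $n^{1-2/k}$ dominates, so the union bound over $s=1,\ldots,q$ succeeds already at the $\log^2(qn)/n$ level (this is the paper's bound $I_{1,1}\le Cq\exp(-C_1n^{1-2/m}\log^{2/m}(qn))$, which is $o(1)$ under Assumption {\bf (A)}). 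With that correction your argument goes through and coincides with the paper's proof; without it, the lemma as stated (with exponent $2$ on the logarithm) is not established.
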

	
	The proofs of Lemma \ref{lemma:NhatHApproximationN1+} is in Appendix
	\ref{proof:lemma:NhatHApproximationN1+}  of supplementary materials. By the definition of $H_s^N$ in (\ref{def:HsN1+}), 
	$\bH^N$ is a sum of  random vectors with zero mean and covariance matrix $\Rb_{12}$, where we set
	\begin{equation}\label{def:SigmaD12N1+}
	\Rb_{12}:=\Db_{12}^{-1/2}\bSigma_{12}\Db_{12}^{-1/2}
	\end{equation}
	with $\bSigma_{12}=\bSigma_1/n_1+\bSigma_2/n_2$ and $\Db_{12}={\rm Diag}(\bSigma_{12})$.
	Therefore, by the central limit theorem, we can use the Gaussian random vector  $\bG^N\sim N(\zero, \bR_{12})$ to  approximate $\bH^N$. To characterize the approximation error, considering 
	$A_z:=\{\vb,\|\vb\|_{(s_0,p)}\le z\}\in \mathcal{A}_{s_0}$, by Lemma \ref{lemma:CCK2}, there is  $\zeta_0>0$ such that 
	\begin{equation}\label{ineq:GaussApproxN1+}
	\sup_{z}\Big|\P(\|\bH^N\|_{(s_0,p)}\!\le\! z)\!\!-\!\!\P(\|\bG^N\|_{(s_0,p)}\!\le\! z)\Big|\!\le Cn^{-\zeta_0}
	\end{equation}
	where the constant $C$ only depends on $K$ and $b$.
	We then use $\bG^N$ as
	the approximation for $\bN$. 
	
	{\bf Step (ii).} In this step, we aim to obtain the distribution of  $\bN^b|\mathcal{X}, \mathcal{Y}$. For this, we rewrite $\hat{u}^b_{1,s}$ and $\hat{u}^b_{2,s}$  in (\ref{def:uhatbs}) as
	\begin{equation}
	\hat{u}^b_{1,s}=\frac{m}{n_1}\sum_{k=1}^{n_1}(Q_{1k,s}-\hat{u}_{1,s})\varepsilon^b_{1,k},~\hat{u}^b_{2,s}=\frac{m}{n_2}\sum_{k=1}^{n_2}(Q_{2k,s}-\hat{u}_{2,s})\varepsilon^b_{2,k},
	\end{equation}
	where $Q_{1k,s}$ and $Q_{2k,s}$ are defined in (\ref{def:qalphas}). Considering that $\varepsilon^b_{\gamma,1},\ldots,\varepsilon^b_{\gamma,n_\gamma}$ are i.i.d. standard normal random variables,  therefore given $\mathcal{X}$ and $\mathcal{Y}$, $\hat{\bu}^b_{\gamma}:=(\hat{u}^b_{\gamma,1},\ldots,\hat{u}^b_{\gamma, q})$ follows $N(\zero, m^2\hat{\bSigma}_{\gamma}/n_\gamma)$ with $	\hat{\bSigma}_{\gamma}:=(\hat{\sigma}_{\gamma,st})\in\reals^{q\times q}$ where
	\begin{equation}\label{def:sigmahatinProf}
\hat{\sigma}_{\gamma,st}=\frac{1}{n_1}\sum_{k=1}^{n_1}(Q_{\gamma k,s}-\hat{u}_{\gamma,s})(Q_{\gamma k,t}-\hat{u}_{\gamma,t}).
	\end{equation}
	Apparently, by the definition of $\hat{v}_{\gamma,s}$ in (\ref{def:vhat}) we have $\hat{v}_{\gamma,s}=
	m^2\hat{\sigma}_{\gamma,ss}$. Therefore, by setting
	$
	\hat{\bSigma}_{12}=\hat{\bSigma}_{1}/n_1+\hat{\bSigma}_{2}/n_2$   and $\hat{\Db}_{12}={\rm Diag}(\hat{\bSigma}_{12}),
	$
	we have
	\[
	\bN^b|\mathcal{X},\mathcal{Y}=m^{-1}\hat{\Db}_{12}^{-1/2} (\hat{\bu}^b_1-\hat{\bu}^b_2)|\mathcal{X},\mathcal{Y}\sim N(\zero,\hat{\Rb}_{12}),
	\]
	where  $\hat{\Rb}_{12}=\hat{\Db}_{12}^{-1/2}\hat{\bSigma}_{12}\hat{\Db}_{12}^{-1/2}$.
	
	{\bf Step (iii).} In this step, we combine results from previous two steps to justify the
	bootstrap procedure, i.e., we aim to prove 
	\[
	\sup_{z\in(0,\infty)}\Big|\P(N_{(s_0,p)}>z )-\P(N^b_{(s_0,p)}>z |\mathcal{X}, \mathcal{Y})\Big|=o_p(1).
	\]
	For this, we need both the lower and upper bounds of $\P\big(N_{(s_0,p)}> z\big)-\P\big(N^b_{(s_0,p)}>z|\mathcal{X},\mathcal{Y}\big)$. We first presents how to obtain the upper bounds. By the triangle inequality, we have
	\begin{equation}\label{ineq:defrho1}
	\P(\|\bN\|_{(s_0,p)}\!>\!z)\!\le\!
	\P(\|\bH^N\|_{(s_0,p)}\!>\!z-\varepsilon)+
	\underbrace{
		\P(\|\bN-\bH^N\|_{(s_0,p)}\!>\!\varepsilon)}_{\rho_1}.
	\end{equation}
	By Lemmas \ref{lemma:NhatHApproximationN1+}, we have $\rho_1=o(1)$. We then bound $\P(\|\bH^N\|_{(s_0,p)}>z-\varepsilon)$. For this, we have
	\begin{equation}\label{ineq:defrho2}
	\P(\|\bH^N\|_{(s_0,p)}>z-\varepsilon)\le  \rho_2 +
	\P(\|\bG^N\|_{(s_0,p)}>z-\varepsilon),
	\end{equation}
	where $
	\rho_2 = \sup_{x>0}\Big|\P(\|\bH^N\|_{(s_0,p)}> x)-\P(\|\bG^N\|_{(s_0,p)}> x)\Big|.
	$ By (\ref{ineq:GaussApproxN1+}),  we have $
	\rho_2\le Cn^{-\zeta_0}$	which yields
	\begin{equation}\label{ineq:D1D2+}
	\P(\|\bN\|_{(s_0,p)}> z)\le \underbrace{\P(\|\bG^N\|_{(s_0,p)}>z-\varepsilon)}_{\rho_3} + o(1),
	\end{equation}
	as $n\rightarrow\infty$. We then  decompose $\rho_3$ as $\rho_3=
	\P(\|\bG^N\|_{(s_0,p)}>z) + \rho_4
	$ with \[\rho_4=\P(z-\varepsilon<\|\bG^N\|_{(s_0,p)}\le z).\]
	To control $\rho_4$, by utilizing the anti-concentration inequality for the 
	the Gaussian random vector in Lemma \ref{lemma:gaussianDif}, we introducing the following lemma.
	\begin{lemma}\label{lemma:anticoncentrate}
		Assumptions {\bf (A)} and {\bf (M1)} hold. For any $z>0$ and $\varepsilon=O(s_0\log^2(qn)n^{-1/2})$, we have 
		$\P(z-\varepsilon<\|\bG^N\|_{(s_0,p)}\le z)=o(1)$ as $n\rightarrow\infty$.
	\end{lemma}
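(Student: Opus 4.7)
The plan is to approximate $\|\bG^N\|_{(s_0,p)}$ uniformly by a maximum of finitely many centered Gaussians indexed by an $\epsilon$-net of $s_0$-sparse dual vectors, and then apply Nazarov's anti-concentration inequality (Lemma~\ref{lemma:gaussianDif}). The three ingredients are: the duality $\|v\|_{(s_0,p)} = \max_{|S|=s_0}\sup_{\|w\|_{p'}\le 1}\langle w,v_S\rangle$ (with $p' = p/(p-1)$), the polytope approximation of Lemma~\ref{lemma:polyAppr} applied in $\reals^{s_0}$, and a uniform variance lower bound on the resulting linear functionals that will follow from Assumption~{\bf (M1)} combined with sparsity.

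First, I would apply Lemma~\ref{lemma:polyAppr} to the unit $L_{p'}$-ball in $\reals^{s_0}$ to obtain a polytope with at most $|T|\le(\gamma/\sqrt{\epsilon}\ln(1/\epsilon))^{s_0}$ vertices so that, writing $\mathcal{N} := \{v_{S,w}: |S|=s_0,\ w\in T\}\subset\reals^q$ with $v_{S,w}$ denoting the embedding of a vertex $w\in T$ into the coordinates indexed by $S$,
\[
(1+\epsilon)^{-1}\|\bG^N\|_{(s_0,p)} \le M := \max_{v\in\mathcal{N}}\langle v,\bG^N\rangle \le \|\bG^N\|_{(s_0,p)}.
\]
Hence $\{\|\bG^N\|_{(s_0,p)}\in(z-\varepsilon,z]\}\subset\{M\in((z-\varepsilon)/(1+\epsilon),z]\}$. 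Choosing $\epsilon = c\varepsilon/Z_0$ with $Z_0 := C s_0^{1/p}\sqrt{\log q}$ keeps the interval length $O(\varepsilon)$ uniformly for $z\le Z_0$, while a Gaussian tail bound using $\|\bG^N\|_{(s_0,p)}\le s_0^{1/p}\|\bG^N\|_\infty$ shows $\P(\|\bG^N\|_{(s_0,p)}>Z_0/2)=o(1)$, so the regime $z>Z_0$ contributes negligibly. Note also $\log|\mathcal{N}|\le s_0\log q + s_0\log(1/\epsilon) = O(s_0\log(qn))$.

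Next, I would verify that every $Y_v := \langle v,\bG^N\rangle$ with $v\in\mathcal{N}$ has variance at least $c/s_0$. Each $v = v_{S,w}$ is $s_0$-sparse with $\|v\|_{p'}=1$, hence $\|v\|_2\ge 1/\sqrt{s_0}$ by standard $L_p$-norm comparison. Letting $u := \Db_{12}^{-1/2}v$, which shares the support of $v$, and using $\Db_{12,ss}\asymp 1/n$ (from Assumption~{\bf (M2)} via Jensen combined with {\bf (M1)} applied to coordinate directions), one has $\|u\|_2^2\ge cn\|v\|_2^2\ge cn/s_0$. Since $u/\|u\|_2\in\mathcal{V}_{s_0}$, Assumption~{\bf (M1)} yields $u^\top\bSigma_\gamma u\ge b\|u\|_2^2$ for $\gamma=1,2$, and therefore $v^\top\Rb_{12}v = u^\top\bSigma_{12}u\ge c'/s_0$. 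Nazarov's inequality, with its constant scaling like $1/\sqrt{c'/s_0}=O(\sqrt{s_0})$, then gives
\[
\P(M\in(a,a+h]) \le C\sqrt{s_0}\,h\sqrt{\log|\mathcal{N}|} \le Ch\,s_0\sqrt{\log(qn)},
\]
and plugging in $h = O(\varepsilon) = O(s_0\log^2(qn)/\sqrt{n})$ produces a bound of order
\[
O\!\left(\frac{s_0^2\log^{5/2}(qn)}{\sqrt{n}}\right) = O\!\left(n^{\delta-1/2}\log^{3/2}(qn)\right) = o(1),
\]
where the last step uses Assumption~{\bf (A)} ($s_0^2\log(qn)=O(n^\delta)$ with $\delta<1/7$).

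The main obstacle in executing this plan is the $z$-dependent calibration of the net parameter: $\epsilon$ must shrink polynomially in $z$ to keep the effective interval length $O(\varepsilon)$, so one splits the range of $z$ into a bulk regime ($z\le Z_0$) where Nazarov applies and a tail regime ($z>Z_0$) controlled by Gaussian concentration of $\|\bG^N\|_{(s_0,p)}$. A secondary technical point is establishing the variance lower bound $v^\top\Rb_{12}v\ge c/s_0$ uniformly over the $\epsilon$-net, for which the sparsity of the dual vectors and the two-sided bound $\Db_{12,ss}\asymp 1/n$ are both essential.
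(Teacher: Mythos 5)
Your proof is correct, and it shares the paper's core mechanism---replace the $(s_0,p)$-sublevel set by a polytope so the event becomes a rectangle event for a finite centered Gaussian vector, then apply Nazarov's inequality (Lemma~\ref{lemma:gaussianDif})---but the construction and the treatment of the scale of $z$ are genuinely different. The paper applies Lemma~\ref{lemma:approximstCovexSet} to the primal set $V^{z,q}_{(s_0,p)}\cap\mathcal{E}^{R,q}$ with $R=qn$ and $\epsilon=(qn)^{-2}$, so the additive expansion $R\epsilon\prec\varepsilon$ is independent of $z$ and no case split is needed, at the price of a polytope with $\log m=O(s_0^2\log(qn))$ facets; you work on the dual side, replacing $\|\cdot\|_{(s_0,p)}$ by a maximum over a net of $s_0$-sparse dual vectors obtained from Lemma~\ref{lemma:polyAppr} in $\reals^{s_0}$, which yields the smaller $\log|\mathcal{N}|=O(s_0\log(qn))$, but since your approximation is multiplicative you must split into the bulk $z\le Z_0$ (Nazarov) and the tail $z>Z_0$ (Gaussian concentration of $\|\bG^N\|_{(s_0,p)}$ via $s_0^{1/p}\|\bG^N\|_\infty$). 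Both routes land on the same rate $O(s_0^2\log^{5/2}(qn)n^{-1/2})=o(1)$ under {\bf (A)}. A merit of your version is that you make explicit the variance lower bound required by Nazarov: your dual-unit vectors can have $\ell_2$-norm as small as $s_0^{-1/2}$, giving variance of order $1/s_0$ and hence the extra $\sqrt{s_0}$, whereas the paper's facet normals are unit $\ell_2$ vectors and it leaves the (constant) lower bound implicit. Two small points: the bound $\Db_{12,ss}\le C/n$ that you use (and the paper tacitly needs as well) rests on bounded second moments of $h_s$, i.e.\ on {\bf (M2)} or {\bf (E)}, which are not listed in the lemma's hypotheses but hold wherever the lemma is invoked; and the vertices of the Barvinok polytope satisfy $\|w\|_{p'}\ge(1+\epsilon)^{-1}$ rather than exactly $1$, which is all your bound $\|v\|_2\ge c\,s_0^{-1/2}$ actually requires.
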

	The proof of Lemma \ref{lemma:anticoncentrate} is in Appendix
	\ref{proof:lemma:anticoncentrate}  of supplementary materials.
	By Lemma \ref{lemma:anticoncentrate}, we then  have
	\[
	\P(\|\bN\|_{(s_0,p)}>z)\le \P(\|\bG^N\|_{(s_0,p)}>z)+ o(1),
	\]
	as $n\rightarrow\infty$.	As is shown in {\bf Step (ii)}, under the null hypothesis we have $\bN^b|\mathcal{X},\mathcal{Y}\sim N(\zero,\hat{\Rb}_{12})$.  Considering $\bG^N\sim N(\zero,\Rb_{12})$, we have \begin{equation}\label{ineq:D5}
	\P(\|\bN\|_{(s_0,p)}>z)- \P(\|\bN^b\|_{(s_0,p)}>z|\mathcal{X},\mathcal{Y})\le \hat{D}_5+o(1).
	\end{equation}
	with $
	\hat{D}_5=\sup_{z>0}\Big|\P(\|\bG^N\|_{(s_0,p)}>z)
	-\P(\|\bN^b\|_{(s_0,p)}>z|\mathcal{X},\mathcal{Y})\Big|$. The following lemma presents the upper bound of $\hat{D}_5$.
	\begin{lemma}\label{lemma:boundhatD5}
		Assumptions {\bf (A)}, {\bf (E)}, {\bf (M1)} and {\bf (M2)} hold. With probability at least $1-C_1n^{-1}$,  we have $\hat{D}_5 = o_p(1)$ as $n\rightarrow\infty.$
	\end{lemma}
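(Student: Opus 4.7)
The plan is to view both $\bG^N$ and $\bN^b\,|\,\mathcal{X},\mathcal{Y}$ as centred Gaussian vectors in $\reals^q$, with covariance matrices $\Rb_{12}$ and $\hat{\Rb}_{12}$ respectively, and to bound the Kolmogorov distance of their $(s_0,p)$-norms by combining (i) an entry-wise closeness of $\hat{\Rb}_{12}$ to $\Rb_{12}$ and (ii) a Gaussian-to-Gaussian comparison inequality on $(s_0,p)$-norm balls obtained by polytope approximation.

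First, I would invoke Lemma~\ref{lemma:sigmaRErrors} for the $\bX$- and $\bY$-samples together with the definitions $\bSigma_{12}=\bSigma_1/n_1+\bSigma_2/n_2$ and $\hat{\bSigma}_{12}=\hat{\bSigma}_1/n_1+\hat{\bSigma}_2/n_2$. Assumption \textbf{(M1)} guarantees that the diagonal entries of $\bSigma_{12}$ are bounded away from zero uniformly in $q$, which allows the elementary manipulation that converts the max-entry bound on $\hat{\bSigma}_\gamma-\bSigma_\gamma$ into a max-entry bound on $\hat{\Rb}_{12}-\Rb_{12}$. This yields an event $\mathcal{E}$ with $\P(\mathcal{E})\ge 1-C_1 n^{-1}$ on which
\[
\Delta_n:=\max_{1\le s,t\le q}|\hat r_{12,st}-r_{12,st}|\le C\,\log^{3/2}(qn)/\sqrt{n}.
\]

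Second, on $\mathcal{E}$, I would carry out a Gaussian-to-Gaussian comparison on $(s_0,p)$-norm balls, mirroring the strategy used to establish Lemma~\ref{lemma:CCK2}. By Lemma~\ref{lemma:approximstCovexSet}, the intersection of the Euclidean ball $\mathcal{E}^{R,q}$ with the $(s_0,p)$-ball $V^{z,q}_{(s_0,p)}$ can be sandwiched between two polytopes with at most $m\le q^{s_0}\bigl(\gamma\epsilon^{-1/2}\log\epsilon^{-1}\bigr)^{s_0^2}$ facets. Choosing $R\asymp \sqrt{s_0\log q}$ so that $\P(\|\bG^N\|_2>R)$ and $\P(\|\bN^b\|_2>R\,|\,\mathcal{X},\mathcal{Y})$ are both $o(1)$ (via Gaussian concentration and $\|\hat{\Rb}_{12}\|_{\mathrm{op}}$ being bounded on $\mathcal{E}$), the problem reduces on the complement of these exceptional sets to comparing $\P(\max_{v\in\mathcal{V}}v^\top\bG^N\le c)$ with $\P(\max_{v\in\mathcal{V}}v^\top\bN^b\le c\,|\,\mathcal{X},\mathcal{Y})$ over a finite set $\mathcal{V}$ of $m$ unit vectors. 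Applying a Gaussian-to-Gaussian max comparison (Slepian-/Stein-interpolation type, as used in \cite{chernozhukov2013gaussian}) together with the anti-concentration inequality of Lemma~\ref{lemma:gaussianDif} gives a bound of order $\Delta_n^{1/3}(\log m)^{2/3}$, plus an anti-concentration slack produced by the $R\epsilon$ polytope thickness, handled as in Lemma~\ref{lemma:anticoncentrate}.

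Plugging in $\log m\lesssim s_0\log q+s_0^2\log(1/\epsilon)$ and choosing $\epsilon$ polynomially small in $n$, the resulting upper bound on $\hat D_5$ is of order
\[
\bigl(\log^{3/2}(qn)/\sqrt n\bigr)^{1/3}\bigl(s_0\log q+s_0^2\log(1/\epsilon)\bigr)^{2/3}+o(1),
\]
which tends to zero under Assumption~\textbf{(A)} ($s_0^2\log q=O(n^\delta)$ with $\delta<1/7$), exactly paralleling the scaling chosen in Lemma~\ref{lemma:CCK2}. Finally, the $o_p(1)$ conclusion follows by noting that all the above deterministic bounds hold on $\mathcal{E}$, a set of probability at least $1-C_1 n^{-1}$.

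The main technical obstacle is the Gaussian-to-Gaussian comparison step on $(s_0,p)$-norm balls: standard Gaussian comparison theorems are stated for max norms, so one must lift them to the $(s_0,p)$ setting via the polytope sandwich of Lemma~\ref{lemma:approximstCovexSet}, carefully tracking the interplay between the facet count $m$, the truncation radius $R$, and the polytope thickness $R\epsilon$, so that the final scaling is still absorbed by Assumption \textbf{(A)}. A secondary subtlety is ensuring that the operator-norm of $\hat{\Rb}_{12}$ is bounded on the event $\mathcal{E}$ uniformly in $q$, so that the Euclidean concentration argument for $\bN^b\,|\,\mathcal{X},\mathcal{Y}$ goes through with the same $R$ used for $\bG^N$; this again uses Assumption \textbf{(M1)} together with $\Delta_n=o(1)$.
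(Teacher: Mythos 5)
Your overall architecture is the same as the paper's: entrywise control of $\hat{\Rb}_{12}-\Rb_{12}$ via Lemma \ref{lemma:sigmaRErrors} and Assumption {\bf (M1)}, a polytope sandwich of the $(s_0,p)$-ball from Lemma \ref{lemma:approximstCovexSet}, Nazarov's inequality (Lemma \ref{lemma:gaussianDif}) for the polytope thickness, and a Gaussian--Gaussian comparison of the Chernozhukov--Chetverikov--Kato type giving a $\Delta^{1/3}\log^{2/3}m$ bound absorbed by Assumption {\bf (A)}. However, your truncation step has a genuine gap. You restrict both vectors to the Euclidean ball of radius $R\asymp\sqrt{s_0\log q}$ and claim $\P(\|\bG^N\|_2>R)$ and $\P(\|\bN^b\|_2>R\mid\mathcal{X},\mathcal{Y})$ are $o(1)$. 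Since $\Rb_{12}$ and $\hat{\Rb}_{12}$ have unit diagonals, $\E\|\bG^N\|_2^2=\tr(\Rb_{12})=q$ and $\|\bG^N\|_2$ concentrates around $\sqrt{q}$, so for $R\asymp\sqrt{s_0\log q}\ll\sqrt{q}$ these exceedance probabilities tend to one, not zero. The auxiliary claim that $\|\hat{\Rb}_{12}\|_{\mathrm{op}}$ is bounded on $\mathcal{E}$ uniformly in $q$ is also unsupported: an entrywise bound $\Delta_n=o(1)$ only controls the operator norm up to a factor $q$, and Assumptions {\bf (M1)}--{\bf (M2)} bound $s_0$-sparse quadratic forms from below and moments, not $\lambda_{\max}(\Rb_{12})$.

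The paper avoids any probabilistic truncation of the Gaussian vectors. It splits the supremum over $z$ at $\tilde{R}=Cs_0\sqrt{n}$: for $z>\tilde{R}$ it uses $\|\vb\|_{(s_0,p)}\le s_0\|\vb\|_\infty$ together with the unit marginal variances to bound both tail probabilities by $Cq\exp(-C_1n)$; for $z\le\tilde{R}$ it uses the deterministic inclusion $\|\xb\|_2\le q^{1/2}\|\xb\|_{(s_0,p)}$, so the $(s_0,p)$-ball is already inside the Euclidean ball of radius $\tilde{R}q^{1/2}$ and the (large) radius enters only through the polytope thickness $R\epsilon$, which is made negligible by taking $\epsilon=(qn)^{-3/2}$. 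Your argument can be repaired by adopting this device (or by taking $R$ of order at least $\sqrt{q}$ up to logarithms), since $R$ affects the final rate only through $R\epsilon$ and $\log(1/\epsilon)$. One further quantitative point: in the comparison step the relevant covariance discrepancy over $s_0$-sparse directions is $\sup_{\vb_1,\vb_2\in\mathcal{V}_{s_0}}|\vb_1^\top(\hat{\Rb}_{12}-\Rb_{12})\vb_2|\le s_0\|\hat{\Rb}_{12}-\Rb_{12}\|_\infty$, so an extra factor $s_0$ must multiply $\Delta_n$ inside the $(\cdot)^{1/3}$, yielding the paper's bound of order $(s_0^{10}\log^7(qn)/n)^{1/6}$; this does not change the conclusion under Assumption {\bf (A)}, but your displayed bound omits it.
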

	The proof of Lemma \ref{lemma:boundhatD5} is in Appendix
	\ref{proof:lemma:boundhatD5}  of supplementary materials.
	Therefore, we have
	\begin{equation}\label{ineq:HalfBoundRes}
	\sup_{z>0}\Big(\P(\|\bN\|_{(s_0,p)}>z)- \P(\|\bN^b\|_{(s_0,p)}>z|\mathcal{X},\mathcal{Y})\Big)= o_p(1),
	\end{equation}
	uniformly for any $z>0$.  We can similarly construct  the lower bound and obtain 
	\[
	\sup_{z>0}\Big|\P(\|\bN\|_{(s_0,p)}>z)- \P(\|\bN^b\|_{(s_0,p)}>z|\mathcal{X},\mathcal{Y})|=o_p(1),
	\]
	which finishes the proof of (\ref{ineq:CoreN1+}) in Theorem \ref{therom:CoreWN1+}.
\end{proof}
\subsection{Proof of Corollary \ref{corollary:size} }
\label{proof:corollary:size}
\begin{proof}
	In Corollary \ref{corollary:size}, we aim to prove (\ref{size_one_sample}) and (\ref{size_two_sample}).  As the proof of (\ref{size_one_sample}) is similar, we only prove (\ref{size_two_sample}).  As $\hat{P}^N_{(s_0,p)}-P^N_{(s_0,p)}\rightarrow0$ implies $\P_{\Hb_0}\big(T^N_{\alpha,(s_0,p)}=1\big)\rightarrow\alpha$, for proving (\ref{size_two_sample}) we only need to prove that as $n, B\rightarrow\infty$, we have 
	\begin{equation}\label{proof:ColGoalTwo}
	\hat{P}_{(s_0,p)}^N-P_{(s_0,p)}^N\rightarrow0,
	\end{equation}
	where  $\hat{P}_{(s_0,p)}^N$ is defined in (\ref{def:PhatWN}) and $P_{(s_0,p)}^N$ is the 
	oracle $P$-value of 	$N_{(s_0,p)}$.
	By introducing  
	\begin{equation}\label{def:FnFbnHat}
	\begin{aligned}
	F_{N,(s_0,p)}(z)&=\P\big(\|\bN\|_{(s_0,p)}\le z\big)\\
	\hat{F}_{N^b,(s_0,p)}(z)&= (B+1)^{-1}\Big(\sum\nolimits_{b=1}^{B}\ind\big\{N^b_{(s_0,p)}\le z|\mathcal{X},\mathcal{Y}\big\}+1\Big),
	\end{aligned}
	\end{equation} 
	consider the definitions of $\hat{P}^N_{(_s0,p)}$ and $P^N_{(s_0,p)}$, 
	we have 
	\begin{equation}\label{def:PNPNhat}
	\hat{P}_{(s_0,p)}^N=1-\hat{F}_{N^b,(s_0,p)}\big(N_{(s_0,p)}\big),~	P_{(s_0,p)}^N=1-F_{N,(s_0,p)}\big(N_{(s_0,p)}\big).
	\end{equation}
	According to  Theorems \ref{therom:CoreWN1+}, under Assumptions {\bf (A)}, {\bf (S)}, {\bf (E)},  {\bf (M1)}, and {\bf (M2)}, by setting $T_1= \big|1-F_{N^b,(s_0,p)}\big(N_{(s_0,p)}\big)-P_{(s_0,p)}^N\big|$ with 
	\begin{equation}\label{def:Fbn}
	F_{N^b,(s_0,p)}(z):=\P\big(\|\bN^b\|_{(s_0,p)}\le z\big|\mathcal{X},\mathcal{Y}\big),
	\end{equation}
	we have $T_1\rightarrow 0$ as $n\rightarrow\infty$.
	Considering (\ref{def:FnFbnHat}) and (\ref{def:PNPNhat}),  we use the triangle inequality to obtain $\Big|P_{(s_0,p)}^N-\hat{P}_{(s_0,p)}^N\Big|\le T_1 +T_2$ with
	\[
	T_2 =\Big|F_{N^b,(s_0,p)}\big(N_{(s_0,p)}\big)-\hat{F}_{N^b,(s_0,p)}\big(N_{(s_0,p)}\big)\Big|
	\]
	By Massart's inequality (see Section 1.5 in \cite{dudley2014uniform:app}), we have 	
	\begin{equation}\label{lim:Flimit_part2}
	\sup_{z\in\reals}\Big|\hat{F}_{N^b,(s_0,p)}(z)-F_{N^b,(s_0,p)}(z)\Big|\rightarrow0,\hspace{2em}\text{as}~n, B\rightarrow\infty.
	\end{equation}
	Therefor, as $n$, $B\rightarrow\infty$, we have $T_2\rightarrow0$, which finishes the proof.

\end{proof}
\subsection{Proof of Theorem \ref{theorem:power}}
\label{proof:theorem:power}
\begin{proof} 
	For simplicity, we only consider the two-sample problem.
	The proof proceeds in two steps.  In the first step, we
	give an upper bound of the oracle critical  value 
	\[t^N_{\alpha,(s_0,p)}=\inf\Big\{t\in\reals:\P\big(\|\bN^b\|_{(s_0,p)}\le t|\mathcal{X},\mathcal{Y}\big)> \alpha\Big\}.\]
	 In the second step, with the obtained upper bound of $t^N_{\alpha,(s_0,p)}$, we construct a lower bound of
	$
	\P\big(N_{(s_0,p)}>t^N_{\alpha,(s_0,p)}\big)
	$. By  showing that this lower bound goes to $1$ under (\ref{assump:powerTwosample}), we have \[
	\P\big(N_{(s_0,p)}>t^N_{\alpha,(s_0,p)}\big)\rightarrow 1,
	\]
	as $n,q\rightarrow\infty$. Considering that $\hat{t}^N_{\alpha,(s_0,p)}$ is a bootstrap estimator for $t^N_{\alpha,(s_0,p)}$,  under (\ref{assump:powerTwosample}) we then have $\P\big(N_{(s_0,p)}>\hat{t}^N_{\alpha,(s_0,p)}\big)\rightarrow 1$, as $n,B\rightarrow\infty$.
	
	{\bf Step (i).} In this step, we give an upper bound of $t^N_{\alpha,(s_0,p)}$. By the definition of $\bN^b$ in (\ref{def:WNB}),  $\bN^b|\mathcal{X},\mathcal{Y}$ is a $q$-dimensional Gaussian random vector with standard normal entries. According to Lemma \ref{lemma:foldedNormal}, by setting $\sigma =1$ and $t=\sqrt{2\log q}$ we have 
	\begin{equation}\label{ineq:ENbInfty}
	\E\big[\|\bN^b\|_{\infty}|\mathcal{X},\mathcal{Y}\big]\le \sqrt{2\log q}+\frac{1}{\sqrt{2\log q}}=
	\sqrt{2\log q}\big(1+\{2\log q\}^{-1}\big).
	\end{equation}
	By Theorem 5.8 of \cite{boucheron2013concentration:app}, we have 
	\begin{equation}\label{ineq:NbInfty}
	\P\Big(\|\bN^b\|_{\infty}\ge \E\big[\|\bN^b\|_{\infty}|\mathcal{X},\mathcal{Y}\big]+u\Big|\mathcal{X},\mathcal{Y}\Big)<\exp(-u^2/2).
	\end{equation}
	By setting $c_\alpha$ as the $\alpha$-quantile of $\|\bN^b\|_\infty|\mathcal{X},\mathcal{Y}$, combining (\ref{ineq:ENbInfty}) and (\ref{ineq:NbInfty}), we have 
	\begin{equation}\label{ineq:qAlpha}
	c_{1-\alpha} \le  \sqrt{2\log q}\big(1+\{2\log q\}^{-1}\big)+ \sqrt{2\log (1/\alpha)}.
	\end{equation}
	Considering that $t^N_{\alpha,(s_0,p)}$ is the $1-\alpha$ quantile of $\|\bN^b\|_{(s_0,p)}|\mathcal{X}, \mathcal{Y}$,
	by the inequality $\|\bN^b\|_{(s_0,p)}\le s_0^{1/p}\|\bN^b\|_\infty$, we then have 
	$t^N_{\alpha,(s_0,p)}\le s_0^{1/p}c_{1-\alpha}$. Therefore, by (\ref{ineq:qAlpha}) we have 
	\begin{equation}\label{ineq:tAlpha}
	t^N_{\alpha,(s_0,p)}\le  s_0^{1/p}\Big(\sqrt{2\log q}\big(1+\{2\log q\}^{-1}\big)+\sqrt{2\log (1/\alpha)}\Big).
	\end{equation}

	{\bf Step (ii)} In this step, we aim to obtain an lower bound of $\P\big(N_{(s_0,p)}>t^N_{\alpha,(s_0,p)}\big)$.
	By (\ref{ineq:tAlpha}), we have  $	\P(N_{(s_0,p)}>t^N_{\alpha,(s_0,p)}\big)\ge L_1^N$, where
	\begin{equation}\label{ineq:LB0}
L_1^N = \P\bigg(N_{(s_0,p)}>s_0^{1/p}\Big(\sqrt{2\log q}\big(1+\{2\log q\}^{-1}\big)+\sqrt{2\log (1/\alpha)}\Big)\bigg).
	\end{equation}
	To obtain the lower bound of $L_1^N$, we need some  additional notations. By setting $N_s$ as 
	$
	N_s=(\hat{u}_{1,s}-\hat{u}_{2,s})/\sqrt{\hat{v}_{1,s}/n_1+\hat{v}_{2,s}/n_2},
	$
	in (\ref{def:WN}), 
	we define $N_{(s_0,p)}=\|\bN\|_{(s_0,p)}$,
	where $\bN=(N_1,\ldots,N_q)^\top$.
	Under the alternative hypothesis, $u_{1,s}=u_{2,s}$ cannot hold for all $s\in\{1,\ldots,q\}$, which motivates us to define 
	\begin{equation}\label{def:NS1N1}
	N_s^1=\frac{\hat{u}_{1,s}-\hat{u}_{2,s}-u_{1,s}+u_{2,s}}
	{\sqrt{\hat{v}_{1,s}/n_1+\hat{v}_{2,s}/n_2}}\hspace{1em}
	{\rm and}\hspace{1em}\bN^1=(N^1_1,\ldots,N^1_q)^\top.
	\end{equation}
	Considering that  $\hat{v}_{\gamma,s}$ is the variance estimator for  $\sqrt{n_{\gamma}}\hat{u}_{\gamma,s}$ and that $\hat{v}_{\gamma,s}$ has the limit $m^2\sigma_{\gamma,ss}$ as $n_\gamma\rightarrow\infty$, we introduce $\bD_2=(D_{2,1},\ldots,D_{2,q})^\top$ and $\hat{\bD}_2=(\hat{D}_{2,1},\ldots,\hat{D}_{2,q})^\top$, where 
	\begin{equation}\label{def:DshatDs}
	\begin{aligned}
	D_{2,s}&=|u_{1,s}-u_{2,s}|/\sqrt{m^2\sigma_{1,ss}/n_1+
		m^2\sigma_{2,ss}/n_2}\\
	\hat{D}_{2,s}&=|u_{1,s}-u_{2,s}|/\sqrt{\hat{v}_{1,s}/n_1+
		\hat{v}_{2,s}/n_2}.
	\end{aligned}
\end{equation} Without loss of generality, we assume that  largest $s_0$
	entries of $\bD_2$ is $k_1$,  $k_2$, $\ldots,$ $k_{s_0}$.
	Therefore, by setting $\bk=(k_1,\ldots,k_{s_0})^\top$  under (\ref{assump:powerTwosample}) we have 
	\begin{equation}\label{assumption:powerInproof}
	\|\bD_2\|_{(s_0,p)}= \|(\bD_{2})_{\bk}\|_p\ge s_0(1+\varepsilon_n)\Big(\sqrt{2\log q}+\sqrt{2\log (1/\alpha)}\Big),
	\end{equation} 
	where we set $\varepsilon_n\rightarrow 0$ and $\varepsilon_n\sqrt{\log q}\rightarrow\infty$ as $n\rightarrow\infty$.
	By the definition of $(s_0,p)$ distance and the triangle inequality, we have
	\begin{equation}\label{ineq:Nlowbound1}
	N_{(s_0,p)}\ge \|\bN_{\bk}\|_p\ge
	\|(\hat{\bD}_2)_{\bk}\|_p-\|\bN^1_{\bk}\|_p.
	\end{equation}
	As we impose conditions on $\bD_2$ not on $\hat{\bD}_2$, by the definitions of $\bD_2$ and $\hat{\bD}_2$ in (\ref{def:DshatDs})  we need the estimation error of 
	$\hat{v}_{\gamma,s}$. By Lemme \ref{lemma:sigmaRErrors}, considering Assumption {\bf (M1)}, 
	for $m>1$, with probability at least $1-C_1n^{-1}$, we have 
	\begin{equation}\label{ineq:vsigma1+}
	\max_{\gamma=1,2\atop s=1,\ldots,q}\Big|\sqrt{\frac{\hat{v}_{\gamma,s}}{m^2\sigma_{\gamma,ss}}}-1\Big|\le C\frac{\log^{3/2}(qn)}{\sqrt{n}},
	\end{equation}
	when $n$ is sufficiently large. Similarly, for $m=1$ and sufficiently large $n$ with probability at least $1-C_1n^{-1}$ we have 
	 \begin{equation}\label{ineq:vsigma1}
	\max_{\gamma=1,2\atop
		s=1,\dots,q}\Big|\sqrt{\frac{\hat{v}_{\gamma,s}}{\sigma_{\gamma,ss}}}-1\Big|\le C \sqrt{\frac{\log(qn)}{n}}+C\frac{\log^2(qn)}{n}.
	\end{equation}
	Therefore, we  introduce the event $\mathcal{E}_0(x)$ as
	\[
	\mathcal{E}_0(x)=\Bigg\{
	\max_{\gamma=1,2\atop s=1,\ldots,q}\Big|\sqrt{\frac{\hat{v}_{\gamma,s}}{m^2\sigma_{\gamma,ss}}}-1\Big|\le x\Bigg\}.
	\]
	We set $x\asymp\log^{3/2}(qn)/\sqrt{n}$ for $m>1$ and $x\asymp \sqrt{\log(qn)/n}+\log^2(qn)/n$ for $m=1$. We then have $\P(\mathcal{E}_0(x)^c)\preceq n^{-1}$.
	By (\ref{ineq:Nlowbound1}), under $\mathcal{E}_0(x)$ we have
	\begin{equation}\label{ineq:LB1}
	N_{(s_0,p)}\ge \underbrace{ \frac{1}{1+x}\|\bD_2\|_{(s_0,p)}-\|\bN^1_{\bk}\|_{p}}_{L_2^N}.
	\end{equation}
	Therefore, by partitioning the event based on $\mathcal{E}_0(x)$,  we use (\ref{ineq:LB1}) to obtain
	\[
	L^N_1 \ge \P\bigg(L_2^N>s_0\big(1+\{2\log q\}^{-1}\big)\Big(\sqrt{2\log q}+\sqrt{2\log (1/\alpha)}\Big),\mathcal{E}_0(x)\bigg).
	\]
	Considering  (\ref{assumption:powerInproof}),
	by choosing $u$ satisfying $(1+x)\big(1+u+\{2\log q\}^{-1}\big)=(1+\varepsilon_n)
	$ we have 
	\begin{equation}\label{ineq:LB2}
	L^N_1\ge\P\Big(\|\bN_{\bk}^1\|_p<s_0u\sqrt{2\log q}\Big).
	\end{equation}
	By the triangle inequality, for $i\in\{1,\ldots, s_0\}$ we have 
	\begin{equation}\label{ineq:LB3}
	\P\Big(\|\bN_{\bk}^1\|_p\ge s_0u\sqrt{2\log q}\Big)\le s_0\max_{1\le i\le s_0}\P\Big(|N^1_{k_i}|\ge u\sqrt{2\log q}\Big).
	\end{equation}
	Therefore, combining (\ref{ineq:LB2}) and (\ref{ineq:LB3}) we have 
	\[
	L^N_1\ge 1-\P\Big(\|\bN_{\bk}^1\|_p\ge s_0u\sqrt{2\log q}\Big)\ge 1-s_0\max_{1\le i\le s_0}\P\Big(|N^1_{k_i}|\ge u\sqrt{2\log q}\Big).
	\]
	By the definition of $L_1$ in (\ref{ineq:LB0}), to prove $L_1\rightarrow 1$  we only need to obtain 
	\begin{equation}\label{limit:Nupperbound}
	s_0\P\big(|N^1_{k_i}|\ge u\sqrt{2\log q}\big)\rightarrow 0,
	\end{equation}
	uniformly as $n,q\rightarrow\infty$.
	For this, we introduce the following lemma. 
	\begin{lemma}\label{lemma:NS1BMEM}
	Under Assumptions {\bf (A)$'$}, {\bf (E)}, {\bf (M1)}, and {\bf (M2)}, as $n, q\rightarrow\infty$, we have 
	\begin{equation}\label{limit:s0tail}
	s_0\max_{s=1,\ldots, q}\P\big(|N^1_{s}|\ge u\sqrt{2\log q}\big)\rightarrow 0.
	\end{equation}
	\end{lemma}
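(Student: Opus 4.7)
The plan is to apply a Cram\'er-type moderate deviation theorem for the studentized two-sample $U$-statistic $N^1_s$. The key insight is that $N^1_s$ behaves like a standard normal in its Cram\'er-range tails, so $\P(|N^1_s|\ge u\sqrt{2\log q})$ decays essentially like $q^{-u^2}$. Multiplied by $s_0=O(\log^{\delta_2}q)$ this still tends to $0$, because $u^2\log q=\varepsilon_n^2\log q\,(1+o(1))\to\infty$ under the assumption $\varepsilon_n\sqrt{\log q}\to\infty$, dominating every polylog factor.

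First I would linearize $N^1_s$ to a standardized sum of independent random variables. The Hoeffding decomposition (\ref{decomposition:tildeusforproof}) writes
\[
\hat u_{\gamma,s}-u_{\gamma,s}=\frac{m}{n_\gamma}\sum_{k=1}^{n_\gamma}h_s(\bX_{k}\text{ or }\bY_k)+R_{\gamma,s},
\]
where $R_{\gamma,s}$ is a sum of completely degenerate $U$-statistics of order $\ge 2$. A Bernstein-type inequality for degenerate $U$-statistics together with Assumption (E) gives $\sqrt{n_\gamma}\,R_{\gamma,s}=o_p(1/\sqrt{\log q})$ uniformly in $s$, and Lemma \ref{lemma:sigmaRErrors} yields $\hat v_{\gamma,s}/(m^2\sigma_{\gamma,ss})=1+o_p(1)$ uniformly in $s$. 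Setting
\[
Z_s:=\frac{n_1^{-1}\sum_{k}h_s(\bX_k)-n_2^{-1}\sum_{k}h_s(\bY_k)}{\sqrt{\sigma_{1,ss}/n_1+\sigma_{2,ss}/n_2}},
\]
these two reductions give $N^1_s=Z_s(1+o_p(1))+o_p(1)$ uniformly in $s$. Because $u\sqrt{\log q}\to\infty$ while the $o_p(1)$ errors are negligible at this scale, it suffices to bound $\P(|Z_s|\ge u\sqrt{2\log q}(1-\eta_n))$ for some $\eta_n=o(1)$.

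Next I would invoke a Cram\'er moderate deviation bound for sums of independent, centered, sub-exponential random variables (for example, Theorem VIII.2 in Petrov 1975, or Shao 1999 for the self-normalized version applied sample by sample). Under (E) the summands $h_s(\bX_k),h_s(\bY_k)$ are sub-exponential; under (M2) their third absolute moments are uniformly bounded in $s$; and under (M1) the denominator in $Z_s$ is uniformly bounded below. This yields, uniformly in $s$ and uniformly for $t\in[0,c\,n^{1/6}]$,
\[
\P\bigl(|Z_s|\ge t\bigr)\le 2\bigl(1-\Phi(t)\bigr)(1+o(1))\le \frac{C}{t}\,e^{-t^2/2}.
\]
Assumption (A)$'$ ensures $\log q=o(n^{1/3})$, so $t=u\sqrt{2\log q}(1-\eta_n)\le c\,n^{1/6}$ for large $n$ and the bound applies, giving
\[
\P(|N^1_s|\ge u\sqrt{2\log q})\le \frac{C}{u\sqrt{\log q}}\,q^{-u^2(1-\eta_n)^2}.
\]
Multiplying by $s_0=O(\log^{\delta_2}q)$ and using $u^2\log q\sim\varepsilon_n^2\log q\to\infty$ (which beats $\log s_0=O(\log\log q)$) yields (\ref{limit:s0tail}).

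The main obstacle is verifying the moderate deviation bound uniformly over $s=1,\ldots,q$ for the two-sample studentized statistic, together with the uniform control of the Hoeffding remainder $R_{\gamma,s}$. The variance-estimator uniformity is already provided by Lemma \ref{lemma:sigmaRErrors}; the remainder can be dominated by a union bound combined with an exponential inequality for completely degenerate $U$-statistics (e.g., the Gin\'e--Lata\l a--Zinn inequality) under (E); and the Cram\'er constants are uniform in $s$ because the moment hypotheses (E), (M1), (M2) are uniform in $s$. Once these uniform pieces are assembled, the chain of inequalities above closes out the proof.
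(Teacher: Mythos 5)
Your proposal is correct and follows essentially the same route as the paper's proof: a Hoeffding decomposition of $N^1_s$, control of the degenerate remainder by a union bound plus an exponential inequality for degenerate $U$-statistics (the paper truncates the kernel at $C\log(qn)$ and applies Arcones--Gin\'e), uniform control of the studentizing denominators via Lemma \ref{lemma:sigmaRErrors}, and a tail bound for the standardized linear part at the threshold $u\sqrt{2\log q}$. The only substantive difference is that you invoke a Cram\'er-type moderate deviation theorem for the linear part, which is sharper than necessary; the paper settles for a crude sub-exponential/Bernstein bound, and both arguments handle the final comparison of the exponent $u^2\log q$ with $\log s_0$ at the same (somewhat informal) level of rigor.
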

	The detailed proof of Lemma \ref{lemma:NS1BMEM} is in Appendix \ref{proof:lemma:NS1BMEM}  of supplementary materials. By Lemma \ref{lemma:NS1BMEM}, we finish the proof.
\end{proof}

\subsection{Proof of Theorem \ref{thm:AdSize}}\label{proof:thm:AdSize}
\begin{proof}
	In Theorem \ref{thm:AdSize}, we aim to prove 
	\eqref{limit:Wad} and \eqref{limit:Nad}. As the proof of \eqref{limit:Wad} is similar, we only prove (\ref{limit:Nad}). The proof proceeds in two steps.
	In the first step,  by setting
	$F_{N,\rm ad}(z)=\P(N_{\rm ad}\le z|\mathcal{X},\mathcal{Y})$ and  $\tilde{F}_{N,\rm ad}(z)=\P(\tilde{N}_{\rm ad}\le z)$, we prove that as $n, B\rightarrow\infty$, we have
	\begin{equation}\label{limit:AdsizeProofPart1}
	\tilde{F}_{N,{\rm ad}}(\tilde{N}_{\rm ad})-F_{N,\rm ad}(N_{\rm ad})\rightarrow0,
	\end{equation}
	where $N_{\rm ad}$ and $\tilde{N}_{\rm ad}$ are defined in (\ref{def:WadNad}) and (\ref{def:tildaWNad}). In the second step, we prove that
	\begin{equation}\label{limit:AdsizeProofPart2}
	F_{N,\rm ad}(N_{\rm ad})-\hat{P}^N_{\rm ad}\rightarrow 0,
	\end{equation}
	as $n,B\rightarrow\infty$.
	Combining (\ref{limit:AdsizeProofPart1}) and (\ref{limit:AdsizeProofPart2}), we can easily obtain (\ref{limit:Nad}).
	
	{\bf Step (i).}  In this step, we aim to prove (\ref{limit:AdsizeProofPart1}). For this, we need the following lemma to analyze the difference between  the cumulative distribution functions  of $N_{\rm ad}$ and $\tilde{N}_{\rm ad}$.

	\begin{lemma}\label{lemma:AdSize}
		Assumptions  {\bf (A)$''$}, {\bf (E)}, {\bf (M1)} and {\bf (M2)}  hold. Under $\Hb_0$ of (\ref{def:htwo-sampleu}),  we have that for any $\epsilon>0$
		\begin{equation}\label{dif:CDFNad}
		\sup_{z\in[\epsilon,1-\epsilon]}\Big|F_{N,\rm ad}(z)-\tilde{F}_{N,{\rm ad}}(z)\Big| =0,
		\end{equation}
		as $n,B\rightarrow\infty$.
	\end{lemma}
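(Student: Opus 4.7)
The plan is to reduce the comparison between $F_{N,\rm ad}$ and $\tilde{F}_{N,\rm ad}$ to a joint high-dimensional central limit theorem for the vector $(\|\bN\|_{(s_0,p)})_{p\in\mathcal{P}}$ together with its bootstrap counterpart, and then to exploit the finiteness of $\mathcal{P}$ and the density bound encoded by $h_{q,N}(\epsilon)$ to pass from CDF closeness to closeness of the transformed $P$-values and, in turn, of their minima.

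First, I would extend Theorem \ref{therom:CoreWN1+} from a single $p$ to the entire finite collection $\mathcal{P}$. Because the proof of \eqref{ineq:CoreN1+} goes through the Gaussian functional $\|\bG^N\|_{(s_0,p)}$ via Lemma \ref{lemma:CCK2}, a union bound over $p\in\mathcal{P}$ yields
\begin{equation}
\max_{p\in\mathcal{P}}\sup_{z>0}\Bigl|\P(\|\bN\|_{(s_0,p)}\le z)-\P(\|\bG^N\|_{(s_0,p)}\le z)\Bigr|=o(1),
\end{equation}
and an analogous bound for $\|\bN^b\|_{(s_0,p)}\mid\mathcal{X},\mathcal{Y}$ holds with probability tending to one. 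Combining with Massart's inequality to absorb the Monte Carlo error in $\hat{F}_{N^b,(s_0,p)}$ as $B\to\infty$, this already delivers the marginal control $\max_{p\in\mathcal{P}}|\hat{P}^N_{(s_0,p)}-P^N_{(s_0,p)}|=o_p(1)$.

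Next, I would introduce the Gaussian proxies $\tilde{P}^{G}_{(s_0,p)}:=1-F_{G,(s_0,p)}(\|\bG^N\|_{(s_0,p)})$, where $F_{G,(s_0,p)}$ is the CDF of $\|\bG^N\|_{(s_0,p)}$. By construction each $\tilde{P}^{G}_{(s_0,p)}\sim\mathrm{U}(0,1)$, and the joint vector $(\tilde{P}^{G}_{(s_0,p)})_{p\in\mathcal{P}}$ serves as a common reference for both $(P^N_{(s_0,p)})_{p\in\mathcal{P}}$ and $(\hat{P}^N_{(s_0,p)})_{p\in\mathcal{P}}$. To control the distance between the former and this reference I need to invert the bound of the previous step: by the mean value theorem applied on the interval $I^N_{(s_0,p)}(\epsilon)$ and the density lower bound $f_{\bG^N,(s_0,p)}\ge h_{q,N}^{-1}(\epsilon)$ supplied by Assumption {\bf (A)$''$},
\begin{equation}
\sup_{\alpha\in[\epsilon,1-\epsilon]}\bigl|F^{-1}_{N,(s_0,p)}(\alpha)-F^{-1}_{G,(s_0,p)}(\alpha)\bigr|\le h_{q,N}(\epsilon)\cdot o(1)=o(1),
\end{equation}
and the same estimate holds for the bootstrap quantile. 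Since $\#(\mathcal{P})<\infty$ and the map $(x_p)_{p\in\mathcal{P}}\mapsto\min_{p\in\mathcal{P}}x_p$ is $1$-Lipschitz in the sup-norm, joint closeness of the three $P$-value vectors carries over to closeness of $N_{\rm ad}$, $\tilde{N}_{\rm ad}$, and $\min_p\tilde{P}^G_{(s_0,p)}$. A Gaussian anti-concentration step (Lemma \ref{lemma:gaussianDif}) applied to the reference minimum, whose marginal density stays bounded thanks to the same lower bound on $f_{\bG^N,(s_0,p)}$, then absorbs the uniform $o(1)$ perturbation into the events $\{N_{\rm ad}\le z\}$ and $\{\tilde{N}_{\rm ad}\le z\}$, giving $\sup_{z\in[\epsilon,1-\epsilon]}|F_{N,\rm ad}(z)-\tilde{F}_{N,\rm ad}(z)|\to 0$.

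The main obstacle will be the quantile inversion. The $(s_0,p)$-norm of a high-dimensional Gaussian admits no tractable density formula, so no universal lower bound on $f_{\bG^N,(s_0,p)}$ is available; only on the restricted interval $I^N_{(s_0,p)}(\epsilon)$ does Assumption {\bf (A)$''$} supply the quantity $h_{q,N}(\epsilon)$ needed to convert a $o(1)$ Kolmogorov error into a $o(1)$ quantile error, and this is exactly the reason the stronger scaling $h^{0.6}_{q,N}(\epsilon)s_0^2\log q=o(n^{1/10})$ replaces Assumption {\bf (A)} here. It is also why the conclusion of the lemma is confined to $z\in[\epsilon,1-\epsilon]$; as flagged in Remark \ref{remark:P}, the subsequent proof of Theorem \ref{thm:AdSize} still has to verify separately that $\P(\tilde{N}_{\rm ad}\in(0,\epsilon))$ is negligible for small $\epsilon$, which is where the finiteness of $\mathcal{P}$ becomes essential.
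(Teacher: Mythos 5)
Your realization-wise step is sound: Theorem \ref{therom:CoreWN1+} plus Massart's inequality gives $\max_{p\in\mathcal{P}}\bigl|\hat{P}^N_{(s_0,p)}-P^N_{(s_0,p)}\bigr|=o_p(1)$, and since $\hat{P}^N_{(s_0,p)}$ and $P^N_{(s_0,p)}$ are both evaluated at the same statistic $N_{(s_0,p)}$ on the same sample, the $1$-Lipschitz property of the minimum yields $|N_{\rm ad}-\tilde{N}_{\rm ad}|=o_p(1)$. The genuine gap is the transfer to the Gaussian reference $(\tilde{P}^G_{(s_0,p)})_{p\in\mathcal{P}}$. That vector is built from $\bG^N$, whose randomness is independent of the data, so ``closeness of the three $P$-value vectors'' can only mean closeness in distribution of the whole vector; but your quantitative inputs are purely marginal — Kolmogorov bounds obtained by a union bound over $p$, and marginal quantile bounds via $h_{q,N}(\epsilon)$. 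Marginal closeness does not pin down the law of $\min_{p\in\mathcal{P}}$, which depends on the joint dependence among $(\|\bN\|_{(s_0,p)})_{p\in\mathcal{P}}$, and the Lipschitz-of-min step is a coupling argument that cannot be applied across independent sources of randomness. This is exactly where the paper's proof does extra work: it writes $1-\tilde{F}_{N,\rm ad}(z)$ and $1-F_{N,\rm ad}(z)$ as probabilities of \emph{intersections} over $p\in\mathcal{P}$ of events $\{F_{\cdot,(s_0,p)}(N_{(s_0,p)})<1-z\}$, disposes of the Monte Carlo error ($B\to\infty$) by Glivenko--Cantelli, and in Lemma \ref{lemma:D1D3} proves the joint Gaussian approximation (\ref{limit:Ns0pGN}) for the vector of $(s_0,p)$-norms before applying the quantile inversion with $h_{q,N}(\epsilon)$ and anti-concentration. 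Your argument needs that joint statement (the sparsely-generated-convex-set CLT behind Lemma \ref{lemma:CCK2} does cover such intersections, so it is available, but it must actually be invoked); without it the comparison of $N_{\rm ad}$ and $\tilde{N}_{\rm ad}$ with $\min_p\tilde{P}^G_{(s_0,p)}$ is unsupported.

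Two smaller points. First, the bound you write as $h_{q,N}(\epsilon)\cdot o(1)=o(1)$ is not automatic, since $h_{q,N}(\epsilon)$ may diverge with $q$; you need the explicit rate of the Gaussian/bootstrap approximation (of order $(s_0^{14}\log^7(qn)/n)^{1/6}$ in the paper's Lemma \ref{lemma:D1D3}) so that the scaling in Assumption {\bf (A)$''$} forces the product to vanish. Second, no density lower bound is needed for the reference minimum: each $\tilde{P}^G_{(s_0,p)}$ is exactly ${\rm U}(0,1)$, so $\P\bigl(z<\min_{p}\tilde{P}^G_{(s_0,p)}\le z+\delta\bigr)\le \#(\mathcal{P})\,\delta$ by a union bound. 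This observation also points to a repair that bypasses the joint CLT: given $|N_{\rm ad}-\tilde{N}_{\rm ad}|=o_p(1)$, it suffices to establish anti-concentration of $\tilde{N}_{\rm ad}$ itself, which follows from the marginal near-uniformity $\sup_z\P\bigl(z<P^N_{(s_0,p)}\le z+\delta\bigr)\le\delta+o(1)$ (marginal Gaussian approximation together with Lemma \ref{lemma:anticoncentrate}) and a union bound over the finite $\mathcal{P}$; as written, however, your proposal establishes neither this direct anti-concentration nor the joint approximation, so the concluding step does not go through.
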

	The proof of Lemma \ref{lemma:AdSize} is in Appendix \ref{proof:lemma:AdSize}  of supplementary materials.
	After introducing Lemma \ref{lemma:AdSize}, we then prove (\ref{limit:AdsizeProofPart1}). In detail, we aim to prove that for any $\delta, \epsilon'>0$ we have 
	\begin{equation}\label{bound:Delta_1}
	\underbrace{\P\Big(| \tilde{F}_{N,{\rm ad}}(\tilde{N}_{\rm ad})-F_{N,\rm ad}(N_{\rm ad})|\ge \delta\Big)}_{\Delta_1} < \epsilon'
	\end{equation}
	as $n\rightarrow\infty$. By plugging in $F_{N,\rm ad}(\tilde{N}_{\rm ad})$, we use the triangle inequality to obtain $\Delta_1\le \Delta_2 +\Delta_3$, where
	\begin{equation}\label{def:Dleta123}
	\begin{aligned}
	\Delta_2 =&\P\Big(|\tilde{F}_{N,\rm ad}(\tilde{N}_{\rm ad})\!-\!F_{N,\rm ad}(\tilde{N}_{\rm ad})|\ge \delta/2\Big),
	\\\Delta_3=&
		\P\Big(|F_{N,\rm ad}(\tilde{N}_{\rm ad})\!-\!F_{N,\rm ad}(N_{\rm ad})|\ge \delta/2\Big).
	\end{aligned}
	\end{equation}
	We then separately bound $\Delta_2$ and $ \Delta_3$.
	To prove (\ref{bound:Delta_1}). We only need to show  both $\Delta_2<\epsilon'/2$ and  $\Delta_3<\epsilon'/2$ hold as $n$  and $B$
	are sufficiently large. For $\Delta_2$, by setting $\mathcal{E}_{\tilde{N},\rm ad}(\epsilon):=\{\tilde{N}_{\rm ad}\in [\epsilon,1-\epsilon]\}$, we  can bound $\Delta_2$ by 
	\begin{equation}\label{ineq:boundDleta2_0}
	\Delta_2\le \P\Big(|\tilde{F}_{N,\rm ad}(\tilde{N}_{\rm ad})-F_{N,\rm ad}(\tilde{N}_{\rm ad})|\ge \delta/2\cap \mathcal{E}_{\tilde{N},\rm ad}(\epsilon)\Big)+ \Delta_4,
	\end{equation}
	where $\Delta_4 = \P\big(\big(\mathcal{E}_{\tilde{N},\rm ad}(\epsilon)\big)^c\big)$.
	By the definition of $\tilde{N}_{\rm ad}$ in (\ref{def:tildaWNad}), by choosing $\epsilon$ small enough, we have $\Delta_4 \le \epsilon'/4$.
	By Lemma \ref{lemma:AdSize} and the definition of $\mathcal{E}_{\tilde{N},\rm ad}(\epsilon)$, we also have
	\begin{equation}\label{ineq:boundDelta2_2}
	\P\Big(|\tilde{F}_{N,\rm ad}(\tilde{N}_{\rm ad})-F_{N,\rm ad}(\tilde{N}_{\rm ad})|\ge \delta/2\cap \mathcal{E}_{\tilde{N},\rm ad}(\epsilon)\Big)\le \epsilon'/4,
	\end{equation}
	for sufficiently large $n$ and $B$. Hence,  we have $\Delta_2 \le \epsilon'/2$ holds as $n$ and $B$ are sufficiently large.  After the proof foe $\Delta_2$, we then bound $\Delta_3$. 
	By the definition of $\tilde{N}_{\rm ad}$ in (\ref{def:tildaWNad}) and Corollary \ref{corollary:size}, we have 
	\begin{equation}\label{limit:tildeNadNad}
	|\tilde{N}_{\rm ad}-N_{\rm ad}|\rightarrow 0,\hspace{2em} \text{as $n$, $B$}\rightarrow\infty. 
	\end{equation}
	Therefore, we obtain that $f_{N,\rm ad}(z)=F'_{N,\rm ad}(z)$ is uniformly bounded for sufficiently large $n,B$. Hence, there is a constant $C$ such that 
	\begin{equation}\label{ineq:LipFNad}
	|F_{N,\rm ad}(\tilde{N}_{\rm ad})-F_{N,\rm ad}(N_{\rm ad})|\le C|\tilde{N}_{\rm ad}-N_{\rm ad}|.
	\end{equation}
	Combining (\ref{def:Dleta123}), (\ref{limit:tildeNadNad}), and (\ref{ineq:LipFNad}), we have $\Delta_3\le \epsilon'/2$ for sufficiently large $n$ and $B$. Therefore, we finish the proof of (\ref{limit:AdsizeProofPart1}).
	
	{\bf Step (ii).} In this step, we aim to prove (\ref{limit:AdsizeProofPart2}). For this,  we introduce 
	\begin{equation}\label{def:Nbad}
	\begin{aligned}
	&F_{N^b,(s_0,p)}=\P\Big(N_{(s_0,p)}^b\le z|\mathcal{X},\mathcal{Y}\Big),\\
&	N_{\rm ad}^b=\min_{p\in\mathcal{P}}\Big(1-F_{N^b,(s_0,p)}(N_{(s_0,p)}^b)\Big),
	\end{aligned}
	\end{equation}
	where $N_{(s_0,p)}^b$ is defined in (\ref{def:WNbs0p}). Therefore, we define the cumulation distribution function of $N_{\rm ad}^b|\mathcal{X},\mathcal{Y}$ as 
	\begin{equation}\label{def:Fnbad}
	F_{N^b,\rm ad}(z)=\P(N_{\rm ad}^b\le z|\mathcal{X},\mathcal{Y}).
	\end{equation}
	Considering the definition of $\hat{P}^N_{\rm ad}$ in (\ref{def:hatPNad}),
	by setting 
	\begin{equation}\label{def:hatFNad'}
	\hat{F}_{N,\rm ad'}(z)=\Bigg(\sum_{b=1}^B\ind\{N_{\rm ad'}^b\le z|\mathcal{X},\mathcal{Y}\}+1\Bigg)\big/(B+1),
	\end{equation}
	we have $\hat{P}^N_{\rm ad}=\hat{F}_{N,\rm ad'}(N_{\rm ad})$.
	
	To prove $F_{N,\rm ad}(N_{\rm ad})-\hat{P}^N_{\rm ad}\rightarrow 0$, by plugging in $F_{N^b,\rm ad}(N_{\rm ad})$ and using the triangle inequality, it is sufficient to prove 
	\begin{equation}\label{limit:adsizeStepii}
F_{N,\rm ad}(N_{\rm ad})-F_{N^b,\rm ad}(N_{\rm ad})\rightarrow 0\hspace{1em}{\rm and}\hspace{1em} F_{N^b,\rm ad}(N_{\rm ad})-\hat{P}^N_{\rm ad}\rightarrow 0,
	\end{equation}
	as $n$, $B\rightarrow\infty$. 
	To prove (\ref{limit:adsizeStepii}), we introduce the following two lemmas.
	\begin{lemma}\label{lemma:limitFnadFnbad}
		Assumptions   {\bf (A)$''$}, {\bf (E)}, {\bf (M1)}, and {\bf (M2)} hold. Under $\Hb_0$ of (\ref{def:htwo-sampleu}), 
		by setting $F_{N,\rm ad}(z)=\P(N_{\rm ad}\le z|\mathcal{X},\mathcal{Y})$ and  $F_{N^b,\rm ad }(z)=\P(N^b_{\rm ad}\le z|\mathcal{X},\mathcal{Y})$,
		we have 
		\begin{equation}\label{eq:limitFnadFnbad}
		\sup_{z\in [\epsilon,1-\epsilon]}|F_{N,\rm ad}(z)-F_{N^b,\rm ad}(z)|\rightarrow 0,\hspace{2em}\text{as}~n, B\rightarrow\infty,
		\end{equation}
		for any $\epsilon>0$.
	\end{lemma}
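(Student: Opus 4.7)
The plan is to reduce the comparison $F_{N,\rm ad}$ vs.\ $F_{N^b,\rm ad}$ to two separate comparisons against the common reference $\tilde F_{N,\rm ad}$ and then to combine via the triangle inequality. Lemma \ref{lemma:AdSize} supplies directly $\sup_{z\in[\epsilon,1-\epsilon]}|F_{N,\rm ad}(z)-\tilde F_{N,\rm ad}(z)|=o(1)$, so the entire work is in establishing the analogous statement $\sup_{z\in[\epsilon,1-\epsilon]}|F_{N^b,\rm ad}(z)-\tilde F_{N,\rm ad}(z)|=o_p(1)$ for the bootstrap side.

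For this, I would mimic the argument of Lemma \ref{lemma:AdSize} with $\bN$ replaced by $\bN^b\mid\mathcal{X},\mathcal{Y}$. Conditional on the data, $\bN^b$ is exactly $N(\zero,\hat\Rb_{12})$ by construction, and Lemma \ref{lemma:sigmaRErrors} gives $\hat\Rb_{12}\to\Rb_{12}$ entrywise with probability $1-O(n^{-1})$; hence the conditional law of $\bN^b\mid\mathcal{X},\mathcal{Y}$ converges in probability to that of $\bG^N\sim N(\zero,\Rb_{12})$. Because $\vb\mapsto\|\vb\|_{(s_0,p)}$ is continuous and $\#(\mathcal{P})<\infty$, this upgrades to joint conditional convergence of $(\|\bN^b\|_{(s_0,p)})_{p\in\mathcal{P}}$ to $(\|\bG^N\|_{(s_0,p)})_{p\in\mathcal{P}}$. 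Applying the probability integral transform—valid because each $F_{\bG^N,(s_0,p)}$ is continuous and strictly increasing on its support under Assumption \textbf{(M1)}—translates this into joint convergence of the $P$-value vectors $(1-F_{N^b,(s_0,p)}(N^b_{(s_0,p)}))_{p\in\mathcal{P}}\mid\mathcal{X},\mathcal{Y}$ to $(1-F_{\bG^N,(s_0,p)}(\|\bG^N\|_{(s_0,p)}))_{p\in\mathcal{P}}$. The latter is precisely the limiting object that underlies $\tilde F_{N,\rm ad}$ in Lemma \ref{lemma:AdSize}, so taking the minimum over the finite set $\mathcal{P}$ preserves the convergence and delivers the desired bound on $F_{N^b,\rm ad}$.

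Upgrading pointwise to uniform convergence on $[\epsilon,1-\epsilon]$ requires $\tilde F_{N,\rm ad}$ to be Lipschitz on that interval; this is where the quantile-density control $h_{q,N}(\epsilon)$ in Assumption \textbf{(A)$''$} enters, guaranteeing that each $\|\bG^N\|_{(s_0,p)}$ has a density uniformly bounded on its relevant quantile interval and that the minimum of the $P$-values admits a bounded density on $[\epsilon,1-\epsilon]$. The main obstacle is the \emph{joint} (rather than merely marginal) control of the bootstrap statistics across $p\in\mathcal{P}$: although Theorem \ref{therom:CoreWN1+} supplies the marginal Gaussian approximation for each $p$, the minimum over $\mathcal{P}$ is sensitive to the joint dependence structure. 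This is overcome by observing that each $\|\bN\|_{(s_0,p)}$ is a deterministic functional of the single high-dimensional vector $\bN$, so that the Gaussian approximation of $\bN$ already yields joint approximation of all the norms simultaneously, after which Lemmas \ref{lemma:CCK2}, \ref{lemma:approximstCovexSet} and \ref{lemma:anticoncentrate} can be invoked $\#(\mathcal{P})$-many times via a finite union bound. The assumption $\#(\mathcal{P})<\infty$ emphasized in Remark \ref{remark:P} is essential both for this union-bound step and for ensuring that the minimum does not degenerate to $0$ in the limit.
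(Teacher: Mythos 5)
Your argument is correct in outline, but it pivots through a different intermediate object than the paper. The paper's proof compares both $F_{N,\rm ad}$ and $F_{N^b,\rm ad}$ to a single quantity $L$, obtained by replacing the empirical bootstrap CDFs $\hat{F}_{N^b,(s_0,p)}$ in the event defining $N_{\rm ad}$ (see (\ref{eq:SNad})) by their exact conditional counterparts $F_{N^b,(s_0,p)}$ evaluated at the observed $N_{(s_0,p)}$: the swap $\hat{F}\to F$ is handled by Massart's inequality together with Lemma 5 of Bonn\'ery et al., and the swap $\bN\to\bN^b$ inside the joint event (\ref{eq:SNbad}) is handled by rerunning the Theorem \ref{therom:CoreWN1+} argument jointly over the finitely many norms. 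You instead pivot through the oracle $\tilde{F}_{N,\rm ad}$: the leg $F_{N,\rm ad}$ versus $\tilde{F}_{N,\rm ad}$ is indeed exactly Lemma \ref{lemma:AdSize} (and this conveniently absorbs the finite-$B$ empirical-CDF issue, which does not arise on the bootstrap side since $N^b_{\rm ad}$ in (\ref{def:Nbad}) is defined with the exact conditional CDFs), while the leg $F_{N^b,\rm ad}$ versus $\tilde{F}_{N,\rm ad}$ carries essentially all the work and amounts to a bootstrap replica of Lemma \ref{lemma:AdSize}. What your route buys is that the first leg is free; what it costs is that the second leg cannot be settled by ``convergence of conditional laws plus continuous mapping plus probability integral transform'' in the qualitative sense, because $q\to\infty$: it must be quantified exactly as the paper does, via the exact conditional Gaussianity $\bN^b|\mathcal{X},\mathcal{Y}\sim N(\zero,\hat{\Rb}_{12})$, Lemma \ref{lemma:sigmaRErrors} for $\max_{s,t}|\hat{r}_{12,st}-r_{12,st}|$, the Gaussian-to-Gaussian comparison of Chernozhukov et al.\ applied to intersections of $(s_0,p)$-balls through Lemma \ref{lemma:approximstCovexSet} (facet counts add over the finite $\mathcal{P}$, which is where $\#(\mathcal{P})<\infty$ really enters, rather than a union bound on probabilities), Nazarov's inequality (Lemma \ref{lemma:gaussianDif}) for the boundary layers, and the $h_{q,N}(\epsilon)$ control of Assumption {\bf (A)$''$} to convert sup-norm closeness of the CDFs into quantile closeness when passing to $P$-values --- i.e., the content of Lemmas \ref{lemma:boundhatD5} and \ref{lemma:D1D3}. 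Since you name these ingredients, your route closes; just note that for the $\hat{\Rb}_{12}$-versus-$\Rb_{12}$ step the relevant tool is the Gaussian comparison theorem rather than Lemma \ref{lemma:CCK2}, which is a CLT for the non-Gaussian sum and is not needed on the bootstrap side.
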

	
	\begin{lemma} \label{lemma:limitFnbadhatFad'}
		For any $\epsilon>0$,  we have that as $n, B\rightarrow\infty$, 
		\begin{equation}\label{eq:limitFnbadhatFad'}
		\sup_{z\in[\epsilon,1-\epsilon]}|F_{N^b,\rm ad}(z)-\hat{F}_{N,\rm ad'}(z)|\rightarrow 0,
		\end{equation}
		where $\hat{F}_{N,\rm ad'}(z)$ is defined in (\ref{def:hatFNad'}).
	\end{lemma}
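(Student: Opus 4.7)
The plan is to sandwich $\hat{F}_{N,\mathrm{ad}'}$ and $F_{N^b,\mathrm{ad}}$ through an intermediate oracle empirical CDF constructed from the \emph{truly independent} samples $\{N^b_{\mathrm{ad}}\}_{b=1}^B$. Define
\[
\tilde{F}_{N^b,\mathrm{ad}}(z):=\frac{1}{B+1}\Bigl(\sum_{b=1}^B\ind\{N^b_{\mathrm{ad}}\le z\}+1\Bigr).
\]
Conditional on $(\mathcal{X},\mathcal{Y})$, the $N^b_{\mathrm{ad}}$'s are i.i.d.\ with CDF $F_{N^b,\mathrm{ad}}$, so Dvoretzky--Kiefer--Wolfowitz (DKW) immediately yields $\sup_{z\in\reals}|\tilde{F}_{N^b,\mathrm{ad}}(z)-F_{N^b,\mathrm{ad}}(z)|=O_p(B^{-1/2})$. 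It then suffices to prove the uniform closeness of $\hat{F}_{N,\mathrm{ad}'}$ to $\tilde{F}_{N^b,\mathrm{ad}}$ on $[\epsilon,1-\epsilon]$.

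The first substep is to control $\max_b|N^b_{\mathrm{ad}'}-N^b_{\mathrm{ad}}|$. By construction $\hat{P}^{b,N}_{(s_0,p)}=B^{-1}\sum_{b_1\neq b}\ind\{N^{b_1}_{(s_0,p)}>N^b_{(s_0,p)}\}$ is the empirical tail probability based on $B-1$ draws that are i.i.d.\ given $(\mathcal{X},\mathcal{Y})$ and independent of $N^b_{(s_0,p)}$. Applying DKW to these $B-1$ samples, together with a union bound over $b=1,\dots,B$ and over the finite set $\mathcal{P}$, gives
\[
\max_{1\le b\le B}\max_{p\in\mathcal{P}}\Bigl|\hat{P}^{b,N}_{(s_0,p)}-\bigl(1-F_{N^b,(s_0,p)}(N^b_{(s_0,p)})\bigr)\Bigr|=O_p\!\left(\sqrt{\tfrac{\log B}{B}}\right).
\]
Since the minimum over the finite set $\mathcal{P}$ is $1$-Lipschitz, this bound transfers directly to $\max_b|N^b_{\mathrm{ad}'}-N^b_{\mathrm{ad}}|=O_p(\sqrt{\log B/B})$.

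The second substep is to pass from pointwise closeness of the $N^b_{\mathrm{ad}'}$ and $N^b_{\mathrm{ad}}$ to uniform-in-$z$ closeness of their empirical CDFs. Writing
\[
|\hat{F}_{N,\mathrm{ad}'}(z)-\tilde{F}_{N^b,\mathrm{ad}}(z)|\le\frac{1}{B+1}\sum_{b=1}^B\ind\Bigl\{|N^b_{\mathrm{ad}}-z|\le \delta_B\Bigr\},
\]
with $\delta_B:=\max_b|N^b_{\mathrm{ad}'}-N^b_{\mathrm{ad}}|=O_p(\sqrt{\log B/B})$, the right-hand side is, by Step~1 and the law of large numbers, close to $F_{N^b,\mathrm{ad}}(z+\delta_B)-F_{N^b,\mathrm{ad}}(z-\delta_B)$. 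An anti-concentration/density bound for $F_{N^b,\mathrm{ad}}$ on $[\epsilon,1-\epsilon]$ then finishes the job after a finite partitioning of $[\epsilon,1-\epsilon]$ into subintervals of width $\sqrt{\delta_B}$.

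The principal obstacle is the anti-concentration bound for $F_{N^b,\mathrm{ad}}$, the CDF of a minimum of finitely many \emph{dependent} $U(0,1)$-marginals (obtained via the probability integral transform applied to each $F_{N^b,(s_0,p)}(N^b_{(s_0,p)})$). I would establish it by combining the Gaussian approximation of $\bN^b$ already used in the proof of Theorem~\ref{therom:CoreWN1+} with the quantile-continuity quantity $h_{q,N}(\epsilon)$ introduced in Assumption~\textbf{(A)}$''$; these are exactly the tools needed to bound the density of the minimum of the $(s_0,p)$-norms of the Gaussian surrogate $\bG^N$ uniformly over $p\in\mathcal{P}$ on the range $[\epsilon,1-\epsilon]$, which is precisely why Assumption~\textbf{(A)}$''$ is imposed. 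Once this density bound is in hand, the three steps combine to give $\sup_{z\in[\epsilon,1-\epsilon]}|F_{N^b,\mathrm{ad}}(z)-\hat{F}_{N,\mathrm{ad}'}(z)|=o_p(1)$, as required.
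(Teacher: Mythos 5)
Your proposal is correct and follows essentially the same route as the paper: the paper likewise sandwiches $\hat{F}_{N,\mathrm{ad}'}$ through the oracle empirical distribution $S(z)$ built from the true $N^b_{\mathrm{ad}}$, controls the leave-one-out perturbation by Massart/DKW-type uniform convergence (this is the content of Lemma \ref{lemma:FhatNad'S}), and finishes with Massart's inequality and the Lipschitz continuity of $F_{N^b,\mathrm{ad}}$ on $[\epsilon,1-\epsilon]$. The only real difference is your final anti-concentration step, where the Gaussian-approximation/$h_{q,N}(\epsilon)$ machinery is heavier than needed: conditionally on the data each $1-F_{N^b,(s_0,p)}(N^b_{(s_0,p)})$ is exactly uniform on $(0,1)$, so a union bound over the finite set $\mathcal{P}$ already shows that $F_{N^b,\mathrm{ad}}$ is Lipschitz with constant $\#(\mathcal{P})$, which is the continuity property the paper's argument relies on.
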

	The proofs of Lemmas \ref{lemma:limitFnadFnbad} and \ref{lemma:limitFnbadhatFad'} are in Appendices \ref{proof:lemma:limitFnadFnbad} and \ref{proof:lemma:limitFnbadhatFad'}  of supplementary materials. 
	Let $\mathcal{E}_{N,\rm ad}(\epsilon)=\{N_{\rm ad}\in [\epsilon,1-\epsilon]\}$. Considering Lemmas \ref{lemma:limitFnadFnbad} and \ref{lemma:limitFnbadhatFad'}, by replacing $\mathcal{E}_{\tilde{N},\rm ad}(\epsilon)$ with $\mathcal{E}_{N,\rm ad}$, similarly to (\ref{ineq:boundDleta2_0}) and (\ref{ineq:boundDelta2_2}) we can prove (\ref{limit:adsizeStepii}), which finishes the proof of Theorem \ref{thm:AdSize}.

\end{proof}

\subsection{ Proof of Remark \ref{remark:Rs0p}
}\label{proof:rmark:Rs0p}
\begin{proof}
	For $\bG \sim N(\zero,\Rb)\in\reals^q$ with $q \geq 1$ fixed, the distribution of $\| \bG \|_{(s_0,p)}$ is absolutely continuous with respect to the Lebesgue measure and its density function $f^{\bG}_{(s_0,p)}$ is positive everywhere. This implies that for any $\epsilon>0$, $\min_{  c^{\bG}_{\epsilon,(s_0,p)} \leq z \leq c^{\bG}_{1-\epsilon,(s_0,p)}  } f^{\bG}_{(s_0,p)}(z)>0$. To prove the result after taking infimum over all positive integers $q$, it suffices to show that as long as $\Rb \in \mathcal{R}$, the limiting distribution of $\| \bG \|_{(s_0,p)}$ as $q\to \infty$ exists with an absolutely continuous density function. For this, we prove a stronger result, which characterizes the joint asymptotic distribution of the top $s_0$ order statistics of weakly dependent standard normal random variables. In detail, let $v^{(1)}, v^{(2)},\ldots, v^{(q)}$ be an ascending sequence of the magnitudes of the coordinates of $\vb \in\reals^q$ such that $0\le v^{(1)}\le v^{(2)}\le \ldots\le v^{(q)}$. Set $\bG=(G_1,\ldots,G_q)^\top\sim N(\zero,\Rb)$ with $\Rb\in\mathcal{R}$ and $\bG^I=(G_1^I,\ldots,G_q^I)^\top\sim N(\zero,\Ib_q)$. Moreover, let $\varphi_j(\bG)=G^{(q- j +1)}$ for $j=1,\ldots, q$ and $a_q=2 \log q - \log( \log q )$. For any $\xb=(x_1,x_2,\ldots,x_{s_0})$ with $x_1>x_2>\cdots>x_{s_0}>0$, by setting $f_{\rm ext}(t_1, \ldots,t_{s_0})= \exp\big( -\frac{1}{2} \sum_{j=1}^{s_0 - 1} t_j \big) g(t _{s_0} )  I( t_1> t_2 >\cdots > t_{s_0} )$, where $g(t) = 2^{-1}\pi^{-1/2} \exp(-t/2- \pi^{-1/2} e^{- t/2 })$,  we shall prove that as $q\rightarrow\infty$,
	\begin{equation}\label{joint.asym}
	\begin{aligned}
	&  \P\Big(\varphi_1^2(\bG)\le x_1+ a_q,\ldots,\varphi_{s_0}^2(\bG)\le x_{s_0}+ a_q \Big)   \\ 
	& \longrightarrow   \bigg( \frac{1}{2\sqrt{\pi}} \bigg)^{s_0 - 1}  
	\int_{-\infty}^{x_1}  \cdots  \int_{-\infty}^{x_{s_0}} f_{\rm ext}(t_1, \ldots,t_{s_0}) \, d t_{s_0} \cdots d t_1,
	\end{aligned}
	\end{equation}
	holds uniformly for $\Rb \in \mathcal{R}$.

	For simplicity, we only prove \eqref{joint.asym} for $s_0=2$, as  the general case can be dealt with similarly. Let $y_{jq}=\sqrt{ x_j + a_q} $ for $j=1,2$, and note
	\[
	\begin{aligned}
	&\Big\{\varphi_1(\bG)> \sqrt{ x_1+ a_q } ,	 \varphi_2(\bG) > \sqrt{ x_2 + a_q } \Big\} \\&\hspace{2em}=\bigcup_{ 1\leq i \ne j \leq q } \Big\{(|G_i|,|G_j|) >(y_{1q},y_{2q}) \Big\}=\bigcup_{k=1}^{2\bar{q} }\Big\{(|G_{i_k}|,|G_{j_k}|) >(y_{1q},y_{2q}) \Big\},
	\end{aligned}
	\]
	where $\{ (i_k, j_k) \}_{k=1}^{2\bar{q}} = \{ (1,2),(1,3) \ldots, (1,q), (2,1), (2,3) \ldots, (2,q) , \ldots, (q,q-1) \}$ and $\bar{q} = q(q-1)/2$. By the Bonferroni inequality, for any fixed $k<\bar{q}$, we have
	\begin{equation}\label{in.ex.formula}
	\begin{aligned}  
	\sum_{\ell=1}^{2k}(-1)^{\ell-1}E_\ell &\le
	\P\Big(\varphi_1(\bG)> \sqrt{ x_1+ a_q } , \varphi_2(\bG)> \sqrt{ x_2+ a_q } \Big)\\&\le \sum_{\ell=1}^{2k-1}(-1)^{\ell-1}E_\ell,
	\end{aligned}
	\end{equation} 
	where
	\[
	E_\ell\hspace{-0.3em}=\hspace{-1em}\sum_{1\le k_1< \cdots<k_\ell\le 2\bar{q}	}\hspace{-1em}\P\Big(   | G_{i_{k_1}} | > y_{1q} , | G_{j_{k_1}} |  > y_{2q} , \ldots , |G_{i_{k_\ell}}| > y_{1q} ,|G_{j_{k_\ell}}|  > y_{2q}  \Big).
	\]
	Moreover, for every $2\leq t\leq 2\ell$, define
	\begin{equation}\label{def:Ellt}
	E_{\ell, t}   =\hspace{-1em}\sum_{1\le k_1 <\cdots<k_\ell\le 2\bar{q} \atop \#\{i_{k_1},j_{k_1},\ldots,i_{k_\ell},j_{k_\ell}\}=t }\underbrace{\P\bigg(   \min_{1\leq \nu \leq \ell } | G_{i_{k_\nu}} | > y_{1q} , \min_{1\leq \nu \leq \ell } | G_{j_{k_\nu }} |  > y_{2q}  \bigg) }_{P_{k_1,\ldots,k_\ell}}.
	\end{equation}
	We  define index sets $\mathcal{I}^c$, $\mathcal{I}$, and $\mathcal{I}_k$ in  the same way as in the proof of Lemma~6 in \cite{tony2014two:app}. Therefore, we have $\mathcal{I}  = \cup_{k=1}^{t-1} \mathcal{I}_k$. Further, for $1\le i_1< \cdots <i_t\le q$, by defining
	\[
	Q(i_1,\ldots,i_t)\!\!=\!\!\Big\{\!1\le k_1\!< \!\cdots<k_\ell \le  2\bar{q}: \{i_{k_1},j_{k_1},\ldots,i_{k_\ell},j_{k_\ell}\}=\{i_1, \ldots,i_t\}\Big \},
	\]
	with $\# Q(i_1,\ldots,i_t) \le   \binom{t(t-1)}{\ell} $, we have 
	\[
	E_{\ell, t}  =\underbrace{\sum_{(i_1,\ldots,i_t)\in\mathcal{I}^{{\rm c}}}\sum_{(k_1,\ldots,k_\ell)\atop\in Q(i_1,\ldots,i_t) }
		P_{k_1,\ldots,k_\ell}}_{M_1(\ell, t)}+\underbrace{\sum_{(i_1,\ldots,i_t)\in\mathcal{I}}\sum_{(k_1,\ldots,k_\ell)\atop\in Q(i_1,\ldots,i_t)} P_{k_1,\ldots,k_\ell}}_{M_2(\ell, t)}.
	\]
	For $(k_1,\ldots,k_\ell)\in Q(i_1,\ldots,i_t)$ with $(i_1,\ldots,i_t)\in\mathcal{I}^{{\rm c}}$, a straightforward adaptation of the arguments used to prove (20) in \cite{tony2014two:app} yields that, as $q\rightarrow\infty$, 
	\begin{equation}
	P_{k_1,\ldots,k_\ell}= \{ 1+o(1) \} P^I_{k_1,\ldots,k_\ell},
	\end{equation}
	where $P^I_{k_1,\ldots,k_\ell}$ is defined in the same way as $P_{k_1,\ldots,k_\ell}$ in (\ref{def:Ellt}) by replacing $G_i$ with $G_i^I$. 
	
	For $(k_1,\ldots,k_\ell)\in Q(i_1,\ldots,i_t)$ with $(i_1,\ldots,i_t)\in\mathcal{I}_k$ for some $1\leq k\leq t-1$, considering $y_{1q}> y_{2q}$, we have  
	\[
	P_{k_1,\ldots,k_\ell}\le \P(|G_{i_1}|>y_{2q},\ldots,|G_{i_t}|>y_{2q}) := \tilde{P}_{i_1,\ldots,i_t}
	.\]
	Now, it follows from (21) in \cite{tony2014two:app} with slight modification that, as $q\rightarrow\infty$,
	\begin{equation}\label{ineq:M2}
	M_2(\ell, t) \le \sum_{(i_1,\ldots,i_t)\in\mathcal{I}}\binom{t(t-1)}{\ell}\tilde{P}_{i_1,\ldots,i_t}\
	=\binom{t(t-1)}{\ell}\sum_{(i_1,\ldots,i_t)\in\mathcal{I}}\tilde{P}_{i_1,\ldots,i_t}\rightarrow0.
	\end{equation}
	We define $M_2^I(\ell, t)$ by replacing entries of $\bG$ with the corresponding entries of $\bG^I$ in $M_2(\ell, t)$. Similarly to (\ref{ineq:M2}), we have $M_2^I(\ell, t)=o(1)$ as $q\rightarrow\infty$.  
	Therefore, as $q\rightarrow\infty$, we have 
	\[
	E_{\ell}=\sum_{t=2}^{2\ell}  E_{\ell, t}   = \{ 1+o(1) \} \sum_{1\le k_1< \cdots<k_\ell \le 2\bar{q} }P^I_{k_1,\ldots,k_\ell}	+	o(1).
	\]
	This, together with \eqref{in.ex.formula} implies that, as $q \to \infty$,
	\begin{equation}\label{sandwich}
	\begin{aligned}
	 &\{ 1 + o(1) \} \sum_{\ell=1}^{2k}(-1)^{\ell-1}  \hspace{-1em}\sum_{1\le k_1< \cdots<k_\ell \le 2\bar{q} }\hspace{-1em}P^I_{k_1,\ldots,k_\ell}\!  +\! o(1) \\\le& \P\Big(\varphi_1(\bG)\!>\! \sqrt{ x_1+ a_q } , \varphi_2(\bG)\!>\! \sqrt{ x_2 + a_q } \Big)  \\ 
 \le & \{ 1+o(1) \} \sum_{\ell=1}^{2k-1}(-1)^{\ell-1} \hspace{-1em}\sum_{1\le k_1< \cdots<k_\ell \le 2\bar{q} }P^I_{k_1,\ldots,k_\ell}  + o(1) . 
	\end{aligned}
	\end{equation}
	On the other hand, observing 
	\begin{equation} \label{ind.asymp}
	\begin{aligned} 
\P\Big( \varphi_1(\bG^I )> \sqrt{ x_1+ a_q } , \varphi_2(\bG^I )> \sqrt{ x_2+ a_q }  \Big)\\ = \lim_{ k \to \infty } \sum_{\ell=1}^{2k}(-1)^{\ell-1}  \sum_{1\le k_1< \cdots<k_\ell \le 2\bar{q} }P^I_{k_1,\ldots,k_\ell},
\end{aligned}
	\end{equation}
	and $a_q = 2\log q - \log(\log q)$, by  \cite{DN03:app},  the bivariate vector $( \varphi_1^2(\bG^I ) - a_q, \varphi_2^2(\bG^I ) - a_q )$ has a limiting distribution with joint density function 
	$$
	g_2( t_1, t_2 ) =  \frac{ g( t_1) g( t_2 ) }{G( t_1 )} = \frac{ e^{- t_1/2 } }{2\sqrt{\pi}} g( t_2 ),   \hspace{3em}{\rm for}\hspace{1em} t_1 > t_2,
	$$
	where $G(t) = \exp(- \pi^{-1/2} e^{- t/2 })$ and $g(t) = G'(t)$. Therefore, the limit in \eqref{ind.asymp} is equal to $\int_{x_1}^\infty \int_{x_2}^\infty 	g_2( t_1, t_2 ) I( t_1 > t_2 ) \, dt_2  \, d t_1$, which together with \eqref{sandwich} proves \eqref{joint.asym} by letting $q \to \infty$ first and then $k\to \infty$.
\end{proof}
\subsection{Proof of Theorem \ref{theorem:Adpower}}
\label{proof:theorem:Adpower}
\begin{proof}
	For simplicity, we only consider the two-sample problem. In detail, we aim to prove 
	\begin{equation}\label{proof:targetAdpower}
\P_{\Hb_1}\big(T^{N}_{\rm ad}= 1 \big)\rightarrow 1,\hspace{2em}\text{as $n$, $B$}\rightarrow\infty.
	\end{equation}
	under (\ref{def:AdpowerD2}) and some assumptions. 
	By the definition of $T^N_{\rm ad}$  in (\ref{def:TWNAd}), for proving (\ref{proof:targetAdpower}), it is equivalent to prove
	\begin{equation}\label{proof:targerAdpower:equ}
	\P_{\Hb_1}\big(\hat{P}^N_{\rm ad}\le\alpha\big)\rightarrow 1, \hspace{2em}\text{as $n$, $B$}\rightarrow \infty.
	\end{equation} 
	By the definition of $\hat{P}^N_{\rm ad}$ and $\hat{F}_{N,\rm ad'}(z)$ in (\ref{def:hatPNad}) and 
	(\ref{def:hatFNad'}), (\ref{proof:targerAdpower:equ}) becomes
	\begin{equation}\label{target:equ}
\P_{\Hb_1}\big(\hat{F}_{N,\rm ad'}(N_{\rm ad})<\alpha\big)\rightarrow 1, \hspace{2em}\text{as $n$, $B$}\rightarrow \infty.
	\end{equation} 
	Therefore, to obtain (\ref{proof:targetAdpower}), it is sufficient to prove (\ref{target:equ}).
	By setting $\alpha'=\alpha/\#\{\mathcal{P}\}$, we can prove that $\alpha$ is also an upper bound of $\hat{F}_{N,\rm ad'}(\alpha')$, i.e.,
	\begin{equation}\label{def:LowboundInvFhatNad'}
	\P_{\Hb_1}\Big(\hat{F}_{N,\rm ad'}\big(\alpha'\big)\le \alpha\Big)\rightarrow 1,\hspace{2em} \text{as $n$, $B$}\rightarrow\infty.
	\end{equation}
	By (\ref{def:LowboundInvFhatNad'}), to obtain (\ref{target:equ}) it is sufficient to prove  
	\begin{equation}\label{proof:targetGoal}
\P_{\Hb_1}\big(N_{\rm ad}\le \alpha'\big)\rightarrow 1, \hspace{2em}\text{as $n$, $B$}\rightarrow\infty.
	\end{equation}
	By the definition of $N_{\rm ad}$ in (\ref{def:Nad}), we have 
	\begin{equation}\label{ineq:NadDmp}
	\P_{\Hb_1}\big(\hat{P}^N_{(s_0,p)}\le \alpha'\big)\le\P_{\Hb_1}\big(N_{\rm ad}\le \alpha'\big),
	\end{equation}
	for any $p\in\mathcal{P}$. By Theorem \ref{theorem:power}, under (\ref{def:AdpowerD2}) we have 
	\begin{equation}\label{limit:powers0p}
	\P_{\Hb_1}\big(\hat{P}^N_{(s_0,p)}\le \alpha'\big)\rightarrow 1, \hspace{2em}\text{as $n$, $B$}\rightarrow\infty.
	\end{equation}
	Combining (\ref{ineq:NadDmp}) and (\ref{limit:powers0p}), we prove (\ref{proof:targetGoal}).

	To complete the proof, we now prove (\ref{def:LowboundInvFhatNad'}). By Lemma \ref{lemma:limitFnbadhatFad'}, for any $0<\alpha<1$,  we have 
	\begin{equation}\label{limit:powerad}
\P\Big(\hat{F}_{N,\rm ad'}\big(\alpha'\big)\le \alpha\Big)=\P\Big(F_{N^b,\rm ad}(\alpha')\le \alpha\Big),\hspace{2em}\text{as $n$, $B$}\rightarrow\infty.
	\end{equation}
	Moreover, by the definition of $F_{N^b,\rm ad}(z)$ in (\ref{def:Fnbad}), we have that
	$
	F_{N^b,\rm ad}(\alpha')\le \alpha
	$ holds with probability $1$, which yields (\ref{def:LowboundInvFhatNad'}).
\end{proof}

\section{Proof of lemmas in Appendix B}\label{sec:appendix C}
\subsection{Proof of Lemma \ref{lemma:NhatHApproximationN1+}}
\label{proof:lemma:NhatHApproximationN1+}
\begin{proof}
	To prove Lemma \ref{lemma:NhatHApproximationN1+}, we need to  bound  $\P\Big(\|\bN -\bH^N\|_{(s_0,p)}> \varepsilon\Big)$, where $\varepsilon= Cs_0 \log^2 (qn) n^{-1/2}$.  We first prove for $m>1$.
	 For this,  we set $\hat{\bH}^N=(\hat{H}_1^N,\ldots,\hat{H}_q^N)^\top$ with 
	\begin{equation}\label{def:HhatN}
	\hat{H}_s^N={\Big(\dfrac{1}{n_1}\sum\limits_{k=1}^{n_1}h_s(\bX_k)-\dfrac{1}{n_2}\sum\limits_{k=1}^{n_2} h_s(\bY_k)\Big)}/{\sqrt{\hat{\sigma}_{1,ss}/n_1+\hat{\sigma}_{2,ss}/n_2}}.
	\end{equation}
	By plugging $\hat{\bH}^N$, we have   $\P(\|\bN-\bH^N\|_{(s_0,p)}\!>\!\varepsilon)\le D_1 + D_2$ with 
	\[
	D_1 = \P\Big(\|\bN-\hat{\bH}^N\|_{(s_0,p)}>\varepsilon/2\Big),~D_2 = \P\Big(\|\hat{\bH}^N-\bH^N\|_{(s_0,p)}>\varepsilon/2\Big).
	\] 
	Therefore, we only need to separately prove $D_1=o(1)$ and $D_2=o(1)$ as $n\rightarrow\infty$.
	
	For proving $D_1=o(1)$, by setting 
	$
	\mathcal{E}_{12}:=\{\min_{s,\gamma}\hat{\sigma}_{\gamma,ss}>  b/2 \}, 
	$ we have
	\begin{equation}\label{ineq:bound}
	D_1\le \underbrace{\P\Big(\big\|\bN -\hat{\bH}^N\|_{(s_0,p)}> 
		\varepsilon/2 \cap\mathcal{E}_{12}\Big)}_{I_1}+\P\big(\mathcal{E}_{12}^c\big).
	\end{equation}
	Considering Assumptions {\bf (A)} and {\bf (M1)}, by Lemma \ref{lemma:sigmaRErrors}, we have $\P\big(\mathcal{E}_{12}^c\big)=o(1)$ as $n\rightarrow\infty$. Hence, we only need to prove 
	$I_1=o(1)$ as $n\rightarrow\infty$. By the Hoeffding's decomposition, considering  $\hat{v}_{\gamma,s}=m^2\hat{\sigma}_{\gamma,ss}$ and $\|\vb\|_{(s_0,p)}\le s_0^{1/p}\|\vb\|_{\infty}$, we have
	\[
	I_1\le\P\Bigg(\max_{1\le s\le q}\Big|\binom{n_1}{m}^{-1}\Delta_{n_1,s}-\binom{n_2}{m}^{-1}\Delta_{n_2,s}\Big|>\frac{mb^{1/2}\varepsilon}{\sqrt{2}s_0^{1/p}}\sqrt{\frac{1}{n_1}+\frac{1}{n_2}}\Bigg),
	\]
	where $\Delta_{n_1,s}$ and $\Delta_{n_2,s}$ are residuals of the Hoeffding's decomposition. 
	For bounding the residuals, we  threshold the kernel by $B_n=C\log(qn)$. For this, we introduce
	\begin{equation}\label{def:VE}
	\begin{aligned}
	V_{1,s}^{i_1,\ldots,i_m} &= \Psi_s(\bX_{i_1},\ldots,\bX_{i_m})\ind\{| \Psi_s(\bX_{i_1},\ldots,\bX_{i_m})|\le B_n\},\\
	E_{1,s}& =\E\Big(\Psi_s(\bX_{i_1},\ldots,\bX_{i_m})\ind\{| \Psi_s(\bX_{i_1},\ldots,\bX_{i_m})|\le B_n\}\Big),
	\end{aligned}
	\end{equation}
	and denote the thresholded kernel and Hoeffding's projection by 
	\begin{equation}\label{def:threshKernel}
	\begin{aligned}
&\hat{\Psi}_s(\bX_{i_1},\ldots,\bX_{i_m})= V_{1,s}^{i_1,\ldots,i_m} -E_{1,s},\\
&	\hat{h}_s(\bX_i)= \E\Big(\hat{\Psi}_s(\bX_{i_1},\ldots,\bX_{i_m})|\bX_i\Big).
	\end{aligned}
	\end{equation}
	Hence, the corresponding residuals become
	\[
\hat{\Delta}_{n_1,s}=\sum\limits_{1\le i_1<\ldots<i_m\le n_1}\Big(\hat{\Psi}_{s}(\bX_{i_1},\ldots,\bX_{i_m})-\sum_{\ell=1}^{m}\hat{h}_{s}(\bX_{i_\ell})\Big),
\]
By the definitions of both $\Delta_{n_1,s}$ and $\hat{\Delta}_{n_1,s}$, we then have
\[
\begin{aligned}
|\Delta_{n_1,s}-\hat{\Delta}_{n_1,s}|\le& \Bigg|\Delta_{n_1,s} -\Big(\sum\limits_{1\le i_1<\ldots<i_m\le n_1}V_{1,s}^{i_1,\ldots,i_m}-\sum_{\ell=1}^m\E(V_{1,s}^{i_1,\ldots,i_m}|\bX_{i_\ell})\Big)\Bigg|\\
&+ (m-1)\binom{n_1}{m}|E_{1,s}|.
\end{aligned}
\]
Considering that $\varepsilon=Cs_0\log^2(qn)n^{-1/2}$, we have \[\frac{mb^{1/2}\varepsilon}{\sqrt{2}s_0^{1/p}}\sqrt{\frac{1}{n_1}+\frac{1}{n_2}}=O(\log^2 (qn)/n).\]
By choosing a proper constant $C$ in $B_n$, considering Assumption {\bf (E)},  we have   $\max_s(|E_{1,s}| + |E_{2,s}|) \prec \log^2 (qn)/n$. Hence, when $n$ is sufficiently large,
we use the triangle  inequality to get $ I_1\le I_{1,1} + I_{1,2}, $
 where 
 \[
 \begin{aligned}
 I_{1,1} &= P\Bigg(\max_{1\le s\le q}\Big|\binom{n_1}{m}^{-1}\hat{\Delta}_{n_1,s}-\binom{n_2}{m}^{-1}\hat{\Delta}_{n_2,s}\Big|>C \frac{\log^2(qn)}{n}\Bigg)\\
 I_{1,2} & = Cqn^m\!\!\!\! \max_{s, i_\ell, j_\ell \atop \ell =1,\ldots, m}\!\!\big(\P(| \Psi_s(\bX_{i_1},\ldots,\bX_{i_m})| \!>\! B_n) \!+\!\P(| \Psi_s(\bY_{j_1},\ldots,\bY_{j_m})| \!>\! B_n)\big)\\
 \end{aligned}
 \]
 By choosing a proper constant $C$ in $B_n$, considering Assumption {\bf (E)}, we have $I_{1,2}=o(1)$.
 For $I_{1,1}$,	by Proposition 2.3 (c) in \cite{arcones1993limit:app}, we obtain 
	\begin{equation}\label{ineq:boundI1I1}
	I_{1,1} \le Cq\exp\big(-C_1n^{1-\frac{2}{m}}\log^{\frac{2}{m} }(qn)\big).
	\end{equation}
	Considering $m\ge 2$ and Assumption {\bf (A)}, we then have $I_{1,1}=o(1)$. Therefore, we prove that $D_1=o(1)$, as $n\rightarrow\infty$.
	
	After the proof for $D_1$, we then prove that $D_2=o(1)$. Considering
	\begin{equation}\label{ineq:HNhatHNdiff}
	\|\hat{\bH}^N -\bH^N\|_{(s_0,p)}\le s_0^{1/p}\|\hat{\bH}^N-\bH^N\|_{\infty},
	\end{equation}
	we have $ D_2 \le \P(\|\hat{\bH}^N-\bH^N\|_{\infty}>0.5s_0^{-1/p}\varepsilon).$
	By the definitions of $\bH^N$ and $\hat{\bH}^N$ in (\ref{def:HsN1+}) and (\ref{def:HhatN}), we have $	\|\hat{\bH}^N-\bH^N\|_{\infty}\le I_2 I_3$ with
	\begin{equation}\label{ineq:NHmaxdecompN1+}
	\begin{aligned}
		I_2 &= \max_{1\le s\le q}
	\frac{|\sum_{k=1}^{n_1}h_s(\bX_k)-\rho\sum_{k=1}^{n_2} h_s(\bY_k)|}{\sqrt{n_1\sigma_{1,ss}+\rho^2n_2\sigma_{2,ss}}}, \\
	I_3 &=  
	\max_{1\le s\le q}\Big|1-\frac{\sqrt{\sigma_{1,ss}+\rho\sigma_{2,ss}}}{\sqrt{\hat{\sigma}_{1,ss}+\rho\hat{\sigma}_{2,ss}}}
	\Big|,
	\end{aligned}
	\end{equation}
	where $\rho=n_1/n_2$. By Assumption {\bf (E)} and exponential inequality, we have that 
	$
	I_2 \le C\sqrt{\log (qn)}
	$ holds  with probability $1-C_1n^{-1}$.

	For bounding $I_3$, we introduce the following lemma. 
		\begin{lemma}\label{lemma:inverse}
		$\xi_1, \ldots,\xi_s\in\reals$ are positive random variables with $\xi_s>0$. For $y\in (0,1]$, we have
		\begin{equation}\label{ineq:inverseRelationship}
		\P\Big(\max_{1\le s\le q}|1-\xi_s|\le y/2\Big)\le
		\P\Big(\max_{1\le s\le q}|1-\xi_s^{-1}|\le y\Big).
		\end{equation}
	\end{lemma}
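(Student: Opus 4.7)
My plan is to prove the lemma by establishing a deterministic set inclusion between the two events, after which the probability inequality is immediate by monotonicity of $\P$. Specifically, I aim to show that on the event $\{\max_{1\le s\le q}|1-\xi_s|\le y/2\}$ we automatically have $\max_{1\le s\le q}|1-\xi_s^{-1}|\le y$, so the sublevel set on the left is contained in the sublevel set on the right.

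To carry this out, I would fix an arbitrary index $s$ and assume $|1-\xi_s|\le y/2$. Since $\xi_s>0$, the identity $1-\xi_s^{-1}=(\xi_s-1)/\xi_s$ holds, which gives
\begin{equation*}
|1-\xi_s^{-1}| \;=\; \frac{|\xi_s-1|}{\xi_s} \;\le\; \frac{y/2}{\xi_s}.
\end{equation*}
The assumption further forces $\xi_s\ge 1-y/2$, so
\begin{equation*}
|1-\xi_s^{-1}| \;\le\; \frac{y/2}{1-y/2}.
\end{equation*}
Thus it suffices to verify $\tfrac{y/2}{1-y/2}\le y$ for $y\in(0,1]$, which is equivalent to $1-y/2\ge 1/2$, i.e., $y\le 1$. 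This inequality holds by assumption, so $|1-\xi_s^{-1}|\le y$; taking the maximum over $s$ yields the claimed inclusion.

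Once this inclusion is in place, the probability inequality $\P(\max_{s}|1-\xi_s|\le y/2)\le \P(\max_{s}|1-\xi_s^{-1}|\le y)$ follows immediately. There is no real obstacle in this argument; the only subtlety is that we must verify the positivity $\xi_s\ge 1-y/2>0$ so that the reciprocal is well defined and the algebraic manipulation is valid, and this is precisely why the restriction $y\in(0,1]$ appears in the statement.
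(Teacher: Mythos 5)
Your proof is correct and follows essentially the same route as the paper: establish the deterministic implication that $|1-\xi_s|\le y/2$ forces $|1-\xi_s^{-1}|\le y$ when $y\in(0,1]$, deduce the set inclusion, and conclude by monotonicity of probability. The only cosmetic difference is that you bound $|1-\xi_s^{-1}|$ uniformly by $\tfrac{y/2}{1-y/2}=\tfrac{y}{2-y}$, whereas the paper records the slightly sharper case-split bound $\max\bigl(\tfrac{y}{2+y},\tfrac{y}{2-y}\bigr)$; both are at most $y$, so the arguments coincide.
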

The detailed proof of Lemma \ref{lemma:inverse} is in Appendix \ref{proof:lemma:inverse}.
	Motivated by Lemma \ref{lemma:inverse}, we introduce
	\[
	I'_3:=\max_{1\le s\le q}\Big|1-\frac{\sqrt{\hat{\sigma}_{1,ss}+\rho\hat{\sigma}_{2,ss}}}{\sqrt{\sigma_{1,ss}+\rho\sigma_{2,ss}}}
	\Big|.
	\]
	By Assumption {\bf (M1)}, considering   $(a+b)(a-b)=a^2-b^2$, we use the triangle inequality to obtain
	\[
	\begin{aligned}
	I_3'&\le \max_{1\le s\le q}{(\sigma_{1,ss}+\rho\sigma_{2,ss})^{-1}}{|\hat{\sigma}_{1,ss}+\rho\hat{\sigma}_{2,ss}-\sigma_{1,ss}-\rho\sigma_{2,ss}|}\\
	&\le {(1+\rho)^{-1}b^{-1}}\Big(\max_{1\le s\le q}|\hat{\sigma}_{1,ss}-\sigma_{1,ss}|+\rho\max_{1\le s\le q}|\hat{\sigma}_{2,ss}-\sigma_{2,ss}|\Big).
	\end{aligned}
	\]
	Therefore, by Lemma \ref{lemma:sigmaRErrors},
	 $I'_3\le C \log^{3/2}(qn)n^{-1/2}$  holds with probability $1-C_1n^{-1}$. By Lemma \ref{lemma:inverse}, we then have that $I_3\le C \log^{3/2}(qn)n^{-1/2}$  holds with probability 
	 $1-C_1n^{-1}$ for sufficiently large $n$. Combining (\ref{ineq:HNhatHNdiff}) and the bound for $I_2$ and $I_3$, we then have 
	 \[
	 \|\hat{\bH}^N -\bH^N\|_{(s_0,p)} \le C s_0 \frac{\log^2 (qn)}{\sqrt{n}}
	 \]
	 with probability $1-C_1n^{-1}$ for sufficiently large $n$. Therefore, we have $D_2=o(1)$ as $n\rightarrow\infty$, which finishes the proof for $m>1$.
	 
	By Lemma \ref{lemma:sigmaRErrors} and similar proof, we can also prove for $m=1$. As the proof is much easier and similar to the proof for $m>1$, we omit the proof here.
\end{proof}

\subsection{Proof of Lemma \ref{lemma:anticoncentrate}}
\label{proof:lemma:anticoncentrate}
\begin{proof}
 For notational simplicity, we set
	\[
	L_{z,\varepsilon}:= \P(\|\bG\|_{(s_0,p)}\le z+\varepsilon)-\P(\|\bG\|_{(s_0,p)}\le z),
	\]
	where $z>0$ and $\varepsilon=O(s_0\log^2(qn)n^{-1/2})$. Let $\mathcal{E}^{R, q} = \{\xb\in\reals^q: \|\xb\|\le  R\}$ and
  $V^{z, q}_{(s_0,p)}=\{\xb\in\reals^q:\|\xb\|_{(s_0,p)}\le z\}$. We then have 
  \[
  L_{z,\varepsilon} \le  \underbrace{\P\big(\bG\in \reals^q \backslash \mathcal{E}^{R,q}\big)}_{L_1}  + \underbrace{\P\big(\bG \in V^{z+\varepsilon, q}_{(s_0,p)}\cap \mathcal{E}^{R,q}\big) - \P\big(\bG \in V^{z,q}_{(s_0,p)}\cap \mathcal{E}^{R,q}\big)}_{L_2}.
  \]
   By the tail probability of Gaussian distribution, we have  \[L_1 \le q(2\pi R^2 q^{-1})^{-1/2}\exp(-R^2q^{-1}/2).\]
   For $L_2$, by Lemma \ref{lemma:approximstCovexSet}, there is a $m$-generated convex set $A^m\in\reals^q$ such that 
  \begin{equation}\label{subset:Am}
  A^m\subset V^{z, q}_{(s_0,p)} \cap  \mathcal{E}^{R,q} \subset A^{m, R\epsilon}\hspace{2em}\text{and}\hspace{2em}
  m\le q^{s_0}\Big(\frac{\gamma}{\sqrt{\epsilon}}\ln \frac{1}{\epsilon}\Big)^{s_0^2}.
  \end{equation}
  Hence, there is a constant $C$ such that
  \begin{equation}\label{subset:Am+}
  V_{(s_0,p)}^{z+\varepsilon} \cap \mathcal{E}^{R,q} \subset A^{m, R\epsilon + C\varepsilon}
  \end{equation}
  By setting  $R=qn$ and $\epsilon = (qn)^{-2}$, we have   $R\epsilon  \prec \varepsilon$ and $L_1=o(1)$. By  Lemma \ref{lemma:gaussianDif}, we 
  combine (\ref{subset:Am}) and (\ref{subset:Am+}) to obtain $L_2 \le C \varepsilon s_0\sqrt{\log (qn)} = O(s_0^2\log^{5/2} (qn)n^{-1/2}).$ By Assumption {\bf (A)}, we have $L_2=o(1)$, which finishes the proof.
  \end{proof}

\subsection{Proof of Lemma \ref{lemma:boundhatD5}}\label{proof:lemma:boundhatD5}
 \begin{proof}
 	In Lemma \ref{lemma:boundhatD5}, we aim to bound $\hat{D}_5$, where
	\[
	\hat{D}_5:=\sup_{z>0}\Big|\P(\|\bG^N\|_{(s_0,p)}>z)
	-\P(\|\bN^b\|_{(s_0,p)}>z|\mathcal{X},\mathcal{Y})\Big|.\]
	To bound  $\hat{D}_5$, we need to analyze the distributions of $\bG^N$ and $\bN^b|\mathcal{X},\mathcal{Y}$. Considering the definitions of $\bSigma_{\gamma}$ and $\hat{\bSigma}_{\gamma}$ in (\ref{def:sigmast}) and (\ref{def:sigmahat}), by setting 
	\[
	\bSigma_{12}=\bSigma_1/n_1+\bSigma_2/n_2\hspace{1em}{\rm and}\hspace{1em}
	\hat{\bSigma}_{12}=\hat{\bSigma}_1/n_1+\hat{\bSigma}_2/n_2,
	\] we have $\bG^N\sim N(\zero,\Rb_{12})$ and $\bN^b|\mathcal{X}, \mathcal{Y}\sim N(\zero, \hat{\Rb}_{12})$, where $\Rb_{12}$ and $\hat{\Rb}_{12}$ are defined as
	\begin{equation}\label{def:R12}
	\begin{array}{l}
	\Rb_{12}={\rm Diag}(\bSigma_{12})^{-1/2}\bSigma_{12}{\rm Diag}(\bSigma_{12})^{-1/2}=(r_{12,ij})_{1\le i,j\le q},\\
	\hat{\Rb}_{12}={\rm Diag}(\hat{\bSigma}_{12})^{-1/2}\hat{\bSigma}_{12}{\rm Diag}(\hat{\bSigma}_{12})^{-1/2}=(\hat{r}_{12,ij})_{1\le i,j\le q}.
	\end{array}
	\end{equation}
	
	\begin{lemma}\label{lemma:r12error}
	\end{lemma}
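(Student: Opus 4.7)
The statement of Lemma~\ref{lemma:r12error} is only indicated by its role in the surrounding text: it must quantify the entrywise approximation error between $\hat{\bR}_{12}$ and $\bR_{12}$, where these correlation matrices are built from the pooled covariances $\bSigma_{12}=\bSigma_1/n_1+\bSigma_2/n_2$ and $\hat{\bSigma}_{12}=\hat{\bSigma}_1/n_1+\hat{\bSigma}_2/n_2$. My plan is to prove that, under Assumptions (A), (E), (M1), and (M2), with probability at least $1-C_1 n^{-1}$,
\[
\max_{1\le i,j\le q}\bigl|\hat{r}_{12,ij}-r_{12,ij}\bigr|\le C\,\frac{\log^{3/2}(qn)}{\sqrt{n}}\qquad (m>1),
\]
with the analogous bound $C\sqrt{\log(qn)/n}+C\log^2(qn)/n$ in the case $m=1$. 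This is precisely the rate one would expect from Lemma~\ref{lemma:sigmaRErrors} propagated through the normalization step.

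The approach is a controlled perturbation argument on top of Lemma~\ref{lemma:sigmaRErrors}. First, I would fix the good event $\mathcal{E}_{\sigma}=\{\max_{s,t,\gamma}|\hat{\sigma}_{\gamma,st}-\sigma_{\gamma,st}|\le \tau_n\}$ with $\tau_n\asymp\log^{3/2}(qn)/\sqrt{n}$, which has probability $1-C_1 n^{-1}$ by Lemma~\ref{lemma:sigmaRErrors}. On $\mathcal{E}_{\sigma}$, the pooled entries satisfy
\[
\max_{s,t}\Bigl|\hat{\sigma}_{12,st}-\sigma_{12,st}\Bigr|\le \tau_n\Bigl(\tfrac{1}{n_1}+\tfrac{1}{n_2}\Bigr),
\]
while Assumption (M1) gives a uniform lower bound $\sigma_{12,ss}\ge b(1/n_1+1/n_2)$, so each denominator $\sqrt{\sigma_{12,ss}\sigma_{12,tt}}$ is bounded below by $b(1/n_1+1/n_2)$. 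Then I would write $\hat{r}_{12,ij}-r_{12,ij}$ via the standard identity
\[
\frac{\hat a}{\sqrt{\hat c\,\hat d}}-\frac{a}{\sqrt{c\,d}}=\frac{\hat a-a}{\sqrt{\hat c\,\hat d}}+a\cdot\frac{\sqrt{cd}-\sqrt{\hat c\,\hat d}}{\sqrt{\hat c\,\hat d}\sqrt{cd}},
\]
with $(a,c,d)=(\sigma_{12,ij},\sigma_{12,ii},\sigma_{12,jj})$ and hatted counterparts. The first term is directly bounded by $\tau_n/b$ (after cancelling the $1/n_1+1/n_2$ factors), and the second term is bounded using $|\sqrt{cd}-\sqrt{\hat c\hat d}|\le |cd-\hat c\hat d|/\sqrt{cd}$ combined with $|r_{12,ij}|\le 1$ and another application of the perturbation bound on diagonals.

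The last step is to verify that the multiplicative factors $1/n_1+1/n_2$ appearing in both the numerator and the denominators cancel cleanly, so that the final rate is exactly $\tau_n/b=O(\log^{3/2}(qn)/\sqrt{n})$, independent of the $n_\gamma^{-1}$ scaling. Since this cancellation is deterministic on $\mathcal{E}_\sigma$, the union bound over the single event $\mathcal{E}_\sigma$ suffices, and no additional concentration is needed. The case $m=1$ is identical in structure; only the rate $\tau_n$ is replaced by the second part of Lemma~\ref{lemma:sigmaRErrors}.

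I do not anticipate a serious obstacle: all the probabilistic content is already contained in Lemma~\ref{lemma:sigmaRErrors}, and the assumption (M1) provides exactly the lower bound on the diagonal entries that is needed to keep the denominators away from zero. The only mildly delicate point is tracking the $n_1^{-1}+n_2^{-1}$ scaling through the normalization, which requires writing $\sigma_{12,ij}=\sigma_{1,ij}/n_1+\sigma_{2,ij}/n_2$ explicitly and using $n_1\asymp n_2\asymp n$; this is bookkeeping rather than a substantive obstacle.
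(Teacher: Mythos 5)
Your reconstruction is correct: the lemma as invoked in the proof of Lemma \ref{lemma:boundhatD5} needs exactly the bound $\max_{i,j}|\hat{r}_{12,ij}-r_{12,ij}|\le C\log^{3/2}(qn)/\sqrt{n}$ with probability $1-C_1n^{-1}$ (the paper leaves the statement itself blank), and your argument — writing the pooled entries as $(1/n_1+1/n_2)$ times convex combinations of the group-wise quantities, invoking Lemma \ref{lemma:sigmaRErrors} on the event $\mathcal{E}_\sigma$, and using the lower bound from Assumption (M1) to keep the denominators away from zero in the standard ratio-perturbation identity — is essentially the same computation the paper carries out for $\hat{r}_{\gamma,st}-r_{\gamma,st}$ at the end of the proof of Lemma \ref{lemma:sigmaRErrors}. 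No gap; the $m=1$ case follows identically with the modified rate.
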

	After analyzing the distributions of $\bG^N$ and $\bN^b|\mathcal{X}, \mathcal{Y}$, we then bound $\hat{D}_5$. For this, we  rewrite $\hat{D}_5$ as
	$
	\hat{D}_5= \max\big(\sup\nolimits_{z\in (0,\tilde{R}]}I_z, \sup\nolimits_{z\in (\tilde{R},\infty)}I_z\big),
	$ where 
	\[I_z=\big|\P(\|\bG^N\|_{(s_0,p)}>z)
	-\P(\|\bN^b\|_{(s_0,p)}>z|\mathcal{X},\mathcal{Y})\big|,\]
	and $\tilde{R} = C s_0 \sqrt{n}$.
	For $\sup\nolimits_{z\in (\tilde{R},\infty)}I_z$, considering $\|\vb\|_{(s_0,p)}\le s_0^{1/p}\|\vb\|_{\infty}\le s_0\|\vb\|_{\infty}$, we have
	\begin{equation}\label{ineq:IzPart2N1+}
	\sup_{z\in(R,\infty)}I_z\le \P(\|\bG^N\|_{\infty}> C\sqrt{n})+\P(\|\bN^b\|_{\infty}> C\sqrt{n}|\mathcal{X},\mathcal{Y}).
	\end{equation}
	Considering  $r_{12,ii}=\hat{r}_{12,ii}=1$, by the tail probability of Gaussian distribution, we further have 
	\begin{equation}\label{ineq:IzPart2ConcenN1+}
	\sup_{z\in(\tilde{R},\infty)}I_z
	\le Cq\exp(-C_1 n) = o(1).
	\end{equation}
	We now  bound $\sup\nolimits_{z\in (0,\tilde{R}]}I^D_z$. Let $\mathcal{E}^{\tilde{R}, q} = \{\xb\in\reals^q: \|\xb\|\le  \tilde{R}\}$ and
	$V^{z, q}_{(s_0,p)}=\{\xb\in\reals^q:\|\xb\|_{(s_0,p)}\le z\}$. Hence, considering
	 $
	\|\xb\|\le q^{1/2}\|\xb\|_{\infty}\le q^{1/2}\|\xb\|_{(s_0,p)},$
	we have $V^{z, q}_{(s_0,p)}\subset \mathcal{E}^{\tilde{R}q^{1/2},q} $ for $z<\tilde{R}$. Therefore,
	Considering Lemma \ref{lemma:approximstCovexSet}, there is a m-generated convex set $A^m$ and $\epsilon>0$ such that 
	\[
A^m\subset V^{z, d}_{(s_0,p)}\subset A^{m, \tilde{R}q^{1/2}\epsilon}\hspace{2em}\text{and}\hspace{2em}
m\le d^{s_0}\Big(\frac{\gamma}{\sqrt{\epsilon}}\ln \frac{1}{\epsilon}\Big)^{s_0^2}.	
	\]
	Let $\varepsilon' = Rq^{1/2}\epsilon$. By setting $\epsilon=(qn)^{-3/2}$, we have $\varepsilon'=s_0(qn)^{-1}$.   We then have
	$
	I_z \le  L_{z,1} + L_{z,2}
	$ with
	\begin{equation}
	\begin{aligned}
	L_{z,1} = \max\big(&\P(\bG^N\in A^{m, \varepsilon'}\backslash A^m), \P(\bN^b\in A^{m, \varepsilon'}\backslash A^m)\big)\\
	L_{z,2}=\max\Big(& \big|\P(\bG^N\in A^{m,\varepsilon'})-\P(\bN^b\in A^{m,\varepsilon'}|\mathcal{X},\mathcal{Y})\big|,\\
	&\big|\P(\bG^N\in A^m )-\P(\bN^b\in A^m|\mathcal{X},\mathcal{Y})\big|\Big),
	\end{aligned}
	\end{equation}
	for $z<\tilde{R}$. We then separately bound $L_{z,1}$ and $L_{z,2}$.
	For $L_{z,1}$, by Lemma \ref{lemma:gaussianDif} and Assumption {\bf (A)}, we have 
	\begin{equation}\label{ineq:l1}
	L_{z,1} \le C\varepsilon'\sqrt{\log(m)} = Cs_0^2 (qn)^{-1}\sqrt{\log(qn)}=o(1).
	\end{equation}	
	Considering $\mathcal{V}_{s_0}:=\{\vb\in \mathbb{S}^{q-1}: \|\vb\|_0\le s_0\}$, we have 
	\begin{align*}
	\sup_{\vb_1,\vb_2 \in\mathcal{V}_{s_0} } 	|\vb_1^\top (\hat{\Rb}_{12}-\Rb_{12})\vb_2|&\le \|\hat{\Rb}_{12}-\Rb_{12}\|_\infty \|\vb_1\|_1\|\vb_2\|_1\\&\le s_0\|\hat{\Rb}_{12}-\Rb_{12}\|_\infty.
	\end{align*}
	Therefore,  combining Theorem 4.1 and Remark 4.1 in \cite{chernozhukov2014central:app}, by Lemma \ref{lemma:r12error}, with probability at least $1-C_1n^{-1}$,
	we have 
	\begin{equation}\label{ineq:l2}
	L_{z,2}\le C\Big(s_0\frac{\log^{3/2}(qn)}{\sqrt{n}}\Big)^{1/3}\log^{2/3} (mn) \le C\Big(\frac{s_0^{10}\log^7( qn)}{n}\Big)^{1/6}.
	\end{equation}
	From Assumption {\bf (A)}, we have $L_{z,2}=o(1)$, which finishes the proof.
\end{proof}

\subsection{Proof of Lemma \ref{lemma:NS1BMEM}}
\label{proof:lemma:NS1BMEM}
\begin{proof}
	We first prove for $m>1$.
	By the definition of $N_s^1$ in (\ref{def:NS1N1}), we have
	\begin{equation}\label{dcmp:Ns1}
	N_s^1=
	\underbrace{\frac{\hat{u}_{1,s}-\hat{u}_{2,s}-u_{1,s}+u_{2,s}}
	{\sqrt{m^2\sigma_{1,ss}/n_1+m^2\sigma_{2,ss}/n_2}}}_{\tilde{N}_s^1}\cdot \frac{\sqrt{m^2\sigma_{1,ss}/n_1+m^2\sigma_{2,ss}/n_2}}{\sqrt{\hat{v}_{1,s}/n_1+\hat{v}_{2,s}/n_2}}.
	\end{equation} 
    By Lemma \ref{lemma:sigmaRErrors} and Lemma \ref{lemma:inverse}, for sufficiently large $n$  with probability at least $1-C_1n^{-1}$ we 
	have
	\begin{equation}\label{ineq:1minus}
	\Bigg|1-\frac{\sqrt{m^2\sigma_{1,ss}/n_1+m^2\sigma_{2,ss}/n_2}}{\sqrt{\hat{v}_{1,s}/n_1+\hat{v}_{2,s}/n_2}}\Bigg|\le C\frac{\log^{3/2}(qn)}{\sqrt{n}}.
	\end{equation}
	By setting $$\mathcal{E}(z)=
	\big\{\big|1-{(\hat{v}_{1,s}/n_1+\hat{v}_{2,s}/n_2)^{-1/2}}{(m^2\sigma_{1,ss}/n_1+m^2\sigma_{2,ss}/n_2)^{1/2}}\big|\le z\big \}
	$$
	and  $z\asymp \log ^{3/2}(qn)/\sqrt{n}$,  we can  bound $\P(|N_s^1|\ge x )$ by
	\begin{equation}\label{dcmpProb:Ns1}
	\P(|N_s^1|\ge x)\le\P\big(|N_s^1|\ge x, \mathcal{E}(z)\big)
	+Cn^{-1}.
	\end{equation}
	By the definition of $\mathcal{E}(z)$ and (\ref{dcmp:Ns1}), we then have 
	\[
		\P\big(|N_s^1|\ge x,\mathcal{E}(z)\big)\le \P\bigg(\frac{|\hat{u}_{1,s}-\hat{u}_{2,s}-u_{1,s}+u_{2,s}|}
	{\sqrt{m^2\sigma_{1,ss}/n_1+m^2\sigma_{2,ss}/n_2}}
	\ge (1+z)^{-1} x\bigg)
	\]
	Considering  $z=o(1)$ and $x=u\sqrt{2\log q}$ in Lemma \ref{lemma:NS1BMEM}, to prove (\ref{limit:s0tail}), we only need to prove  that as  $n,q\rightarrow\infty$, we have
	\[
	s_0 \underbrace{ \P\bigg(\frac{|\hat{u}_{1,s}-\hat{u}_{2,s}-u_{1,s}+u_{2,s}|}
	{\sqrt{m^2\sigma_{1,ss}/n_1+m^2\sigma_{2,ss}/n_2}}
	\ge C\sqrt{\log q}\bigg)}_{A_1} \rightarrow 0,
	\]
	uniformly  for $s$. By triangle and Hoeffding's inequalities, we have
	\begin{align*}
	s_0A_{1}\le & s_0\underbrace{\P\bigg(\frac{|\frac{1}{n_1}\sum_{k=1}^{n_1}h_s(\bX_k)-\frac{1}{n_2}\sum_{k=1}^{n_2}h_s(\bY_k|}
	{\sqrt{m^2\sigma_{1,ss}/n_1+m^2\sigma_{2,ss}/n_2}}
	\ge \frac{C}{2}\sqrt{\log q}\bigg)}_{A_2} \\
	&+s_0\underbrace{\P\bigg(\frac{\big|\binom{n_1}{m}^{-1}\Delta_{n_1,s}-\binom{n_2}{m}^{-1}\Delta_{n_2,s}\big|}{\sqrt{m^2\sigma_{1,ss}/n_1+m^2\sigma_{2,ss}/n_2}}\ge \frac{C}{2}\sqrt{\log q}\bigg)}_{A_3}.
	\end{align*}
	By the exponential inequality for sub-exponential distribution, considering Assumption {\bf (A)$'$} we  have
	$
	s_0A_2 \le s_0 \exp(-C\log^{1/2}(q))\rightarrow 0
	$. As $A_3$ does not exist for $m=1$, we only need to deal with $m>1$. Similarly to (\ref{def:VE}), we threshold the kernel of $\hat{u}_{\gamma,s}-u_{\gamma,s}$ by $B_n=C\log(q)$
	and construct the threshold residual  $\hat{\Delta}_{n_\gamma,s}$. Similarly to the proof of bounding $|\binom{n_1}{m}^{-1}\Delta_{n_1,s}-\binom{n_2}{m}^{-1}\Delta_{n_2,s}|$ in Lemma \ref{lemma:NhatHApproximationN1+}, by setting 
	\begin{align*}
	A_{3,1}&=	\P\Big(\Big|\binom{n_1}{m}^{-1}\hat{\Delta}_{n_1,s}-\binom{n_2}{m}^{-1}\hat{\Delta}_{n_2,s}\Big| > C\sqrt{\frac{\log q}{n}}\Big)\\
	A_{3,2}&= \max_{i_\ell, j_\ell \atop \ell =1,\ldots, m}\!\!\big(\P(| \Psi_s(\bX_{i_1},\ldots,\bX_{i_m})| \!>\! B_n) \!+\!\P(| \Psi_s(\bY_{j_1},\ldots,\bY_{j_m})| \!>\! B_n)\big),
	\end{align*}
	For proving $s_0A_3\rightarrow0$, we only need to prove  $s_0A_{3,1}\rightarrow 0$, $s_0 n^mA_{3,2}\rightarrow 0$, and $|E_{1,s}|+|E_{2,s}|\prec \sqrt{\log q / n}$, where $E_{\gamma,s}$ is defined in (\ref{def:VE}).
	By by Proposition 2.3 (c) in \cite{arcones1993limit:app}, under Assumption {\bf (A)$'$}, we have
	\[
	s_0A_{3,1}\le C_1s_0\exp\Big(- (n^{\frac{m-1}{2}}\log^{-\frac{1}{2}} (q))^{\frac{2}{m}}\Big)\rightarrow 0.
	\]
	As $\Psi_s(\bX_{i_1},\ldots,\bX_{i_m})| $ and $\Psi_s(\bX_{i_1},\ldots,\bX_{i_m})| $ have sub-exponential tails from Assumption {\bf (E)},   similarly to the proof in Lemma \ref{lemma:NhatHApproximationN1+}, under Assumption {\bf (A)$'$} we  have $s_0 n^mA_{3,2}\rightarrow 0$, and $|E_{1,s}|+|E_{2,s}|\prec \sqrt{\log q / n}$, which finishes the proof.

\end{proof}

\subsection{Proof of Lemma \ref{lemma:AdSize}}
\label{proof:lemma:AdSize}
\begin{proof}
	In Lemma \ref{lemma:AdSize}, we aim to prove (\ref{dif:CDFNad}). For this, we need to  bound 
	\[
	\sup_{z\in[\epsilon
		,1-\epsilon]}\Big|1-\tilde{F}_{N,{\rm ad}}(z)-\P(N_{\rm ad}>z|\mathcal{X},\mathcal{Y})\Big|.
	\] 
	By the definition of $N_{\rm ad}$ in (\ref{def:WadNad}), 
	we have $N_{\rm ad}=\min_{p\in\mathcal{P}}\hat{P}^N_{(s_0,p)}$, where 
	$\mathcal{P}$ is a finite set. Therefore, without loss of generality,  we assume $\mathcal{P}=\{p_1,p_2\}$ with $1\le  p_1\neq p_2\le \infty$.  We then have 
$N_{\rm ad}=\min\big(\hat{P}^N_{(s_0,p_1)},
	\hat{P}^N_{(s_0,p_2)}\big)$.  We then  have 
	$
	\P(N_{\rm ad}> z|\mathcal{X},\mathcal{Y})=\P\Big(
	\big\{\hat{P}^N_{(s_0,p_1)}> z\big\} \cap\big\{
	\hat{P}^N_{(s_0,p_2)}> z
	\big\} \Big| \mathcal{X}, \mathcal{Y}
	\Big).
	$
	In (\ref{def:FnFbnHat}) and (\ref{def:Fbn}), we introduce $F_{N^b,(s_0,p_\ell)}(z)$ and $\hat{F}_{N^b,(s_0,p_\ell)}(z)$ as 
	\begin{equation}\label{def:FlbnFlbnHat}
	\begin{aligned}
	F_{N^b,(s_0,p_\ell)}(z)&=\P(\|\bN^b\|_{(s_0,p_\ell)}\le z|\mathcal{X},\mathcal{Y}),\\
	\hat{F}_{N^b,(s_0,p_\ell)}(z)&= \frac{\sum\nolimits_{b=1}^{B}\ind\big\{N^b_{(s_0,p_\ell)}\le z|\mathcal{X},\mathcal{Y}\big\}+1}{B+1},
	\end{aligned}
	\end{equation}
	for $\ell=1,2$. By 
	the definition of  $\hat{P}^N_{(s_0,p)}$ in (\ref{def:PhatWN}), we then have 
	$
	\hat{P}^N_{(s_0,p_\ell)}=1-\hat{F}_{N^b,(s_0,p_\ell)}\big(N_{(s_0,p_\ell)}\big).
	$ 
	Therefore, we can rewrite $\P(N_{\rm ad}>z|\mathcal{X},\mathcal{Y})$ as 
	\begin{equation}\label{trans:PNad}
	\P\Big(\hat{F}_{N^b,(s_0,p_1)}(N_{(s_0,p_1)})<1-z,\hat{F}_{N^b,(s_0,p_2)}(N_{(s_0,p_2)})<1-z|\mathcal{X},\mathcal{Y}\Big).
	\end{equation}
	Similarly, by setting
	$
	F_{N,(s_0,p_\ell)}(z)=\P\big(N_{(s_0,p_\ell)}\le z\big)
	$
	we can also  rewrite 	$1-\tilde{F}_{N,{\rm ad}}(z)$ as
	\begin{equation}\label{trans:PNadTilde}
\P\Big(F_{N,(s_0,p_1)}\big(N_{(s_0,p_1)}\big)<1-z,F_{N,(s_0,p_2)}\big(N_{(s_0,p_2)}\big)<1-z\Big).
	\end{equation}
	Combining (\ref{trans:PNad}) and (\ref{trans:PNadTilde}), by setting 
	\[
	\begin{aligned}
	D_1(z)&=\P\Big(F_{N,(s_0,p_1)}\big(N_{(s_0,p_1)}\big)<1-z,F_{N,(s_0,p_2)}\big(N_{(s_0,p_2)}\big)<1-z\Big),\\
	D_2(z)&=\P\Big(\hat{F}_{N^b,(s_0,p_1)}(N_{(s_0,p_1)})<1-z,\hat{F}_{N^b,(s_0,p_2)}(N_{(s_0,p_2)})<1-z\Big),
	\end{aligned}
	\]
	we have 
	$
	\Big|1-\tilde{F}_{N,{\rm ad}}(z)-\P(N_{\rm ad}>z|\mathcal{X},\mathcal{Y})\Big|=\Big|D_{1}(z)-D_2(z)\Big|.
	$
	By Glivenko-Cantelli Theorem, we have 
	$\lim_{B\rightarrow\infty}\sup_{z\in\reals}|\hat{F}_{N^b,(s_0,p_\ell)}(z)-F_{N^b,(s_0,p_\ell)}(z)|=0$ almost surely, which motives us to 
	introduce 
	\[
	D_3(z)=\P\Big(F_{N^b,(s_0,p_1)}(N_{(s_0,p_1)})<1-z,F_{N^b,(s_0,p_2)}(N_{(s_0,p_2)})<1-z\Big).
	\]
	We then use the triangle inequality to bound $\Big|1-\tilde{F}_{N,{\rm ad}}(z)-\P(N_{\rm ad}>z|\mathcal{X},\mathcal{Y})\Big|$ by 
	\begin{equation}\label{ineq:BoundFNadtildeFNad}
	\Big|1-\tilde{F}_{N,{\rm ad}}(z)-\P(N_{\rm ad}>z|\mathcal{X},\mathcal{Y})\Big|\le \big|D_1(z)-D_3(z)\big|+
	\big|D_3(z)-D_2(z)\big|.
	\end{equation}
	By (\ref{ineq:BoundFNadtildeFNad}), to prove (\ref{dif:CDFNad}), it is sufficient  to prove that as $n,B\rightarrow\infty$, we have
	\begin{equation}\label{limit:D1D2D3}
   \sup_{z\in [\epsilon,1-\epsilon]}|D_1(z)-D_3(z)|\rightarrow 0 
	~{\rm and}~\sup_{z\in[\epsilon,1-\epsilon]}|D_3(z)-D_2(z)|\rightarrow 0,
	\end{equation}
	for any fixed $\epsilon>0$.

	By Lemma 5 in \cite{bonnery2012uniform:app}, we can prove
	\begin{equation}\label{limit:D3D2}
	\lim_{B\rightarrow\infty}\sup_{z\in[\epsilon,1-\epsilon]}|D_3(z)-D_2(z)|=0.
	\end{equation}
	Hence, we only need to prove $\lim_{n\rightarrow\infty}\sup_{z\in [\epsilon,1-\epsilon]}|D_1(z)-D_3(z)|=0.$ For this, we introduce the following lemma.
	\begin{lemma}\label{lemma:D1D3}
		Assumptions {\bf (A)$''$}, {\bf (E)}, {\bf (M1)}, and {\bf (M2)} hold. 
		Under $\Hb_0$ of (\ref{def:htwo-sampleu}) for any $\epsilon>0$
		we have 
		\[
	\sup_{z\in [\epsilon,1-\epsilon]}|D_1(z)-D_3(z)|\rightarrow 0,\hspace{2em}\text{as $n$}\rightarrow\infty.
		\]
	\end{lemma}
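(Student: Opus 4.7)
The plan is to reduce the bound on $|D_1(z) - D_3(z)|$ to a marginal problem, then use Theorem \ref{therom:CoreWN1+} to convert it to an anti-concentration statement about $F_{N,(s_0,p_\ell)}(N_{(s_0,p_\ell)})$, which in turn will be established via the Gaussian approximation underlying that theorem.

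First I would write $A_\ell = \{F_{N,(s_0,p_\ell)}(N_{(s_0,p_\ell)}) < 1-z\}$ and $B_\ell = \{F_{N^b,(s_0,p_\ell)}(N_{(s_0,p_\ell)}) < 1-z\}$, so that $D_1(z) = \P(A_1 \cap A_2)$ and $D_3(z) = \P(B_1 \cap B_2)$. A routine inclusion--exclusion yields $|D_1(z) - D_3(z)| \le \sum_{\ell=1}^2 \P(A_\ell \triangle B_\ell)$, reducing the claim to $\sup_{z \in [\epsilon, 1-\epsilon]} \P(A_\ell \triangle B_\ell) \to 0$ for each $\ell$. Applying Theorem \ref{therom:CoreWN1+} with $p = p_\ell$ supplies a deterministic sequence $\delta_n \downarrow 0$ with $\P(\Delta_\ell \le \delta_n) \to 1$, where $\Delta_\ell := \sup_{z} |F_{N^b,(s_0,p_\ell)}(z) - F_{N,(s_0,p_\ell)}(z)|$; on $\{\Delta_\ell \le \delta_n\}$ the definitions of $A_\ell, B_\ell$ directly force
\[
A_\ell \triangle B_\ell \subset \Big\{F_{N,(s_0,p_\ell)}(N_{(s_0,p_\ell)}) \in [1-z - \delta_n,\, 1-z + \delta_n]\Big\}.
\]
It then remains to bound this probability by $o(1)$ uniformly in $z \in [\epsilon, 1-\epsilon]$.

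For the final anti-concentration step, I would transfer to the Gaussian comparator $G^N_\ell := \|\bG^N\|_{(s_0,p_\ell)}$ from (\ref{def:GNR12}). Its CDF $F_{G^N,(s_0,p_\ell)}$ is continuous, and by Assumption \textbf{(A)$''$} its density satisfies $f_{G^N,(s_0,p_\ell)} \ge 1/h_{q,N}(\epsilon/2)$ on $I^N_{(s_0,p_\ell)}(\epsilon/2)$, so the quantile function $c_{G^N,(s_0,p_\ell)} := F_{G^N,(s_0,p_\ell)}^{-1}$ is uniformly continuous on $[\epsilon/2, 1-\epsilon/2]$. A by-product of the proof of Theorem \ref{therom:CoreWN1+}, combining Lemma \ref{lemma:NhatHApproximationN1+} with Lemma \ref{lemma:CCK2}, is the uniform bound $\eta_n := \sup_z |F_{N,(s_0,p_\ell)}(z) - F_{G^N,(s_0,p_\ell)}(z)| = o(1)$. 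Using this to replace $F_{N,(s_0,p_\ell)}$ by $F_{G^N,(s_0,p_\ell)}$, inverting through the continuous quantile $c_{G^N,(s_0,p_\ell)}$, and reapplying the CDF approximation once more yields $\P(F_{N,(s_0,p_\ell)}(N_{(s_0,p_\ell)}) \in [a,b]) \le (b-a) + O(\eta_n) = O(\delta_n + \eta_n) = o(1)$, uniformly for $a = 1-z-\delta_n$, $b = 1-z+\delta_n$ with $z \in [\epsilon, 1-\epsilon]$, completing the proof.

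The delicate step, which I expect to be the main obstacle, is this final anti-concentration bound. Since $F_{N,(s_0,p_\ell)}$ need not be continuous, $F_{N,(s_0,p_\ell)}(N_{(s_0,p_\ell)})$ is not literally uniform on $[0,1]$, and one cannot claim $\P(F_{N,(s_0,p_\ell)}(N_{(s_0,p_\ell)}) \in [a,b]) = b - a$ directly; the perturbation between $F_{N,(s_0,p_\ell)}$ and the continuous $F_{G^N,(s_0,p_\ell)}$ has to be pushed through a quantile inversion that is itself only controlled via the lower bound on $f_{G^N,(s_0,p_\ell)}$. Assumption \textbf{(A)$''$} is precisely what converts the pointwise CDF estimate of Theorem \ref{therom:CoreWN1+} into a quantile estimate uniform on $[\epsilon/2, 1-\epsilon/2]$, which is consistent with Remark \ref{remark:P} and explains why the argument requires $z$ to stay bounded away from $0$ and $1$ and $\mathcal{P}$ to be finite.
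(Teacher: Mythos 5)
Your proposal is correct, but it follows a genuinely different route from the paper. The paper keeps the two norms coupled: it replaces the pair $(N_{(s_0,p_1)},N_{(s_0,p_2)})$ by the Gaussian pair $(\|\bG^N\|_{(s_0,p_1)},\|\bG^N\|_{(s_0,p_2)})$ inside both $D_1$ and $D_3$, which requires a bivariate extension of Theorem \ref{therom:CoreWN1+} (the joint convergence (\ref{limit:Ns0pGN})), and then bounds the remaining gap quantitatively through the quantile-inversion factor $h_{q,N}(\epsilon)$, Nazarov-type anti-concentration at scale $s_0\sqrt{\log(qn)}$, and the explicit bootstrap rate $(s_0^{14}\log^7(qn)/n)^{1/6}$; Assumption {\bf (A)$''$} is exactly what makes this product vanish. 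You instead decouple the norms at the start via the symmetric-difference union bound, so only the marginal statements of Theorem \ref{therom:CoreWN1+} are needed, and your anti-concentration step is qualitative: transferring $F_{N,(s_0,p_\ell)}$ to the continuous Gaussian CDF through the sup-norm error $\eta_n$ gives $\P\big(F_{N,(s_0,p_\ell)}(N_{(s_0,p_\ell)})\in[a,b]\big)\le (b-a)+O(\eta_n)$ using only the identity $F_{\bG^N,(s_0,p_\ell)}\big(c_{\bG^N,(s_0,p_\ell)}(u)\big)=u$ for $u$ bounded away from $0$ and $1$; the density lower bound (hence $h_{q,N}$ and the quantitative content of {\bf (A)$''$}) is in fact superfluous in your argument. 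What the paper's heavier machinery buys is an explicit rate, reused as a template in Lemma \ref{lemma:limitFnadFnbad}; what yours buys is simplicity and weaker effective assumptions. Two small points: when extracting $\eta_n=o(1)$ from the proof of Theorem \ref{therom:CoreWN1+} you should also invoke Lemma \ref{lemma:anticoncentrate} to absorb the $\varepsilon$-shift coming from Lemma \ref{lemma:NhatHApproximationN1+}, and your reading of {\bf (A)$''$} as sufficient to run Theorem \ref{therom:CoreWN1+} is the same convention the paper's own proof adopts.
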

	The proof of Lemma \ref{lemma:D1D3} is in Appendix \ref{proof:lemma:D1D3}  of supplementary materials.  Combining (\ref{limit:D3D2}) and Lemma \ref{lemma:D1D3}, we prove (\ref{limit:D1D2D3}), which finishes the proof of Lemma \ref{lemma:AdSize}.
\end{proof}

\subsection{Proof of Lemma \ref{lemma:limitFnadFnbad}}
\label{proof:lemma:limitFnadFnbad}
\begin{proof}
	In Lemma \ref{lemma:limitFnadFnbad}, we aim to prove (\ref{eq:limitFnadFnbad}). We set 
	\[
	F_{N,\rm ad}(z)=\P(N_{\rm ad}\le z|\mathcal{X},\mathcal{Y})\hspace{1em}{\rm and }\hspace{1em}F_{N^b,\rm ad}(z)=\P(N^b_{\rm ad}\le z|\mathcal{X},\mathcal{Y}),
	\] 
	where $N_{\rm ad}$ and $N^b_{\rm ad}$ are defined in (\ref{def:Nad}) and (\ref{def:Nbad}).
	Hence, to prove (\ref{eq:limitFnadFnbad}), it is sufficient to prove 
	\begin{equation}\label{goal:lemmaFnadFnbad0}
	\sup_{z\in[\epsilon,1-\epsilon]}\Big|\P(N_{\rm ad}>z|\mathcal{X},\mathcal{Y})-\P(N^b_{\rm ad}>z|\mathcal{X},\mathcal{Y})\Big|\rightarrow0~\text{as}~n,B\rightarrow\infty.
	\end{equation}
	Without loss of generality, we assume $\mathcal{P}=\{p_1,p_2\}$ with $1\le p_1\neq p_2\le \infty$. We can then rewrite $\P(N_{\rm ad}>z|\mathcal{X},\mathcal{Y})$ as 
	\begin{equation}\label{eq:SNad}
	\P\Big(\hat{F}_{N^b,(s_0,p_1)}(N_{(s_0,p_1)})<1-z,\hat{F}_{N^b,(s_0,p_2)}(N_{(s_0,p_2)})<1-z|\mathcal{X},\mathcal{Y}\Big),
	\end{equation}
	where $\hat{F}_{N^b,(s_0,p_\ell)}(z)$  is defined in (\ref{def:FlbnFlbnHat}). Similarly, we can rewrite $\P(N^b_{\rm ad}>z|\mathcal{X},\mathcal{Y})$
	\begin{equation}\label{eq:SNbad}
\P\Big(F_{N^b,(s_0,p_1)}(N^b_{(s_0,p_1)})<1-z,F_{N^b,(s_0,p_2)}(N^b_{(s_0,p_2)})<1-z|\mathcal{X},\mathcal{Y}\Big),
	\end{equation}
	where $F_{N^b,(s_0,p_\ell)}(z)$  is defined in  (\ref{def:Nbad}). Let 
	\[
	L = \P\Big(F_{N^b,(s_0,p_1)}(N_{(s_0,p_1)})\!<\!1\!-\!z,F_{N^b,(s_0,p_2)}(N_{(s_0,p_2)})\!<\!1\!-\!z|\mathcal{X},\mathcal{Y}\Big).
	\]
	 By	Massart's inequality (see Section 1.5 in \cite{dudley2014uniform:app}) and  Lemma 5 in \cite{bonnery2012uniform}, under Assumptions {\bf (A)$''$}, {\bf (E)}, {\bf (M1)}, and {\bf (M2)}, for any fix $\epsilon > 0$, we have
	\begin{equation}\label{goal:lemmaFnadFnbad1}
\sup_{z\in[\epsilon,1-\epsilon]}\Big|\P(N_{\rm ad}>z|\mathcal{X},\mathcal{Y})-L\Big|\rightarrow 0\hspace{2em}\text{as $n$, $B$}\rightarrow\infty.
	\end{equation}
	Similarly to  the proof of Theorems \ref{therom:CoreWN1+}, considering (\ref{eq:SNbad}), we also  have
	\begin{equation}\label{goal:lemmaFnadFnbad2}
	\sup_{z\in[0,1]}\Big|\P(N^b_{\rm ad}>z|\mathcal{X},\mathcal{Y})- L\Big|\rightarrow 0,\hspace{2em}\text{as $n$, $B$}\rightarrow\infty.
	\end{equation}
	Combining (\ref{goal:lemmaFnadFnbad1}) and (\ref{goal:lemmaFnadFnbad2}), we  use the triangle inequality to obtain (\ref{goal:lemmaFnadFnbad0}), which finishes the proof of Lemma \ref{lemma:limitFnadFnbad}.

\end{proof}

\subsection{Proof of Lemma \ref{lemma:limitFnbadhatFad'}}
\label{proof:lemma:limitFnbadhatFad'}
\begin{proof}
	In Lemma \ref{lemma:limitFnbadhatFad'}, we aim to prove (\ref{eq:limitFnbadhatFad'}).
	By the definitions of $F_{N^b,\rm ad}(z)$ and $\hat{F}_{N,\rm ad'}(z)$ in (\ref{def:Fnbad}) and 
	(\ref{def:hatFNad'}), we have 
	\begin{equation}\label{eq:surviveFF}
	\begin{aligned}
	1-F_{N^b,\rm ad}(z)&=\P(N^b_{\rm ad}>z|\mathcal{X},\mathcal{Y})\\
	1-\hat{F}_{N,\rm ad'}(z)&=\sum_{b=1}^B\ind\{N^b_{ad'}>z|\mathcal{X},\mathcal{Y}\}/(B+1),
	\end{aligned}
	\end{equation}
	where $N_{\rm ad}^b$ and $N^b_{\rm ad'}$ are defined in (\ref{def:Nbad}) and (\ref{def:hatbns0p}). Therefore, for (\ref{eq:limitFnbadhatFad'}) it is sufficient to prove 
	\begin{equation}\label{trans:lemma:FhadNad'}
\sup_{z\in[\epsilon,1-\epsilon]}\Bigg|\P\Big(N^b_{\rm ad}>z|\mathcal{X},\mathcal{Y}\Big)-\Big(1-\hat{F}_{N,\rm ad'}(z)\Big)\Bigg|\rightarrow 0,
	\end{equation}
	as $n,B\rightarrow\infty$.
	Without loss of generality, we assume $\mathcal{P}=\{p_1,p_2\}$ with $1\le p_1\neq p_2\le \infty$, which yields 
	\begin{equation}\label{def:Nbad'inproof}
	N^b_{\rm ad'}=\min\Big(\hat{P}^{b,N}_{(s_0,p_1)},\hat{P}^{b,N}_{(s_0,p_2)}\Big),
	\end{equation} 
	where $\hat{P}^{b,N}_{(s_0,p)}$ is defined in (\ref{def:hatbns0p}).
	Combining (\ref{eq:surviveFF}) and (\ref{def:Nbad'inproof}), we then have 
	\begin{equation}\label{eq:1-hatFNad'part1}
	1-\hat{F}_{N,\rm ad'}(z)=\frac{\sum_{b=1}^{B}\ind\{\hat{P}^{b,N}_{(s_0,p_1)}>z,\hat{P}^{b,N}_{(s_0,p_2)}>z|\mathcal{X},\mathcal{Y}\}}{B+1}.
	\end{equation}
	By setting
	$\hat{F}^{b,N}_{(s_0,p)}(z)=B^{-1}\bigg(\sum_{b_1\neq b}\ind\{N^{b_1}_{(s_0,p)}\le z|\mathcal{X},\mathcal{Y}\}+1\bigg),$
	considering the definition of $\hat{P}^{b,N}_{(s_0,p)}$ in (\ref{def:hatbns0p}), 
	we have $\hat{P}^{b,N}_{(s_0,p)}=1-\hat{F}^{b,N}_{(s_0,p)}(N^b_{(s_0,p)})$. Therefore, by (\ref{eq:1-hatFNad'part1}), we  rewrite $ 1-\hat{F}_{N,{\rm ad'}}(z)$ as
	\begin{equation}\label{def:1-hatFad'}
	\frac{\sum_{b=1}^B\ind\Big\{\hat{F}^{b,N}_{(s_0,p_1)}(N^b_{(s_0,p_1)})<1-z,
		\hat{F}^{b,N}_{(s_0,p_2)}(N^b_{(s_0,p_2)})<1-z|\mathcal{X},\mathcal{Y}\Big\}}{B+1},
	\end{equation}
	As $\hat{F}^{b,N}_{(s_0,p)}(z)\rightarrow F_{N^b,(s_0,p)}(z)$, to approximate $1-\hat{F}_{N,\rm ad'}(z)$ we introduce $S(z)$ as
	\begin{equation}\label{def:Sz}
	\frac{\sum_{b=1}^B\ind\Big\{F_{N^b,(s_0,p_1)}(N^b_{(s_0,p_1)})\!\!<\!\!1\!\!-\!\!z,F_{N^b,(s_0,p_2)}(N^b_{(s_0,p_2)})\!\!<\!\!1\!\!-\!\!z|\mathcal{X},\mathcal{Y}\Big\}}{B+1},
	\end{equation}
	where $F_{N^b,(s_0,p_\ell)}$ is defined in (\ref{def:Nbad}).
	To analyze the difference between $1-\hat{F}_{N,\rm ad'}(z)$ and $S(z)$, we introduce the following lemma.
	\begin{lemma}\label{lemma:FhatNad'S}
		Let $\epsilon$ be any positive real number, we have
		\[
		\sup_{z\in [\epsilon,1-\epsilon]}\big|1-\hat{F}_{N,\rm ad'}(z)-S(z)\big|=0, \hspace{1em}\text{as $n$, $B$}\rightarrow\infty.
		\]
	\end{lemma}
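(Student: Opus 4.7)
The plan is to exploit two facts: (i) conditional on $\mathcal{X},\mathcal{Y}$, the bootstrap replicates $\{N^b_{(s_0,p_\ell)}\}_{b=1}^{B}$ are i.i.d.\ with continuous CDF $F_{N^b,(s_0,p_\ell)}$ (since they are $(s_0,p_\ell)$-norms of continuous Gaussian vectors); and (ii) the only difference between $1-\hat{F}_{N,\rm ad'}(z)$ in \eqref{def:1-hatFad'} and $S(z)$ in \eqref{def:Sz} is that each leave-one-out empirical CDF $\hat F^{b,N}_{(s_0,p_\ell)}$ is replaced by the true conditional CDF $F_{N^b,(s_0,p_\ell)}$. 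Writing the bound
\[
\bigl|1-\hat F_{N,\rm ad'}(z)-S(z)\bigr|\le \frac{1}{B+1}\sum_{\ell=1}^{2}\sum_{b=1}^{B}\ind\Bigl\{\ind\bigl\{\hat F^{b,N}_{(s_0,p_\ell)}(N^b_{(s_0,p_\ell)})<1-z\bigr\}\ne\ind\bigl\{F_{N^b,(s_0,p_\ell)}(N^b_{(s_0,p_\ell)})<1-z\bigr\}\Bigr\},
\]
I would control each indicator-mismatch term via a DKW-type argument plus anti-concentration of the probability integral transform.

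First, I would invoke the Dvoretzky--Kiefer--Wolfowitz inequality conditionally on $\mathcal{X},\mathcal{Y}$: for each $\ell\in\{1,2\}$, pick $\delta_B\to 0$ with $B\delta_B^2\to\infty$; then
\[
\P\Bigl(\sup_{b\le B}\sup_{z\in\reals}\bigl|\hat F^{b,N}_{(s_0,p_\ell)}(z)-F_{N^b,(s_0,p_\ell)}(z)\bigr|>\delta_B\,\Big|\,\mathcal{X},\mathcal{Y}\Bigr)\le 2B\exp(-2(B-1)\delta_B^2)=o(1).
\]
On the complement of this event, the two indicators inside the sum above can disagree only when $F_{N^b,(s_0,p_\ell)}(N^b_{(s_0,p_\ell)})$ lies in the interval $[1-z-\delta_B,\,1-z+\delta_B]$.

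Next, set $U^b_\ell:=F_{N^b,(s_0,p_\ell)}(N^b_{(s_0,p_\ell)})$; conditional on $\mathcal{X},\mathcal{Y}$, these are uniform on $[0,1]$ (by the continuity of $F_{N^b,(s_0,p_\ell)}$) and i.i.d.\ across $b$. Hence $\P(|U^b_\ell-(1-z)|\le\delta_B\mid\mathcal{X},\mathcal{Y})\le 2\delta_B$, uniformly in $z$. Since the class of intervals $\{[1-z-\delta_B,1-z+\delta_B]:z\in\reals\}$ is a VC class of constant dimension, a uniform Glivenko--Cantelli bound (e.g.\ Massart's inequality for VC classes) yields, conditionally on $\mathcal{X},\mathcal{Y}$,
\[
\sup_{z\in\reals}\frac{1}{B+1}\sum_{b=1}^{B}\ind\bigl\{|U^b_\ell-(1-z)|\le\delta_B\bigr\}\le 2\delta_B+o_{\P}(1),\qquad B\to\infty.
\]
Combining the previous two displays, $\sup_{z\in[\epsilon,1-\epsilon]}|1-\hat F_{N,\rm ad'}(z)-S(z)|\le 4\delta_B+o_{\P}(1)$, which vanishes as $B\to\infty$.

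The main obstacle is the uniformity in $z$: a naive pointwise DKW/LLN argument would only give convergence at each fixed $z$, so the key technical step is to simultaneously handle the VC class of intervals shifted by $z$ and the leave-one-out CDFs $\hat F^{b,N}_{(s_0,p_\ell)}$ indexed by $b$. Once these uniform bounds are in place, the restriction to $z\in[\epsilon,1-\epsilon]$ is only used to discard degenerate behavior of the CDFs near $0$ and $1$; the argument actually delivers the stronger conclusion of uniform convergence on all of $\reals$.
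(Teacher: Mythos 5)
Your proof is correct, but it takes a genuinely different route from the paper's. The paper controls $\sup_{z\in[\epsilon,1-\epsilon]}|1-\hat F_{N,\rm ad'}(z)-S(z)|$ through quantile functions: it uses Massart's inequality together with a uniform quantile-convergence result (Lemma 5 of Bonn\'ery et al.) to put all leave-one-out quantile functions within $\delta'$ of the true conditional quantile functions on $[\epsilon,1-\epsilon]$, deduces the sandwich $S(z+C\delta')\le 1-\hat F_{N,\rm ad'}(z)\le S(z-C\delta')$, and then bounds the resulting oscillation of $S$ by the uniform Lipschitz continuity of the limiting CDF $F_{N^b,\rm ad}$ on $[\epsilon,1-\epsilon]$; this is why the restriction to $[\epsilon,1-\epsilon]$ and the interplay with $n\to\infty$ appear in the paper's argument. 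You instead bound the difference directly by the frequency of indicator mismatches, reduce each mismatch (on the DKW event) to the event $|U^b_\ell-(1-z)|\le\delta_B$ with $U^b_\ell=F_{N^b,(s_0,p_\ell)}(N^b_{(s_0,p_\ell)})$, exploit that conditionally on the data these are exactly i.i.d.\ uniform (probability integral transform, using continuity of the conditional Gaussian-norm CDF, i.e.\ positivity of the jackknife variances, which the paper also assumes implicitly), and finish with a VC-class uniform law over intervals. This is more elementary and self-contained: it needs no quantile-function machinery, no Lipschitz property of limiting distributions, no $n\to\infty$, and it actually yields uniformity over all $z\in\reals$ rather than only $[\epsilon,1-\epsilon]$. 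One small repair: with the union bound over $b$, the condition ``$\delta_B\to0$, $B\delta_B^2\to\infty$'' is not quite enough to make $2B\exp(-2(B-1)\delta_B^2)=o(1)$; you need $B\delta_B^2/\log B\to\infty$ (e.g.\ $\delta_B=B^{-1/3}$), or, more simply, note that every leave-one-out empirical CDF differs from the full empirical CDF by at most $2/B$, so a single application of DKW/Massart suffices and the union bound can be dropped entirely.
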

	The proof of Lemma \ref{lemma:FhatNad'S} is in Appendix \ref{proof:lemma:FhatNad'S}.
	Considering  
	$
	N_{\rm ad}^b=\min_{p\in\mathcal{P}}\Big(1-F_{N^b,(s_0,p)}(N_{(s_0,p)}^b)\Big)$ from (\ref{def:Nbad}), we can rewrite $S(z)$ as 
	\[
	S(z)=(B+1)^{-1}{\sum_{b=1}^{B}\ind \{N^b_{\rm ad}>z|\mathcal{X},\mathcal{Y}\}}.
	\]
	By Massart's inequality (see Section 1.5 in \cite{dudley2014uniform:app}), we have 
	\begin{equation}\label{limit:SNbad}
	\sup_{z\in [0,1]}|S(z)-\P(N^b_{\rm ad}>z|\mathcal{X},\mathcal{Y})|\rightarrow 0,
	\end{equation}
	as $n, B\rightarrow\infty.$
	Combining Lemma \ref{lemma:FhatNad'S} and (\ref{limit:SNbad}), we plug in $S(z)$ and use the triangle inequality to obtain (\ref{trans:lemma:FhadNad'}), which finishes the proof of Lemma \ref{lemma:limitFnbadhatFad'}.
\end{proof}
\section{Proofs of lemmas in Appendix C }\label{sec:appendix D}

\subsection{Proof of Lemma \ref{lemma:inverse}}
\label{proof:lemma:inverse}

\begin{proof}
	Under the event $\{\max_{1\le s\le q}|1-\xi_s|\le y/2\}$, we have $|1-\xi_s|\le y/2$ for any $s\in \{1,\ldots,q\}$. Considering $y\in(0,1]$, by the simple calculation,  $|1-\xi_s|\le y/2$ implies 
	\[
	|1-\xi_s^{-1}|\le\max\Big(\frac{y}{2+y},\frac{y}{2-y}\Big)\le y,
	\]
	for any $s\in\{1,\ldots,q\}$. Therefore, we have 
	\[
	\Big\{\max_{1\le s\le q}|1-\xi_s|\le y/2\Big\}\subseteq \Big\{\max_{1\le s\le q}|1-\xi_s^{-1}|\le y\Big\},
	\]
	which implies (\ref{ineq:inverseRelationship}). Hence, we finish the proof of Lemma \ref{lemma:inverse}.
\end{proof}

\subsection{Proof of Lemma \ref{lemma:D1D3}}
\label{proof:lemma:D1D3}
\begin{proof}
	Without loss of  generality, we assume $\mathcal{P}=\{p_1,p_2\}$ with $1\le p_1\neq p_2\le \infty$. We set 
	\begin{equation}\label{def:D1D3inProof}
	\begin{aligned}
	D_1(z)&\!=\!\P\Big(F_{N,(s_0,p_1)}\big(N_{(s_0,p_1)}\big)\!<\!1\!-\!z,F_{N,(s_0,p_2)}\big(N_{(s_0,p_2)}\big)\!<\!1\!-\!z\Big),\\
	D_3(z)&\!=\!\P\Big(F_{N^b,(s_0,p_2)}(N_{(s_0,p_1)})\!<\!1\!-\!z,F_{N^b,(s_0,p_2)}(N_{(s_0,p_2)})\!<\!1\!-\!z\Big),
	\end{aligned}
	\end{equation}
	where $F_{N,(s_0,p_\ell)}(z)$ and  $F_{N^b,(s_0,p_\ell)}(z)$ are defined in (\ref{def:FnFbnHat}) and  (\ref{def:Fbn}). In Lemma \ref{lemma:D1D3}, we aim to prove 
	\begin{equation}\label{limit:D1D3goal}
	\lim_{n\rightarrow\infty}\sup_{z\in [\epsilon,1-\epsilon]}|D_1(z)-D_3(z)|=0.
	\end{equation}
	By following the proof of Theorem \ref{therom:CoreWN1+}, under Assumptions {\bf (A)$''$}, {\bf (E)},  {\bf (M1)}, and {\bf (M2)}, by setting
	\begin{align*}
	F_{N,12}(z_1,z_2)&=\P\big(N_{(s_0,p_1)}\le z_1,N_{(s_0,p_2)}\le z_2\big),\\
	F_{G,12}(z_1,z_2)&=\P\big(\|\bG^N\|_{(s_0,p_1)}\le z_1,\|\bG^N\|_{(s_0,p_2)}\le z_2\big)\Big|,
	\end{align*}
	 with $\bG^N\sim N(\zero,\Rb_{12})$ with $\Rb_{12}$ defined in (\ref{def:SigmaD12N1+}), we have 
	\begin{equation}\label{limit:Ns0pGN}
\sup_{z_1,z_2\in(0,\infty)}\Big|F_{N,12}(z_1,z_2)-F_{G,12}(z_1,z_2)\Big|\rightarrow\!0,~\text{as $n$}\rightarrow\infty.
	\end{equation}
 (\ref{limit:Ns0pGN}) motives us to introduce 
	\begin{align*}
	D_4(z)&\!=\!\P\Big(F_{N,(s_0,p_1)}\big(\|\bG^N\|_{(s_0,p_1)}\big)\!<\!1\!-\!z,F_{N,(s_0,p_2)}\big(\|\bG^N\|_{(s_0,p_2)}\big)\!<\!1\!-\!z\Big),\\
	D_5(z)&\!=\!\P\Big(F_{N^b,(s_0,p_1)}\big(\|\bG^N\|_{(s_0,p_1)}\big)\!<\!1\!-\!z,F_{N^b,(s_0,p_2)}\big(\|\bG^N\|_{(s_0,p_2)}\big)\!<\!1\!-\!z\Big).
	\end{align*}
	Combining (\ref{def:D1D3inProof}) and (\ref{limit:Ns0pGN}), we then have 
	\[
\sup_{z\in(0,1)}|D_1(z)-D_4(z)|\rightarrow 0\hspace{2em}{\rm and}\hspace{2em}\sup_{z\in(0,1)}|D_3(z)-D_5(z)|\rightarrow 0,
	\]
	as $n\rightarrow\infty$.
	Therefore, by using the triangle inequality, to prove (\ref{limit:D1D3goal}) we only need to prove 
	\begin{equation}\label{limit:D4D5goal}
\sup_{z\in[\epsilon,1-\epsilon]}|D_4(z)-D_5(z)|\rightarrow 0,\hspace{2em}\text{as $n$}\rightarrow\infty.
	\end{equation}
	By Assumption {\bf (A)$''$}, considering Theorems \ref{therom:CoreWN1+},  for any $\epsilon>0$ and sufficiently large $n$, we have
	\[
	\sup_{z\in[\epsilon, 1-\epsilon]}\Big|F_{N,(s_0,p)}^-(z)-F_{N^b,(s_0,p)}^-(z)\Big|\le h_{q,N}(\epsilon)\sup_{t\in \reals} |F_{N,(s_0,p)}(t)-F_{N^b,(s_0,p)}(t)|.
	\]
	Moreover, by the proof of Lemma \ref{lemma:anticoncentrate} we have
	\[
	\sup_{z\in[\epsilon,1-\epsilon]}|D_4(z)-D_5(z)|
	\le C h_{q,N}(\epsilon) s_0\sqrt{\log(nq)}\sup_{t\in \reals} \Big|F_{N,(s_0,p)}(t)-F_{N^b,(s_0,p)}(t)\Big|,
	\]
	for sufficiently large $n$. By the proof of Lemma \ref{lemma:CCK2}, \ref{lemma:anticoncentrate},  \ref{lemma:boundhatD5} and Theorem \ref{therom:CoreWN1+}, under Assumption {\bf (A)$''$}, {\bf (E)}, {\bf (M1)}, and 
	{\bf (M2)}, we have 
	\[
	\sup_{z\in[\epsilon,1-\epsilon]}|D_4(z)-D_5(z)|
	\le C h_{q,N}(\epsilon)s_0\sqrt{\log(qn)}\Big(\frac{s_0^{14}\log^7 (qn)}{n}\Big)^{1/6}.
	\]
    In Assumption {\bf (A)$''$}, we set  $h^{0.6}_{q,N}(\epsilon)s_0^2\log(qn)=o(n^{1/10})$. Therefore, we have 
	\[
	\sup_{z\in[\epsilon,1-\epsilon]}|D_4(z)-D_5(z)|\rightarrow0,\hspace{2em}\text{as $n$}\rightarrow\infty,
	\]
	which finishes the proof.
	
\end{proof}

\subsection{Proof of Lemma \ref{lemma:FhatNad'S}}
\label{proof:lemma:FhatNad'S}
\begin{proof}
	In  Lemma \ref{lemma:FhatNad'S}, we aim to prove 
	$
	\sup_{z\in [\epsilon,1-\epsilon]}\big|1-\hat{F}_{N,\rm ad'}(z)-S(z)\big|\rightarrow 0,
	$
	as $n, B\rightarrow\infty$.
	For this, we need to prove that
	for any $\delta, \tilde{\varepsilon}>0$
	\begin{equation}\label{ineq:limitgoalpart0}
	\P\Bigg(\sup_{z\in [\epsilon,1-\epsilon]}\big|1-\hat{F}_{N,\rm ad'}(z)-S(z)\big|>\delta\Bigg)<\tilde{\varepsilon},
	\end{equation}
	holds for sufficient large $n$ and $B$. By setting $$\hat{F}^{b,N}_{(s_0,p)}(z)=B^{-1}\big(\sum_{b_1\neq b}\ind\{N^{b_1}_{(s_0,p)}\le z|\mathcal{X},\mathcal{Y}\}+1\big),$$
	considering Massart's inequality (Section 1.5 in \cite{dudley2014uniform:app}), we have 
	\[
	\sup_{1\le b\le B\atop z\in\reals}\Big|\hat{F}^{b,N}_{(s_0,p)}(z)-F_{N^b,(s_0,p)}(z)\Big|\rightarrow0, \hspace{2em}\text{as $n$, $B$}\rightarrow\infty.
	\]	
	Considering Lemma 5 in \cite{bonnery2012uniform:app}, for any fixed $\epsilon, \delta'>0$, by setting \[\mathcal{A}(\delta')=\Bigg\{\sup_{1\le b\le B\atop z\in[\epsilon,1-\epsilon]}\Big|\hat{F}^{b,N-}_{(s_0,p)}(z)-F^{-}_{N^b,(s_0,p)}(z)\Big|\le \delta'\Bigg\},\] as $n$ and $B$ are sufficiently large, we have $\P(\mathcal{A}(\delta')^c)\le \tilde{\varepsilon}/2.
	$ Therefore, considering that 
	$F^{-}_{N^b,(s_0,p)}(z)$ is Lipschitz continuous on $z\in [\epsilon,1-\epsilon]$,  by the definitions of $1-\hat{F}_{N,\rm ad'}$ and $S(z)$ in (\ref{def:1-hatFad'}) and (\ref{def:Sz}), under $\mathcal{A}(\delta')$ there is a constant $C$ such that 
	$
	S(z+C\delta')\le 1-\hat{F}_{N,\rm ad'}(z)\le S(z-C\delta')
	$
	holds for any $z\in [\epsilon,1-\epsilon]$ and sufficiently large $	n$ and $B$. Hence, under $\mathcal{A}(\delta')$ we have   $	\sup_{z\in[\epsilon,1-\epsilon]}\Big|1\!-\!\hat{F}_{N,\rm ad'}(z)\!-\!S(z)\big|\le \mathcal{L}$ with 
	\begin{equation}\label{ineq:1mhatFSpart1}
 \mathcal{L}=\max\Bigg(\sup_{z\in[\epsilon,1\!-\!\epsilon]}\Big|S(z\!+\!C\delta')\!-\!S(z)\Big|,\sup_{z\in[\epsilon,1\!-\!\epsilon]}\Big|S(z-C\delta')\!-\!S(z)\Big|\Bigg).
	\end{equation}
	Therefore,  to prove (\ref{ineq:limitgoalpart0}), we only need to prove
	\begin{equation}\label{ineq:limitLgoal}
	\P\big({\mathcal{L}}>\delta,\mathcal{A}(\delta')\big)\le \tilde{\mathcal{\varepsilon}}/2,
	\end{equation}
	for sufficiently large $n$ and $B$. By Massart's inequality (Section 1.5 in \cite{dudley2014uniform:app}) and the definition of $S(z)$ in (\ref{def:Sz}), we have 
	\begin{equation}\label{limit:SzFnbad}
	\sup_{z\in[0,1]}|S(z)-F_{N^b,\rm ad}(z)|\rightarrow 0, \hspace{2em}\text{as $n$, $B$}\rightarrow\infty, 
	\end{equation}
	where $F_{N^b,\rm ad}(z)$ is defined in (\ref{def:Fnbad}). By (\ref{limit:SzFnbad}),  the limit of $\mathcal{L}$ is 
	\[
	\max\Big(\sup_{z\in[\epsilon,1\!-\!\epsilon]}\Big|F_{N^b,\rm ad}(z\!+\!C\delta')\!-\!F_{N^b,\rm ad}(z)\Big|,\sup_{z\in[\epsilon,1\!-\!\epsilon]}\Big|F_{N^b,\rm ad}(z-C\delta')\!-\!F_{N^b,\rm ad}(z)\Big|\Big).
	\]
	As $F_{N^b,\rm ad}(z)$ is uniformly Lipschitz contentious on $[\epsilon,1-\epsilon]$,  there is a constant $C_1$ such that 
	\begin{equation}\label{limit:BL}
	0\le \mathcal{L}\le C_1\delta',
	\end{equation}  
	holds for sufficiently large $n$ and $B$.
	By setting $\delta'$ small enough and (\ref{limit:BL}), we  obtain (\ref{ineq:limitLgoal}), which finishes the proof of Lemma \ref{lemma:FhatNad'S}.
\end{proof}

\section{Proof of useful lemmas in Appendix  A}\label{sec:proof:usefullemma}
\subsection{Proof of Lemma \ref{lemma:CCK2}}\label{proof:lemma:CCK2}
By setting $\mathcal{E}^{R,d} = \{\xb\in\reals^d: \|\xb\| \le R\}$, from Assumption
{\bf (E)}$'$, we have 
\[
\P(S_n^{\bZ}\in (\mathcal{ E}^{R,d})^c)\vee\P(S_n^{\bW}\in (\mathcal{ E}^{R,d})^c)=C_1d\exp(-C_2Rd^{-1/2}).
\]
By setting $V^{z,d}_{(s_0,p)}=\{\xb\in\reals^d:\|\xb\|_{(s_0,p)}\le z\}$, we then have
\begin{equation}\label{ineq:boundR}
\sup_z\Big|\P\big(S_n^{\bZ}\in V^{z,d}_{(s_0,p)}\big)-P\big(S_n^{\bW}\in  V^{z,d}_{(s_0,p)}\big)\big|\le A_1 + A_2,
\end{equation}
where $A_1=C_1d\exp(-C_2Rd^{-1/2})$ and $A_2=\sup_z P_z$ with 
\[
P_z =|\P\big(S_n^{\bZ}\in \mathcal{E}^{R,d}\cap V^{z,d}_{(s_0,p)})-P\big(S_n^{\bW}\in \mathcal{E}^{R,d}\cap V^{z,d}_{(s_0,p)}\big)|.
\]
We then approximate $\mathcal{E}^{R,d}\cap V^{z,d}_{(s_0,p)}$ with $m$-generated convex set. According to Lemmas \ref{lemma:approximstCovexSet} and \ref{lemma:gaussianDif}, by setting
\[
\bar{\rho}=|\P(S_n^{\bZ}\in A^m) -\P(S_n^{\bW}\in A^m)|\vee|\P(S_n^{\bZ}\in A^{m,R\epsilon}) -\P(S_n^{\bW}\in A^{m, R\epsilon})|,
\]
we have $P_z\le CR\epsilon\log^{1/2}(m)+\bar{\rho}$, where $C$ only depends on $b$. By high dimensional CLT for Hyperreactangles in \cite{chernozhukov2014central:app}, we have 
\[
\bar{\rho}\le C\Big(\frac{\log^7(mn)}{n}\Big)^{1/6},
\]
where $C$ only depends on $b$. Considering (\ref{ineq:boundR}), we then
have
\begin{align*}
\sup_z\Big|\P\big(&S_n^{\bZ}\in V^z_{(s_0,p)}\big)-P\big(S_n^{\bW}\in  V^z_{(s_0,p)}\big)\big|\\\le & CR\epsilon\log^{1/2}(m)+C\Big(\frac{\log^7(mn)}{n}\Big)^{1/6}+C_1d\exp(-C_2Rd^{-1/2}).
\end{align*}
By setting $\epsilon = (dn)^{-3/2}$ and $R=(dn)^{1/2}$, considering $s_0^2\log (dn) = O(n^\zeta)$ with $0<\zeta<1/7$, we  have 
\[
R\epsilon\log^{1/2}(m) \preceq   \Big(\frac{\log^7(mn)}{n}\Big)^{1/6},~ d\exp(-C_2Rd^{-1/2})
\preceq \Big(\frac{\log^7(mn)}{n}\Big)^{1/6},
\]
which yields (\ref{ineq:HCLT_s0p}).

\subsection{Proof of Lemma \ref{lemma:approximstCovexSet}}
\begin{proof}
	By the definition of $m$-generated convex sets $A^m$ and $A^{m,\epsilon}$, Lemma \ref{lemma:approximstCovexSet} is an immediate corollary of Lemma \ref{lemma:polyAppr}.
\end{proof}

\subsection{Proof of Lemma \ref{lemma:foldedNormal}}
\begin{proof}
	By the Jensen's inequality, we have 
	\begin{equation}\label{ineq:fn1}
	\exp\Big(t\E\Big[\max_{1\le i\le d}|W_i|\Big]\Big)
	\le \E\Big[\exp\Big(t\max_{1\le i\le d}|W_i|\Big)\Big]
	\le d\E[\exp(t|W_i|)].
	\end{equation} 
	By  (23) of \cite{tsagris2014folded:app}, we have 
	\begin{equation}\label{ineq:fn2}
	\E[\exp(t|W_i|)]=2e^{\frac{\sigma^2 t^2}{2}}[1-\Phi(-\sigma t)]\le 2e^{\frac{\sigma^2 t^2}{2}}.
	\end{equation}
	Combining (\ref{ineq:fn1}) and (\ref{ineq:fn2}), we have
	\[
	\exp\Big(t\E\Big[\max_{1\le i\le d}|W_i|\Big]\Big)
	\le  2de^{\frac{\sigma^2 t^2}{2}},
	\]
	which yields (\ref{ineq:fn0}).

\end{proof}

\subsection{Proof of Lemma \ref{lemma:sigmaRErrors}}
\begin{proof}
	We first prove for $m>1$.
	For simplicity, we only present the proof for $\bX$.
	In (\ref{def:sigmast}), we set $\bSigma_{1}=(\sigma_{1,st})$   with 
	\begin{equation}\label{def:sigmaInProof}
	\sigma_{1,st}= \E[h_s(\bX)h_t(\bX)],
	\end{equation} where $h_s$ is defined in (\ref{def:PsiAnd}). To estimate $\bSigma_1$, in (\ref{def:sigmahat}) we introduce  $\hat{\bSigma}_1:=(\hat{\sigma}_{1,st})\in\reals^{q\times q}$, where $
\hat{\sigma}_{1,st}={n_1}^{-1}\sum_{k=1}^{n_1}(Q_{1k,s}-\hat{u}_{1,s})(Q_{1 k,t}-\hat{u}_{1,t}).
	$
	By setting $\tilde{u}_{1,s}=\hat{u}_{1,s}-u_{1,s}$  and $
	\tilde{Q}_{1 k,s}=Q_{1 k,s}-u_{1,s}$, we  rewrite $\hat{\sigma}_{1,st}$ as 
	\begin{equation}\label{def:sigmahatInProof1}
	\hat{\sigma}_{1,st}=n_1^{-1}
	\sum\nolimits_{k=1}^{n_1}
	\tilde{Q}_{1 k,s}\tilde{Q}_{1 k,t}
	-\tilde{u}_{1,s}\tilde{u}_{1,t}.
	\end{equation}
	To provide an upper bound for $\max_{1\le s,t\le q} \big|\hat{\sigma}_{1,st}-\sigma_{1,st}\big|$, by
	 combining (\ref{def:sigmaInProof}) and (\ref{def:sigmahatInProof1}),
	we use the triangle inequality to obtain
	\begin{align*}
	\max_{1\le s,t\le q} \big|\hat{\sigma}_{1,st}-\sigma_{1,st}\big|
	\le& \underbrace{\max_{1\le s,t\le q}\Big|n_{1}^{-1}
		\big(\sum\nolimits_{k=1}^{n_1}
		\tilde{Q}_{1k,s}
		\tilde{Q}_{1k,t}\big)-\E[h_s(\bX)h_t(\bX)]\Big|}_{L_1}\\
	&+\underbrace{\max_{1\le s,t\le q}\Big|\tilde{u}_{1,s}\tilde{u}_{1,t}\Big|}_{L_2}.
	\end{align*}
	We then bound $L_1$ and $L_2$ separately. 	For bounding $L_2$, we introduce 
	\[\tilde{u}'_{1,s}=\binom{n_1}{m}^{-1}\sum_{1\le i_1<\ldots,<i_m\le n_1}V_{1,s}^{i_1,\ldots,i_m} - E_{1,s},
	\]
	where $V_{1,s}^{i_1,\ldots,i_m}$ and $E_{1,s}$ are defined in (\ref{def:VE}) with threshold $B_n = C\log(qn)$. For any $\delta > 0$,  by choosing proper $C$, we have $E_{1,s} \prec  (qn)^{-\delta}$. We
	then have
	\[
	|\tilde{u}_{1,s}-\tilde{u}'_{1,s}| \le 
	\underbrace{\Big|\tilde{u}_{1,s} - \binom{n_1}{m}^{-1}\sum_{1\le i_1<\ldots,<i_m\le n_1}V_{1,s}^{i_1,\ldots,i_m} \Big|}_{L_{2,s}} + E_{1,s}.
	\]
    By setting $z \succ (qn)^{-\delta}$, we have 
	\begin{equation}\label{bound:L2_1}
	\max_{1\le s \le q}\P(|\tilde{u}_{1,s}| > z) \le \max_{1\le s \le q}\Big(\P(|\tilde{u}'_{1,s}|>z/3) +\P(L_{2,s} > z/3) \Big).
	\end{equation}
	By  using the exponential inequality for bounded $U$-statistics we have 
	\begin{equation}\label{bound:L2_2}
	\max_{1\le s\le q}\P(|\tilde{u}'_{1,s}|>z/3) \le  C \exp( - C_1  n z^2/B_n^2).
	\end{equation}
	By Assumption {\bf (E)},  we also have 
	\begin{equation}\label{bound:L2_3}
	\begin{aligned}
	&\max_{1\le s\le q}\P(L_{2,s} > z/3) \le C n_1^m \exp(-C_1 B_n)
	\end{aligned}
	\end{equation}{}
	Combining (\ref{bound:L2_1}), (\ref{bound:L2_2}), and (\ref{bound:L2_3}), we then have
	\begin{equation}\label{BoundL2}
	\begin{aligned}{}
	\P(L_2 >y)&\le q^2\max_{1\le s,t \le q}\P\big(|\tilde{u}_{1,s}\tilde{u}_{1,t}|>y\big)
	\le 2q^2\max_{1\le s\le q}\P\big(|\tilde{u}_{1,s}|>\sqrt{y}\big)\\
	&\le Cq^2\exp(-C_1ny/B_n^2) + Cq^2 n_1^m \exp(-C_1B_n).
	\end{aligned}
	\end{equation}
	Therefore, for sufficiently large $n_1$ with probability $1-Cn_1^{-1}$  we have 
	$
	L_2 \le \log^3 (qn) / n.
	$
	
	We now bound $L_1$. Considering that $n_1^{-1}\sum_{k=1}^{n_1}h_s(\bX_{k})h_t(\bX_{k})$ approximates $\E[h_s(\bX_{k})h_t(\bX_{k})]$, we use  triangle inequality again to bound $L_1$ by
	\begin{equation}\label{DecompL_1}
	\begin{aligned}
	L_1\le&\underbrace{\max_{1\le s,t\le q}\Big|n_{1}^{-1}
		\big(\sum\nolimits_{k=1}^{n_1}
		\tilde{Q}_{1k,s}
		\tilde{Q}_{1k,t}\big)-n_1^{-1}\sum_{k=1}^{n_1}h_s(\bX_{k})h_t(\bX_{k})
		\Big|}_{L_3}\\
	&+\underbrace{\max_{1\le s,t\le q}\Big|n_1^{-1}\sum_{k=1}^{n_1}h_s(\bX_{k})h_t(\bX_{k})
		-\E[h_s(\bX)h_t(\bX)]\Big|}_{L_4}.
	\end{aligned}
	\end{equation}
	By Assumption {(\bf E)}, $h_s(\bX)$  has sub-exponential tails.	 Therefore, by Theorem 6 in \cite{delaigle2011robustness:app}, we have
	\begin{equation}\label{decomp:sigmahat1st}
	\P(L_4 > z)
	\le Cq^2\exp(-C_1n_1z^2)+Cq^2\exp\big(-C_2(n_1z)^{1/2}\big).
	\end{equation}
	Therefore, for sufficiently large $n_1$, with probability $1-Cn_1^{-1}$, we have 
	\[
	L_4 \le C\sqrt{\frac{\log(qn)}{n}} + C_1\frac{\log^2(qn)}{n}.
	\]
	After bounding $L_4$, we now deal with $L_3$. For this,   we   decompose $\tilde{Q}_{1k,s}$ as
	\begin{align}\label{decomposition:tildeq}
	\tilde{Q}_{1k,s}=\binom{n_1-1}{m-1}^{-1}\Big(Ah_s(\bX_{k})+B S_{1,s}+\Upsilon_{1,s}^{(k)}\Big),
	\end{align} 
	with $A=\binom{n_1-1}{m_1-1}-\binom{n_1-2}{m-2}$, $B=\binom{n_1-1}{m-2}$ ,
	$S_{1,s}:=\sum_{\beta=1}^{n_1}h_{s}(\bX_{\beta})$ and 
	\begin{align}\label{def:Upsilon}
	\Upsilon_{1,s}^{(k)}&=\sum_{1\le \ell_1<\ldots<\ell_{m-1}\le n_1\atop \ell_j\neq k, j= 1,\ldots,m-1}^{n_1} \Gamma_{1,s}^{k,\ell_1\ell_2\ldots\ell_{m-1}},
	\end{align}
	with $\Gamma_{1,s}^{k,\ell_1\ell_2\ldots\ell_{m-1}}= \Psi_{s}(\bX_{k},\bX_{\ell_1}\ldots,\bX_{\ell_m-1})-\big(h_{s}(\bX_{k})+\sum\limits_{i=1}^{m-1} h_{s}(\bX_{\ell_i})\big)\Big)$. $\Psi_s(\bX_{k_1},\ldots,\bX_{k_m})$, the centralized version of $\Phi_s(\bX_{k_1},\ldots,\bX_{k_m})$, is defined in (\ref{def:PsiAnd}). For notational simplicity,  by setting
	\begin{align}\label{def:VGammaD}
	V_{1,st}^2:=\sum\limits_{k=1}^{n_1}
	h_s(\bX_{k})h_t(\bX_{k}),
	~\Lambda_{1,s}:=\sum\limits_{k=1}^{n_1}
	\Upsilon_{1,s}^{(k)},~
	\Lambda^2_{1,st}:=\sum\limits_{k=1}^{n_1}
	\Upsilon_{1,s}^{(k)}\Upsilon_{1,t}^{(k)},
	\end{align}
	and $D=\binom{n_1-1}{m-1}$.
	we have $L_3=\max_{1\le s,t \le q}L_{3,st}$, where $L_{3,st}$ is defined as
	\begin{align*}
	&\Big|\frac{1}{n_1}\Big(\frac{1}{D^2}\sum\limits_{k=1}^{n_1}  \big( Ah_s(\bX_{
		k})+BS_{1,s}+\Upsilon_{1,s}^{(k)} \big)\big( Ah_t(\bX_{
		k})+BS_{1,t}+\Upsilon_{1,t}^{(k)} \big)\\&-\sum\limits_{k=1}^{n_1}h_s(\bX_{k})h_t(\bX_{k})
	\Big)\Big|,
	\end{align*}
	After introducing these notations, we  can expand $L_{3,st}$  as  
	\begin{align*}
	L_{3,st}=&\Big|\frac{A^2-D^2}{n_1D^2}V_{1,st}^2+\frac{1}{n_1D^2}(2AB+n_1B^2)S_{1,s}S_{1,t}+\frac{1}{n_1D^2}\Lambda_{1,st}^2\\&+\frac{A}{n_1D^2}\sum_{k=1}^{n_1}(\Upsilon_{1,s}^{(k)}h_t(\bX_{k})+\Upsilon_{1,t}^{(k)}h_s(\bX_{k}))
	+\frac{B}{n_1D^2}(\Lambda_{1,s}S_{1,t}+\Lambda_{1,t}S_{1,s} )\Big|.
	\end{align*}
	By using the triangle inequality on $L_{3,st}$, we have   
	$
	L_{3,st}\le J_{1,st}+J_{2,st}+J_{3,st}+J_{4,st}+J_{5,st},
	$
	where   
	\[
	\begin{array}{ll}
	J_{1,st}:=\Big|\dfrac{A^2-D^2}{n_1D^2}V_{1,st}^2\Big|,~J_{2,st}:=\Big|\dfrac{2AB+n_1B^2}{n_1D^2}S_{1,s}S_{1,t}\Big|,\\
	J_{3,st}:=\Big| \dfrac{1}{n_1D^2}\Lambda_{1,st}^2\Big|, J_{4,st}:=\Big|\dfrac{A}{n_1D^2}\sum_{k=1}^{n_1}\big(\Upsilon_{1,s}^{(k)}h_t(\bX_{k})+\Upsilon_{1,t}^{(k)}h_s(\bX_{k})\big)\Big|,\\
	J_{5,st}:=\Big|\dfrac{B}{n_1D^2}(\Lambda_{1,s}S_{1,t}+\Lambda_{1,t}S_{1,s}  )\Big|.
	\end{array}
	\]
	
	We now bound $J_{1,st},\ldots, J_{5, st}$ separately.
	By the definitions of $A$ and $D$, we obtain 
	\[
	A=O(n_1^{m-1}),~ D=O(n_1^{m-1})~ {\rm and}
	~D-A=\binom{n_1-2}{m-2}=O(n_1^{m-2}).\]
	Thus, for $J_{1,st}$, by the definition of  $V_{1,st}$ in (\ref{def:VGammaD}), by Assumption {\bf (M2)}  we easily have that 
	$\max_{1\le s,t \le q}J_{1,st} = O_p(n_1^{-1})$.
	For $J_{2,st}$,  considering $B=O(n_1^{m-2})$, we use the exponential inequality  to have
	\begin{equation}\label{BoundS}
	\begin{aligned}
	\P(J_{2,st}>y)=\P\Big(\frac{S_{1,s}S_{1,t}}{n_1^2}\ge Cy\Big)
	\le C_1\exp(-C_2n_1\min (y, \sqrt{y})).
	\end{aligned}
	\end{equation}
    With probability $1-Cn_1^{-1}$, we then have 
	$\max_{1\le s, t\le  q}J_{2,st}\le \log(qn_1) n_1^{-1}$ for sufficiently large $n_1$.
	We then bound $J_{3,st}$. Recalling  $\Lambda_{1,st}^2:=\sum_{k=1}^{n_1}\Upsilon_{1,s}^{(k)}\Upsilon_{1,t}^{(k)}$ in (\ref{def:VGammaD}), we have
	\begin{align*}
	\P(J_{3,st}>y)&=\P\Big(\frac{\Lambda_{1,st}^2}{n_1^{2m-1}}\ge Cy\Big)=\P\Big(\sum\limits_{k=1}^{n_1}\Upsilon_{1,s}^{(k)}\Upsilon_{1,t}^{(k)}\ge Cn_1^{2m-1}y\Big)\\
&	\le\sum\limits_{k=1}^{n_1}\P\Big(\Upsilon_{1,s}^{(k)}\Upsilon_{1,t}^{(k)}\ge Cn_1^{2m-2}y\Big).
	\end{align*}
	By the definition of $\Upsilon_{1,s}^{(k)}$ in (\ref{def:Upsilon}), given $\bX_{k}$, we can treat
	\[
	\Psi_{s}(\bX_{k},\bX_{\ell_1}\ldots,\bX_{\ell_m-1})-\big(h_{ij}(\bX_{k})+\sum_{r=1}^{m-1} h_{ij}(\bX_{\ell_r})\big),
	\] as a symmetric kernel function. Therefore, 
	$\Upsilon_{1,s}^{(k)}/D|\bX_k$ is  a $U$-statistic with  a kernel function of zero mean  and $m-1$ order.  Hence,  similarly to $L_2$, we  threshold the kernel with $C\log(qn_1)$ and use the exponential inequality for $U$-statistics to obtain that for sufficiently large $n_1$ with probability with $1-C_1n_1^{-1}$, we have 
	\[
	\max_{1\le s, t\le q} J_{3,st} \le C\frac{\log^2(qn)}{n}.
	\]

	We now bound $J_{4,st}$ and $J_{5,st}$. For $J_{4,st}$, we use the Cauchy-Swartz inequality on $\sum_{k=1}^{n_1}
	\Upsilon_{1,s}^{(k)}h_{t}(\bX_{k})$  and $\sum_{k=1}^{n_1}
	\Upsilon_{1,t}^{(k)}h_{s}(\bX_{k})$  to obtain
	\begin{equation}\label{ineq:boundJ4}
	J_{4,st}\le\big| \frac{A}{n_1D^2}(\Lambda_{1,ss}V_{1,tt}+\Lambda_{1,tt}V_{1,ss})\Big|.
	\end{equation}
	For $J_{5,st}$, by using the Cauchy-Swartz inequality on $\Lambda_{1,s}$ and $S_{1,s}$, we have 
	\begin{equation}\label{ineq:boundJ5}
	J_{5,st}\le \Big|\frac{B}{D^2}(\Lambda_{1,ss}V_{1,tt}+\Lambda_{1,tt}V_{1,ss})\Big|.
	\end{equation}
	Combining (\ref{ineq:boundJ4}) and (\ref{ineq:boundJ5}), we have 
	\begin{equation}\label{BoundJ45}
	J_{4,st}+J_{5,st}\le\underbrace{\Big| \frac{A+n_1B}{n_1D^2}(\Lambda_{1,ss}V_{1,tt}+\Lambda_{1,tt}V_{1,ss})
		\Big|}_{J_{6, st}}.
	\end{equation}
 Considering  $A=O(n_1^{m-1}),~B=O(n_1^{m-2}),$ and $D=O(n_1^{m-1})$, by the triangle inequality we have 
	\[
	\max_{1\le s,t \le q}J_{6, st}\le C\max_{1\le s,t\le q} \frac{\Lambda_{1,ss}V_{1,tt}}{n_1^m} = \Big(\max_{1\le s \le q}\underbrace{\frac{\Lambda^2_{1,ss}}{n_1^{2m-3/2}}}_{J'_{6,s}}\max_{1\le s\le q}\underbrace{\frac{V^2_{1,ss}}{n_1^{3/2}}}_{J''_{6,s}} \Big)^{1/2}.
	\]
	Similarly to $L_4$, from Assumption {\bf (M2)}, we have 
	$\max _{1\le s\le q} J''_{6,s}=O_p(n_1^{-1/2})$.
	For $J'_{6,s}$, we  have
	\begin{align}\label{BoundJ6_2_1}
	\P\Bigl(\frac{\Lambda_{1,ss}^2}{n_1^{2m-3/2}}\ge y \Bigr)
	\le \sum\limits_{k=1}^{n_1}\P\Big(\frac{|\Upsilon_{1,ss}^{(k)}|}{n_1^{m-1}}\ge Cn_1^{-1/4}y^{1/2}\Big).
	\end{align}
	By thresholding kernel with $C\log(qn)$ and exponential inequality for $U$-statistics, for sufficiently large $n_1$, 
	 $\max_{1\le s\le q}J'_{6,s} \le \log ^3(qn_1)n_1^{-1/2}
	 $
	 holds with probability $1-C_1n_1^{-1}$. Therefore, we have \[
	 \max_{1\le s,t \le q} J_{6,st} \le C \log^{3/2}(qn_1) n_1^{-1/2}.\]
	From all above results, for sufficiently large $n_1$, with probability $1-C_1 n_1^{-1}$, we have
	\begin{equation}\label{ineq:sigmaError}
	\max_{1\le, s, t\le q} |\hat{\sigma}_{1,st}-\sigma_{1,st}|\le C\frac{\log^{3/2} (qn_1)}{\sqrt{n_1}}.
	\end{equation}
	
	After analyzing the approximation error of $\hat{\sigma}_{1,st}$, we then prove for $\hat{r}_{1,st}$.
		By (\ref{def:R1R2hatR1R2}), we have $\hat{r}_{1,st}={\hat{\sigma}_{1,st}}/{\sqrt{\hat{\sigma}_{1,ss}
				\hat{\sigma}_{1,tt}}}$ and $r_{1,st}=\sigma_{1,st}/\sqrt{\sigma_{1,ss}\sigma_{1,tt}}$. Therefore, we have 
		\begin{align*}
		|\hat{r}_{1,st}-r_{1,st}|&=\Big|\frac{\hat{\sigma}_{1,st}}{\sqrt{\hat{\sigma}_{1,ss}\hat{\sigma}_{1,tt}}}-\frac{\sigma_{1,st}}{\sqrt{\sigma_{1,ss}\sigma_{1,tt}}}\Big|\\
		&\le\underbrace{\Big|\frac{\hat{\sigma}_{1,st}}{\sqrt{\hat{\sigma}_{1,ss}\hat{\sigma}_{1,tt}}}-\frac{\hat{\sigma}_{1,st}}{\sqrt{\sigma_{1,ss}\sigma_{1,tt}}}\Big|}_{A_1}+\underbrace{\Big|\frac{\hat{\sigma}_{1,st}}{\sqrt{\sigma_{1,ss}\sigma_{1,tt}}}-\frac{\sigma_{1,st}}{\sqrt{\sigma_{1,ss}\sigma_{1,tt}}}\Big|}_{A_2}.
		\end{align*}
		Hence, to bound $|\hat{r}_{1,st}-r_{1,st}|$  we  bound $A_1$ and $A_2$ separately.  For $A_1$, we rewrite it as
		\[
		A_1=\Big|\frac{\hat{\sigma}_{1,st}}{\sqrt{\hat{\sigma}_{1,ss}\hat{\sigma}_{1,tt}}}\Big|\Big|1-\frac{\sqrt{\hat{\sigma}_{1,ss}\hat{\sigma}_{1,tt}}}{\sqrt{\sigma_{1,ss}\sigma_{1,tt}}}\Big|.
		\]
		Considering $|\hat{r}_{1,st}|\le 1$ and $a^2-b^2=(a+b)(a-b)$, we have 
		\[
		A_1\le \sigma_{1,ss}^{-1}\sigma_{1,tt}^{-1} \big|{\hat{\sigma}_{1,ss}\hat{\sigma}_{1,tt}-\sigma_{1,ss}\sigma_{1,tt}}\big|.
		\]
		By Assumption {\bf (M1)} and {\bf (M2)}, there are constants $b$ and $B$,
		such that  $0<b\le\sigma_{1,ss}\le B< \infty$ for $s=1,\ldots,q$. Hence, we have
		\begin{equation}\label{ineq:A1}
		A_1\le b^{-2}\max_{1\le s\le q}|\hat{\sigma}_{1,ss}-\sigma_{1,ss}|^2+2Bb^{-2}\max_{1\le s\le q}|\hat{\sigma}_{1,ss}-\sigma_{1,ss}|.
		\end{equation}
		For $A_2$, by $\sigma_{1,ss}\ge b>0$ from Assumption {\bf (M1)} we have 
		\begin{equation}\label{ineq:A2}
		A_2\le 
		b^{-1}\max_{1\le s,t\le q}|\hat{\sigma}_{1,st}-\sigma_{1,st}|.
		\end{equation}
		Combining (\ref{ineq:sigmaError}), (\ref{ineq:A1}), and (\ref{ineq:A2}), we then have that
		\[
			\max_{1\le, s, t\le q} |\hat{r}_{1,st}-r_{1,st}|\le C\frac{\log^{3/2} (qn_1)}{\sqrt{n_1}},
		\]
		holds with the overwhelming probability,	which finishes the proof for $m>1$.
		
			We then prove for $m=1$. We  decompose $\hat{\sigma}_{1,st}$  as
		\[
		\hat{\sigma}_{1,st}=
		n_1^{-1}\sum_{k=1}^{n_1}\Psi_s(\bX_k)\Psi_t(\bX_k)-
		\overline{\Psi}_{1,s}\overline{\Psi}_{1,t},
		\]
		where $\Psi_{s}(\bX_k)=\Phi_{s}(\bX_k)-u_{1,s}$ and $\overline{\Psi}_{1,s}=n_1^{-1}\sum_{k=1}^{n_1}\Psi_{s}(\bX_k)$. Considering $\sigma_{1,st}=\E[\Psi_s(\bX)\Psi_t(\bX)]$,  by setting 
		\begin{align*}
		B_1 &= \P\Big(\max_{1\le s, t\le q}\big|{n_1}^{-1}\sum_{k=1}^{n_1}\Psi_s(\bX_k)\Psi_t(\bX_k)-
		\E[\Psi_s(\bX)\Psi_t(\bX)]\big|>x/2\Big)\\
		B_2&=\P\Big(\max_{1\le s, t\le q}\overline{\Psi}_{1,s}\overline{\Psi}_{1,t}>x/2\Big)
		\end{align*}
		we then have
		\begin{equation}\label{decomp:sigmahatst}
		\P\Big(\max_{1\le s, t\le q}\big|\hat{\sigma}_{\ \gamma,st}-\sigma_{\gamma,st}\big|>x\Big)\\
		 \le B_1 + B_2,
		\end{equation}
		By Theorem 6 in \cite{delaigle2011robustness:app}, we can bound $B_1$ by
		\begin{equation}\label{decomp:sigmahat1st}
		B_1
		\le Cq^2\exp(-C_1n_1x^2)+Cq^2\exp\big(-C_2(n_1x)^{1/2}\big).
		\end{equation}
		Similarly, for the term $B_2$ in (\ref{decomp:sigmahatst}), we use the same argument to obtain 
		\begin{equation}\label{decomp:sigmahat2st}
		\P\Big(\!\max_{1\le s, t\le q}\overline{\Psi}_{1,s}\overline{\Psi}_{1,t}\!>\!x/2\!\Big)\!\!\le\!\! Cq^2\exp(-C_1n_1x)\!+\!Cq^2\exp\!\big(\!-C_2(n_1\sqrt{x})\big).
		\end{equation}
		Combining (\ref{decomp:sigmahatst}), (\ref{decomp:sigmahat1st}), and (\ref{decomp:sigmahat2st}), for sufficiently large $n_1$, with probability $1-C_1 n_1^{-1}$, we have
		\begin{equation}\label{ineq:sigmaError:m1}
		\max_{1\le, s, t\le q} |\hat{\sigma}_{1,st}-\sigma_{1,st}|\le C\sqrt{\frac{\log (qn_1)}{n_1}} + C\frac{\log ^2(qn_1)}{n_1}.
		\end{equation}
		Similarly to $m>1$, we also have that
			\[
		\max_{1\le, s, t\le q} |\hat{r}_{1,st}-r_{1,st}|\le C\sqrt{\frac{\log (qn_1)}{n_1}} + C\frac{\log ^2(qn_1)}{n_1},
		\]
		holds with the overwhelming probability for $m=1$.
	\end{proof}

\section{More simulation results}\label{appendix:moreSimulation}
This section consists of three parts. Firstly, we  present the empirical size for high dimensional mean tests based on {\bf Models 2-4}, which are introduced in Section \ref{section:Exper}. Secondly, we  apply our methods to test high dimensional covariance/correlation coefficients to  illustrate the generality of proposed methods.
At last, we apply our methods to analyze  resting-state  functional magnetic resonance imaging (fMRI) data.

\subsection{Additional simulation results of testing   high dimensional mean values}
In Section \ref{section:Exper}, we introduce {\bf Models 1-4} for high dimensional mean tests.  In this section, we show the numerical results for {\bf Models 2-4} in Table \ref{table:Models2to4}. 
\begin{table}[!htbp]
	\begin{center}
		\addtolength{\tabcolsep}{-2pt}
		\scriptsize
		\caption{Empirical sizes of {\bf Models 2}, {\bf 3} and {\bf 4} with $\alpha=0.05$, $B=300$, and $n_1=n_2=100$ based on 2000
			replications.} \vspace{0.2cm}\label{table:Models2to4}
		\begin{tabular*}{12.3cm}{cccccccccp{2mm}cccc}
			\toprule[2pt]
			
			\multicolumn{2}{c}{}&\multicolumn{12}{c}{{\bf Model 2}}\\
			$d$&$s_0$&$p=1$&$p=2$&$p=3$&$p=4$&$p=5$&$p=\infty$&$T^N_{\rm ad}$&  &$T^2$&BY&SD&CLX\\\hline
			
			75 &5&6.20& 6.50 &6.55 &6.85 &7.00& 6.65&7.10 &
			& 5.25  &  6.50 & 5.40& 5.05
			\\
			&30&4.30 &4.75& 5.35 &6.00 &6.35 &6.75&6.25 &
			& 5.25  &  6.50 & 5.40& 5.05
			\\
			&75& 4.55& 4.75 &5.60 &6.00 &6.25 &6.50&6.30 &
			& 5.25  &  6.50 & 5.40& 5.05
			\\
			\hline
			200&10&5.20& 5.45 &5.75 &5.65 &6.20 &6.65& 6.30 &  
			&-& 5.35 & 4.60 &  6.10
			\\
			&50& 3.30 &3.40&3.80 &4.50 &5.30& 6.25& 5.30 & 
			&-& 5.35 & 4.60 &  6.10
			\\
			&100&2.85 &3.05 &3.35 &3.95& 4.75 &7.10& 5.10 &  
			&-& 5.35 & 4.60 &  6.10
			\\
			&150&3.00 &3.10 &3.55 &4.50& 5.10 &7.00& 5.50 & 
			&-& 5.35 & 4.60 &  6.10
			\\
			&200&2.70 &2.90 &3.40 &4.20 &5.05 &7.10& 5.15 & 
			&-& 5.35 & 4.60 &  6.10
			\\\hline
			
			400&10&4.85& 5.00 &5.45 &5.45 &5.95& 0.71&6.90 & 
			& - &5.10& 4.10& 6.25     
			\\
			&50&  1.90& 2.15 &2.60 &3.30 &3.90& 7.40& 5.45& 
			& - &5.10& 4.10& 6.25      
			\\
			&100& 1.35& 1.50&1.85& 2.80 &3.85 &7.20& 4.75&
			& - &5.10& 4.10& 6.25   
			\\
			&200&1.05 &1.15& 1.70 &2.65&3.70 &7.00 & 4.45& 
			& - &5.10& 4.10& 6.25         
			\\
			&400& 1.30 &1.65& 1.75 &2.70 &3.55 &7.10 & 4.50 &  
			& - &5.10& 4.10& 6.25    \\\hline
			\multicolumn{2}{c}{}&\multicolumn{12}{c}{{\bf Model 3}}\\
			$d$&$s_0$&$p=1$&$p=2$&$p=3$&$p=4$&$p=5$&$p=\infty$&$T^N_{\rm ad}$&  &$T^2$&BY&SD&CLX\\\hline
			
			75 &5& 5.25 &5.65 &6.25 &6.15 &6.30 &6.90& 6.75 &  
			&5.30& 6.10 & 5.40 &  5.90
			\\
			&30&4.70 &4.70 &5.35 &5.75 &6.20 &6.95&5.65 &
			&5.30& 6.10 & 5.40 &  5.90
			\\
			&75& 4.25& 4.80 &5.05& 5.10 &5.75 &7.00&5.75 &
			&5.30& 6.10 & 5.40 &  5.90
			\\
			\hline
			200&10&3.75 &4.05 &4.65& 5.20& 5.35 &7.05&5.85&  
			&-& 5.70 & 4.90 &  5.50
			\\
			&50& 2.80 &2.60 &3.20&3.50 &4.15 &6.70& 4.65 & 
			&-& 5.70 & 4.90 &  5.50
			\\
			&100&2.45& 2.50 &2.75 &3.50 &4.35 &6.60& 4.20 &  
			&-& 5.70 & 4.90 &  5.50
			\\
			&150&2.40& 2.55& 2.75 &3.70 &4.40 &7.05& 4.50 & 
			&-& 5.70 & 4.90 &  5.50
			\\
			&200&2.15& 2.30&2.75& 3.60 &4.35& 6.70& 4.65 & 
			&-& 5.70 & 4.90 &  5.50
			\\\hline
			400&10&3.95 &4.30 &4.80& 4.85 &5.30 &7.35& 6.05 &  
			&-& 5.25 & 3.95 &  6.25
			\\
			&50& 1.40 &1.80 &2.15& 2.55 &3.70 &7.15& 4.75 & 
			&-& 5.25 & 3.95 &  6.25
			\\
			&100&1.10 &1.20 &1.65& 2.25 &3.05 &7.05& 4.45 &  
			&-& 5.25 & 3.95 &  6.25
			\\
			&200& 0.90 &0.95 &1.25& 1.95 &3.20 &7.10& 4.35 & 
			&-& 5.25 & 3.95 &  6.25
			\\
			&400& 0.95 &0.75 &1.30 &2.10 &3.20 &7.15& 3.80 & 
			&-& 5.25 & 3.95 &  6.25
			\\\hline
			\multicolumn{2}{c}{}&\multicolumn{12}{c}{{\bf Model 4}}\\
			$d$&$s_0$&$p=1$&$p=2$&$p=3$&$p=4$&$p=5$&$p=\infty$&$T^N_{\rm ad}$&  &$T^2$&BY&SD&CLX\\\hline
			
			75 &5&4.10& 4.05 &4.05 &4.70 &4.95 &5.50&5.05 &
			& 4.10  & 3.90 & 3.60& 4.40
			\\
			&30&3.05&3.00 &3.20 &3.55& 3.90 &5.15&5.00 &
			& 4.10  & 3.90 & 3.60& 4.40
			\\
			&75& 2.75& 3.15& 3.30 &3.75 &4.10 &5.60 &4.65 &
			& 4.10  & 3.90 & 3.60& 4.40
			\\
			\hline
			200&10& 2.45 &2.75 &2.80& 3.10 &3.30 &5.30&4.20 &  
			&-& 1.75 & 1.50 &  4.35
			\\
			&50& 1.05 &1.05& 1.30 &1.75 &2.35& 5.50& 3.30 & 
			&-& 1.75 & 1.50 &  4.35
			\\
			&100& 1.10 &1.10& 1.20& 1.65 &2.35 &5.60& 3.00 &  
			&-& 1.75 & 1.50 &  4.35
			\\
			&150& 0.85& 0.90 &1.10& 1.45 &2.25 &5.65& 3.35 & 
			&-& 1.75 & 1.50 &  4.35
			\\
			&200&1.00 &1.10 &1.10 &1.65 &1.95 &5.65& 2.75& 
			&-& 1.75 & 1.50 &  4.35 
			\\\hline
			400&10& 2.85 &3.05 &3.35& 3.40 &4.15& 5.70& 4.20 &  
			&-& 0.85 & 0.45&  4.20
			\\
			&50&0.95 &0.95 &1.05 &1.30 &1.80 &5.65& 3.20 & 
			&-& 0.85 & 0.45&  4.20
			\\
			&100&0.45 &0.65 &0.60& 0.75& 1.20& 5.45& 2.80 &  
			&-& 0.85 & 0.45&  4.20
			\\
			&200&0.30 &0.30 &0.35 &1.00&1.60 &5.40& 2.70 & 
			&-& 0.85 & 0.45&  4.20
			\\
			&400&0.30 &0.30& 0.50 &0.70 &1.50 &5.50& 2.45 & 
			&-& 0.85 & 0.45&  4.20
			\\
			\bottomrule[2pt]
		\end{tabular*}
	\end{center}
\end{table}
\subsection{Simulation results of testing high dimensional covariance and correlation coefficients }
In this section, we carry out the simulation of the marginal  test using the Pearson's covariance and Kendall's tau correlation matrices. For simplicity, we  consider the one-sample problem. In the simulation, $Z$ and $\bX\in\reals^d$ are the response variable and the explanatory vector. We generate 
$n_1$ data points of $(Z, \bX^\top)^\top$
from the following models.
\begin{itemize}
	\item {\bf Model 5.} Let $\bSigma^L_0, \bSigma^L_1 \in\reals^{(d+1)\times(d+1)}$  to be
	\[
	\bSigma^L_0 \!\!=\!\!\left[\!\!\begin{array}{cc}
	1& \zero^\top\\
	\zero &(\Db^\star)^{-1/2}\bSigma^\star(\Db^\star)^{-1/2}
	\end{array}\!\!\right],~
	\bSigma^L_1\!\!=\!\!\left[\!\!\begin{array}{cc}
	1& \bV^\top\\
	\bV &(\Db^\star)^{-1/2}\bSigma^\star(\Db^\star)^{-1/2}
	\end{array}\!\!\right],
	\]
	where $\bV\in\reals^d$ has $s$ nonzero entries with the magnitude  ${\rm U}(u_1,u_2)$. Under the null hypothesis, we generate $n_1$ random vectors from $t(\nu, \bmu,\bSigma)$ with $\nu=5$, $\bmu=\zero$, $\bSigma=\bSigma^L_0$ as the samples of $(Z,\bX^\top)^\top$. Under the alternative hypothesis, we generate the samples of $(Z,\bX^\top)^\top$ from $t(5,\zero,\bSigma^L_1+\delta\bI_{d+1})$ with $\delta =|  
	\lambda_{\rm min}(\bSigma^L_1)|+0.5$.
\end{itemize}
The experimental results of {\bf Model 5} are in Table \ref{table:Model5}. In {\bf Model 5} we compare the proposed tests based on Pearson's covariance and Kendall' tau correlation matrices.
The pattern of empirical size and power for {\bf Model 5} is similar to {\bf Models 1-4}. Moreover,
the experiment
shows that Kendall's tau based test is more powerful than the Pearson's covariance based one for  distributions with the heavy tails and  strong tail dependence.

\begin{table}[!htbp]
	\begin{center}
		\addtolength{\tabcolsep}{-3pt}
		\tiny
		\caption{Empirical size and power of  {\bf Model 5} with $\alpha=0.05$, $B=300$, and $n_1=200$ based on 2000
			replications.} \vspace{0.2cm}\label{table:Model5}
		\begin{tabular*}{13.5cm}{cccccccccp{2mm}ccccccc}
			\toprule[2pt]
			\multicolumn{2}{c}{}&\multicolumn{15}{c}{
				Empirical size (\%) }\\
			\multicolumn{2}{c}{}&\multicolumn{7}{c}{Pesrson's sample covariance}&&\multicolumn{7}{c}{Kendall's tau}\\
			\cline{3-9}\cline{11-17}
			$d$&$s_0$&$p=1$&$p=2$&$p=3$&$p=4$&$p=5$&$p=\infty$&$T^N_{\rm ad}$&  &$p=1$&$p=2$&$p=3$&$p=4$&$p=5$&$p=\infty$&$T^N_{\rm ad}$\\\hline
			
			200& 10 &0.00& 0.00 &  0.00& 0.15& 0.15& 2.65
			&1.50&
			&3.75 &4.30 &4.75 &5.40& 6.05& 8.20
			&6.85\\
			&50& 0.00& 0.00& 0.00& 0.00& 0.05& 2.60
			&1.30&
			&1.20 &1.75 &1.90 &3.35 &4.35 &8.75
			&5.25\\
			&100& 0.00& 0.00& 0.00& 0.00& 0.00& 2.10
			&1.35&
			&0.60& 0.85& 1.70& 2.55& 3.95& 8.60
			&5.65\\
			&150& 0.00& 0.00& 0.00& 0.00& 0.00& 2.65
			&1.25&
			&0.60& 0.85& 1.50& 2.65& 3.75& 8.35
			&4.75\\
			&200& 0.00& 0.00& 0.00& 0.00& 0.00& 2.40
			&1.20&
			& 0.60& 0.90& 1.40& 2.80& 3.55& 8.20
			&5.25\\\hline
			\multicolumn{17}{c}{}\\
			\multicolumn{2}{c}{}&\multicolumn{15}{c}{ Empirical power (\%)  with $s=5$, $u_1=0$, and $u_2=4\sqrt{\log(d)/n_1}$ }\\
			$d$&$s_0$&$p=1$&$p=2$&$p=3$&$p=4$&$p=5$&$p=\infty$&$T^N_{\rm ad}$&  &$p=1$&$p=2$&$p=3$&$p=4$&$p=5$&$p=\infty$&$T^N_{\rm ad}$\\\hline
			200& 10&13.70 &20.7& 27.95& 34.00& 38.85& 55.20
			&50.60&
			&73.20& 78.75& 81.40& 83.45& 84.05& 84.20
			&84.00\\
			&50&  0.40& 1.50 &5.50& 14.00& 24.40& 55.95
			&49.05&
			& 26.90& 52.75& 70.55& 78.50& 81.80& 84.05
			&81.85\\
			&100&0.05& 0.40& 3.20& 11.55& 23.65& 55.65
			&48.85&
			&11.85& 39.20& 66.35 &77.05 &81.90& 84.15
			&82.00\\
			&150& 0.05& 0.30& 2.80& 12.00& 22.90& 55.20
			&48.25&
			& 8.30& 35.00& 65.65 &77.25 &81.55& 84.25
			&82.15\\
			&200&0.05& 0.40& 3.05& 11.85& 23.30& 55.20
			&47.55&
			& 6.95 &34.95& 65.55& 76.55 &81.70 &84.05
			&81.45\\\hline
			\multicolumn{17}{c}{}\\
			\multicolumn{2}{c}{}&\multicolumn{15}{c}{ Empirical power (\%) with $s=5$, $u_1=0$, and $u_2=3\sqrt{1/n_1}$ }\\
			$d$&$s_0$&$p=1$&$p=2$&$p=3$&$p=4$&$p=5$&$p=\infty$&$T^N_{\rm ad}$&  &$p=1$&$p=2$&$p=3$&$p=4$&$p=5$&$p=\infty$&$T^N_{\rm ad}$\\\hline
			200& 10& 10.25& 10.55& 11.35 &11.65 &12.80& 16.80
			&13.95&
			&75.85& 75.05 &74.15 &72.90& 70.65 &46.55
			&68.20\\
			&50& 4.90& 5.55 &6.85 &8.40 &9.6 &16.25
			&12.60&
			&78.30& 79.80 &80.60 &79.50& 77.10 &47.15
			&74.50\\
			&100&2.95& 4.05& 4.85 &6.45 &8.2 &17.20
			&11.15&
			& 73.80& 78.65& 80.90&80.35& 77.60 &46.85
			&75.10\\
			&150& 2.70& 4.00 &5.15 &6.50 &8.60 &16.90
			&11.15&
			&69.55 &78.00 &80.75 &79.70& 77.90 &47.45
			&73.75\\
			&200&2.75& 3.65& 5.35& 6.85& 8.60& 16.45
			&11.15&
			& 68.00& 78.45& 81.15& 80.70 &77.60& 46.95
			&74.25\\
			\bottomrule[2pt]
		\end{tabular*}
	\end{center}
\end{table}

\subsection{Simulation results of increasing $\#(\mathcal{P})$}\label{sec:simP}
In this section, we discuss the impact of $\#(\mathcal{P})$ by simulation. In Sections \ref{sec:adTestProc}  and \ref{sec:theroAd}, we require fixed $\mathcal{P}$ for the data-adaptive combined test. In Remark \ref{remark:P}, we discuss theoretical difficulties of increasing $\#(\mathcal{P})$.  In this section, we present the performance of proposed methods under various  $\mathcal{P}$.

 For this we generate the data based on {\bf Model 1} in Section \ref{section:Exper}. We consider various $\cP$. In detail, we set $\cP_1=\{1,2\}$, $\cP_2=\{1,2,\infty\}$, $\cP_3=\{1,2,3,4,5\}$, $\cP_4=\{1,2,3,4,5,\infty \}$, $\cP_5=\{1,2,\ldots,10,\infty\}$, and $\cP_6 =\{1,2,\ldots,20,\infty\}$. We also consider various alternatives with $s=5,50, 100$, from sparse to dense. The simulation results are in Table \ref{table:P}.

\begin{table}[!htbp]
	\begin{center}
		\addtolength{\tabcolsep}{-1pt}
		
		\caption{Empirical size and power of $T_{\rm ad}^N$ under {\bf Model 1} with $\alpha=0.05$, $B=300$, $d=400$, and $n_1=n_2=200$ based on 1000
			replications.} \vspace{0.2cm}\label{table:P}
		\begin{tabular*}{12.5cm}{cccccccccccccc}
			\toprule[2pt]
			&\multicolumn{6}{c}{} &&\multicolumn{6}{c}{Empirical power (\%) with}\\
		   &\multicolumn{6}{c}{Empirical size(\%) with}&&\multicolumn{6}{c}{$s=5$, $u_1=0$, $u_2=4\sqrt{\log(d)/n_1}$}\\\hline
			$s_0$&$\mathcal{P}_1$&$\mathcal{P}_2$&$\mathcal{P}_3$&$\mathcal{P}_4$&$\mathcal{P}_5$&$\mathcal{P}_6$&&$\mathcal{P}_1$&$\mathcal{P}_2$&$\mathcal{P}_3$&$\mathcal{P}_4$&$\mathcal{P}_5 $&$\mathcal{P}_6$\\\hline
			10&5.1&5.6&5.3&5.2&5.3&5.4&& 82.8&86.3&86.6&86.4&86.5 &86.5\\
			50&3.7&4.6&4.8&4.6&5.1&4.6&&60.4&84.0&82.4&84.7&85.0&85.1\\
			100&2.9&3.9&3.7&4.5&4.6&4.5&&44.2&83.7&81.6&84.5&84.8&85.0\\
			150&2.6&3.6&3.5&3.8&4.1&4.2&&33.6&83.3&80.8&83.6&84.6&84.5\\
			200&2.3&4.0&3.5&3.9&4.0&4.1&&29.1&83.5 &81.1 &84.1 &84.2 &84.8\\\hline
		     &\multicolumn{6}{c}{Empirical power (\%) with}&&\multicolumn{6}{c}{Empirical power (\%)with}\\
			&\multicolumn{6}{c}{$s=50$, $u_1=0$,$u_2=4\sqrt{1/n_1}$}&&\multicolumn{6}{c}{$s=100$, $u_1=0$, $u_2=3\sqrt{1/n_1}$}\\\hline
			$s_0$&$\mathcal{P}_1$&$\mathcal{P}_2$&$\mathcal{P}_3$&$\mathcal{P}_4$&$\mathcal{P}_5 $&$\mathcal{P}_6$&&$\mathcal{P}_1$&$\mathcal{P}_2$&$\mathcal{P}_3$&$\mathcal{P}_4$&$\mathcal{P}_5 $&$\mathcal{P}_6$\\\hline
			10&76.0 &75.2& 78.6& 77.3& 74.8& 75.1&& 71.1 &65.1& 0.1& 69.5& 64.1& 64.8\\
			50&75.8& 75.2 &79.0&78.2&78.0&78.0&& 79.6& 73.8& 77.9& 76.3& 72.7& 74.3\\
			100&70.1& 71.1& 79.4& 78.3& 77.3& 76.1&& 78.6& 72.0& 77.0& 76.0& 73.1& 74.2\\
			150& 65.0& 67.5& 78.1& 77.2& 75.3& 75.3	&& 75.8& 68.5& 76.9& 75.8& 73.1& 73.7\\
			200&60.2& 65.9& 76.8& 76.5& 74.6& 74.0&& 74.5& 68.5& 76.0& 75.1& 73.8& 73.8\\
			\bottomrule[2pt]
		\end{tabular*}
	\end{center}
\end{table}
From Table \ref{table:P}, we recommend using $\cP_4=\{1,2,3,4,5,\infty\}$. It has good performance for both sparse and dense alternatives. Table  \ref{table:P} also shows that there is no power advantage to add more elements to $\cP_4$.

\subsection{Real data example}
\label{sec:RealDataExample}

In this section, we apply our methods to analyze  resting-state  functional magnetic resonance imaging (fMRI) data.  We aim  to compare the resting-sate fMRI scans between the attention deficit hyperactivity disorder (ADHD)
and normal children. For each subject,  the resting-state fMRI scan is a high dimensional time series. 
Instead of dealing with the time series directly, we alternatively use an index named amplitude of low frequency fluctuation (ALFF) to yield a high dimensional vector for each subject. Each entry of ALFF is defined as the total power within the frequency range between 0.01 and 0.1 Hz of 
the corresponding  entry of the original fMRI time series, which reflects the slow fluctuation. In general, ALFF reflects the intensity of regional spontaneous brain activity. As for the detailed definition of ALFF, we refer to \cite{yu2007altered:app}.  Existing literature \citep{yu2007altered:app,zou2008improved:app} utilizes   univariate two-sample $t$-tests  to detect differentially experessed brain areas between the diseased and control groups based on ALFF. Before we conduct the univariate two-sample tests, it is a common practice to perform a global test to verify that there is significant difference  of ALFFs between two groups. By the definition of ALFF, we utilize the high dimensional mean test to perform the global test. 

\begin{figure}[!htbp]
	\begin{center}
		\resizebox{10.5cm}{12cm}{
			\includegraphics[width=0.7\textwidth,angle=0]{./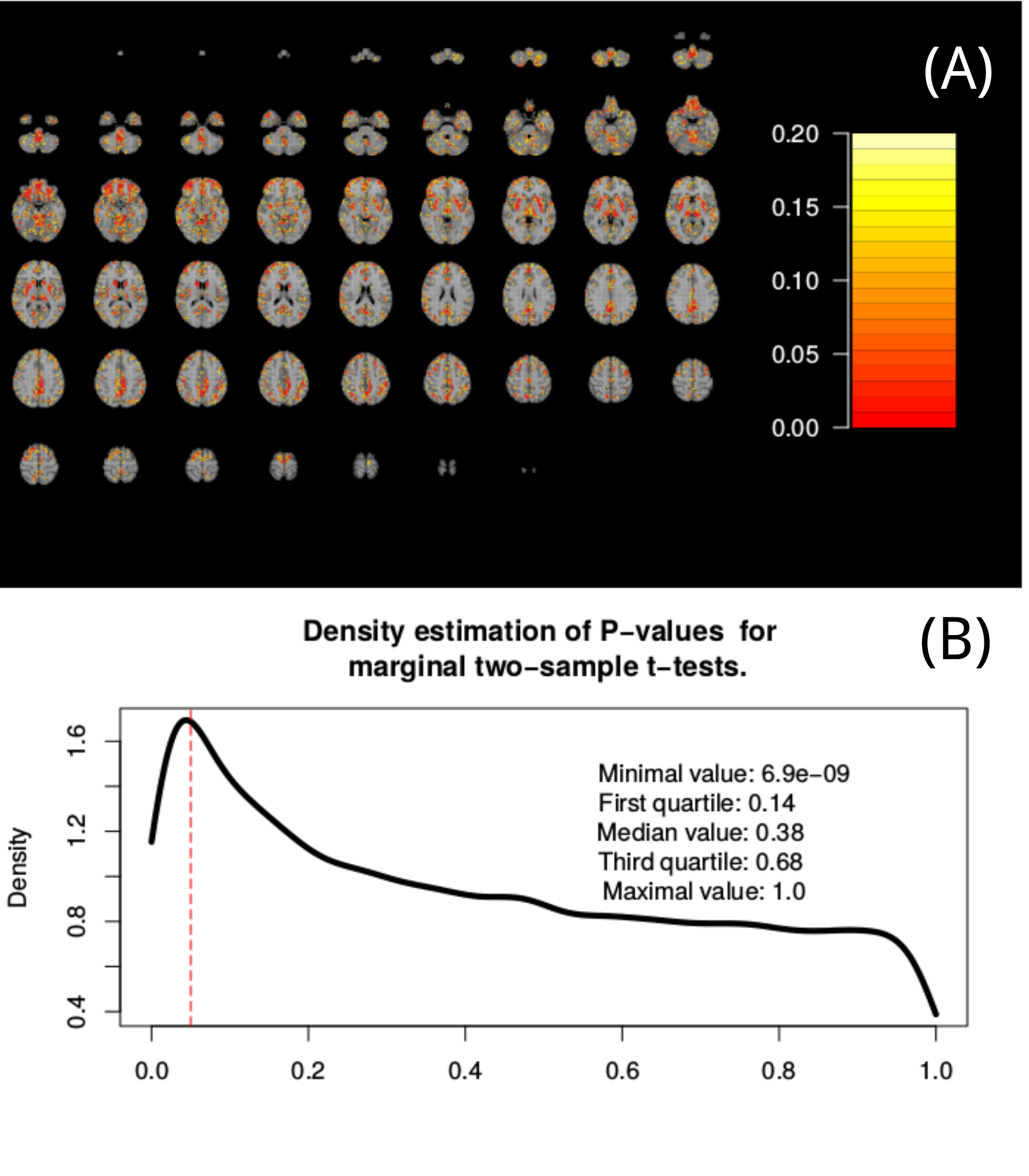}
		}
		\vspace*{-4em}
	\end{center}
	\caption{\small $P$-values of the marginal two-sample $t$-tests on ALFFs between   ADHD and control groups. (A) The $P$-value map on the standard MN152 brain template with the slice thickness 3mm at the given threshold ($P$-value $<0.2$). (B) The estimated density of the $P$-values  and some summary statistics. } \label{fig:pvalues}
	
\end{figure}

Our experiment is based on the first dataset of Peking University from the ADHD-200 sample.\footnote{ The website for ADHD-200 sample is  http://fcon\_1000.projects.nitrc.org/indi/adhd200/.}
The sample consists of 85 subjects, in which 24 subjects have ADHD. Therefore, the control group has 61 subjects. 
ALFF analysis is performed by using the C-PAC software. The C-PAC software preprocesses the  data by registering each person's fMRI scan to the standard MN152 template. To increase the signal-noise ratio, the software also performs  slice timing correction,
body motion correction,  nuisance signal correction, and
temporal filtering. Because of the difference of individual brain baseline activity,  we standardize the ALFF  for each subject. We then use the Gaussian kernel to perform the spatial  smoothing for each subject. Moreover, existing literature and psychological knowledge suggest that the ALFF of brain's gray matter  is related to the mental disease. Hence, we restrict the testing area to the gray matter of the brain. For detailed description of the processing procedure, we refer to \cite{yu2007altered:app}, \cite{zou2008improved:app}, and the user guide of  C-PAC software.\footnote{ The website for the C-PAC software is http://fcp-indi.github.io/.}

Figure  \ref{fig:pvalues} illustrates  $P$-values of univariate two-sample $t$-tests. Figure \ref{fig:pvalues}(A) illustrates  the $P$-value map to the standard MN152 brain template with the slice thickness 3mm at the given threshold ($P$-value $<0.2$). Moreover, Figure \ref{fig:pvalues}(B) illustrates the estimated density  of these $P$-values.   Figure \ref{fig:pvalues} shows there are significant   ALFF differences between the diseased and control groups in some brain areas. 

\begin{table}[!ht]
	\begin{center}
		\caption{$P$-values of the $(s_0,p)$-norm tests and  data-adaptive combined test with $s_0=40,400,4000, 8000$ and  $B=1000$ on the ALFF data.} \vspace{0.2cm}\label{table:realdata}
		\begin{tabular*}{9.5cm}{lccccccc}
			\toprule[2pt]
			\multicolumn{8}{c}{
				$P$-values of global tests between the ADHD and control groups }\\
			$s_0$&$p=1$&$p=2$&$p=3$&$p=4$&$p=5$&$p=\infty$&$T^N_{\rm ad}$\\
			\hline
			40& 0.001&0.001 &0.001 &0.001 &0.001 &0.001 & 0.000\\
			400&0.013 &0.013 &0.012 &0.011 &0.010 &0.000 & 0.000\\
			4000&0.016&0.015 &0.015 &0.015 &0.013 &0.000 & 0.000\\
			8000&0.016 &0.015 &0.013 &0.011 &0.008 &0.000 & 0.000
			
			\\\hline
			\multicolumn{8}{c}{}\\
			\multicolumn{8}{c}{ $P$-values of global tests within the  control group }\\
			$s_0$&$p=1$&$p=2$&$p=3$&$p=4$&$p=5$&$p=\infty$&$T^N_{\rm ad}$\\\hline
			40&  0.192& 0.192 &0.193 &0.195 &0.196& 0.254 &0.237\\
			400& 0.301 &0.295 &0.290 &0.288 &0.284& 0.299
			&0.355\\
			4000&0.373 &0.362 &0.352 &0.337 &0.323 &0.273
			&0.354\\
			8000& 0.406 &0.394 &0.387 &0.375 &0.360 &0.282
			&0.370\\
			\bottomrule[2pt]
		\end{tabular*}
	\end{center}
\end{table}

We then  use both the individual $(s_0,p)$-norm test and   data-adaptive combined test with balanced $\mathcal{P}=\{1,\ldots,5,\infty\}$ to perform the global test. We also  
randomly split the sample for the control group into  two subsamples with  $30$ and $31$ subjects.
We then  perform the global mean test between the two subsamples of the control group to confirm the validity of our proposed methods.   As is shown in Figure \ref{fig:pvalues}, at most  20\% of the
gray matter  is potentially different between the diseased and control groups. Therefore, considering that the voxel size is about 40000,  we set $s_0=40,400,4000,8000$ in the experiment. The experiment result is presented in Table \ref{table:realdata}, which shows that
our proposed methods are quite powerful to distinguish the ADHD and control groups.

\end{bibunit}
\end{document}